%README: RED text flags areas requiring attention. PURPLE text indicates private comments relating to text structure. BLUE text indicates private notes on the content.

%Document style
%----------------------
%\documentclass[10pt,reqno]{amsart}
\documentclass[12pt,a4paper]{amsart}	%Comment: Increased font. Can switch back to other class while writing and compare?
 
%\pagestyle{fancy}              %Comment: Changed header/footer style for now. Happy to change back though!
%\fancyhf{}
%\rhead{Anup Poudel}
%\rfoot{\thepage}

%Random 'put' debugging
\let\latexput\put
\usepackage{pictexwd}

\let\put\latexput
%end of random debugging

%Packages: Anup
%---------------------
\usepackage{enumerate, amsmath, amsthm, amsfonts, amssymb, xy,  mathrsfs, graphicx, array}
\usepackage[usenames, dvipsnames]{color}
\usepackage{paralist}
\usepackage{lscape}
\usepackage{linegoal}
\usepackage{graphicx}
\usepackage{subcaption}				
\usepackage{filecontents}
\usepackage{graphics}
\usepackage{xcolor}
\usepackage{relsize}
\usepackage[margin=1in]{geometry}
\usepackage{tikz-cd}
\usepackage{fancyhdr,color}
\usepackage{tikz,graphicx,multicol}
\usetikzlibrary{shapes.geometric}
\usetikzlibrary{knots}
\usepackage{amssymb,euscript,eufrak, enumitem}
\usepackage{amsfonts,amsmath,amsthm}
\usepackage{dcpic}
\usepackage{pictexwd}
\usepackage{comment}

%Packages: Sachin
%-----------------------
\usepackage{amscd}
\usepackage{marvosym}
\usepackage{bm, bbm}
\usepackage{braket}
\usepackage[latin2]{inputenc}
\usepackage{t1enc}
\usepackage{indentfirst}
\usepackage{float}
\usepackage{faktor}
\usepackage{mathtools}
\usepackage{caption}			
\usepackage{adjustbox}
\usepackage[outline]{contour}
\usepackage{aligned-overset}
\usepackage{booktabs}
\usepackage{tablefootnote}
\usepackage[many]{tcolorbox}
\usepackage[hidelinks]{hyperref}

%Header (other): Anup
%-----------------------------
\allowdisplaybreaks
\usetikzlibrary{matrix,arrows,decorations.markings,decorations.pathreplacing}
\numberwithin{equation}{section}

\theoremstyle{definition}
\newtheorem{theorem}{Theorem}[section]  %Note: Changed [equation]->[section]

%Note: Changed [equation]->[theorem] in the below block; for `definition', changed [section]->[theorem]
 
\newtheorem{proposition}[theorem]{Proposition}
\newtheorem{lemma}[theorem]{Lemma}
\newtheorem{corollary}[theorem]{Corollary}

\newtheorem{problem}[theorem]{Problem}
\newtheorem{example}[theorem]{Example}

\newtheorem{oproblem}{Problem}

\DeclareMathOperator{\Hom}{Hom}
\DeclareMathOperator{\End}{End}
\DeclareMathOperator{\diag}{diag}
\DeclareMathOperator{\co}{co}

\newcommand{\CC}{\mathbb{C}}

\newcommand{\ZZ}{\mathbb{Z}}

\newcommand{\RR}{\mathbb{R}}
\newcommand{\C}{\mathcal{C}}

%Header (other): Sachin
%------------------------------

%\usepackage[margin=2.9cm]{geometry}     %Note: Anup is using a smaller margin of 1 inch: this could be better.
%Note: Excluded my usual theoremstyles/newtheorem commands. Will use what Anup has defined and modify/append as necessary. 

\newtheorem{remark}[theorem]{Remark}
 %So we can add private comments pertaining to the STRUCTURE of the paper for ourselves (not for publication) as we write it.
 %So we can add private comments pertaining to the mathematical CONTENT of the paper for ourselves (not for publication).

\DeclareMathOperator{\ev}{ev}
\DeclareMathOperator{\coev}{coev}
\DeclareMathOperator{\id}{id}
\DeclareMathOperator{\Irr}{Irr}
\DeclareMathOperator{\tr}{tr}
\DeclareMathOperator{\spn}{span}

%--Drawing Stuff

\newcommand{\KP}[1]{
  \begin{tikzpicture}[baseline=-\dimexpr\fontdimen22\textfont2\relax]
  #1
  \end{tikzpicture}
}

\newcommand{\loopy}{%loop
  \KP{\filldraw[color=black, fill=none, thick] circle (0.3);}
}

\newcommand{\vpt}{%vertical positive twist
\KP{
\begin{scope}[scale=.4]
\begin{knot}[consider self intersections=no splits,draft mode=off,flip crossing=1, clip width=6]
\strand[thick,->] (0,2) .. controls (0,-1) and (2,-1) .. (2,0)..controls (2,1) and (0,1)..(0,-2) node[left] at (0,-1){$i$};
\end{knot}
\end{scope}
    }
}

\newcommand{\vnt}{%vertical negative twist
\KP{
\begin{scope}[scale=.4]
\begin{knot}[consider self intersections=no splits,draft mode=off, clip width=6]
\strand[thick,->] (0,2) .. controls (0,-1) and (2,-1) .. (2,0)..controls (2,1) and (0,1)..(0,-2) node[left] at (0,-1){$i$};
\end{knot}
\end{scope}
    }
}

\newcommand{\vptt}{%vertical positive twist rotated 180
\KP{
\begin{scope}[scale=.4]
\begin{knot}[consider self intersections=no splits,draft mode=off,flip crossing=1, rotate=180, clip width=6]
\strand[thick,<-] (0,2) .. controls (0,-1) and (2,-1) .. (2,0)..controls (2,1) and (0,1)..(0,-2) node[right] at (0,1){$i$};
\end{knot}
\end{scope}
    }
}

\newcommand{\vntt}{%vertical negative twist rotated 180
\KP{
\begin{scope}[scale=.4]
\begin{knot}[consider self intersections=no splits,draft mode=off, rotate=180, clip width=6]
\strand[thick,<-] (0,2) .. controls (0,-1) and (2,-1) .. (2,0)..controls (2,1) and (0,1)..(0,-2) node[right] at (0,1){$i$};
\end{knot}
\end{scope}
    }
}

\newcommand{\px}{%+ve-crossing
  \KP{
    \draw[color=black,thick] (-0.3,-0.3) -- (0.3,0.3);
    \draw[color=black,thick] (-0.3,0.3) -- (-0.1,0.1);
    \draw[color=black,thick] (0.1,-0.1) -- (0.3,-0.3);
  }
}

\newcommand{\nx}{%-ve-crossing
  \KP{
    \draw[color=black,thick] (-0.3,0.3) -- (0.3,-0.3);
    \draw[color=black,thick] (-0.3,-0.3) -- (-0.1,-0.1);
    \draw[color=black,thick] (0.1,0.1) -- (0.3,0.3);
  }
}

\newcommand{\opx}[2]{%+ve-crossing, oriented w\ labels
  \KP{
    \draw[color=black,thick,<-] (-0.3,-0.3) -- (0.3,0.3) node[left] at (-0.3,0.3){$#1$};
    \draw[color=black,thick] (-0.3,0.3) -- (-0.1,0.1);
    \draw[color=black,thick,->] (0.1,-0.1) -- (0.3,-0.3) node[right] at (0.3,0.3){$#2$};
  }
}

\newcommand{\ccm}{%cup-cap morphism
  \KP{
    \draw[color=black,thick] (-0.3,0.3) .. controls (0,-0.01) .. (0.3,0.3);
    \draw[color=black,thick] (-0.3,-0.3) .. controls (0,0.01) .. (0.3,-0.3);
  }
}

\newcommand{\idm}{%id morphism
  \KP{
    \draw[color=black,thick] (-0.3,-0.3) .. controls (0.01,0) .. (-0.3,0.3);
    \draw[color=black,thick] (0.3,-0.3) .. controls (-0.01,0) .. (0.3,0.3);
  }
}

\newcommand{\jack}[1]{%jack e.g. \jack{\lambda} returns jack with typesetted lambda to right of vertical
  \KP{
    \draw[color=black,thick] (-0.3,-0.3) -- (0.0,-0.2);
    \draw[color=black,thick] (0.3,-0.3) -- (0.0,-0.2);
    \draw[color=black,thick] (-0.3,0.3) -- (0.0,0.2);
    \draw[color=black,thick] (0.3,0.3) -- (0.0,0.2);
    \draw[color=black,thick] (0.0,-0.2) -- node[right]{$#1$} (0.0,0.2);
  }
}

\newcommand{\jj}[1]{%jumping bone
\mathcal{J}_{#1}
}

\newcommand{\jjd}[1]{%jumping bone w\ minus sign
\mathcal{J}'_{#1}
}

\newcommand{\ojack}[1]{%oriented jack e.g. \jack{\lambda} returns jack with typesetted lambda to right of vertical
  \KP{
    \draw[color=black,thick] (-0.3,-0.3) -- (0.0,-0.2);
    \draw[color=black,thick] (0.3,-0.3) -- (0.0,-0.2);
    \draw[color=black,thick] (-0.3,0.3) -- (0.0,0.2);
    \draw[color=black,thick] (0.3,0.3) -- (0.0,0.2);
    \draw[<-,color=black,thick] (0.0,-0.05) -- node[right,yshift=-1mm]{$#1$} (0.0,0.2);
    \draw[color=black,thick] (0.0,0.0)--(0.0,-0.2);
  }
}

\newcommand{\bone}[1]{%bone e.g. \bone{\lambda} returns bone with typesetted lambda above horizontal
  \KP{
    \draw[color=black,thick] (-0.3,-0.3) -- (-0.2,-0.0);
    \draw[color=black,thick] (0.3,-0.3) -- (0.2,0.0);
    \draw[color=black,thick] (-0.3,0.3) -- (-0.2,0.0);
    \draw[color=black,thick] (0.3,0.3) -- (0.2,0.0);
    \draw[color=black,thick] (-0.2,0.0) -- node[above]{$#1$} (0.2,0.0);
  }
}

%BIGGER versions (equation size):

\newcommand{\Loopy}{%loop
  \KP{\filldraw[color=black, fill=none, thick] circle (0.5);}
}

\newcommand{\Px}{%+ve-crossing
  \KP{
    \draw[color=black,thick] (-0.4,-0.4) -- (0.4,0.4);
    \draw[color=black,thick] (-0.4,0.4) -- (-0.1,0.1);
    \draw[color=black,thick] (0.1,-0.1) -- (0.4,-0.4);
  }
}

\newcommand{\OPx}[2]{%oriented +ve-crossing
  \KP{
    \draw[color=black,thick,<-] (-0.4,-0.4) -- (0.4,0.4) node[left] at (-0.4,0.4){$#1$};
    \draw[color=black,thick] (-0.4,0.4) -- (-0.1,0.1);
    \draw[color=black,thick,->] (0.1,-0.1) -- (0.4,-0.4) node[right] at (0.4,0.4){$#2$};
  }
}

\newcommand{\ONx}{%oriented -ve-crossing
  \KP{
    \draw[color=black,thick,->] (-0.4,0.4) -- (0.4,-0.4);
    \draw[color=black,thick, <-] (-0.4,-0.4) -- (-0.1,-0.1);
    \draw[color=black,thick] (0.1,0.1) -- (0.4,0.4);
  }
}

\newcommand{\Nx}{%-ve-crossing
  \KP{
    \draw[color=black,thick] (-0.4,0.4) -- (0.4,-0.4);
    \draw[color=black,thick] (-0.4,-0.4) -- (-0.1,-0.1);
    \draw[color=black,thick] (0.1,0.1) -- (0.4,0.4);
  }
}

\newcommand{\Ccm}{%cup-cap morphism
  \KP{
    \draw[color=black,thick] (-0.4,0.4) .. controls (0,-0.01) .. (0.4,0.4);
    \draw[color=black,thick] (-0.4,-0.4) .. controls (0,0.01) .. (0.4,-0.4);
  }
}

\newcommand{\Idm}{%id morphism
  \KP{
    \draw[color=black,thick] (-0.4,-0.4) .. controls (0.01,0) .. (-0.4,0.4);
    \draw[color=black,thick] (0.4,-0.4) .. controls (-0.01,0) .. (0.4,0.4);
  }
}

\newcommand{\OIdm}[2]{%oriented id morphism
  \KP{
    \draw[color=black,thick,<-] (-0.4,-0.4) .. controls (0.01,0) .. (-0.4,0.4) node[left]{$#1$};
    \draw[color=black,thick,<-] (0.4,-0.4) .. controls (-0.01,0) .. (0.4,0.4) node[right]{$#2$};
  }
}

\newcommand{\Jack}[1]{%jack e.g. \Jack{\lambda} returns jack with typesetted lambda to right of vertical
  \KP{
    \draw[color=black,thick] (-0.4,-0.5) -- (0.0,-0.3);
    \draw[color=black,thick] (0.4,-0.5) -- (0.0,-0.3);
    \draw[color=black,thick] (-0.4,0.5) -- (0.0,0.3);
    \draw[color=black,thick] (0.4,0.5) -- (0.0,0.3);
    \draw[color=black,thick] (0.0,-0.3) -- node[right]{$#1$} (0.0,0.3);
  }
}

\newcommand{\OLJack}[5]{%labelled jack e.g. \Jack{\lambda} returns jack with typesetted lambda to right of vertical
  \KP{
    \draw[color=black,thick] (-0.4,-0.5) -- (0.0,-0.3) node[left] at (-0.4,-0.5){$#1$};
    \draw[thick,->](0,-0.3)--(-0.2,-0.4);
    \draw[color=black,thick] (0.4,-0.5) -- (0.0,-0.3)node[right] at (0.4,-0.5){$#2$};
    \draw[thick,->](0,-0.3)--(0.2,-0.4);
    \draw[color=black,thick] (-0.4,0.5) -- (0.0,0.3)node[left] at (-0.4,0.5) {$#4$};
    \draw[thick,->](-0.4,0.5)--(-0.2,0.4);
    \draw[color=black,thick] (0.4,0.5) -- (0.0,0.3)node[right] at (0.4,0.5) {$#5$};
    \draw[thick,->](0.4,0.5)--(0.2,0.4);
    \draw[color=black,thick] (0.0,-0.3) -- node[right]{$#3$} (0.0,0.3);
    \draw[thick,->](0,0.3)--(0,-0.05);
  }
}

\newcommand{\OJack}[1]{%oriented jack e.g. \Jack{\lambda} returns jack with typesetted lambda to right of vertical
  \KP{
   \draw[color=black,thick] (-0.4,-0.5) -- (0.0,-0.3);
    \draw[color=black,thick] (0.4,-0.5) -- (0.0,-0.3);
    \draw[color=black,thick] (-0.4,0.5) -- (0.0,0.3);
    \draw[color=black,thick] (0.4,0.5) -- (0.0,0.3);
    \draw[<-,color=black,thick] (0.0,-0.05) -- (0.0,0.3)node[right] at (0.0,-0.05){$#1$};
    \draw[color=black,thick] (0.0,-0.3) -- (0.0,0.0);
  }
}

\newcommand{\Bone}[1]{%bone e.g. \Bone{\lambda} returns bone with typesetted lambda above horizontal
  \KP{
    \draw[color=black,thick] (-0.5,-0.4) -- (-0.3,-0.0);
    \draw[color=black,thick] (0.5,-0.4) -- (0.3,0.0);
    \draw[color=black,thick] (-0.5,0.4) -- (-0.3,0.0);
    \draw[color=black,thick] (0.5,0.4) -- (0.3,0.0);
    \draw[color=black,thick] (-0.3,0.0) -- node[above]{$#1$} (0.3,0.0);
  }
}

\newcommand{\OBone}[1]{%oriented bone e.g. \Bone{\lambda} returns bone with typesetted lambda above horizontal
  \KP{
    \draw[color=black,thick] (-0.5,-0.4) -- (-0.3,-0.0);
    \draw[color=black,thick] (0.5,-0.4) -- (0.3,0.0);
    \draw[color=black,thick] (-0.5,0.4) -- (-0.3,0.0);
    \draw[color=black,thick] (0.5,0.4) -- (0.3,0.0);
    \draw[->,color=black,thick] (-0.3,0.0) --(0.05,0.0) node[above]{$x$};
    \draw[color=black,thick] (0.05,0.0) -- (0.3,0.0);
  }
}

\newcommand{\CctwistA}{\hspace{-10mm}\raisebox{-7mm}{\begin{tikzpicture}  %twisted cup-cap A
\draw[color=black,thick] (-0.3,0.3) .. controls (0,-0.01) .. (0.3,0.3);
\scalebox{0.8}{
\begin{knot}[clip width=6, xshift=-0.35cm, yshift=-0.75cm,consider self intersections,draft mode=off]
\strand[thick] (0,0) .. controls +(1.75,1) and +(-1.75,1) .. (0.7,0);
\end{knot}
}
\end{tikzpicture}}\hspace{-10mm}}

\newcommand{\CctwistB}{\hspace{-10mm}\raisebox{-10mm}{\begin{tikzpicture} %twisted cup-cap B
\begin{scope}[xscale=-1,yscale=1]
\rotatebox{180}{\scalebox{0.8}{
\begin{knot}[clip width=6, xshift=-0.35cm, yshift=-0.75cm,consider self intersections,draft mode=off]
\strand[thick] (0,0) .. controls +(1.75,1) and +(-1.75,1) .. (0.7,0);
\end{knot}
}}
\end{scope}
\draw[color=black,thick] (-0.3,-0.3) .. controls (0,0.01) .. (0.3,-0.3);
\end{tikzpicture}}\hspace{-10mm}}

\newcommand{\rtwoA}{ %Reidemeister-II type drawing
  \raisebox{-4mm}{
  \begin{tikzpicture}
  \coordinate (O) at (0.1,0,0); \coordinate (A) at (0.1,1,0); 
  \coordinate (B) at (0,0.25,0); \coordinate (C) at (0,0.75,0); 
  \coordinate (D) at (-0.3,0,0); \coordinate (E) at (-0.15,0.1,0);
  \coordinate (F) at (-0.3,1,0); \coordinate (G) at (-0.15,0.9,0);
  
  \draw[thick](O) to [bend left=70] (A); %overstrand
  \draw[thick](B) to [bend right=20] (C); %understrand,centre
  \draw[thick](D) to [bend right=10] (E); %understrand,bottom-left
  \draw[thick](F) to [bend left=10] (G); %understrand,top-left
  \end{tikzpicture}
  }
}

\newcommand{\rtwoB}{ %rotation of rtwoA
  \raisebox{6mm}{\rotatebox{180}{ 
  \begin{tikzpicture}
  \coordinate (O) at (0.1,-0,0); \coordinate (A) at (0.1,1,0); 
  \coordinate (B) at (0,0.25,0); \coordinate (C) at (0,0.75,0); 
  \coordinate (D) at (-0.3,0,0); \coordinate (E) at (-0.15,0.1,0);
  \coordinate (F) at (-0.3,1,0); \coordinate (G) at (-0.15,0.9,0);
  
  \draw[thick](O) to [bend left=70] (A); 
  \draw[thick](B) to [bend right=20] (C); 
  \draw[thick](D) to [bend right=10] (E);
  \draw[thick](F) to [bend left=10] (G); 
  \end{tikzpicture}
  }}
}

\newcommand{\mtrivalent}{%trivalent vertex with \mu
\KP{
\begin{scope}[scale=.5, rotate=180]
\draw[thick](2,-1)--(3,0);\draw[thick,->](2,-1)--(5/2,-.5)node[right] at (2,-1){$X_j$};
\draw[thick](4,-1)--(3,0);\draw[thick,->](4,-1)--(7/2,-.5)node[left] at (4,-1) {$X_i$};
\draw[thick,->](3,0)--(3,.5)node[right] at (3,.5){$X_k$} node[left] at (3,0.1){$\mu$};
\draw[thick](3,.5)--(3,1);
\end{scope}
}
}

\newcommand{\trivalent}{%trivalent vertex without \mu
\KP{
\begin{scope}[scale=.5, rotate=180]
\draw[thick](2,-1)--(3,0);\draw[thick,->](2,-1)--(5/2,-.5)node[right] at (2,-1){$X_j$};
\draw[thick](4,-1)--(3,0);\draw[thick,->](4,-1)--(7/2,-.5)node[left] at (4,-1) {$X_i$};
\draw[thick,->](3,0)--(3,.5)node[right] at (3,.5){$X_k$} node[left] at (3,0) {$\mu$};
\draw[thick](3,.5)--(3,1);
\end{scope}
}
}

\newcommand{\adtrivalent}{%adjoint trivalent vertex
\KP{
\begin{scope}[scale=.5]
\draw[thick](2,-1)--(3,0);\draw[thick,<-](5/2,-.5)--(3,0) node[left] at (2,-1){$X_i$};
\draw[thick](4,-1)--(3,0);\draw[thick,<-](7/2,-.5)--(3,0) node[right] at (4,-1) {$X_j$};
\draw[thick,<-](3,.5)--(3,1)node[right] at (3,.5){$X_k$}node[left] at (3,0) {$\mu$};
\draw[thick](3,.5)--(3,0);
\end{scope}
}
}

\newcommand{\braideigen}{ %braid eigenvalue
\KP{
\begin{scope}[scale=.8,shift={(0,.75)}]
\draw[thick] (0.4,-0.4) to [out=315,in=45] (0,-1);
\draw[thick](-0.4,-0.4) to [out=225,in=135] (0,-1);
\draw[thick,->] (0,-1) -- (0,-1.25) node[left] at (0,-1.25){$k$};\draw[thick](0,-1.25)--(0,-1.5);
\draw[color=black,thick,<-] (-0.4,-0.4) -- (0.4,0.4) node[left] at (-0.4,0.4){$i$};
\draw[color=black,thick] (-0.4,0.4) -- (-0.1,0.1);
\draw[color=black,thick,->] (0.1,-0.1) -- (0.4,-0.4) node[right] at (0.4,0.4){$j$};
\end{scope}
}
}

\newcommand{\unbraideigen}{ %right side of braid eigenvalue eqn
\KP{
\begin{scope}[scale=.8,shift={(0,.75)}]
\draw[thick,->] (0,-.5) -- (0,-1.25) node[left] at (0,-1.25){$k$};\draw[thick](0,-1.25)--(0,-1.5);
\draw[color=black,thick] (0,-.5)--(-.4,.4) node[left] at (-.2,-.05) {$i$};
\draw[color=black,thick,->](-.4,.4)-- (-.2,-.05);
\draw[color=black,thick] (0,-.5)--(.4,.4) node[right] at (.2,-.05){$j$};
\draw[color=black,thick,->](.4,.4)-- (.2,-.05);
\end{scope}
}
}

\newcommand{\Ststrand}{ %single straight strand
\KP{
\begin{scope}[scale=.6]
\draw[thick,->] (0,2)-- (0,0);
\draw[thick](0,0)--(0,-2) node[right] at (0,-1){$X_k$};
\end{scope}
}
}

\newcommand{\ststrand}[1]{ %smaller single straight strand
\KP{
\begin{scope}[scale=.4]
\draw[thick,->] (0,2)-- (0,0);
\draw[thick](0,0)--(0,-2) node[right] at (0,-1){$#1$};
\end{scope}
}
}

\newcommand{\fststrand}{ %smaller flipped single straight strand
\KP{
\begin{scope}[scale=.4,rotate=180]
\draw[thick,->] (0,2)-- (0,0);
\draw[thick](0,0)--(0,-2) node[right] at (0,1){$X_k$};
\end{scope}
}
}

\tikzset{%Initialise `arrowat' for hopf command
    arrowat/.style={
        postaction={decorate,decoration={
                markings,
                mark=at position #1 with {\arrow[xshift=2pt]{<}}}}
    }
}

\newcommand{\hopf}[2]{%+ve oriented hopf link w\ labels ; e.g. \hopf{a}{b}
\begin{tikzpicture}[baseline={([yshift=-.5ex]current bounding box.center)}]
\begin{knot}[flip crossing=2, clip width=10]
\strand[thick, arrowat=1] (0.4,0) circle[radius=0.8cm] node[midway,right] {$\ \ \ #1$};
\strand[thick, arrowat=1] (-0.4,0) circle[radius=0.8cm] node[midway,right] {$\quad \quad \ \ \ #2$};
\end{knot}
\end{tikzpicture}
}

\newcommand{\trivalentxyz}{%trivalent vertex with x y z
\KP{
\begin{scope}[scale=.5, rotate=180]
\draw[thick](2,-1)--(3,0);\draw[thick,->](2,-1)--(5/2,-.5)node[right] at (2,-1){$y$};
\draw[thick](4,-1)--(3,0);\draw[thick,->](4,-1)--(7/2,-.5)node[left] at (4,-1) {$x$};
\draw[thick,->](3,0)--(3,.5)node[right] at (3,.5){$z$};
\draw[thick](3,.5)--(3,1);
\end{scope}
}
}

\newcommand{\trivalentdoozy}{%trivalent vertex with x^* z y
\KP{
\begin{scope}[scale=.5, rotate=180]
\draw[thick](2,-1)--(3,0);\draw[thick,->](2,-1)--(5/2,-.5)node[right] at (2,-1){$x^*$};
\draw[thick](4,-1)--(3,0);\draw[thick,->](4,-1)--(7/2,-.5)node[left] at (4,-1) {$z$};
\draw[thick,->](3,0)--(3,.5)node[right] at (3,.5){$y$};
\draw[thick](3,.5)--(3,1);
\end{scope}
}
}

\newcommand{\Ststrandz}{ %single straight strand with z
\KP{
\begin{scope}[scale=.4]
\draw[thick,->] (0,2)-- (0,0);
\draw[thick](0,0)--(0,-2) node[right] at (0,-1){$z$};
\end{scope}
}
}

%------------------------------------------Title etc.-----------------------------------------------------------
\title{Skein-Theoretic Methods for Unitary Fusion Categories}
\author[Anup Poudel and Sachin J. Valera]{Anup Poudel$^{\mathsection}$ and Sachin J. Valera$^{\dagger}$}
\thanks{$^{\mathsection}$\textit{Department of Mathematics, The University of Iowa, Iowa City, USA}}
\thanks{$^{\dagger}$\textit{Selmer Center, Department of Informatics, University of Bergen, Norway}}

\begin{document}

%----------Abstract-------------
\begin{abstract}
Unitary fusion categories (UFCs) have gained increased attention due to emerging connections with quantum physics. We consider a fusion rule of the form $q\otimes q \cong \bm{1}\oplus\bigoplus^k_{i=1}x_{i}$ in a UFC $\C$, and extract information using the graphical calculus. For instance, we classify all associated skein relations when $k=1,2$ and $\C$ is ribbon. In particular, we also consider the instances where $q$ is antisymmetrically self-dual. Our main results follow from considering the action of a rotation operator on a ``canonical basis''. Assuming self-duality of the summands $x_{i}$, some general observations are made e.g. the real-symmetricity of the $F$-matrix $F^{qqq}_q$. We then find explicit formulae for $F^{qqq}_q$ when $k=2$ and $\C$ is ribbon, and see that the spectrum of the rotation operator distinguishes between the Kauffman and Dubrovnik polynomials.
\end{abstract}

%Would be nice to add something about fusion Bell states (e.g. especially if this went to a more `physics-y' journal).
% ``and deduce that $q$ must be symmetrically self-dual here.'' ... was trimmed out.
% ``We also diagonalise the rotation operator when $k=2$ and $q$ is symmetrically self-dual.'' ... was trimmed out.
% ``Analyzing the action of a rotation operator in $\End(q^{\otimes 2})$, we describe a new bases for the corresponding morphism space.'' ...was trimmed out.

\maketitle

\vspace{-4mm}
%------------Introduction--------------
\section{Introduction}
\label{intro}
Fusion categories have played an important role in understanding structures arising from quantum physics, and lie at the heart of quantum algebra and quantum topology. Some fusion categories can be extended to ribbon fusion categories (RFCs): these gadgets are rich in structure, and carry a lot of information. Since ribbon categories are endowed with the topological properties of ribbon graphs, they naturally lend themselves to investigation from a skein-theoretic perspective. For instance, it is known that one can fashion link (in fact, $3$-manifold) invariants from RFCs: seminal work in this direction was carried out by Reshetikhin and Turaev \cite{RT}, followed by Kuperberg who used a skein-theoretic method to obtain new link invariants associated to quantum groups coming from Lie algebras of type $A_2, B_2, C_2$ and $G_2$ \cite{Kup1,Kup2}. In a similar vein, an important class of RFCs known as Temperley-Lieb-Jones categories can be understood using Kauffman and Lins' planar algebra of Jones-Wenzl idempotents at roots of unity \cite{kauflin, quik3}.\\

Understanding unitary fusion categories (i.e.\ fusion categories with a positive dagger structure) is crucial to developing an algebraic framework for describing topological phases of matter (TPMs). Indeed, unitary modular tensor categories (MTCs) have proved to be useful in the program for classifying (bosonic) TPMs and $(2+1)$-dimensional topological quantum field theories (TQFTs) \cite{RSW, rk5clas,dgreen}. The connection between link invariants and TQFTs was first observed by Witten when he gave an interpretation of the Jones polynomial in the context of Chern-Simons QFTs \cite{bigboiwitten}. \\
%The condensed matter systems hosting these TPMs may be repurposed as hardware for realising fault-tolerant quantum computation \cite{tqc1,tqc2,tqc3}.

Although the classification of fusion categories is beyond our current capabilities, weaker variants of this problem can be studied by structural embellishment (e.g. imposing pivotality, braiding, (pre)modularity); but even with these modifications, the problem remains out of reach. It has been shown that there are finitely many braided fusion categories of any given rank \cite{gxrkfin}, whence there are finitely many commutative fusion algebras (of a given rank) that admit categorification. The categorifications admitted by a (commutative) fusion algebra can be explicitly calculated by solving the pentagon (and hexagon) equations: doing so recovers all of the information contained in the categories. However, solving these equations quickly becomes intractable as the rank grows. This motivates the idea of determining additional general relations between unknowns, in an attempt to reduce the size of the parameter space. In this spirit, much of our exposition revolves around starting with a fusion rule of the form
\begin{equation}q\otimes q \cong \bm{1} \oplus \bigoplus_{i=1}^{k}x_{i}\label{thesource}\end{equation}
and applying skein-theoretic methods to deduce some properties of the underlying category $\mathcal{C}$ and the associated quantum invariants. Our work is inspired by \cite[Theorems 3.1 \& 3.2]{MSP1}: using a rotation operator on $\End(q^{\otimes2})$ for $\C$ ribbon and $q$ symmetrically self-dual\footnote{For a reminder of the definition of (anti)symmetrically self-dual objects, we direct the reader to Appendix \ref{pivcoffappx}.}, the authors discuss the link invariants coming from $q$ invertible and (\ref{thesource}) for $k=1,2$. In each case, they also give some relations between the eigenvalues of the $R$-matrix $R^{qq}$. \\

%In Section \ref{skeinrev}, starting with a diagrammatic basis for $\End(q^{\otimes2})$ and using the rotation operator (defined in Section \ref{rotopsec}), we systematically recover \cite[Theorems 3.1 \& 3.2]{MSP1}. Moreover, we also consider the antisymmetrically self-dual case, and extend a theorem of Lickorish to provide an explicit relationship between some of the resulting link invariants. \red{We also make a general observation about link invariants coming from antisymmetrically self-dual objects in Section \ref{addendum}.} 
\noindent We systematically recover and extend the results of \mbox{\cite[Theorems 3.1 \& 3.2]{MSP1}} in \mbox{Section \ref{skeinrev}.} Our main contributions are contained in Section \ref{mainresults}, where we uncover a relationship between the rotation operator and the $F$-matrix $F^{qqq}_{q}$ (under certain assumptions), thereby allowing us to deduce some  properties of said matrix. We note that understanding $F$-matrices is particularly important for many physical applications (e.g.in the study of TPMs, topological quantum computation, quantum tensor networks).

\subsection{Outline of the paper}\hspace{2mm} 
\vspace{2mm}
%\vspace{1.5mm}

In Section \ref{prelims}, we detail the relevant mathematical background. In  Section \ref{normpatsec}, we introduce a canonical orthonormal basis of ``jumping jacks'' on $\End(q^{\otimes2})$ (for \ref{thesource}) that features throughout our main exposition. In Section \ref{relcase}, we define some conventions that are followed in the main discourse. The \mbox{rotation} operator (one of the tools most central to this paper) is introduced in \mbox{Section \ref{rotopsec}}, and a supplementary discussion is provided in Appendix \ref{rotappx}.\\

In Section \ref{skeinrev}, we consider the action of the rotation operator on crossings in $\End(q^{\otimes2})$ so as to ascertain the link invariants associated to the fusion rule (\ref{thesource}) for $k=1,2$. The Jones, Kauffman and Dubrovnik polynomials are recovered, and we find three additional skein relations coming from the antisymmetrically self-dual cases. In Appendix \ref{reptheory}, the narrative of the Section \ref{skeinrev} is reframed in terms of braid group representations. In particular, observing that braid representations associated to fusion rules of the form $q^{\otimes2}\cong\bm{1}\oplus y$ factor through the Iwahori-Hecke and Temperley-Lieb algebras, we derive a skein relation for the framed HOMFLY-PT polynomial (from which we recover the quantum invariants associated to (\ref{thesource}) for $k=1$). In \mbox{Appendix \ref{addendum}}, we give a few insights into invariants coming from antisymmetrically self-dual objects. \\

In Section \ref{mainresults}, we apply the rotation operator to the canonical basis of jumping jacks on $\End(q^{\otimes2})$ for a unitary spherical fusion category $\C$. In doing so, we require that the summands in (\ref{thesource}) are self-dual (Remark \ref{dualityremk}). Theorem \ref{bojackthm} determines the components of a ``bone'' morphism (a rotated jumping jack) in the canonical basis: we use this to prove some ``bubble-popping'' identities (Corollary \ref{popping}) and to make some general observations, a highlight of which is the real-symmetricity of $F^{qqq}_{q}$ (Corollary \ref{realsymm}). As a simple application, we deduce the form of $F^{qqq}_{q}$ when $k=1$ in (\ref{thesource}). \\
We proceed to apply the results of Section \ref{skeinrev} in order to derive explicit formulae for $F^{qqq}_{q}$ when $\C$ is also ribbon and $k=2$ (Theorem \ref{cubicfsymbols}), and deduce that $q$ cannot be antisymmetrically self-dual in this instance (Corollary \ref{noasd}). It is also observed that the spectrum of the rotation operator distinguishes between the Kauffman and Dubrovnik invariants. In Section \ref{newbies}, we investigate some properties of bases for $\End(q^{\otimes2})$ whose elements are permuted (up to a sign) under the action of the rotation operator. 
We apply our results to construct such bases when $k=2$; the diagonalisation of the rotation operator follows as an immediate consequence.\\
\vspace{-1mm}

\noindent In Section \ref{outlook}, we review the contents of our work with an eye to future extensions. 

\subsection{Acknowledgements}Both authors would like to thank Corey Jones and Dave Penneys for organising the 2019 OSU Quantum Symmetries Summer Reseach Program (with grant support from David Penneys' NSF grant DMS 1654159) where they met. Sachin Valera also wishes to thank Daniel R. Copeland for helpful discussions, and for acquainting him with the rotation operator and the results of \cite{MSP1} (at aforementioned research program) without which this work would not have been possible.

%------------Preliminaries--------------
\section{Preliminaries}
\label{prelims}
We provide an overview of various concepts that are used throughout this work. For further details on some of these topics, we refer the reader to \cite{EGNO, Selinger, Tur, Kitaev}.
\subsection{Tensor categories} 
Recall that a \textit{tensor category} $\C$ is a $\Bbbk$-linear, rigid monoidal category with $\End(\mathbf{1})\cong \Bbbk$ (where $\mathbf{1}$ denotes the unit object). We henceforth let $\Bbbk=\CC$. By a \textit{simple} object $X\in\C$, we mean an object $X$ such that every nonzero $f\in \End(X)$ is an isomorphism. For any object $X\in \C$, its left and right dual objects are respectively denoted by $X^*$ and $^*X$. Every object $X\in\C$ comes with the $(\co)\ev_X$ and $(\co)\ev'_X$ morphisms, which are the left and right \textit{(co)evaluations} respectively.\footnote{The compatibility of these morphisms with the monoidal structure is ensured by the \textit{rigidity axioms}.}
\begin{align*}
    \ev_X:X^*\otimes X \to \mathbf{1} \hspace{1cm} \coev_X: \mathbf{1}\to X \otimes X^*\\
    \ev'_X:X \otimes {}^*X\to \mathbf{1} \hspace{1cm} \coev'_X: \mathbf{1}\to {}^*X \otimes X
\end{align*}
Dual objects are unique up to unique isomorphism \cite[Proposition 2.10.5]{EGNO}. 
% Then, the corresponding evaluation and coevaluation morphisms are defined as follows: $\ev_X:X^*\otimes X \to \mathbf{1}$ and $\coev_X: \mathbf{1}\to X \otimes X^*$.\\
%For a tensor category, $\C$, an object $X \in \C$, and morphisms $f\in \Hom_\C(X,X^{**})$ and $f' \in \Hom_\C(X, {}^{**}X)$,  we define the \textit{left} and \textit{right quantum traces}, $\widetilde{Tr}_l(f)$ and $\widetilde{Tr}_r(f')$ respectively as:
Throughout the rest of this paper, we identify left and right duals, and denote the dual of $X$ by $X^{*}$.

\subsection{Pivotality, sphericality and quantum trace}\label{pivsphetrace}
A \textit{pivotal tensor category} $\C$ is a tensor category with a collection of isomorphisms (called a \textit{pivotal structure}) \mbox{$a_X: X \xrightarrow{\sim} X^{**}$} natural in $X$ and satisfying $a_{X\otimes Y}=a_X \otimes a_Y$ for all objects $X,Y\in\C$. For any $X \in \C$ and any morphism $f\in \End(X)$, the \textit{left} and \textit{right quantum traces} of $f$ are defined as
\begin{subequations}
\begin{equation}
    \widetilde{Tr}_l(f) := \ev_{X}\circ (\id_{X^*} \otimes f) \circ \coev'_X \in \End(\mathbf{1})
\end{equation}
\begin{equation}
    \widetilde{Tr}_r(f) :=\ev'_{X}\circ (f \otimes \id_{X^*})\circ \coev_{X} \in \End(\mathbf{1}) %(obtained using ({}^*X)* \cong X)
\end{equation}
\end{subequations}
If the left and right quantum traces coincide, then $\C$ is called a \textit{spherical tensor category}. Hence, for a spherical tensor category, we can unambiguously define the \textit{quantum trace} for any object $X\in \C$ and any morphism $f\in \End(X)$. That is,
\begin{equation}
     \widetilde{Tr}(f):=\widetilde{Tr}_l(f)=\widetilde{Tr}_r(f)
\end{equation}

%Using \red{[NS07a]}, one can choose to identify the double duality isomorphism (from the pivotal structure) with an identity morphism. 

\noindent and the \textit{quantum dimension} $d_X$ of an object $X$ is is given by (\ref{yungpepsi}), noting that $d_X=d_{X^*}$.
\begin{align}\label{yungpepsi}
    d_X:=\widetilde{Tr}(\id_X)
\end{align}

\subsection{Fusion categories and trivalent vertices} \label{fuscattriver}
A \textit{fusion category} $\C$ is a semisimple tensor category with only finitely many simple objects up to isomorphism.
\begin{remark}[Skeleton of $\C$]
Let $\Irr(\C)$ denote a set of representatives of isomorphism classes of simple objects in $\C$. Let $X_i\in \Irr(\C)$, where $i\in I$ for some index set $I\subseteq \ZZ_{\geq 0}$ and $X_0:=\mathbf{1}$. We also let $i^*$ denote $j\in I$ such that $X_j=X_i^*$. The cardinality of $\Irr(\C)$ is called the \textit{rank} of $\C$. When we restrict to working with objects in $\Irr(\C)$, it is understood that we are working in the \textit{skeleton} of $\C$: this is the full subcategory of $\C$ on the subset of objects $\Irr(\C)$, and is equivalent \mbox{to $\C$.} A category is called \textit{skeletal} if it contains one object in each isomorphism class. See also \mbox{Remarks \ref{skelremk1} and \ref{skelremk2}.}
\end{remark}

\noindent The so-called \textit{fusion rules} for $\C$ are encoded by the \textit{fusion coefficients} $N_k^{ij}\in \ZZ_{\geq 0}$ where 
\begin{align}\label{fusionrule}
    X_i \otimes X_j = \bigoplus_{k\in I} N_k^{ij}X_k, \hspace{1cm} i,j\in I.
\end{align}
We also have (where $\delta_{ij}$ denotes the Kronecker delta)
\begin{align}
    N_j^{i0}=N_j^{0i}=\delta_{ij}
\end{align}

\noindent There is a graphical calculus associated with morphisms for any tensor category $\C$. We adopt the \textit{pessimistic} convention i.e. our diagrams are viewed as morphisms going from \textit{top-to-bottom}. 
Any edge is oriented and labelled by an object $X\in \C$; and for $\bm{1}\in\C$, the edge is either invisible or emphasised by a dotted line. Diagrams representing morphisms in the \textit{skeleton} of a fusion category $\C$ have edges labelled by objects $X_i\in \Irr(\C)$. A \textit{trivalent vertex} represents a projection from a twofold tensor product onto a summand 
(or conversely, an inclusion of a summand into such a product). E.g.\ we have the projections
%\vspace{2mm}
\begin{equation} \label{trivalent}
      \spn_{\CC}\Bigg\{\mtrivalent\Bigg\}^{N^{ij}_k}_{\mu=1} = \Hom(X_i\otimes X_j, X_k)
\end{equation}
%As is clear from the diagram above, trivalent vertices with different bottom color represent projections of $X_i \otimes X_j$ into distinct simple objects in their tensor decomposition.\\ 

\noindent where the left-hand side constitutes a \textit{basis} for $\Hom(X_i\otimes X_j, X_k)$. Similarly, flipping the trivalent vertices upside-down  in (\ref{trivalent}), we obtain a basis of inclusion morphisms for $\Hom(X_{k},X_{i}\otimes X_{j})$. $\End(X_i)\cong \CC$, whence diagrammatically, we have 
\vspace{1.5mm}
\begin{align}
    \def\svgwidth{3cm}%% Creator: Inkscape 1.0 (4035a4f, 2020-05-01), www.inkscape.org
%% PDF/EPS/PS + LaTeX output extension by Johan Engelen, 2010
%% Accompanies image file '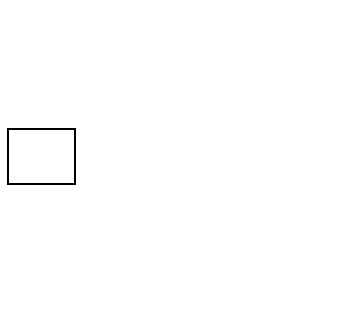' (pdf, eps, ps)
%%
%% To include the image in your LaTeX document, write
%%   \input{<filename>.pdf_tex}
%%  instead of
%%   \includegraphics{<filename>.pdf}
%% To scale the image, write
%%   \def\svgwidth{<desired width>}
%%   \input{<filename>.pdf_tex}
%%  instead of
%%   \includegraphics[width=<desired width>]{<filename>.pdf}
%%
%% Images with a different path to the parent latex file can
%% be accessed with the `import' package (which may need to be
%% installed) using
%%   \usepackage{import}
%% in the preamble, and then including the image with
%%   \import{<path to file>}{<filename>.pdf_tex}
%% Alternatively, one can specify
%%   \graphicspath{{<path to file>/}}
%% 
%% For more information, please see info/svg-inkscape on CTAN:
%%   http://tug.ctan.org/tex-archive/info/svg-inkscape
%%
\begingroup%
  \makeatletter%
  \providecommand\color[2][]{%
    \errmessage{(Inkscape) Color is used for the text in Inkscape, but the package 'color.sty' is not loaded}%
    \renewcommand\color[2][]{}%
  }%
  \providecommand\transparent[1]{%
    \errmessage{(Inkscape) Transparency is used (non-zero) for the text in Inkscape, but the package 'transparent.sty' is not loaded}%
    \renewcommand\transparent[1]{}%
  }%
  \providecommand\rotatebox[2]{#2}%
  \newcommand*\fsize{\dimexpr\f@size pt\relax}%
  \newcommand*\lineheight[1]{\fontsize{\fsize}{#1\fsize}\selectfont}%
  \ifx\svgwidth\undefined%
    \setlength{\unitlength}{101.77079088bp}%
    \ifx\svgscale\undefined%
      \relax%
    \else%
      \setlength{\unitlength}{\unitlength * \real{\svgscale}}%
    \fi%
  \else%
    \setlength{\unitlength}{\svgwidth}%
  \fi%
  \global\let\svgwidth\undefined%
  \global\let\svgscale\undefined%
  \makeatother%
  \begin{picture}(1,0.8974646)%
    \lineheight{1}%
    \setlength\tabcolsep{0pt}%
    \put(0.43422201,0.42552155){\color[rgb]{0,0,0}\makebox(0,0)[lt]{\lineheight{1.25}\smash{\begin{tabular}[t]{l}$= \lambda$\end{tabular}}}}%
    \put(0,0){\includegraphics[width=\unitlength,page=1]{Schurslemma.pdf}}%
    \put(0.06555778,0.43083299){\color[rgb]{0,0,0}\makebox(0,0)[lt]{\lineheight{1.25}\smash{\begin{tabular}[t]{l}$f$\end{tabular}}}}%
    \put(0,0){\includegraphics[width=\unitlength,page=2]{Schurslemma.pdf}}%
    \put(0.15335653,0.67175677){\color[rgb]{0,0,0}\makebox(0,0)[lt]{\lineheight{1.25}\smash{\begin{tabular}[t]{l}$X_i$\end{tabular}}}}%
    \put(0.1537951,0.20393795){\color[rgb]{0,0,0}\makebox(0,0)[lt]{\lineheight{1.25}\smash{\begin{tabular}[t]{l}$X_i$\end{tabular}}}}%
    \put(0,0){\includegraphics[width=\unitlength,page=3]{Schurslemma.pdf}}%
    \put(0.81802115,0.6710769){\color[rgb]{0,0,0}\makebox(0,0)[lt]{\lineheight{1.25}\smash{\begin{tabular}[t]{l}$X_i$\end{tabular}}}}%
  \end{picture}%
\endgroup%
 
\end{align}
where $f\in\End(X_{i})$ and $\lambda\in\CC$.

\vspace{2.5mm}
\subsection{Dagger structure, inner product and unitarity}\label{daginnuni}
Let $\C$ be a fusion category. Then $\C$ is called a \textit{dagger} fusion category if it is equipped with an involutive, contravariant functor $\dagger: \C \to \C$ such that it acts as the identity on objects, and satisfies (\ref{germanalgebra1})-(\ref{germanalgebra4}) where for any morphisms $f: X \to Y$, we have $\dagger(f)=f^{\dagger}:Y \to X$ where $f^\dagger$ is called the \textit{adjoint} of $f$. For morphisms $f,g\in\C$ and scalars $\lambda_1, \lambda_2\in \CC$, the $\dagger$-functor satisfies
\begin{subequations}
\begin{align}
    (\id_X)^\dagger &= \id_X \label{germanalgebra1}\\
    (g\circ f)^\dagger &= f^\dagger \circ g^\dagger\label{compositionabc}\\
    (f \otimes g)^\dagger &= f^\dagger \otimes g^\dagger\\
    (\lambda_1\cdot f + \lambda_2\cdot g)^\dagger &= \lambda_1^*\cdot f^\dagger + \lambda_2^*\cdot g^\dagger \label{germanalgebra4}
\end{align}
\end{subequations}
where for (\ref{compositionabc}) we have $f: X \to Y$ and $g: Y \to Z$ for some objects $X,Y,Z\in \C$. Note that $\lambda^*$ denotes the complex conjugate of $\lambda \in \CC$. Considering the skeleton of $\C$, we have
\vspace{-1mm}
\begin{equation}\begin{split}
    \Hom(X_i\otimes X_j, X_k) &\overset{\dagger}{\xrightarrow{\sim}} \Hom(X_k, X_i \otimes X_j) \\
     \trivalent &\overset{\dagger}{\longmapsto} \adtrivalent
\label{dualspace}
\end{split}\end{equation}
\vspace{-1mm}
\noindent We can define a sesquilinear form 
\begin{align}\label{sesquilinear}
    \langle g,f \rangle=\tr(fg^\dagger)
\end{align}
where $f,g\in \Hom(Y,X)$ and $fg^\dagger\in \End(X)$ for any $X,Y\in \C$. Further note that
\begin{align}
\langle f,g \rangle=\langle g,f \rangle^*
\end{align}
whence, (\ref{sesquilinear}) actually defines a Hermitian form.\footnote{Let $\eta:\Hom(Y,X)\overset{\sim}{\to}\Hom(YX^{*},\bm{1})$. We may equivalently write $\langle g,f \rangle=\eta(f)(\eta(g))^{\dagger}\in\End(\bm{1})\cong\CC$.}\\

\noindent Consider two elements $\bm{e}_{\mu}$ and $\bm{e}_{\nu}$ of the basis in (\ref{trivalent}). Then
\[ \bm{e}_{\nu}\bm{e}^{\dagger}_{\mu}= \raisebox{-11mm}{\includegraphics[width=0.11\textwidth]{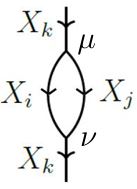}} = \lambda \ \Ststrand \ \ , \ \ \lambda\in\CC \]
Note that
\begin{equation}
\lambda=\tr(\bm{e}_{\nu}\bm{e}^{\dagger}_{\mu})=\tr(\bm{e}^{\dagger}_{\mu}\bm{e}_{\nu})=\tr\left(\raisebox{-8mm}{\includegraphics[width=0.11\textwidth]{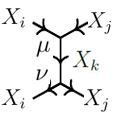}}\right)
\end{equation}
whence $\lambda$ vanishes for $\mu\neq\nu$. It follows that (\ref{trivalent}) defines an orthogonal basis with respect to the Hermitian form. We may thus write
\begin{align}\label{innerproddef}
    \raisebox{-11mm}{\includegraphics[width=0.11\textwidth]{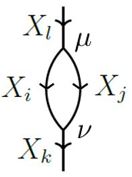}} = \lambda_{lijk} \cdot \delta_{lk}\delta_{\mu\nu} \ \Ststrand \ \ , \ \ \lambda_{lijk} \in \CC
\end{align}
where the factor of $\delta_{lk}$ follows from Schur's lemma.

\begin{proposition}$\lambda_{kijk}$ is real.
\begin{proof}
Taking the adjoint of (\ref{innerproddef}), the result is immediate.
\end{proof}
\end{proposition}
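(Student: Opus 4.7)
The plan is to exploit the fact that, for any morphism $f$, the composite $ff^{\dagger}$ is automatically self-adjoint, and then to compare this with the right-hand side of (\ref{innerproddef}). Concretely, I would first specialise (\ref{innerproddef}) to the case $l=k$ and $\mu=\nu$, obtaining
\[
\bm{e}_{\mu}\bm{e}_{\mu}^{\dagger} \;=\; \lambda_{kijk}\cdot \id_{X_{k}}.
\]
I would then apply the dagger functor to both sides. On the left, (\ref{compositionabc}) together with the involutivity of $\dagger$ yields $(\bm{e}_{\mu}\bm{e}_{\mu}^{\dagger})^{\dagger} = \bm{e}_{\mu}\bm{e}_{\mu}^{\dagger}$, so the left-hand side is unchanged. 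On the right, (\ref{germanalgebra1}) and (\ref{germanalgebra4}) produce $\lambda_{kijk}^{*}\cdot \id_{X_{k}}$. Since $X_{k}$ is simple, $\End(X_{k})\cong\CC\cdot\id_{X_{k}}$, and so equating coefficients forces $\lambda_{kijk}^{*}=\lambda_{kijk}$, i.e.\ $\lambda_{kijk}\in\RR$.

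There is essentially no obstacle here: the result is a direct consequence of the dagger axioms together with Schur's lemma. The only mild subtlety is to verify that the diagrammatic identity (\ref{innerproddef}) is genuinely an equality of endomorphisms of $X_{k}$, so that the dagger can be applied term-by-term and scalars read off — but this is immediate from the graphical calculus, since both the bubble and $\id_{X_{k}}$ are morphisms $X_{k}\to X_{k}$. I would also remark in passing that under a positive dagger structure one would in fact get $\lambda_{kijk}\geq 0$ (as $\lambda_{kijk}$ is, up to the positive scalar $d_{X_{k}}^{-1}$, the norm-squared $\langle \bm{e}_{\mu},\bm{e}_{\mu}\rangle$), but the proposition only claims reality, which is what the above argument delivers.
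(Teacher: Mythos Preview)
Your proof is correct and is precisely the approach taken in the paper: apply the dagger to (\ref{innerproddef}), observe that the left-hand side is self-adjoint while the scalar on the right gets complex-conjugated, and read off $\lambda_{kijk}^{*}=\lambda_{kijk}$. You have simply spelled out in detail what the paper compresses into the phrase ``taking the adjoint of (\ref{innerproddef}), the result is immediate.''
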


\begin{remark}[\textbf{Basis for Hom-space}]\label{poppingmonkey}
Consider the space $\Hom(X,Y)$ in the skeleton of $\C$ (where at least one of $X$ or $Y$ is not simple). This space is isomorphic to a direct sum of $\Hom$-spaces of the form $\Hom(\bigotimes^m_{k=1}X_{i_k},\bigotimes^n_{l=1}X_{j_l})$ where $X_{i_k},X_{j_l}\in \Irr(\C)$. We thus consider spaces of the form
\begin{align}\label{decomposition1}
    \Hom\left(\bigotimes^m_{k=1}X_{i_k},\bigotimes^n_{l=1}X_{j_l}\right) \cong \bigoplus_{b\in \Irr(\C)} \Hom\left(\bigotimes^m_{k=1}X_{i_k},b\right)\otimes \Hom\left(b,\bigotimes^n_{l=1}X_{j_l}\right)
\end{align}
Writing $V^X_Y:=\Hom(X,Y)$, further note that
\begin{subequations}
\begin{align}
    V^{X_{i_1}\cdots X_{i_m}}_b &\cong\bigoplus_{e_1,\ldots,e_{m-2}\in \Irr(\C)}V^{X_{i_1}X_{i_2}}_{e_1}\otimes V^{e_1X_{i_3}}_{e_2}\otimes \cdots \otimes V^{e_{m-3}X_{i_{m-1}}}_{e_{m-2}}\otimes V^{e_{m-2}X_{i_m}}_b \label{bottomgun}\\
    V^b_{X_{j_1}\cdots X_{j_n}} &\cong\bigoplus_{f_1,\ldots,f_{n-2}\in \Irr(\C)}V^b_{f_{n-2}X_{j_n}}\otimes V^{f_{n-2}}_{f_{n-3}X_{j_{n-1}}}\otimes \cdots \otimes V^{f_2}_{f_1 X_{j_3}} \otimes V_{X_{j_1}X_{j_2}}^{f_1}\label{topgun}
\end{align}
\end{subequations}

\noindent The decompositions in (\ref{bottomgun}) and (\ref{topgun}) correspond to a choice of \textit{fusion basis} on the respective Hom-spaces.

\begin{figure}[H]\centering \includegraphics[width=0.575\textwidth]{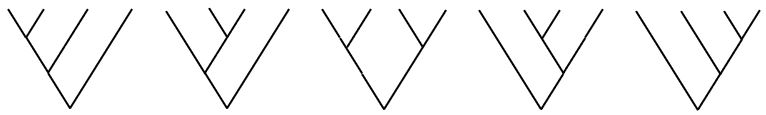} \caption{By a ``fusion basis'', we mean a parenthesisation of $\bigotimes_{k}X_{i_{k}}$. Diagrammatically, a fusion basis corresponds to a full rooted binary tree on a space of the form in (\ref{bottomgun}) or (\ref{topgun}). The above trees illustrate the fusion bases for a space of the form $V^{abcd}_{e}$. The number of distinct fusion bases for an $n$-fold product is given by the $(n-1)^{th}$ Catalan number.} \label{hoipoloi}\end{figure}

\noindent Using the basis from (\ref{trivalent}), and fixing fusion bases as in (\ref{bottomgun}) and (\ref{topgun}), we obtain the following basis\footnote{Note that inpermissible values of the indices do not contribute to the basis. Labels $\mu_{i}$ and $\nu_{j}$ respectively denote the multiplicities of trivalent vertices associated to $e_{i}$ and $f_{j}$ (they are not annotated on the basis in (\ref{orangepill}) so as not to clutter the diagram).}:
\begin{align}\label{orangepill}
    \Hom\left(\bigotimes^m_{k=1}X_{i_k},\bigotimes^n_{l=1}X_{j_l}\right) = \spn_{\CC}\left\{
   \raisebox{-35mm}{\includegraphics[width=0.35\textwidth]{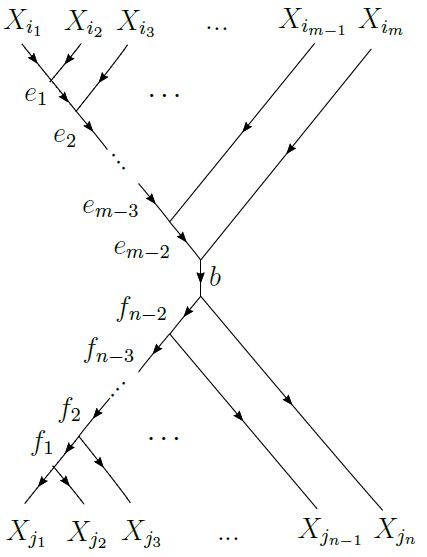}}
    \right\}_{\substack{b,e_1,\cdots,e_{m-2},f_1,\cdots,f_{n-2} \\ \phantom{b,}\mu_1,\cdots,\mu_{m-2},\nu_1,\cdots,\nu_{n-2}}} 
\end{align}
\end{remark}
\vspace{.75cm}

\vspace{-4mm}
\noindent Let $\{\textbf{e}_i\}_i$ denote elements of the basis in (\ref{orangepill}). Then we can write
\begin{align}
    g=\sum_i g_i \textbf{e}_i \quad \text{and}\quad f=\sum_i f_i \textbf{e}_i
\end{align}
where $f_i,g_i\in \CC$. Let $X':=\bigotimes^m_{i=1}X_i$. We write $\textbf{e}_i\textbf{e}_j^\dagger=:K_{ij}\tilde{\textbf{e}}_{ij}$ where the value of $K_{ij}$ is determined by the following three cases: 
\begin{enumerate}
    \item $\textbf{e}_i\textbf{e}_j^\dagger$ vanishes, in which case $K_{ij}:=0$.
    \item \label{tpop2} $\textbf{e}_i\textbf{e}_j^\dagger\in \End(X')$ and contains no loops, in which case $K_{ij}:=1$.
    \item \label{tpop3}$\textbf{e}_i\textbf{e}_j^\dagger\in \End(X')$ and contains loops, in which case $K_{ij}$ is a product of some scalars \mbox{${\lambda_{abca}\in \RR}$} coming from loops of the form (\ref{innerproddef}).
\end{enumerate}
where in cases (\ref{tpop2}) and (\ref{tpop3}), $\tilde{\textbf{e}}_{ij}$ is a basis element of the form (\ref{orangepill}) in $\End(X')$. Then 
\vspace{-0.5mm}
\begin{align*}
    \langle g,f \rangle = \tr\left(\sum_{i,j}f_ig_j^*\textbf{e}_i\textbf{e}_j^\dagger\right)=\tr\left(\sum_{i,j}f_ig_j^*K_{ij}\tilde{\textbf{e}}_{ij}\right)=\sum_{i} f_ig_i^*K_{ii}
\end{align*}
\begin{remark}[\textbf{Positive dagger structure}]
Note that
\begin{align}
    \langle f,f \rangle = \sum_i|f_i|^2K_{ii}
\end{align}
Hence, given $K_{ii}>0$, our Hermitian form defines a Hermitian inner product. This is ensured by setting  $\lambda_{kijk}>0$ in (\ref{innerproddef}). Under this constraint, our category is said to have a \textit{positive dagger structure}. 
Furthermore, this means that $\C$ is a \textit{unitary} fusion category (see also Remarks \ref{skelremk1} and \ref{skelremk2}). Also note that basis in (\ref{orangepill}) is orthogonal with respect to this inner product. If $\C$ is also spherical, viewing the quantum dimension of an object as an inner product immediately shows that it must be positive. Throughout this paper, we assume that any category we work with possesses a positive dagger structure.
\end{remark}

%Due to the functor, $\dagger$, we have a Hermitian inner product space of morphisms. Hence, we get $d_X\geq 0$.

\subsection{Frobenius-Schur indicator}\label{frobschsec}
%This will serve as a brief remark on the Frobenius-Schur indicator for a fusion category. The $FS$ indicator for a fusion category can be thought of as a sign, $\varkappa \in \{+1,-1\}$, that a special $F$-symbol, namely, $(F^{aa^*a}_a)_{00}$ comes with. Notice that using the Pentagon identity, one could easily see that this $F$-symbol is gauge invariant. Further, if the label $a$ is non-self dual, the $FS$ indicator for $(F^{aa^*a}_a)_{00}$, $\varkappa_a = 1$ and if the label $a$ is self-dual, $\varkappa_a=\pm1$. 
\noindent Let $\C$ be a unitary pivotal fusion category. Following \cite[Proposition 3.9]{Pen}, we identify zig-zag \mbox{morphisms} with the pivotal structure:
\vspace{-2mm}
\begin{align}
    \def\svgwidth{5.5cm}%% Creator: Inkscape 1.0 (4035a4f, 2020-05-01), www.inkscape.org
%% PDF/EPS/PS + LaTeX output extension by Johan Engelen, 2010
%% Accompanies image file '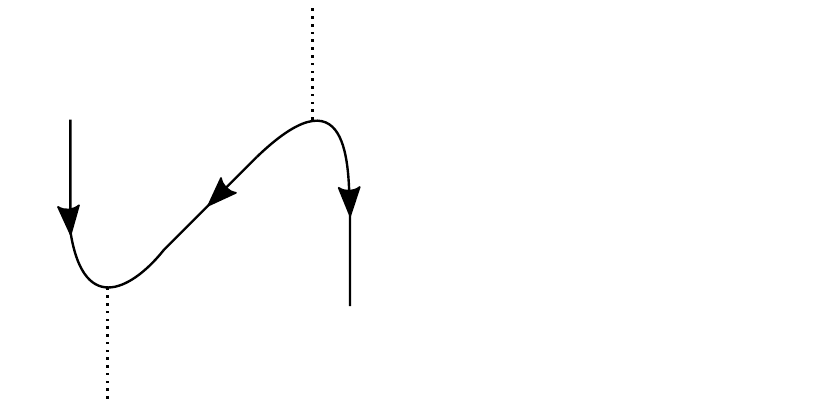' (pdf, eps, ps)
%%
%% To include the image in your LaTeX document, write
%%   \input{<filename>.pdf_tex}
%%  instead of
%%   \includegraphics{<filename>.pdf}
%% To scale the image, write
%%   \def\svgwidth{<desired width>}
%%   \input{<filename>.pdf_tex}
%%  instead of
%%   \includegraphics[width=<desired width>]{<filename>.pdf}
%%
%% Images with a different path to the parent latex file can
%% be accessed with the `import' package (which may need to be
%% installed) using
%%   \usepackage{import}
%% in the preamble, and then including the image with
%%   \import{<path to file>}{<filename>.pdf_tex}
%% Alternatively, one can specify
%%   \graphicspath{{<path to file>/}}
%% 
%% For more information, please see info/svg-inkscape on CTAN:
%%   http://tug.ctan.org/tex-archive/info/svg-inkscape
%%
\begingroup%
  \makeatletter%
  \providecommand\color[2][]{%
    \errmessage{(Inkscape) Color is used for the text in Inkscape, but the package 'color.sty' is not loaded}%
    \renewcommand\color[2][]{}%
  }%
  \providecommand\transparent[1]{%
    \errmessage{(Inkscape) Transparency is used (non-zero) for the text in Inkscape, but the package 'transparent.sty' is not loaded}%
    \renewcommand\transparent[1]{}%
  }%
  \providecommand\rotatebox[2]{#2}%
  \newcommand*\fsize{\dimexpr\f@size pt\relax}%
  \newcommand*\lineheight[1]{\fontsize{\fsize}{#1\fsize}\selectfont}%
  \ifx\svgwidth\undefined%
    \setlength{\unitlength}{238.2957062bp}%
    \ifx\svgscale\undefined%
      \relax%
    \else%
      \setlength{\unitlength}{\unitlength * \real{\svgscale}}%
    \fi%
  \else%
    \setlength{\unitlength}{\svgwidth}%
  \fi%
  \global\let\svgwidth\undefined%
  \global\let\svgscale\undefined%
  \makeatother%
  \begin{picture}(1,0.49049296)%
    \lineheight{1}%
    \setlength\tabcolsep{0pt}%
    \put(0,0){\includegraphics[width=\unitlength,page=1]{pivotal.pdf}}%
    \put(0.00577545,0.19067157){\color[rgb]{0,0,0}\makebox(0,0)[lt]{\lineheight{1.25}\smash{\begin{tabular}[t]{l}$X$\end{tabular}}}}%
    \put(0.2606178,0.19022578){\color[rgb]{0,0,0}\makebox(0,0)[lt]{\lineheight{1.25}\smash{\begin{tabular}[t]{l}$X^*$\end{tabular}}}}%
    \put(0.44540784,0.18994988){\color[rgb]{0,0,0}\makebox(0,0)[lt]{\lineheight{1.25}\smash{\begin{tabular}[t]{l}$X^{**}$\end{tabular}}}}%
    \put(0.61842799,0.23457687){\color[rgb]{0,0,0}\makebox(0,0)[lt]{\lineheight{1.25}\smash{\begin{tabular}[t]{l}$=$\end{tabular}}}}%
    \put(0,0){\includegraphics[width=\unitlength,page=2]{pivotal.pdf}}%
    \put(0.76678518,0.22743551){\color[rgb]{0,0,0}\makebox(0,0)[lt]{\lineheight{1.25}\smash{\begin{tabular}[t]{l}$a_X$\end{tabular}}}}%
    \put(0,0){\includegraphics[width=\unitlength,page=3]{pivotal.pdf}}%
    \put(0.82568037,0.34838564){\color[rgb]{0,0,0}\makebox(0,0)[lt]{\lineheight{1.25}\smash{\begin{tabular}[t]{l}$X$\end{tabular}}}}%
    \put(0.82589564,0.09982756){\color[rgb]{0,0,0}\makebox(0,0)[lt]{\lineheight{1.25}\smash{\begin{tabular}[t]{l}$X^{**}$\end{tabular}}}}%
    \put(0.05609498,0.11091424){\color[rgb]{0,0,0}\makebox(0,0)[lt]{\lineheight{1.25}\smash{\begin{tabular}[t]{l}$\mu$\end{tabular}}}}%
    \put(0.30170569,0.35948829){\color[rgb]{0,0,0}\makebox(0,0)[lt]{\lineheight{1.25}\smash{\begin{tabular}[t]{l}$\mu'$\end{tabular}}}}%
  \end{picture}%
\endgroup%
\label{FSdef}
\end{align}
\noindent Thus, passing to the skeleton yields%\vspace{-3mm}
\begin{align}
    \def\svgwidth{4.75cm}%% Creator: Inkscape 1.0 (4035a4f, 2020-05-01), www.inkscape.org
%% PDF/EPS/PS + LaTeX output extension by Johan Engelen, 2010
%% Accompanies image file '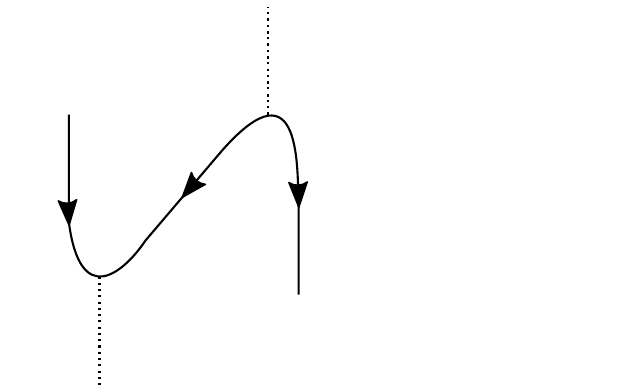' (pdf, eps, ps)
%%
%% To include the image in your LaTeX document, write
%%   \input{<filename>.pdf_tex}
%%  instead of
%%   \includegraphics{<filename>.pdf}
%% To scale the image, write
%%   \def\svgwidth{<desired width>}
%%   \input{<filename>.pdf_tex}
%%  instead of
%%   \includegraphics[width=<desired width>]{<filename>.pdf}
%%
%% Images with a different path to the parent latex file can
%% be accessed with the `import' package (which may need to be
%% installed) using
%%   \usepackage{import}
%% in the preamble, and then including the image with
%%   \import{<path to file>}{<filename>.pdf_tex}
%% Alternatively, one can specify
%%   \graphicspath{{<path to file>/}}
%% 
%% For more information, please see info/svg-inkscape on CTAN:
%%   http://tug.ctan.org/tex-archive/info/svg-inkscape
%%
\begingroup%
  \makeatletter%
  \providecommand\color[2][]{%
    \errmessage{(Inkscape) Color is used for the text in Inkscape, but the package 'color.sty' is not loaded}%
    \renewcommand\color[2][]{}%
  }%
  \providecommand\transparent[1]{%
    \errmessage{(Inkscape) Transparency is used (non-zero) for the text in Inkscape, but the package 'transparent.sty' is not loaded}%
    \renewcommand\transparent[1]{}%
  }%
  \providecommand\rotatebox[2]{#2}%
  \newcommand*\fsize{\dimexpr\f@size pt\relax}%
  \newcommand*\lineheight[1]{\fontsize{\fsize}{#1\fsize}\selectfont}%
  \ifx\svgwidth\undefined%
    \setlength{\unitlength}{181.97038065bp}%
    \ifx\svgscale\undefined%
      \relax%
    \else%
      \setlength{\unitlength}{\unitlength * \real{\svgscale}}%
    \fi%
  \else%
    \setlength{\unitlength}{\svgwidth}%
  \fi%
  \global\let\svgwidth\undefined%
  \global\let\svgscale\undefined%
  \makeatother%
  \begin{picture}(1,0.61474622)%
    \lineheight{1}%
    \setlength\tabcolsep{0pt}%
    \put(0,0){\includegraphics[width=\unitlength,page=1]{FS.pdf}}%
    \put(0.01012206,0.23188513){\color[rgb]{0,0,0}\makebox(0,0)[lt]{\lineheight{1.25}\smash{\begin{tabular}[t]{l}$X_i$\end{tabular}}}}%
    \put(0.49679116,0.23638866){\color[rgb]{0,0,0}\makebox(0,0)[lt]{\lineheight{1.25}\smash{\begin{tabular}[t]{l}$X_i$\end{tabular}}}}%
    \put(0.64166972,0.28868318){\color[rgb]{0,0,0}\makebox(0,0)[lt]{\lineheight{1.25}\smash{\begin{tabular}[t]{l}$=$\end{tabular}}}}%
    \put(0.90133809,0.23263538){\color[rgb]{0,0,0}\makebox(0,0)[lt]{\lineheight{1.25}\smash{\begin{tabular}[t]{l}$X_i$\end{tabular}}}}%
    \put(0,0){\includegraphics[width=\unitlength,page=2]{FS.pdf}}%
    \put(0.78289969,0.29301976){\color[rgb]{0,0,0}\makebox(0,0)[lt]{\lineheight{1.25}\smash{\begin{tabular}[t]{l}$t_i$\end{tabular}}}}%
    \put(0.28023185,0.23638866){\color[rgb]{0,0,0}\makebox(0,0)[lt]{\lineheight{1.25}\smash{\begin{tabular}[t]{l}$X^*_i$\end{tabular}}}}%
    \put(0.07691071,0.14167563){\color[rgb]{0,0,0}\makebox(0,0)[lt]{\lineheight{1.25}\smash{\begin{tabular}[t]{l}$\mu$\end{tabular}}}}%
    \put(0.33810594,0.45847795){\color[rgb]{0,0,0}\makebox(0,0)[lt]{\lineheight{1.25}\smash{\begin{tabular}[t]{l}$\mu'$\end{tabular}}}}%
  \end{picture}%
\endgroup%
\label{FSdef2}
\end{align}
where $t_i \in \CC^\times$ is called a \textit{pivotal coefficient}. It can be shown \cite[Lemma E.3]{Kitaev} that (\ref{FSdef2}) implies (\ref{nobuo}), whence the indices on the trivalent vertices in (\ref{FSdef}) and (\ref{FSdef2}) \mbox{can be dropped.}
\begin{align}
    N^{ij}_0=N^{ji}_0=\delta_{ij^*} \label{nobuo}
\end{align}
It can also be shown (Proposition \ref{pivpropappx}) that\vspace{1.5mm}
\begin{equation} |t_{i}|=1 \quad , \quad t_{i^{*}}^{}\hspace{-1mm}=t_{i}^{*} \label{pivcoprops1}\end{equation}

\begin{itemize}
\vspace{2.5mm}
\item If $X_{i}$ is non self-dual, we will assume that $t_{i}=1$. This choice is always possible through a unitary ("gauge") transformation of the trivalent vertices in (\ref{FSdef2}).
\item If $X_{i}$ is self-dual, then $t_{i}$ is called the \textit{Frobenius-Schur} indicator and is written $\varkappa_{i}$; this quantity is invariant under any unitary transformations of trivalent vertices, and is therefore a fixed property of $X_{i}$. Furthermore, (\ref{pivcoprops1}) tells us that $\varkappa_{i}=\pm1$. The object $X_{i}$ is said to be \textit{(anti)symmetrically} self-dual when  $\varkappa_{i}$ is ($-1$ or) $+1$.
\end{itemize}

\vspace{3mm}
\noindent Further details are given in Appendix \ref{pivcoffappx}. Following (\ref{FSdef2}), we can make the identification
\begin{equation}
    \ststrand{X^*_k} = \hspace{.55cm}\fststrand
\end{equation}
which allows us to slide arrows around cups and caps. 

\vspace{8mm}
\subsection{Normalisation and partial trace}\label{normpatsec}
\begin{remark}
A unitary fusion category admits a unique \mbox{spherical (and corresponding pivotal) structure \cite[Prop.\ 8.23]{ENO}.}
\end{remark}
 Let $\C$ be a unitary spherical fusion category. We will henceforth use labels $i\in I$ to denote objects $X_i\in \Irr(\C)$.
\begin{remark}\textbf{(Multiplicity-free)}
Since the results of this paper pertain to fusion rules without multiplicity, we shall henceforth assume our fusion categories to be \textit{multiplicity-free} i.e. $N_k^{ij}\in \{0,1\}$ for all $i,j,k$ (unless stated otherwise). This obviates the need to index trivalent vertices (e.g. $\mu$ can be omitted in (\ref{trivalent}) and (\ref{dualspace}) in this instance). 
\end{remark}
\noindent We adopt a normalisation convention where trivalent vertices as in (\ref{trivalent}) are normalised through a scaling of factor $\sqrt[4]{\frac{d_{k}}{d_{i}d_{j}}}$. Further details are provided in Appendix \ref{normaddappx}. Under this normalisation, observe that
\begin{equation}\lambda_{kijk}=\sqrt{\dfrac{d_{i} d_{j}}{d_{k}}}\label{mangta}\end{equation}
in (\ref{innerproddef}). Following Remark \ref{poppingmonkey}, we have a \textit{canonical (orthonormal) basis} 
\begin{equation}\label{basisdecomp}
   \Hom(i\otimes j,l\otimes m) =\spn_{\CC}\Bigg\{\left(\sqrt[4]{\dfrac{d^2_{k}}{d_{i}d_{j}d_ld_m}}\right)
   \OLJack{l}{m}{k}{i}{j}\Bigg\}_{k\in I: N^{ij}_k N^{lm}_k\neq 0}
\end{equation}
where we call the graphical components of the basis diagrams \textit{jumping jacks} or \textit{jack morphisms}. Using the canonical basis for $\End(i\otimes j)$, we have the decomposition
\begin{align}\label{orientedid}
    \id_{i\otimes j}=\OIdm{i}{j} = \sum_{k\in I: N^{ij}_k\neq0}\sqrt{\frac{d_k}{d_i d_j}} \OLJack{i}{j}{k}{i}{j}  
\end{align}

\noindent For any morphism $f\in \Hom(i_1\otimes i_2 \cdots \otimes i_n, j_1\otimes j_2 \cdots \otimes j_n)$, one can define a \textit{right partial trace} if $i_n=j_n$, and a \textit{left partial trace} if $i_1=j_1$. 
%\vspace{2mm}
\begin{figure}[H]
    \centering
\def\svgwidth{5cm}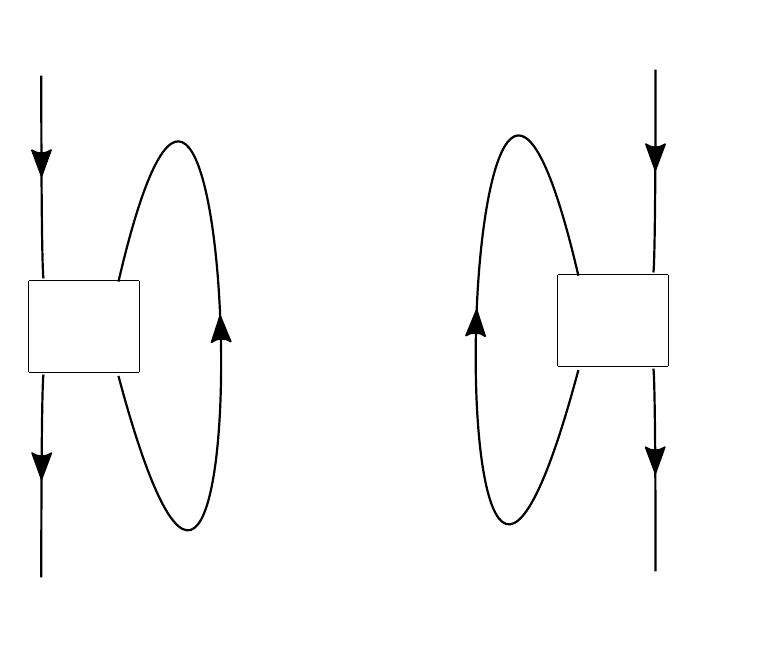\label{partialtrace}
    \caption{The right and left partial traces of $f$.}
    \label{partialtraces}
\end{figure}

%\subsection*{Theta nets} 
%We define two types of \textit{theta nets} that differ from each other by a $\frac{\pi}{2}$ rotation. The $\Phi-$net is obtained by tracing the inner product as shown below. Then its rotation gives us the $\Theta-$net.

% \begin{equation}\centering\def\svgwidth{7.5cm}\label{thetanets}\end{equation}
% Note that a later discussion in \ref{duality} shows that the middle label for the $\Theta-$net is indeed self-dual. 

% \subsubsection*{Relevant case} Consider the case when there exists a \textit{self-dual simple object} $V$ such that:
% \begin{align}\label{relevantcase}
%     V\otimes V \cong \mathbf{1}\oplus \bigoplus_{k\in \ZZ_{+}} X_k
% \end{align}

% Then, from the discussion above, on canonical basis, we get the following set of canonical basis for $\End(V\otimes V): \{\ojack{X_i}\}_{i\in I}$. Here, $i=0$ resembles the unit object labelling on the diagram, which will be represented diagrammatically as $\ccm$ form hereon.

% In particular, using the normalization in (\ref{normalization}), we get the following canonical decomposition of identity morphism in $\End(V\otimes V)$:
% \begin{align}\label{identitycanon}
%     \Idm=\frac{1}{d_V}\Ccm + \sum_{k\in I} \frac{\sqrt{d_{X_k}}}{d_V} \OJack{X_k}
% \end{align}
\noindent Now suppose that $\mathcal{C}$ is also spherical. We define the \textit{phi-net}
\begin{equation}\Phi(i,j,k):=\widetilde{Tr}\left(\raisebox{-8mm}{\includegraphics[width=0.065\textwidth]{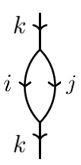}}\right)
=\raisebox{-9mm}{\includegraphics[width=0.11\textwidth]{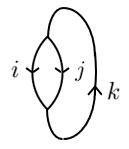}}
=\raisebox{-4.5mm}{\includegraphics[width=0.18\textwidth]{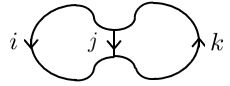}} \end{equation}
where the final diagram corresponds to the left and right partial trace of a basis jack in $\End(i^{*}\otimes k)$. Following (\ref{mangta}), we know that
\begin{equation}\Phi(i,j,k)=\sqrt{\dfrac{d_{i}d_{j}}{d_{k}}}\cdot\widetilde{Tr}(\id_{k})=\sqrt{d_{i}d_{j}d_{k}}\end{equation}
Given $a,b,c$ self-dual, we define the \textit{theta-net}
\vspace{-2mm}
\begin{equation}\Theta(a,b,c):=\raisebox{-8mm}{\includegraphics[width=0.085\textwidth]{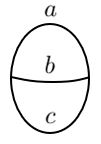}} \end{equation}
where we have left the edges unoriented (since the labels are self-dual). Note that $\Theta(a,b,c)=\Phi(a,b,c)$. Applying the left and right partial traces to (\ref{orientedid}), we get
\begin{equation}d_{i}d_{j}=\sum_{k\in I : N^{ij}_{k}\neq0}\sqrt{\dfrac{d_{k}}{d_{i}d_{j}}}\Phi(i^{*},k,j)=\sum_{k}N^{ij}_{k}d_{k}\label{gina}\end{equation}

\subsection{$F$-matrices} Recall that a monoidal category $\mathcal{C}$ has associativity isomorphisms \mbox{$\alpha_{X,Y,Z}: (X\otimes Y)\otimes Z \xrightarrow{\sim} X\otimes(Y\otimes Z)$} for any objects $X,Y,Z\in\C$. These isomorphisms satisfy compatibility conditions given by the pentagon and triangle axioms.\\

\noindent For a skeletal fusion category, after making a choice of basis for each $\Hom(i\otimes j,k)$ where $i,j,k \in I$, we obtain a block-diagonal matrix $A^{abc}$ corresponding to each associativity isomorphism $\alpha_{a,b,c}$ where $a,b,c\in I$. Each block in $A^{abc}$ is called an $F$-\textit{matrix}, and is written $F^{abc}_d$ (where $d$ indexes each block). As a map, $F^{abc}_d$ represents the isomorphism (\ref{fmatrix}) and can be interpreted as a change of (fusion) basis on $\Hom(a\otimes b \otimes c,d)$.

\begin{align}\label{fmatrix}
    F^{abc}_d:\bigoplus_e \Hom(a\otimes b,e)\otimes \Hom(e\otimes c,d) \xrightarrow{\sim} \bigoplus_f \Hom(a\otimes f,d) \otimes \Hom(b\otimes c,f)
\end{align}

\noindent where $A^{abc} = \bigoplus_d F^{abc}_d$. In the graphical calculus,
\vspace{2mm}
\begin{equation}\centering\def\svgwidth{8cm}%% Creator: Inkscape 1.0 (4035a4f, 2020-05-01), www.inkscape.org
%% PDF/EPS/PS + LaTeX output extension by Johan Engelen, 2010
%% Accompanies image file '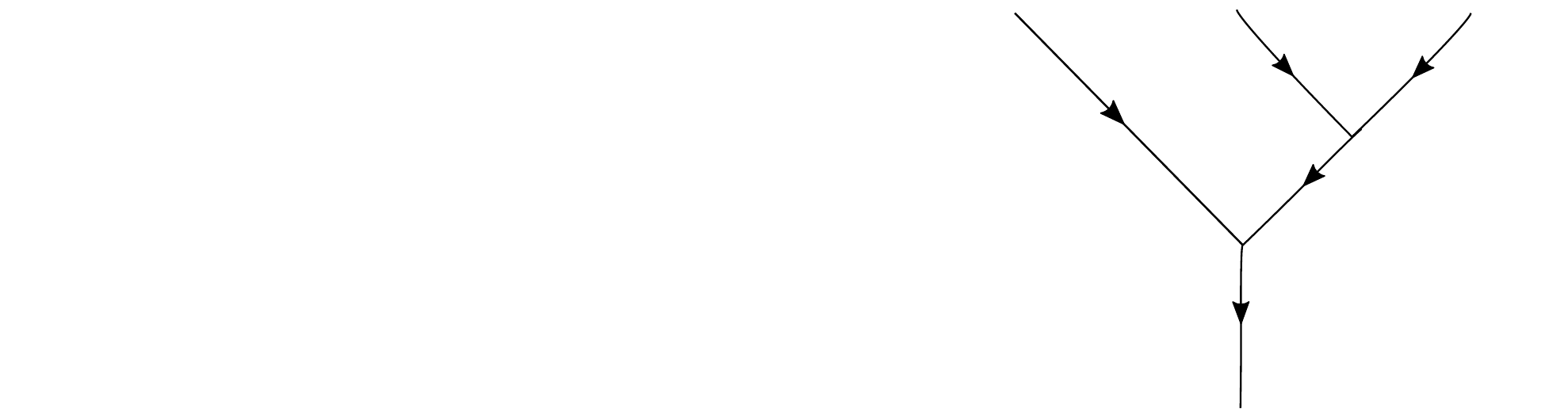' (pdf, eps, ps)
%%
%% To include the image in your LaTeX document, write
%%   \input{<filename>.pdf_tex}
%%  instead of
%%   \includegraphics{<filename>.pdf}
%% To scale the image, write
%%   \def\svgwidth{<desired width>}
%%   \input{<filename>.pdf_tex}
%%  instead of
%%   \includegraphics[width=<desired width>]{<filename>.pdf}
%%
%% Images with a different path to the parent latex file can
%% be accessed with the `import' package (which may need to be
%% installed) using
%%   \usepackage{import}
%% in the preamble, and then including the image with
%%   \import{<path to file>}{<filename>.pdf_tex}
%% Alternatively, one can specify
%%   \graphicspath{{<path to file>/}}
%% 
%% For more information, please see info/svg-inkscape on CTAN:
%%   http://tug.ctan.org/tex-archive/info/svg-inkscape
%%
\begingroup%
  \makeatletter%
  \providecommand\color[2][]{%
    \errmessage{(Inkscape) Color is used for the text in Inkscape, but the package 'color.sty' is not loaded}%
    \renewcommand\color[2][]{}%
  }%
  \providecommand\transparent[1]{%
    \errmessage{(Inkscape) Transparency is used (non-zero) for the text in Inkscape, but the package 'transparent.sty' is not loaded}%
    \renewcommand\transparent[1]{}%
  }%
  \providecommand\rotatebox[2]{#2}%
  \newcommand*\fsize{\dimexpr\f@size pt\relax}%
  \newcommand*\lineheight[1]{\fontsize{\fsize}{#1\fsize}\selectfont}%
  \ifx\svgwidth\undefined%
    \setlength{\unitlength}{569.64522234bp}%
    \ifx\svgscale\undefined%
      \relax%
    \else%
      \setlength{\unitlength}{\unitlength * \real{\svgscale}}%
    \fi%
  \else%
    \setlength{\unitlength}{\svgwidth}%
  \fi%
  \global\let\svgwidth\undefined%
  \global\let\svgscale\undefined%
  \makeatother%
  \begin{picture}(1,0.26264718)%
    \lineheight{1}%
    \setlength\tabcolsep{0pt}%
    \put(0.43926282,0.11931157){\color[rgb]{0,0,0}\rotatebox{-0.82568067}{\makebox(0,0)[lt]{\lineheight{1.25}\smash{\begin{tabular}[t]{l}=$\sum_f [F^{abc}_d]_{fe}$\end{tabular}}}}}%
    \put(0.59814623,0.22387554){\color[rgb]{0,0,0}\rotatebox{-0.82568067}{\makebox(0,0)[lt]{\lineheight{1.25}\smash{\begin{tabular}[t]{l}$a$\end{tabular}}}}}%
    \put(0.74922339,0.22336594){\color[rgb]{0,0,0}\rotatebox{-0.82568067}{\makebox(0,0)[lt]{\lineheight{1.25}\smash{\begin{tabular}[t]{l}$b$\end{tabular}}}}}%
    \put(0.93408176,0.22046338){\color[rgb]{0,0,0}\rotatebox{-0.82568067}{\makebox(0,0)[lt]{\lineheight{1.25}\smash{\begin{tabular}[t]{l}$c$\end{tabular}}}}}%
    \put(0.80097533,0.02019462){\color[rgb]{0,0,0}\rotatebox{-0.82568067}{\makebox(0,0)[lt]{\lineheight{1.25}\smash{\begin{tabular}[t]{l}$d$\end{tabular}}}}}%
    \put(0.82730394,0.11162154){\color[rgb]{0,0,0}\rotatebox{-0.82568067}{\makebox(0,0)[lt]{\lineheight{1.25}\smash{\begin{tabular}[t]{l}$f$\end{tabular}}}}}%
    \put(0,0){\includegraphics[width=\unitlength,page=1]{6j2.pdf}}%
    \put(0.0106328,0.22146846){\color[rgb]{0,0,0}\rotatebox{-0.82568067}{\makebox(0,0)[lt]{\lineheight{1.25}\smash{\begin{tabular}[t]{l}$a$\end{tabular}}}}}%
    \put(0,0){\includegraphics[width=\unitlength,page=2]{6j2.pdf}}%
    \put(0.21974927,0.22123098){\color[rgb]{0,0,0}\rotatebox{-0.82568067}{\makebox(0,0)[lt]{\lineheight{1.25}\smash{\begin{tabular}[t]{l}$b$\end{tabular}}}}}%
    \put(0.34947747,0.22088958){\color[rgb]{0,0,0}\rotatebox{-0.82568067}{\makebox(0,0)[lt]{\lineheight{1.25}\smash{\begin{tabular}[t]{l}$c$\end{tabular}}}}}%
    \put(0.23099825,0.02352304){\color[rgb]{0,0,0}\rotatebox{-0.82568067}{\makebox(0,0)[lt]{\lineheight{1.25}\smash{\begin{tabular}[t]{l}$d$\end{tabular}}}}}%
    \put(0.19008244,0.14490624){\color[rgb]{0,0,0}\rotatebox{-0.82568067}{\makebox(0,0)[lt]{\lineheight{1.25}\smash{\begin{tabular}[t]{l}$e$\end{tabular}}}}}%
    \put(0,0){\includegraphics[width=\unitlength,page=3]{6j2.pdf}}%
  \end{picture}%
\endgroup%
\label{6jdef}\end{equation}
\vspace{1mm}
\noindent The entries of an $F$-matrix are called $F$-\textit{symbols} (or $6j$-symbols).
In terms of the fusion coefficients, associativity is expressed as
\vspace{1.5mm}
\begin{align}
    \sum_e N^{ab}_e N^{ec}_d = \sum_f N^{af}_d N^{bc}_f
\end{align}
 
%\vspace{2mm}
\begin{remark}[\textbf{Skeletal data I}]\label{skelremk1}
Given a fusion category $\C$, its \textit{skeletal data} is given by the set of all fusion coefficients and $F$-symbols; this data completely characterises $\C$. The $F$-symbols satisfy the pentagon equation coming from the pentagon axiom. If $\C$ has a positive dagger structure, it is easy to see that all associated $F$-matrices will be unitary (and so $\C$ is called unitary).
\end{remark}

%\vspace{2mm}
\subsection{Braided tensor categories} Recall that for any two objects $X$ and $Y$ in a \textit{braided tensor category} $\C$, a \textit{braiding} is a natural isomorphism $c_{X,Y}: X\otimes Y \xrightarrow{\sim} Y \otimes X$ which is compatible with the associativity isomorphisms: this is ensured by the hexagon axioms, and the braidings consequently satisfy the \textit{Yang-Baxter equation}
%\vspace{2mm}
\begin{equation}
    (c_{Y,Z}\otimes \id_X)\circ (\id_Y \otimes c_{X,Z} )\circ (c_{X,Y}\otimes \id_Z) = (\id_Z\otimes c_{X,Y})\circ (c_{X,Z}\otimes \id_Y)\circ (\id_X \otimes c_{Y,Z}) 
\end{equation}
%\vspace{3mm}
\noindent for any $X,Y,Z\in \C$. This affords us braid isotopy in the graphical calculus.%\\

\begin{figure}[H]
    \centering
    \begin{tabular}{c c}
        \text{(a) } \raisebox{-1cm}{\def\svgwidth{2.5cm}%% Creator: Inkscape 1.0 (4035a4f, 2020-05-01), www.inkscape.org
%% PDF/EPS/PS + LaTeX output extension by Johan Engelen, 2010
%% Accompanies image file 'R2.pdf' (pdf, eps, ps)
%%
%% To include the image in your LaTeX document, write
%%   \input{<filename>.pdf_tex}
%%  instead of
%%   \includegraphics{<filename>.pdf}
%% To scale the image, write
%%   \def\svgwidth{<desired width>}
%%   \input{<filename>.pdf_tex}
%%  instead of
%%   \includegraphics[width=<desired width>]{<filename>.pdf}
%%
%% Images with a different path to the parent latex file can
%% be accessed with the `import' package (which may need to be
%% installed) using
%%   \usepackage{import}
%% in the preamble, and then including the image with
%%   \import{<path to file>}{<filename>.pdf_tex}
%% Alternatively, one can specify
%%   \graphicspath{{<path to file>/}}
%% 
%% For more information, please see info/svg-inkscape on CTAN:
%%   http://tug.ctan.org/tex-archive/info/svg-inkscape
%%
\begingroup%
  \makeatletter%
  \providecommand\color[2][]{%
    \errmessage{(Inkscape) Color is used for the text in Inkscape, but the package 'color.sty' is not loaded}%
    \renewcommand\color[2][]{}%
  }%
  \providecommand\transparent[1]{%
    \errmessage{(Inkscape) Transparency is used (non-zero) for the text in Inkscape, but the package 'transparent.sty' is not loaded}%
    \renewcommand\transparent[1]{}%
  }%
  \providecommand\rotatebox[2]{#2}%
  \newcommand*\fsize{\dimexpr\f@size pt\relax}%
  \newcommand*\lineheight[1]{\fontsize{\fsize}{#1\fsize}\selectfont}%
  \ifx\svgwidth\undefined%
    \setlength{\unitlength}{160.23176779bp}%
    \ifx\svgscale\undefined%
      \relax%
    \else%
      \setlength{\unitlength}{\unitlength * \real{\svgscale}}%
    \fi%
  \else%
    \setlength{\unitlength}{\svgwidth}%
  \fi%
  \global\let\svgwidth\undefined%
  \global\let\svgscale\undefined%
  \makeatother%
  \begin{picture}(1,0.62753194)%
    \lineheight{1}%
    \setlength\tabcolsep{0pt}%
    \put(0,0){\includegraphics[width=\unitlength,page=1]{R2.pdf}}%
    \put(0.03374778,0.56011769){\color[rgb]{0,0,0}\makebox(0,0)[lt]{\lineheight{1.25}\smash{\begin{tabular}[t]{l}$X$\end{tabular}}}}%
    \put(0.25740203,0.56373993){\color[rgb]{0,0,0}\makebox(0,0)[lt]{\lineheight{1.25}\smash{\begin{tabular}[t]{l}$Y$\end{tabular}}}}%
    \put(0.6189212,0.55196307){\color[rgb]{0,0,0}\makebox(0,0)[lt]{\lineheight{1.25}\smash{\begin{tabular}[t]{l}$X$\end{tabular}}}}%
    \put(0.85627093,0.54991217){\color[rgb]{0,0,0}\makebox(0,0)[lt]{\lineheight{1.25}\smash{\begin{tabular}[t]{l}$Y$\end{tabular}}}}%
    \put(0,0){\includegraphics[width=\unitlength,page=2]{R2.pdf}}%
    \put(0.39931362,0.28707862){\color[rgb]{0,0,0}\makebox(0,0)[lt]{\lineheight{1.25}\smash{\begin{tabular}[t]{l}$=$\end{tabular}}}}%
    \put(0,0){\includegraphics[width=\unitlength,page=3]{R2.pdf}}%
  \end{picture}%
\endgroup%
}  &  \quad \text{(b) } \raisebox{-1cm}{\def\svgwidth{4.5cm}%% Creator: Inkscape 1.0 (4035a4f, 2020-05-01), www.inkscape.org
%% PDF/EPS/PS + LaTeX output extension by Johan Engelen, 2010
%% Accompanies image file 'R3.pdf' (pdf, eps, ps)
%%
%% To include the image in your LaTeX document, write
%%   \input{<filename>.pdf_tex}
%%  instead of
%%   \includegraphics{<filename>.pdf}
%% To scale the image, write
%%   \def\svgwidth{<desired width>}
%%   \input{<filename>.pdf_tex}
%%  instead of
%%   \includegraphics[width=<desired width>]{<filename>.pdf}
%%
%% Images with a different path to the parent latex file can
%% be accessed with the `import' package (which may need to be
%% installed) using
%%   \usepackage{import}
%% in the preamble, and then including the image with
%%   \import{<path to file>}{<filename>.pdf_tex}
%% Alternatively, one can specify
%%   \graphicspath{{<path to file>/}}
%% 
%% For more information, please see info/svg-inkscape on CTAN:
%%   http://tug.ctan.org/tex-archive/info/svg-inkscape
%%
\begingroup%
  \makeatletter%
  \providecommand\color[2][]{%
    \errmessage{(Inkscape) Color is used for the text in Inkscape, but the package 'color.sty' is not loaded}%
    \renewcommand\color[2][]{}%
  }%
  \providecommand\transparent[1]{%
    \errmessage{(Inkscape) Transparency is used (non-zero) for the text in Inkscape, but the package 'transparent.sty' is not loaded}%
    \renewcommand\transparent[1]{}%
  }%
  \providecommand\rotatebox[2]{#2}%
  \newcommand*\fsize{\dimexpr\f@size pt\relax}%
  \newcommand*\lineheight[1]{\fontsize{\fsize}{#1\fsize}\selectfont}%
  \ifx\svgwidth\undefined%
    \setlength{\unitlength}{162.80853752bp}%
    \ifx\svgscale\undefined%
      \relax%
    \else%
      \setlength{\unitlength}{\unitlength * \real{\svgscale}}%
    \fi%
  \else%
    \setlength{\unitlength}{\svgwidth}%
  \fi%
  \global\let\svgwidth\undefined%
  \global\let\svgscale\undefined%
  \makeatother%
  \begin{picture}(1,0.40907186)%
    \lineheight{1}%
    \setlength\tabcolsep{0pt}%
    \put(0,0){\includegraphics[width=\unitlength,page=1]{R3.pdf}}%
    \put(0.03065849,0.36005549){\color[rgb]{0,0,0}\makebox(0,0)[lt]{\lineheight{1.25}\smash{\begin{tabular}[t]{l}$X$\end{tabular}}}}%
    \put(0.18539077,0.35986421){\color[rgb]{0,0,0}\makebox(0,0)[lt]{\lineheight{1.25}\smash{\begin{tabular}[t]{l}$Y$\end{tabular}}}}%
    \put(0.2987623,0.35737117){\color[rgb]{0,0,0}\makebox(0,0)[lt]{\lineheight{1.25}\smash{\begin{tabular}[t]{l}$Z$\end{tabular}}}}%
    \put(0.60118397,0.35256999){\color[rgb]{0,0,0}\makebox(0,0)[lt]{\lineheight{1.25}\smash{\begin{tabular}[t]{l}$X$\end{tabular}}}}%
    \put(0.73325943,0.34899727){\color[rgb]{0,0,0}\makebox(0,0)[lt]{\lineheight{1.25}\smash{\begin{tabular}[t]{l}$Y$\end{tabular}}}}%
    \put(0.86928779,0.34758178){\color[rgb]{0,0,0}\makebox(0,0)[lt]{\lineheight{1.25}\smash{\begin{tabular}[t]{l}$Z$\end{tabular}}}}%
    \put(0.44279422,0.17082656){\color[rgb]{0,0,0}\makebox(0,0)[lt]{\lineheight{1.25}\smash{\begin{tabular}[t]{l}$=$\end{tabular}}}}%
  \end{picture}%
\endgroup%
}
    \end{tabular}
    \caption{(a) $(c_{X,Y})^{-1}\circ c_{X,Y}=\id_{X\otimes Y}$, (b) Yang-Baxter equation.}
    \label{braidisotopy2}
\end{figure}
    
% \begin{align}
%     \def\svgwidth{3.3cm}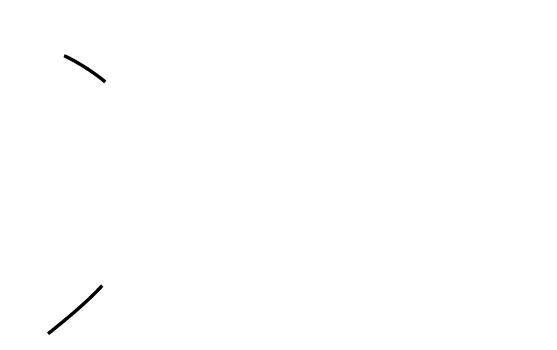 \quad \quad \def\svgwidth{5cm}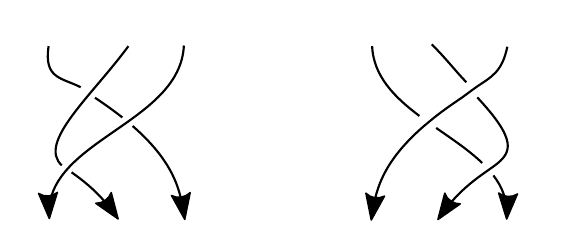
% \end{align}
%If such a braiding exists for all pairs of objects in $\C$, then $\C$ is called a \textit{braided tensor category}. 
%\newpage

\noindent For a skeletal braided fusion category, after making a choice of basis for each $\Hom(i\otimes j,k)$ where $i,j,k \in I$, we obtain a block-diagonal matrix $R^{ij}$ corresponding to each braiding isomorphism $c_{i,j}$. In the multiplicity-free case, each block is a $1 \times 1$ matrix denoted by $R^{ij}_k$ (where $k$ indexes each block) whose entry is called an $R$-\textit{symbol}. By abuse of notation, we will use $R^{ij}_k$ to denote the $1 \times 1$ matrix and the $R$-symbol interchangeably. In the graphical calculus, the $R$-symbols are given by
\begin{equation}\label{braideigen}
    \braideigen := R^{ij}_k \unbraideigen\\
\end{equation}
\noindent whence in the graphical calculus, the $R$-\textit{matrix} is given by
% Graphically, for a fusion category, one can decompose this isomorphism (or the $R-$matrix) as sums of canonical basis for a fixed outside labels. Following our previous convention that $X_k\in \Irr(\C)$ with each labels below used as in (\ref{orientedid}), we can write our associated $R$-matrix in a block-diagonal form using each orthogonal jumping jack diagrams.  
\begin{align}\label{crossingdecomp}
    R^{ij}:=\OPx{i}{j} \overset{(\ref{basisdecomp})}{=} \sum_{{k\in I: N^{ij}_k\neq0}} R^{ij}_k \sqrt{\frac{d_k}{d_i d_j}} \OLJack{j}{i}{k}{i}{j}
\end{align}
Thus, the $R$-matrix is diagonal; specifically, we have
\begin{align}
    R^{ij}=\bigoplus_{k\in I: N^{ij}_k\neq 0} R^{ij}_k
\end{align}
In the presence of a braiding, all fusion coefficients clearly satisfy
\begin{align}
    N^{ij}_k=N^{ji}_k
\end{align}

\begin{remark}[\textbf{Skeletal data II}]\label{skelremk2}
Given a braided fusion category $\C$, its \textit{skeletal data} is given by the set of all fusion coefficients, $F$-symbols and $R$-symbols; this data completely characterises $\C$.\footnote{The skeletal data of a ribbon fusion category or modular tensor category is also given by this set.} The $F$-symbols and $R$-symbols satisfy the hexagon equations coming from the hexagon axioms. If $\C$ has a positive dagger structure, then the category is called unitary: we know that all associated $F$-matrices will be unitary; furthermore, all associated $R$-matrices must also be unitary, since every admissible braiding on a unitary fusion category must also be unitary \cite[Theorem 3.2]{Gal}.
\end{remark}

% \subsubsection*{Relevant case} Again, assume the braiding structure, $c_{V,V}\in \End(V^{\otimes 2})$ where $V$ is as in (\ref{relevantcase}). Then, as in (\ref{identitycanon}), we get the following diagrammatic decomposition of the braiding isomorphism:(below, all unlabelled edges are labeled with $V$)
% \begin{align}
%     \Px= \frac{1}{d_V}R^{VV}_{X_0}\Ccm + \sum_{k\in I} \frac{\sqrt{d_{X_k}}}{d_V} R^{VV}_{X_k} \OJack{X_k}
% \end{align}

\subsection{Ribbon structure}
A spherical braided fusion category $\C$ is called a \textit{ribbon fusion} (or \textit{premodular}) \textit{category}. This is a braided fusion category with a \textit{ribbon structure}, which is given by a natural isomorphism $\theta_X: X \overset{\sim}{\to} X$ called the \textit{twist} that satisfies
\vspace{-2mm}
\begin{subequations}
\begin{align}
    \theta_{X\otimes Y} &= c_{Y,X}\circ c_{X,Y} \circ (\theta_X \otimes \theta_Y) \label{argo1}\\
    (\theta_X)^* &= \theta_{X^*} \label{argo2}
\end{align}
\end{subequations}
for all $X,Y\in \C$, and where $^*$ denotes the dual functor on the left-hand side of (\ref{argo2}). Graphically, the twist is defined as follows for a skeletal ribbon category:
\begin{equation}
\label{ribgradef}
    \text{(a)}\ \ \ststrand{i} \overset{\theta_i}{\longmapsto} \vptt= \vartheta_i \ststrand{i} \quad , \quad
    \text{(b)} \ \ \ststrand{i} \overset{\theta^{-1}_{i}}{\longmapsto} \vnt = \vartheta_{i}^{-1} \ststrand{i} 
\end{equation}    
where $\vartheta_{i}\in\mathbb{C}^{\times}$. Note that (\ref{ribgradef}b) follows from (\ref{ribgradef}a), since by braid isotopy (and pivotality),
\begin{equation}
\def\svgwidth{4cm}%% Creator: Inkscape 1.0 (4035a4f, 2020-05-01), www.inkscape.org
%% PDF/EPS/PS + LaTeX output extension by Johan Engelen, 2010
%% Accompanies image file '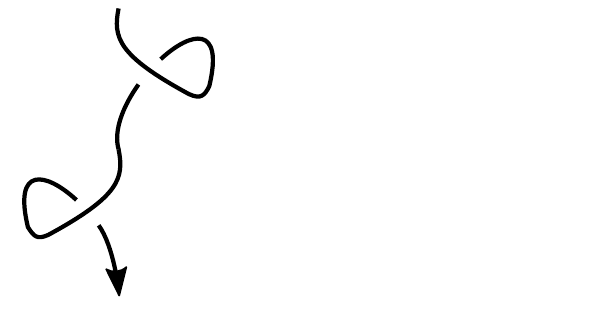' (pdf, eps, ps)
%%
%% To include the image in your LaTeX document, write
%%   \input{<filename>.pdf_tex}
%%  instead of
%%   \includegraphics{<filename>.pdf}
%% To scale the image, write
%%   \def\svgwidth{<desired width>}
%%   \input{<filename>.pdf_tex}
%%  instead of
%%   \includegraphics[width=<desired width>]{<filename>.pdf}
%%
%% Images with a different path to the parent latex file can
%% be accessed with the `import' package (which may need to be
%% installed) using
%%   \usepackage{import}
%% in the preamble, and then including the image with
%%   \import{<path to file>}{<filename>.pdf_tex}
%% Alternatively, one can specify
%%   \graphicspath{{<path to file>/}}
%% 
%% For more information, please see info/svg-inkscape on CTAN:
%%   http://tug.ctan.org/tex-archive/info/svg-inkscape
%%
\begingroup%
  \makeatletter%
  \providecommand\color[2][]{%
    \errmessage{(Inkscape) Color is used for the text in Inkscape, but the package 'color.sty' is not loaded}%
    \renewcommand\color[2][]{}%
  }%
  \providecommand\transparent[1]{%
    \errmessage{(Inkscape) Transparency is used (non-zero) for the text in Inkscape, but the package 'transparent.sty' is not loaded}%
    \renewcommand\transparent[1]{}%
  }%
  \providecommand\rotatebox[2]{#2}%
  \newcommand*\fsize{\dimexpr\f@size pt\relax}%
  \newcommand*\lineheight[1]{\fontsize{\fsize}{#1\fsize}\selectfont}%
  \ifx\svgwidth\undefined%
    \setlength{\unitlength}{175.64263411bp}%
    \ifx\svgscale\undefined%
      \relax%
    \else%
      \setlength{\unitlength}{\unitlength * \real{\svgscale}}%
    \fi%
  \else%
    \setlength{\unitlength}{\svgwidth}%
  \fi%
  \global\let\svgwidth\undefined%
  \global\let\svgscale\undefined%
  \makeatother%
  \begin{picture}(1,0.50892525)%
    \lineheight{1}%
    \setlength\tabcolsep{0pt}%
    \put(0,0){\includegraphics[width=\unitlength,page=1]{inversetwist.pdf}}%
    \put(0.4495475,0.22653062){\color[rgb]{0,0,0}\makebox(0,0)[lt]{\lineheight{1.25}\smash{\begin{tabular}[t]{l}$=$\end{tabular}}}}%
    \put(0,0){\includegraphics[width=\unitlength,page=2]{inversetwist.pdf}}%
    \put(0.18933197,0.11362662){\color[rgb]{0,0,0}\makebox(0,0)[lt]{\lineheight{1.25}\smash{\begin{tabular}[t]{l}$i$\end{tabular}}}}%
    \put(0.8275178,0.11956438){\color[rgb]{0,0,0}\makebox(0,0)[lt]{\lineheight{1.25}\smash{\begin{tabular}[t]{l}$i$\end{tabular}}}}%
  \end{picture}%
\endgroup%

\end{equation}
Further note that
\begin{equation}
\def\svgwidth{15cm}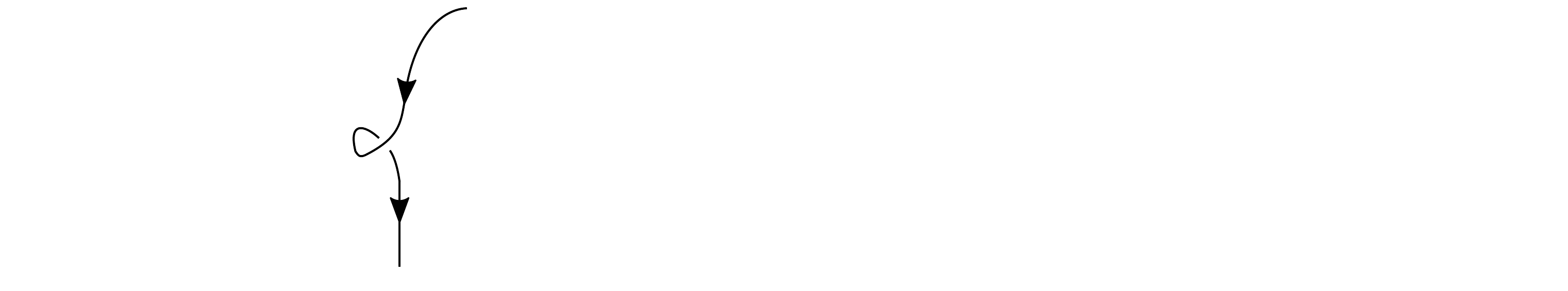
\label{surgeles}
\end{equation}
%\vspace{-3mm}
whence we obtain (\ref{baku}a). Equation (\ref{baku}b) follows similarly. 
\begin{equation}\label{baku}
\text{(a)} \ \ \scalebox{0.82}{\vptt=\vpt}  \quad , \quad \text{(b)} \ \ \scalebox{0.82}{\vnt \ = \ \vntt}
\end{equation}
From (\ref{surgeles}), the skeletal form of (\ref{argo2}) is also made apparent:
\begin{equation}\vartheta_{i}=\vartheta_{i^*}\label{yunalesca}\end{equation}
\noindent Taking the left and right partial traces for the crossing $\opx{i}{i}$, note that
\vspace{-2mm}
\begin{equation}\raisebox{-4mm}{\includegraphics[width=0.11\textwidth]{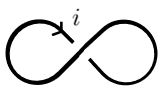}}=\raisebox{-6mm}{\includegraphics[width=0.09\textwidth]{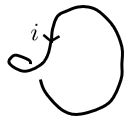}}=\raisebox{-6mm}{\includegraphics[width=0.09\textwidth]{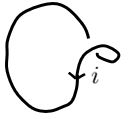}}=\vartheta_{i}d_{i}\label{agnusdei}\end{equation}
Resolving the crossing in the first diagram of (\ref{agnusdei}) using (\ref{crossingdecomp}), it easy to check that
\begin{equation}\vartheta_{i}=\dfrac{1}{d_{i}}\sum_{k}R^{ii}_{k}d_{k}\label{floobendooben}\end{equation}
It can also be shown (Appendix \ref{pivcoffappx}) that for $i$ self-dual,
\begin{equation}\vartheta_{i}=\varkappa_{i}\left(R^{ii}_{0}\right)^{-1}\label{fstwisteq}\end{equation}
In the graphical calculus for ribbon categories, edges may be promoted from lines to ribbons, and twists are $2\pi$ clockwise self-rotations of a ribbon. A labelled edge is assumed to be oriented from top-to-bottom. For instance, (\ref{yunalesca}) can be observed from
\begin{equation}\raisebox{-12mm}{\includegraphics[width=0.11\textwidth]{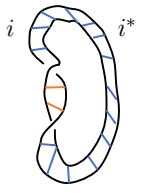}}=\raisebox{-11mm}{\includegraphics[width=0.11\textwidth]{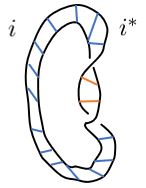}}\end{equation}
where the twist is pushed around the closed ribbon. For any $x,y,z\in\Irr(\C)$, (\ref{argo1}) may be illustrated via the action of the monodromy on a basis element of $\Hom(x\otimes y,z)$:
\vspace{-1mm}
\begin{equation}\left(R^{yx}\circ R^{xy}\right)\raisebox{-12mm}{\includegraphics[width=0.06\textwidth]{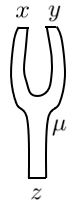}}
=\raisebox{-13mm}{\includegraphics[width=0.075\textwidth]{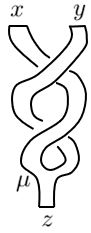}}
=\raisebox{-13mm}{\includegraphics[width=0.082\textwidth]{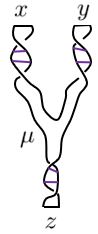}}
=\dfrac{\vartheta_{z}}{\vartheta_{x}\vartheta_{y}}\raisebox{-12mm}{\includegraphics[width=0.06\textwidth]{monoskel1}}\end{equation}
\vspace{-1mm}
where we have relaxed the multiplicity-free assumption. Thus,
\begin{equation}\sum_{\lambda}\left[R^{yx}_{z}\right]_{\mu\lambda}\left[R^{xy}_{z}\right]_{\lambda\nu}=\dfrac{\vartheta_{z}}{\vartheta_{x}\vartheta_{y}}\delta_{\mu\nu}\end{equation}
Graphically, ribbon structure affords our diagrams equivalence under braid isotopy on the $2$-sphere. We henceforth refer to braid isotopy on the $2$-sphere as \textit{framed isotopy}.
\begin{remark}\hspace{2mm} 
\begin{enumerate}[label=(\roman*)]
\item The Anderson-Moore-Vafa theorem \cite{andymoore, vafa} tells us that the twist factor $\vartheta_{i}$ is a root of unity for all $i\in I$. For a proof, we refer the reader to \cite[Theorem E.10]{Kitaev}.
\item A unitary braided fusion category admits a unique unitary ribbon structure \cite{Gal}. 
\end{enumerate}
\end{remark}

\begin{proposition}
\label{dimboi}
Let $\C$ be a unitary ribbon fusion category. Then for any $x\in\Irr(\C)$, 
\begin{equation}d_{x}\in\{1\}\cup[\sqrt{2},\infty)\end{equation}
\begin{proof}
We know that $d_{x}>0$ by unitarity. Using (\ref{gina}), we have $d_{x}d_{\bm{1}}=d_{x}$, whence $d_{\bm{1}}=1$. It will be useful to classify $x$ according to whether it satisfies the property\footnote{In a field-theoretic context, \ref{buchla} characterises the \textit{abelianity} of a quasiparticle.}
\begin{enumerate}[label=(P\arabic*),start=0]
\item $\sum_{z}N^{xy}_{z}=1$ for all $y\in\Irr(\C)$ \label{buchla}
\end{enumerate}
\textit{Claim:} $x$ satisfies \ref{buchla} if and only if $x$ is invertible (i.e.\ $x\otimes x^{*}=\bm{1}$). \\
If $x$ satisfies \ref{buchla}, then $x$ is clearly invertible. If $x$ is invertible, then $x^{*}\otimes x\otimes y=y$ for all $y\in\Irr(\C)$. Thus, $\sum_{z}N^{x^{*}z}_{y}N^{xy}_{z}=1$, whence $\sum_{z}N^{xy}_{z}=1$ for all $y$. This shows the claim. It immediately follows that if $x$ satisfies \ref{buchla}, then so does $x^{*}$. Now,
\begin{enumerate}[label=(\roman*)]
\item If $x$ satisfies \ref{buchla}, then $d_{x}d_{x^{*}}=d_{x}^{2}=d_{\bm{1}}=1$, whence $d_{x}=1$.
\item If $x$ does not satisfy \ref{buchla}, then $d_{x}d_{x^{*}}=d_{x}^{2}=d_{\bm{1}}+\sum_{y\neq\bm{1}}N^{xx^{*}}_{y}d_{y}>1$, whence $d_{x}>1$. The lower bound is attained when $x\otimes x^{*}=\bm{1}\oplus y$ for some $y$ satisfying \ref{buchla}. Thus, $d_{x}\geq\sqrt{2}$.
\end{enumerate}
\end{proof}
\end{proposition}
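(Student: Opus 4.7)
My plan is to exploit the fusion rule for $x\otimes x^{*}$, which always contains $\bm{1}$ as a summand with multiplicity one (by (\ref{nobuo})). The strategy is to use equation (\ref{gina}) applied to the pair $(x,x^{*})$ together with $d_{x}=d_{x^{*}}$ (noted in Section \ref{pivsphetrace}) to obtain an expression for $d_{x}^{2}$ as a sum of quantum dimensions; then control the possible values from below.

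First I would check the base case $d_{\bm{1}}=1$ by setting $i=\bm{1}$, $j=x$ in (\ref{gina}), which forces $d_{\bm{1}}\cdot d_{x}=d_{x}$ and hence $d_{\bm{1}}=1$ (using $d_{x}>0$ from unitarity, per the remark on positive dagger structure). Next, applying (\ref{gina}) with $j=x^{*}$ gives
\begin{equation}
d_{x}^{2} \;=\; d_{x}d_{x^{*}} \;=\; \sum_{k}N^{xx^{*}}_{k}d_{k} \;=\; 1+\sum_{k\neq\bm{1}}N^{xx^{*}}_{k}d_{k},
\end{equation}
where the separation of the $k=\bm{1}$ term uses $N^{xx^{*}}_{\bm{1}}=\delta_{x,x}=1$ from (\ref{nobuo}). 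Since all $d_{k}>0$, this immediately yields $d_{x}^{2}\geq 1$, i.e.\ $d_{x}\geq 1$ for every simple $x$ (a fact I will need to use on the summands below).

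The argument then splits into two cases. If $x\otimes x^{*}\cong\bm{1}$ (i.e.\ $x$ is invertible), then $d_{x}^{2}=1$, giving $d_{x}=1$. Otherwise, there exists some simple $y\neq\bm{1}$ with $N^{xx^{*}}_{y}\geq 1$, and by the preceding paragraph $d_{y}\geq 1$, so
\begin{equation}
d_{x}^{2} \;\geq\; 1 + d_{y} \;\geq\; 2,
\end{equation}
whence $d_{x}\geq\sqrt{2}$. Combining both cases gives $d_{x}\in\{1\}\cup[\sqrt{2},\infty)$.

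I do not anticipate any serious obstacle: the result is essentially a pigeonhole-style consequence of the fusion rule for $x\otimes x^{*}$, combined with the universal lower bound $d_{y}\geq 1$ that comes for free from the same identity. The only subtle point is avoiding circularity when invoking $d_{y}\geq 1$ for the summand $y$ appearing in $x\otimes x^{*}$; this is handled by first establishing the bound $d_{k}\geq 1$ universally from the $\bm{1}$ summand of $k\otimes k^{*}$ before running the case analysis.
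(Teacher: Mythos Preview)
Your proof is correct and follows essentially the same approach as the paper: both use (\ref{gina}) on the pair $(x,x^{*})$ to write $d_{x}^{2}=1+\sum_{k\neq\bm{1}}N^{xx^{*}}_{k}d_{k}$, then split into the invertible and non-invertible cases. Your organisation is arguably cleaner: you first establish the universal bound $d_{k}\geq1$ for every simple $k$ directly from the $\bm{1}$ summand of $k\otimes k^{*}$, and only then run the case analysis, which makes the non-circularity explicit; the paper instead detours through a characterisation of invertibility via the property $\sum_{z}N^{xy}_{z}=1$ for all $y$, and in case (ii) appeals to the ``lower bound being attained'' without separately stating why every summand satisfies $d_{y}\geq1$.
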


\subsection{Modularity}\label{mtcsec}
%\begin{definition}
%A (braided) fusion category is called \textit{unitary} if its skeletal data (namely, the associated (R-matrices and) F-matrices) is unitary.
%\end{definition}

Let $\mathcal{C}$ be a braided fusion category. An object $X$ in $\mathcal{C}$ such that
\begin{equation}c_{Y,X}\circ c_{X,Y}=\id_{X\otimes Y}\end{equation}
for all objects $Y$ in $\mathcal{C}$ is called \textit{transparent}. If all transparent objects in $\mathcal{C}$ are isomorphic to $\bm{1}$, then the braiding is called \textit{non-degenerate}.

\noindent Further assume that $\mathcal{C}$ is ribbon. We define\footnote{We caution the reader that conventions for the orientation of (\ref{unnormsmat}) vary in the literature.} the matrix $\tilde{S}$ where
\begin{equation}[\tilde{S}]_{xy}:=\hopf{x}{y} \ \ , \ \ x,y\in\Irr(\mathcal{C})\label{unnormsmat}\end{equation}
i.e.\ the left and right partial trace of $R^{y^{*}x}\circ R^{xy^{*}}$. The \textit{S-matrix} is $S:=\dfrac{1}{\mathcal{D}}\tilde{S}$ where 
\begin{equation}\mathcal{D}:=\sum_{x\in\Irr(\mathcal{C})}\sqrt{d_{x}^{2}}\end{equation}
is called the \textit{total quantum dimension} of $\mathcal{C}$. A ribbon fusion category $\mathcal{C}$ is called a \textit{modular tensor category} (MTC) if it has a non-degenerate braiding (or equivalently, if the associated $S$-matrix is invertible).

\subsection{Additional conventions}\label{relcase}
Throughout much of this paper, we consider a fusion category $\C$ containing a fusion rule of the form
\begin{align}
    q\otimes q = \bm{1}\oplus\bigoplus_{i=1}^{k}x_{i}\label{ourguysec2}
\end{align}
where $q, x_i\in \Irr(\C)$ and objects $x_{i}$ are distinct. In this context, we fix some conventions:
\begin{itemize}
    \item Unlabelled, unoriented edges are understood to represent edges labelled by the self-dual object $q$.
    \item Greek indices (e.g.\ $\lambda$) will be used to denote elements in $I$ for which $N^{qq}_\lambda \neq0$. Latin indices (e.g. $i$) will be used to denote elements in $I\setminus \{0\}$ for which $N^{qq}_i \neq 0$.
\end{itemize}
For instance, (\ref{crossingdecomp}) may be written as follows for $i=j=q$:
\begin{align}\label{ourguydecomp}
    \Px=\sum_{\lambda}R^{qq}_{\lambda}\frac{\sqrt{d_{\lambda}}}{d_{q}}\OJack{\lambda}= R^{qq}_0\frac{1}{d_q}\Ccm + \sum_{i}R^{qq}_{i}\frac{\sqrt{d_{i}}}{d_{q}}\OJack{i}
\end{align}
where we call the jumping jacks on the right-hand side of (\ref{ourguydecomp}) \textit{$i$-jacks}.
% Then, the canonical decomposition for the braiding can be obtained as in (\ref{crossingdecomp}), which is shown in (\ref{ourguydecomp}) below. 
% Now, applying the quantum trace on both sides of \ref{ourguydecomp}, one gets:
% \begin{align}
%     \theta_q = R^{qq}_0.
% \end{align}

%\subsection*{Unitarity of twist and $R$-matrix} Note that due to the Hermitian inner product structure of our morphism space, by applying the functor $\dagger$ on both sides of (\ref{twist}), it follows that $\vartheta_i=\vartheta_i^{-1}$ for all simple labels $i$. As a consequence, we get that the twist operator is \textit{unitary}.

\subsection{Rotation operator}\label{rotopsec}
We follow the conventions of Section \ref{relcase}, and further assume that $\C$ is \textit{pivotal}. 
%In this section, we focus on the case of $\End(q\otimes q)$ for a self-dual object, $q\in \Irr(\C)$.\\
Let $f\in \End(q^{\otimes 2})$ and $f':=\id_q \otimes f \otimes \id_q\in \End(q^{\otimes 4})$. We define the \textit{rotation operator} 
\begin{align}\label{rotaction}
    \varphi: f &\mapsto f'\mapsto \underbrace{(\id_q \otimes \id_q \otimes \ev_q)}_{\in\text{ } \Hom(q^{\otimes 4},q^{\otimes 2}\otimes \mathbf{1})}\circ f' \circ \underbrace{(\coev_q \otimes \id_q \otimes \id_q)}_{\in \text{ } \Hom(\mathbf{1}\otimes q^{\otimes 2},q^{\otimes 4})}
\end{align}
Hence, $\varphi(f) \in \Hom(\mathbf{1}\otimes q^{\otimes 2},q^{\otimes 2}\otimes \mathbf{1})= \End(q^{\otimes 2})$. Graphically, $\varphi$ acts as an anticlockwise $\frac{\pi}{2}$-rotation on a morphism in $\End(q^{\otimes 2})$:
\begin{equation}\centering\def\svgwidth{7cm}%% Creator: Inkscape 1.0 (4035a4f, 2020-05-01), www.inkscape.org
%% PDF/EPS/PS + LaTeX output extension by Johan Engelen, 2010
%% Accompanies image file '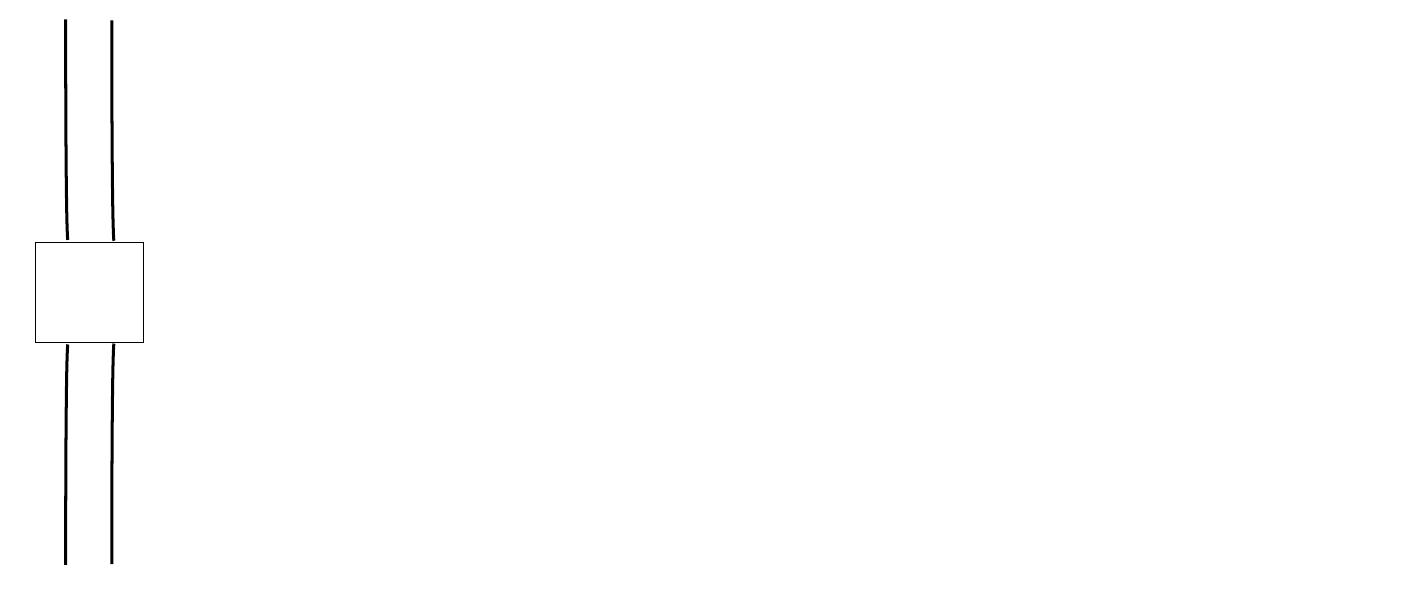' (pdf, eps, ps)
%%
%% To include the image in your LaTeX document, write
%%   \input{<filename>.pdf_tex}
%%  instead of
%%   \includegraphics{<filename>.pdf}
%% To scale the image, write
%%   \def\svgwidth{<desired width>}
%%   \input{<filename>.pdf_tex}
%%  instead of
%%   \includegraphics[width=<desired width>]{<filename>.pdf}
%%
%% Images with a different path to the parent latex file can
%% be accessed with the `import' package (which may need to be
%% installed) using
%%   \usepackage{import}
%% in the preamble, and then including the image with
%%   \import{<path to file>}{<filename>.pdf_tex}
%% Alternatively, one can specify
%%   \graphicspath{{<path to file>/}}
%% 
%% For more information, please see info/svg-inkscape on CTAN:
%%   http://tug.ctan.org/tex-archive/info/svg-inkscape
%%
\begingroup%
  \makeatletter%
  \providecommand\color[2][]{%
    \errmessage{(Inkscape) Color is used for the text in Inkscape, but the package 'color.sty' is not loaded}%
    \renewcommand\color[2][]{}%
  }%
  \providecommand\transparent[1]{%
    \errmessage{(Inkscape) Transparency is used (non-zero) for the text in Inkscape, but the package 'transparent.sty' is not loaded}%
    \renewcommand\transparent[1]{}%
  }%
  \providecommand\rotatebox[2]{#2}%
  \newcommand*\fsize{\dimexpr\f@size pt\relax}%
  \newcommand*\lineheight[1]{\fontsize{\fsize}{#1\fsize}\selectfont}%
  \ifx\svgwidth\undefined%
    \setlength{\unitlength}{405.26453694bp}%
    \ifx\svgscale\undefined%
      \relax%
    \else%
      \setlength{\unitlength}{\unitlength * \real{\svgscale}}%
    \fi%
  \else%
    \setlength{\unitlength}{\svgwidth}%
  \fi%
  \global\let\svgwidth\undefined%
  \global\let\svgscale\undefined%
  \makeatother%
  \begin{picture}(1,0.42062255)%
    \lineheight{1}%
    \setlength\tabcolsep{0pt}%
    \put(0,0){\includegraphics[width=\unitlength,page=1]{rotdef.pdf}}%
    \put(0.04478872,0.19690946){\color[rgb]{0,0,0}\makebox(0,0)[lt]{\lineheight{1.25}\smash{\begin{tabular}[t]{l}$f$\end{tabular}}}}%
    \put(0,0){\includegraphics[width=\unitlength,page=2]{rotdef.pdf}}%
    \put(0.17834671,0.20726311){\color[rgb]{0,0,0}\makebox(0,0)[lt]{\lineheight{1.25}\smash{\begin{tabular}[t]{l}$\overset{\varphi}{\longmapsto}$\end{tabular}}}}%
    \put(0,0){\includegraphics[width=\unitlength,page=3]{rotdef.pdf}}%
    \put(0.8538355,0.19384292){\color[rgb]{0,0,0}\rotatebox{90}{\makebox(0,0)[lt]{\lineheight{1.25}\smash{\begin{tabular}[t]{l}$f$\end{tabular}}}}}%
    \put(0,0){\includegraphics[width=\unitlength,page=4]{rotdef.pdf}}%
    \put(0.57492705,0.19452072){\color[rgb]{0,0,0}\makebox(0,0)[lt]{\lineheight{1.25}\smash{\begin{tabular}[t]{l}$=$\end{tabular}}}}%
    \put(0.39919801,0.20057056){\color[rgb]{0,0,0}\makebox(0,0)[lt]{\lineheight{1.25}\smash{\begin{tabular}[t]{l}$f$\end{tabular}}}}%
  \end{picture}%
\endgroup%
\label{rotationdef}\end{equation}

\noindent By $\CC$-linearity of $\C$ and bilinearity of the bifunctor $``\otimes"$ on morphisms, note that $\varphi$ is a $\CC$-linear operator. Further note that $\varphi^4(f)=f$ (as demonstrated in (\ref{rotationoperator}), where the final diagram can be straightened to the first diagram).
\begin{equation}\centering\def\svgwidth{17cm}%% Creator: Inkscape 1.0 (4035a4f, 2020-05-01), www.inkscape.org
%% PDF/EPS/PS + LaTeX output extension by Johan Engelen, 2010
%% Accompanies image file '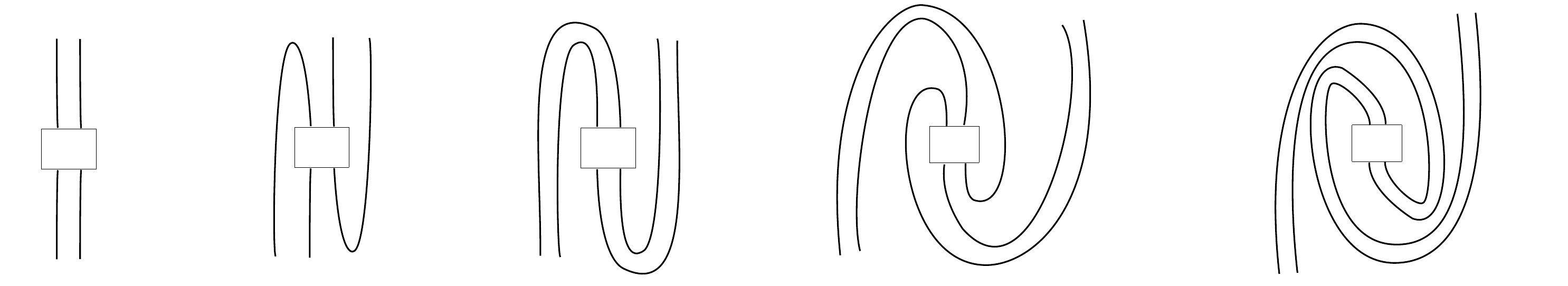' (pdf, eps, ps)
%%
%% To include the image in your LaTeX document, write
%%   \input{<filename>.pdf_tex}
%%  instead of
%%   \includegraphics{<filename>.pdf}
%% To scale the image, write
%%   \def\svgwidth{<desired width>}
%%   \input{<filename>.pdf_tex}
%%  instead of
%%   \includegraphics[width=<desired width>]{<filename>.pdf}
%%
%% Images with a different path to the parent latex file can
%% be accessed with the `import' package (which may need to be
%% installed) using
%%   \usepackage{import}
%% in the preamble, and then including the image with
%%   \import{<path to file>}{<filename>.pdf_tex}
%% Alternatively, one can specify
%%   \graphicspath{{<path to file>/}}
%% 
%% For more information, please see info/svg-inkscape on CTAN:
%%   http://tug.ctan.org/tex-archive/info/svg-inkscape
%%
\begingroup%
  \makeatletter%
  \providecommand\color[2][]{%
    \errmessage{(Inkscape) Color is used for the text in Inkscape, but the package 'color.sty' is not loaded}%
    \renewcommand\color[2][]{}%
  }%
  \providecommand\transparent[1]{%
    \errmessage{(Inkscape) Transparency is used (non-zero) for the text in Inkscape, but the package 'transparent.sty' is not loaded}%
    \renewcommand\transparent[1]{}%
  }%
  \providecommand\rotatebox[2]{#2}%
  \newcommand*\fsize{\dimexpr\f@size pt\relax}%
  \newcommand*\lineheight[1]{\fontsize{\fsize}{#1\fsize}\selectfont}%
  \ifx\svgwidth\undefined%
    \setlength{\unitlength}{855.6141963bp}%
    \ifx\svgscale\undefined%
      \relax%
    \else%
      \setlength{\unitlength}{\unitlength * \real{\svgscale}}%
    \fi%
  \else%
    \setlength{\unitlength}{\svgwidth}%
  \fi%
  \global\let\svgwidth\undefined%
  \global\let\svgscale\undefined%
  \makeatother%
  \begin{picture}(1,0.18240309)%
    \lineheight{1}%
    \setlength\tabcolsep{0pt}%
    \put(0,0){\includegraphics[width=\unitlength,page=1]{rotationoperator.pdf}}%
    \put(0.09530062,0.0837714){\color[rgb]{0,0,0}\makebox(0,0)[lt]{\lineheight{1.25}\smash{\begin{tabular}[t]{l}$\overset{\varphi}{\longmapsto}$\end{tabular}}}}%
    \put(0.27253211,0.08447428){\color[rgb]{0,0,0}\makebox(0,0)[lt]{\lineheight{1.25}\smash{\begin{tabular}[t]{l}$\overset{\varphi}{\longmapsto}$\end{tabular}}}}%
    \put(0.46552024,0.08447428){\color[rgb]{0,0,0}\makebox(0,0)[lt]{\lineheight{1.25}\smash{\begin{tabular}[t]{l}$\overset{\varphi}{\longmapsto}$\end{tabular}}}}%
    \put(0.74187922,0.08524672){\color[rgb]{0,0,0}\makebox(0,0)[lt]{\lineheight{1.25}\smash{\begin{tabular}[t]{l}$\overset{\varphi}{\longmapsto}$\end{tabular}}}}%
    \put(0.03431008,0.080714){\color[rgb]{0,0,0}\makebox(0,0)[lt]{\lineheight{1.25}\smash{\begin{tabular}[t]{l}$f$\end{tabular}}}}%
    \put(0.19719625,0.08234704){\color[rgb]{0,0,0}\makebox(0,0)[lt]{\lineheight{1.25}\smash{\begin{tabular}[t]{l}$f$\end{tabular}}}}%
    \put(0.37851661,0.08242064){\color[rgb]{0,0,0}\makebox(0,0)[lt]{\lineheight{1.25}\smash{\begin{tabular}[t]{l}$f$\end{tabular}}}}%
    \put(0.59896283,0.08318257){\color[rgb]{0,0,0}\makebox(0,0)[lt]{\lineheight{1.25}\smash{\begin{tabular}[t]{l}$f$\end{tabular}}}}%
    \put(0.86696369,0.08401728){\color[rgb]{0,0,0}\makebox(0,0)[lt]{\lineheight{1.25}\smash{\begin{tabular}[t]{l}$f$\end{tabular}}}}%
  \end{picture}%
\endgroup%
\label{rotationoperator}\end{equation}
\noindent To the knowledge of the authors, the first instance where the rotation operator was used in a categorical context was in \cite{MSP1}. See Appendix \ref{rotappx} for a supplementary excursion on the rotation of morphisms.

%------------Some Framed Invariants from Ribbon Categories----------
\section{Some Framed Invariants from Ribbon Categories}
\label{skeinrev}
Let $\mathcal{C}$ be a unitary ribbon fusion category containing a fusion rule of the form
\begin{equation}q\otimes q=\bm{1}\oplus\bigoplus_{i=1}^{k}x_{i}\label{ourguysec3}\end{equation}
where $q,x_{i}\in\Irr(\mathcal{C})$ and objects $x_{i}$ are distinct. Framed, oriented links whose components are labelled by elements of $\Irr(\mathcal{C})$ can be thought of as morphisms in $\End(\bm{1})$; the value in $\mathbb{C}$ to which any such link evaluates is invariant under framed isotopy. Restated, given an oriented\footnote{In the instance where all labels are self-dual, $D$ is an unoriented diagram.} link diagram $D$ whose components are labelled as such, there is a complex-valued function whose value is constant on the framed isotopy class of $D$. Such a function coincides with the notion of a \textit{framed link invariant} (when none of the labels are antisymmetrically self-dual). \\
Let $\Lambda_{\mathcal{C},q}$ denote the framed link invariant for oriented links with all components labelled by $q\in\Irr(\mathcal{C})$ symmetrically self-dual. For $q$ antisymmetrically self-dual, $\Lambda_{\mathcal{C},q}$ denotes the polynomial-valued function obtained from applying the associated skein relation to a link diagram.\footnote{This is further explored in Appendix \ref{addendum}.} Our goal is to extract information pertaining to $\Lambda_{\mathcal{C},q}$ when $q$ satisfies (\ref{ourguysec3}); in particular, we use the rotation operator $\varphi$ to find relations amongst $d_{q}$ and the eigenvalues of $R^{qq}$. We do this for the trivial case (i.e.\ $q^{\otimes2}=\bm{1}$) and then for cases $k=1,2$. When $q$ is symmetrically self-dual (i.e. $\varkappa_q=1$), it is easy to see that 
\begin{equation}\tilde{\Lambda}_{\mathcal{C},q}(L)=\vartheta_{q}^{-w(D)}\Lambda_{\mathcal{C},q}(D)\label{woofwatch}\end{equation}
is an oriented link invariant (where $L$ is the oriented link for which $D$ is a diagram, and $w$ is the writhe). \\

\noindent Much of the exposition in this section is already well-established and has been presented in \cite{MSP1} (see Theorems 3.1 \& 3.2) where a broader discussion may be found. However, we choose to include this material for its relevance to our main results in Section \ref{mainresults}. Furthermore, our approach differs slightly to that taken in \cite{MSP1} and we also treat the instances where $q$ is antisymmetrically self-dual (i.e. $\varkappa_q=-1$), which leads to an extended discussion in Appendix \ref{addendum}. In Appendix \ref{reptheory}, the narrative of this section is approached from the perspective of braid group representations. We follow the conventions fixed in Section \ref{relcase}.\\

\noindent $R^{qq}$ is diagonalisable in the canonical basis, so we may resolve crossings as follows:
\vspace{2mm}
\begin{equation}\Px=\sum_{\lambda}R^{qq}_{\lambda}\frac{\sqrt{d_{\lambda}}}{d_{q}}\OJack{\lambda} \quad \text{and} \quad
\Nx=\sum_{\lambda}\left(R^{qq}_{\lambda}\right)^{-1}\frac{\sqrt{d_{\lambda}}}{d_{q}}\OJack{\lambda}\label{resx}\end{equation}
%Observing that $\nx=\left(\px\right)^{\dagger}$, it is clear that $\left(R^{qq}\right)^{\dagger}=\left(R^{-1}\right)^{qq}$ and so $R^{qq}_{\lambda}\in U(1)$. 
\noindent Also,
\begin{equation}\varphi\left(\ \Px\ \right)=\raisebox{-6mm}{\includegraphics[width=0.085\textwidth]{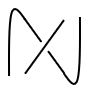}}=\varkappa_{q}\ \Nx \label{rotx}\end{equation}

\vspace{1mm}
\subsection{Trivial case}
\label{trivcase}
Here, the fusion rule is given by
\begin{equation}q\otimes q = \bm{1}\end{equation}
Let $\alpha:=R^{qq}_0$. Then from (\ref{resx}), 
\begin{align}\Px = \frac{\alpha}{d_{q}} \ \Ccm \quad \text{and} \quad \Nx = \frac{\alpha^{-1}}{d_{q}}\ \Ccm\end{align}
whence $\varphi\left(\px\right)=\frac{\alpha}{d_{q}}\idm$. Using (\ref{rotx}) and comparing coefficients, we get
\[ \alpha=\varkappa_q\alpha^{-1} \implies \alpha^2=\varkappa_q.\]
and so we have the following two possibilities:\\[2mm]
\fcolorbox{black}[HTML]{FFFFFF}{\parbox{\textwidth}{
\begin{enumerate}
    \item For $\varkappa_{q}=1$, $R_0^{qq}=\pm1$ with skein relation 
    \begin{equation}\Px=\Nx\text{ \ \ and \ \ } \Loopy =1\label{trivskein1}\end{equation}
    \item For $\varkappa_{q}=-1$, $R_0^{qq}=\pm i$ with skein relation 
    \begin{equation}\Px=-\Nx\text{\ \ and \ \ } \Loopy =1\label{trivskein2}\end{equation}
\end{enumerate}}}

%\vspace{3mm}
\subsection{k=1}
\label{quadcase} 
Now our fusion rule is of the form
\begin{equation}q\otimes q = \bm{1}\oplus x\end{equation}
Let $\alpha:=R^{qq}_{0}$ and $\beta:=R^{qq}_{x}$. Using (\ref{resx}) and (\ref{orientedid}), we resolve the crossings in the basis $\left\{\idm, \ccm\right\}$ to get 
\begin{subequations}\begin{align}
\Px&=\frac{1}{d_q}(\alpha-\beta)\Ccm+\beta\Idm \label{trotsky1}\\
\Nx&=\frac{1}{d_q}(\alpha^{-1}-\beta^{-1})\Ccm+\beta^{-1}\Idm \label{trotsky2}
\end{align}\end{subequations}
whence
\begin{equation}\varphi\left(\ \Px \ \right)=\beta\Ccm+\frac{1}{d_q}(\alpha-\beta)\Idm\end{equation}
Then using (\ref{rotx}) and comparing coefficients with (\ref{trotsky2}), we have
\begin{subequations}\begin{align}
\Idm&: \quad \varkappa_{q}\beta^{-1}=\frac{1}{d_{q}}(\alpha-\beta)\implies\alpha=\varkappa_{q}d_{q}\beta^{-1}+\beta \label{borzov1}\\
\Ccm&: \quad \varkappa_{q}\beta=\frac{1}{d_{q}}(\alpha^{-1}-\beta^{-1})\implies\alpha^{-1}=\varkappa_{q}d_{q}\beta+\beta^{-1} \label{borzov2}
\end{align}\end{subequations}
which can be solved to get
\begin{equation}d_{q}=-\varkappa_{q}(\beta^{2}+\beta^{-2})\quad , \quad \alpha=-\beta^{-3}\label{quadkeys}\end{equation}
We thus have the following two possibilities:\\[2mm]
\fcolorbox{black}[HTML]{FFFFFF}{\parbox{\textwidth}{
\begin{enumerate}
    \item For $\varkappa_q=1$, $R^{qq}=\diag(-\beta^{-3},\beta)$ with skein relation 
    \begin{equation}\Px=\beta\Idm+\beta^{-1}\Ccm \text{\ \ and \ \ } \Loopy=-(\beta^2+\beta^{-2})\label{kbskein}\end{equation}
    i.e. the \textit{Kauffman bracket}.\vspace{4mm}
    \item For $\varkappa_q=-1$, $R^{qq}=\diag(-\beta^{-3},\beta)$ with skein relation 
    \begin{equation}\Px=\beta\Idm-\beta^{-1}\Ccm \text{ \ \ and \ \ } \Loopy=\beta^2+\beta^{-2}\label{kbtwinskein}\end{equation}
\end{enumerate}}}

\vspace{1mm}
\subsection{k=2}
\label{cubicase}
Our fusion rule is of the form
\begin{equation}q\otimes q = \bm{1}\oplus x\oplus y\end{equation}
Let $\alpha:=R^{qq}_{0}, \beta:=R^{qq}_{x}$ and $\gamma:=R^{qq}_{y}$. Using (\ref{resx}) and (\ref{orientedid}), we resolve the crossings in the basis $\left\{\idm, \ccm, \ojack{x}\right \}$ to get 
\begin{subequations}\begin{align}
\Px&=A \Ccm+B\OJack{x}+C\Idm \label{flageolet1}\\
\Nx&=A' \Ccm+B'\OJack{x}+C^{-1}\Idm \label{flageolet2}
\end{align}\end{subequations}
where 
\[A:=\frac{1}{d_q}(\alpha-\gamma),\text{ } B:=\frac{\sqrt{d_x}}{d_q}(\beta-\gamma),\text{ } C:= \gamma\]
\[A':=\frac{1}{d_q}(\alpha^{-1}-\gamma^{-1}),\text{ } B':=\frac{\sqrt{d_x}}{d_q}(\beta^{-1}-\gamma^{-1})\]

\begin{remark}We henceforth set $\beta \neq \gamma$. The case $B,B'=0$ is treated in Section \ref{specialcubics}.\end{remark}
\noindent Eliminating the $x$-jacks and rearranging yields
\begin{subequations}\begin{align}
    \Px&=\left(A-\frac{A'B}{B'}\right)\Ccm+\left(C-\frac{C^{-1}B}{B'}\right)\Idm+\frac{B}{B'}\Nx \label{bluku1} \\
   \implies \varphi\left(\ \Px\ \right)&=\left(A-\frac{A'B}{B'}\right)\Idm+\left(C-\frac{C^{-1}B}{B'}\right)\Ccm+\varkappa_q\frac{B}{B'}\Px
    %\Nx&=\left(A'-\frac{AB'}{B}\right)\Ccm+\left(C^{-1}-\frac{CB'}{B}\right)\Idm+\frac{B'}{B}\Px \label{bluku2}
\end{align}\end{subequations}
\noindent Using (\ref{flageolet1}) to express $\varphi\left(\px\right)$ in our chosen basis,
\begin{align*}
    \varphi\left(\ \Px\ \right)=\left(A-\frac{A'B}{B'}+\varkappa_q\frac{CB}{B'}\right)\Idm+\left(C-\frac{C^{-1}B}{B'}+\varkappa_q\frac{AB}{B'}\right)\Ccm+\varkappa_q\frac{B^2}{B'}\OJack{x}
\end{align*}
\vspace{2mm}
\noindent Applying (\ref{rotx}) and comparing coefficients with (\ref{flageolet2}), we have
\vspace{1.5mm}
\begin{subequations}\begin{align}
\ojack{x}:& \quad B'=\pm B \\
\Ccm:& \quad
 	A'=\varkappa_{q}C+\frac{B}{B'}(A-\varkappa_{q}C^{-1}) \\
\Idm:& \quad
    A'=\varkappa_qC+\frac{B'}{B}(A-\varkappa_qC^{-1})
\end{align}\end{subequations}

\noindent Thus,
\begin{equation}B'=\pm B \quad , \quad A'=\varkappa_{q}(C\mp C^{-1})\pm A\label{thisguy}\end{equation}

\vspace{2.5mm}
\begin{remark}[\textbf{Caveat}]\label{kinkhorz}
When $\varkappa_{q}=-1$, there is a difference in sign betweeen (vertical) twists and their ``horizontal'' counterparts (i.e. a $\frac{\pi}{2}$-rotated version). This is taken into account when solving for Cases $3$ and $4$ below. For instance,
\begin{align*}
\raisebox{-5mm}{\includegraphics[width=0.075\textwidth]{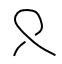}}=u\ \raisebox{-2mm}{\includegraphics[width=0.06\textwidth]{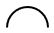}} &\implies \raisebox{-7mm}{\includegraphics[width=0.06\textwidth]{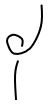}} = u \ \raisebox{-5.5mm}{\includegraphics[width=0.06\textwidth]{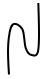}} \\
 &\implies u=\varkappa_{q}\vartheta_{q}
\end{align*}
\end{remark}
\vspace{5mm}
\noindent There are now four cases to examine (for $B'=\pm B$ and $\varkappa_{q}=\pm1$). \\[4mm]
\noindent\underline{Case 1:} Let $B'=B$ and $\varkappa_{q}=1$. Then (\ref{bluku1}) becomes
%\begin{subequations}\begin{align}
\begin{equation}
    \Px-\Nx=D\left(\ \Idm-\Ccm\ \right) \label{oo1} %\\
\end{equation}    
    %\Nx&=D\left(\ \Ccm-\Idm\ \right)+\Px \label{oo2}
%\end{align}\end{subequations}
where $D:=C-C^{-1}$. Stacking $\px$ on top of $\nx$ and using (\ref{oo1}) ,
\begin{align*}
\rtwoA&=D^{2}\left(\ \Idm-\Ccm\ \right)\left(\ \Ccm-\Idm\ \right)+D\Big(\ \Px-\CctwistA+\CctwistB-\Nx\ \Big)+\rtwoB \\[-3.5mm]
         &=D^{2}\left(\ \Ccm-\Idm-d_{q}\Ccm+\Ccm\ \right)+D(\vartheta_{q}-\vartheta_{q}^{-1})\Ccm+D\left(\ \Px-\Nx \ \right)+\rtwoB \\
         \overset{(\ref{oo1})}&{=}D\left[(2-d_{q})D+\vartheta_{q}-\vartheta_{q}^{-1}\right]\Ccm-D^{2}\Idm+D^{2}\left(\ \Idm-\Ccm \ \right)+\rtwoB \\
         &=D\left[(1-d_{q})D+\vartheta_{q}-\vartheta_{q}^{-1}\right]\Ccm+\rtwoB
\end{align*}whence by Reidemeister-II,
\begin{center}\begin{tabular}{ccccccc}
(i) $D=\cfrac{\vartheta_{q}-\vartheta_{q}^{-1}}{d_{q}-1}$ & & or & & (ii) $D=0$
\end{tabular}\end{center}
For (i), note that $D$ is well-defined since $d_{q}>1$. Then
\begin{equation}d_{q}=\cfrac{\alpha^{-1}-\alpha}{\gamma-\gamma^{-1}}+1=\cfrac{\alpha^{-1}-\alpha}{\beta-\beta^{-1}}+1\label{greenjam}\end{equation}
where the second equality\footnote{Heuristically, this equality also follows by the symmetry of the construction in $\beta$ and $\gamma$. That is, we could have alternatively used the $y$-jack in our chosen basis.} follows from $B'=B$. Since $D\neq0$, we have $\beta,\gamma\neq\pm1$ and so (\ref{greenjam}) is also well-defined.\\

\vspace{2.5mm}
\noindent\underline{Case 2:} Let $B'=-B$ and $\varkappa_q=1$. Then (\ref{bluku1}) becomes
\begin{align}\Px+\Nx=K\left(\ \Idm+\Ccm\ \right)\end{align}
where $K:=C+C^{-1}$. Similarly, we get
\begin{center}\begin{tabular}{ccccccc}
(i) $K=\cfrac{\vartheta_{q}+\vartheta_{q}^{-1}}{d_{q}+1}$ & & or & & (ii) $K=0$
\end{tabular}\end{center}
where for (i), 
\begin{equation}d_{q}=\cfrac{\alpha^{-1}+\alpha}{\gamma+\gamma^{-1}}-1=\cfrac{\alpha^{-1}+\alpha}{\beta+\beta^{-1}}-1\label{bluejam}\end{equation}

\vspace{7.5mm}
\noindent\underline{Case 3:} Let $B'=B$ and $\varkappa_q=-1$. Then (\ref{bluku1}) becomes
\begin{align}\Px-\Nx=D\left(\ \Idm+\Ccm\ \right)\end{align}
Similarly, we get
\begin{center}\begin{tabular}{ccccccc}
(i) $D=\cfrac{\vartheta_{q}-\vartheta_{q}^{-1}}{d_{q}+1}$ & & or & & (ii) $D=0$
\end{tabular}\end{center}
where for (i), 
\begin{equation}d_{q}=\cfrac{\alpha-\alpha^{-1}}{\gamma-\gamma^{-1}}-1=\cfrac{\alpha-\alpha^{-1}}{\beta-\beta^{-1}}-1\label{redjam}\end{equation}

\vspace{7.5mm}
\noindent\underline{Case 4:} Let $B'=-B$ and $\varkappa_q=-1$. Then (\ref{bluku1}) becomes
\begin{align}\Px+\Nx=K\left(\ \Idm-\Ccm\ \right)\end{align}
Similarly, we get
\begin{center}\begin{tabular}{ccccccc}
(i) $K=\cfrac{\vartheta_{q}+\vartheta_{q}^{-1}}{d_{q}-1}$ & & or & & (ii) $K=0$
\end{tabular}\end{center}
where for (i), 
\begin{equation}d_{q}=-\cfrac{\alpha^{-1}+\alpha}{\gamma+\gamma^{-1}}+1=-\cfrac{\alpha^{-1}+\alpha}{\beta+\beta^{-1}}+1\label{bluejam}\end{equation}

\vspace{4mm}
\begin{remark}Cases $D=0$ and $K=0$ for $\varkappa_{q}=\pm1$ are covered in Section \ref{specialcubics}.\end{remark}
\vspace{2mm}
\noindent For $B'=B$ we have $\beta-\beta^{-1}=\gamma-\gamma^{-1}=D$ whence $\sin(\arg\beta)=\sin(\arg\gamma)$. Since $\beta\neq\gamma$, we have $\arg\beta+\arg\gamma=\pi$. Thus, 
\begin{equation}\beta\gamma=-1 \ \ \text{and} \ \ D=\beta+\gamma \quad , \quad B'=B\end{equation}
For $B'=-B$ we have $\beta+\beta^{-1}=\gamma+\gamma^{-1}=K$ whence $\cos(\arg\beta)=\cos(\arg\gamma)$. Since $\beta\neq\gamma$, we have $\arg\beta+\arg\gamma=2\pi$. Thus, 
\begin{equation}\beta\gamma=+1 \ \ \text{and} \ \ K=\beta+\gamma \quad , \quad B'=-B\end{equation}
Following the notation in \cite{MSP1}, we let $z:=\beta+\gamma$ and $a:=\vartheta_{q}$. Summarising these four cases (where $\beta\neq\gamma$ and $z\neq0$),\\[2mm]
\fcolorbox{black}[HTML]{FFFFFF}{\parbox{\textwidth}{
\begin{enumerate}
    \item For $B'=B$ and $\varkappa_q=1$, $R^{qq}=\diag(\alpha,\beta,-\beta^{-1})$ with skein relation 
    \begin{equation}\Px-\Nx=z\left(\ \Idm-\Ccm\ \right) \text{ \ \ and \ \ } \Loopy=\cfrac{a-a^{-1}}{z}+1\label{dubskein}\end{equation}
    i.e. the \textit{framed Dubrovnik polynomial}.\vspace{4mm}
    \item For $B'=-B$ and $\varkappa_q=1$, $R^{qq}=\diag(\alpha,\beta,\beta^{-1})$ with skein relation 
    \begin{equation}\Px+\Nx=z\left(\ \Idm + \Ccm \ \right) \text{ \ \ and \ \ } \Loopy=\cfrac{a+a^{-1}}{z}-1\label{kauffskein}\end{equation}
    i.e.\ the \textit{framed Kauffman polynomial}.\vspace{4mm}
     \item For $B'=B$ and $\varkappa_q=-1$, $R^{qq}=\diag(\alpha,\beta,-\beta^{-1})$ with skein relation 
    \begin{equation}\Px-\Nx=z\left(\ \Idm+\Ccm\ \right) \text{ \ \ and \ \ } \Loopy=\cfrac{a-a^{-1}}{z}-1\label{dubtwinskein}\end{equation}
     \item For $B'=-B$ and $\varkappa_q=-1$, $R^{qq}=\diag(\alpha,\beta,\beta^{-1})$ with skein relation 
    \begin{equation}\Px+\Nx=z\left(\ \Idm-\Ccm\ \right) \text{ \ \ and \ \ } \Loopy=\cfrac{a+a^{-1}}{z}+1\label{kaufftwinskein}\end{equation}   
\end{enumerate}}}
\vspace{2mm}

\begin{remark}Let $L$ denote a link, $D$ a corresponding diagram, and $w(D)$ the writhe of $D$ (given some choice of orientation on $D$). Let $\Lambda^{(1)}_{(a,z)}$ and $\Lambda^{(2)}_{(a,z)}$ respectively denote the framed Dubrovnik (\ref{dubskein}) and framed Kauffman (\ref{kauffskein}) polynomial. Then
\begin{equation}\Lambda^{(1)}_{(a,z)}(D)=i^{-w(D)}(-1)^{c(L)}\Lambda^{(2)}_{(ia,-iz)}(D)\label{licktrick}\end{equation}
noting that the writhe does not depend on the choice of orientation modulo $4$. The relation (\ref{licktrick}) was proved by Lickorish in \cite{lickorish}.\footnote{There is a sign error in the statement of (\ref{licktrick}) in \cite{lickorish} which has been corrected here. The same correction appears in \cite{MSP1}.}\end{remark}

\vspace{2mm}
\subsubsection{Special cases}
\label{specialcubics}

\hspace{3mm}

\vspace{2mm}
\begin{enumerate}

\item Suppose $B,B'=0$ (i.e.\ $\beta=\gamma$). Then (\ref{flageolet1}) and (\ref{flageolet2}) become (\ref{trotsky1}) and (\ref{trotsky2}). That is, the crossings lie in the subspace $\spn\left\{\idm,\ccm\right\}$. Thus,
\begin{center}\begin{tabular}{ccc}
 & $\varkappa_{q}=1$ & $\varkappa_{q}=-1$ \\
\hline\addlinespace[1.5pt]
Skein relation & (\ref{kbskein}) & (\ref{kbtwinskein})  \\[1pt]
%\hline\addlinespace[1.5pt]
$R^{qq}$ & $\diag(-\beta^{-3},\beta,\beta)$ & $\diag(-\beta^{-3},\beta,\beta)$
\end{tabular}\end{center}

\vspace{2mm}
\item Suppose $D=0$ with $\varkappa_{q}=\pm1$. Then 
\[(A',B',C^{-1})=(A,B,C) \implies \alpha,\beta,\gamma\in\{\pm1\}\]
and using (\ref{flageolet1}),(\ref{flageolet2}) and $\beta\neq\gamma$ we get
\begin{equation}R^{qq}=(\alpha,\pm1,\mp1) \ \ \text{and} \ \ \Px=\Nx \quad ,\quad \alpha\in\{\pm1\} \end{equation}

\vspace{2mm}
\item Suppose $K=0$ with $\varkappa_{q}=\pm1$. Then 
\[(A',B',C^{-1})=(-A,-B,-C) \implies \alpha,\beta,\gamma\in\{\pm i\}\]
and using (\ref{flageolet1}),(\ref{flageolet2}) and $\beta\neq\gamma$ we get
\begin{equation}R^{qq}=(\alpha,\pm i,\mp i) \ \ \text{and} \ \ \Px=-\Nx \quad ,\quad \alpha\in\{\pm i\} \end{equation}
\end{enumerate}

\vspace{2mm}

%-----------Main Results--------------
\section{Main Results}
\label{mainresults}
\noindent Let $\mathcal{C}$ be a unitary spherical fusion category containing a fusion rule of the form
\begin{equation}q\otimes q= \bm{1}\oplus\bigoplus_{i}x_{i}\label{mainrule}\end{equation}
where $q,x_{i}\in\Irr(\mathcal{C})$, objects $x_{i}$ are distinct and where $N:=\dim\left(\End(q^{\otimes2})\right)\geq2$.\\
We write $f_{\mu\nu}:=\left[F^{qqq}_{q}\right]_{\mu\nu}$. For any simple object $x$ in the decomposition of $q^{\otimes2}$, the symmetries of the fusion coefficients give  $N^{qq}_{x}=N^{xq}_{q}=N^{qx}_{q}=N^{qq}_{x^{*}}$. Firstly, this tells us that the indices of $f_{\mu\nu}$ run over  $\bm{1}$ and $\{x_{i}\}_{i}$. Secondly, this tells us that the set $\{x_{i}\}_{i}$ is closed under taking duals: this allows us to define a \textit{(charge) conjugation matrix} $\mathscr{C}:=\delta_{\mu\mu^{*}}$ where $\mu$ indexes  $\bm{1}$ and $\{x_{i}\}_{i}$. We follow the conventions from Section \ref{relcase} and let $\sigma(A)$ denote the spectrum of a linear operator $A$.

\vspace{2mm}
\subsection{Rotation operator in the canonical basis}
\label{rotcanonsec}

\vspace{2mm}
\begin{lemma}\hspace{2mm}
\vspace{2mm}
\label{treecapping}
\begin{enumerate}[label=(\roman*)]
\item $f_{0\lambda}=\varkappa_{q}\frac{\sqrt{d_{\lambda}}}{d_{q}}$
\item $\delta_{\lambda0}=\varkappa_{q}\sum_{\rho}\frac{\sqrt{d_{\rho}}}{d_{q}}f_{\rho\lambda}$
\item \hspace{2mm}\begin{minipage}[t]{\linewidth}
          \raggedright
          \adjustbox{valign=t}{\centering\def\svgwidth{5cm}%% Creator: Inkscape 1.0 (4035a4f, 2020-05-01), www.inkscape.org
%% PDF/EPS/PS + LaTeX output extension by Johan Engelen, 2010
%% Accompanies image file '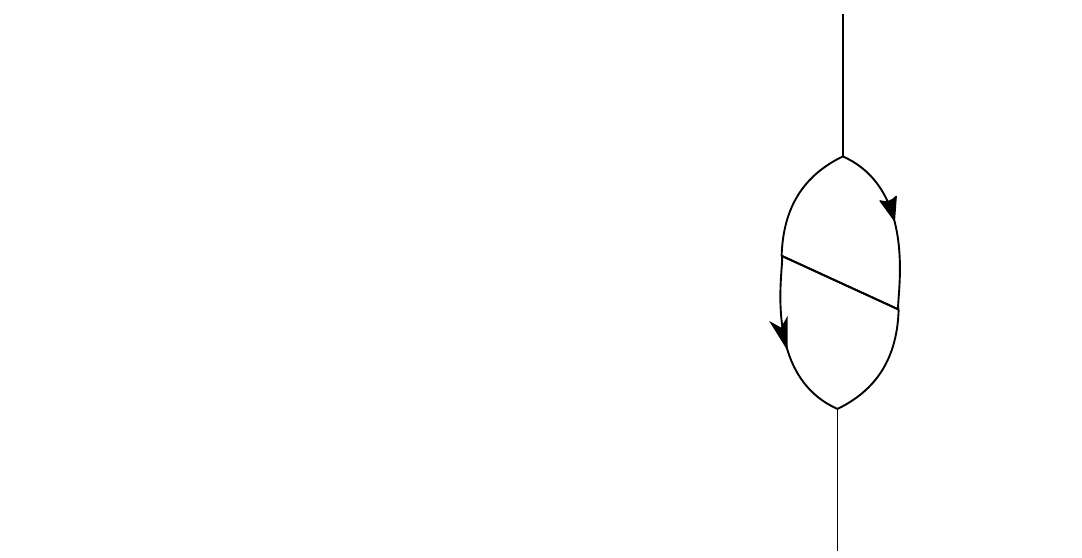' (pdf, eps, ps)
%%
%% To include the image in your LaTeX document, write
%%   \input{<filename>.pdf_tex}
%%  instead of
%%   \includegraphics{<filename>.pdf}
%% To scale the image, write
%%   \def\svgwidth{<desired width>}
%%   \input{<filename>.pdf_tex}
%%  instead of
%%   \includegraphics[width=<desired width>]{<filename>.pdf}
%%
%% Images with a different path to the parent latex file can
%% be accessed with the `import' package (which may need to be
%% installed) using
%%   \usepackage{import}
%% in the preamble, and then including the image with
%%   \import{<path to file>}{<filename>.pdf_tex}
%% Alternatively, one can specify
%%   \graphicspath{{<path to file>/}}
%% 
%% For more information, please see info/svg-inkscape on CTAN:
%%   http://tug.ctan.org/tex-archive/info/svg-inkscape
%%
\begingroup%
  \makeatletter%
  \providecommand\color[2][]{%
    \errmessage{(Inkscape) Color is used for the text in Inkscape, but the package 'color.sty' is not loaded}%
    \renewcommand\color[2][]{}%
  }%
  \providecommand\transparent[1]{%
    \errmessage{(Inkscape) Transparency is used (non-zero) for the text in Inkscape, but the package 'transparent.sty' is not loaded}%
    \renewcommand\transparent[1]{}%
  }%
  \providecommand\rotatebox[2]{#2}%
  \newcommand*\fsize{\dimexpr\f@size pt\relax}%
  \newcommand*\lineheight[1]{\fontsize{\fsize}{#1\fsize}\selectfont}%
  \ifx\svgwidth\undefined%
    \setlength{\unitlength}{311.4412515bp}%
    \ifx\svgscale\undefined%
      \relax%
    \else%
      \setlength{\unitlength}{\unitlength * \real{\svgscale}}%
    \fi%
  \else%
    \setlength{\unitlength}{\svgwidth}%
  \fi%
  \global\let\svgwidth\undefined%
  \global\let\svgscale\undefined%
  \makeatother%
  \begin{picture}(1,0.51551043)%
    \lineheight{1}%
    \setlength\tabcolsep{0pt}%
    \put(0,0){\includegraphics[width=\unitlength,page=1]{thetastump1.pdf}}%
    \put(0.66827567,0.34022088){\color[rgb]{0,0,0}\makebox(0,0)[lt]{\lineheight{1.25}\smash{\begin{tabular}[t]{l}$q$\end{tabular}}}}%
    \put(0.84454319,0.32303264){\color[rgb]{0,0,0}\makebox(0,0)[lt]{\lineheight{1.25}\smash{\begin{tabular}[t]{l}$\mu$\end{tabular}}}}%
    \put(0.64114654,0.17314193){\color[rgb]{0,0,0}\makebox(0,0)[lt]{\lineheight{1.25}\smash{\begin{tabular}[t]{l}$\lambda$\end{tabular}}}}%
    \put(0.84532579,0.16245874){\color[rgb]{0,0,0}\makebox(0,0)[lt]{\lineheight{1.25}\smash{\begin{tabular}[t]{l}$q$\end{tabular}}}}%
    \put(0,0){\includegraphics[width=\unitlength,page=2]{thetastump1.pdf}}%
    \put(0.3646254,0.02597784){\color[rgb]{0,0,0}\makebox(0,0)[lt]{\lineheight{1.25}\smash{\begin{tabular}[t]{l}$q$\end{tabular}}}}%
    \put(0.46892609,0.22685489){\color[rgb]{0,0,0}\makebox(0,0)[lt]{\lineheight{1.25}\smash{\begin{tabular}[t]{l}=\end{tabular}}}}%
    \put(0.01514449,0.24347368){\color[rgb]{0,0,0}\makebox(0,0)[lt]{\lineheight{1.25}\smash{\begin{tabular}[t]{l}$\varkappa_{q}d_qf_{\mu\lambda}$\end{tabular}}}}%
  \end{picture}%
\endgroup%
}\medskip
          \end{minipage}
\item \begin{minipage}[t]{\linewidth}
          \raggedright
          \adjustbox{valign=t}{\centering\def\svgwidth{6.6cm}%% Creator: Inkscape 1.0 (4035a4f, 2020-05-01), www.inkscape.org
%% PDF/EPS/PS + LaTeX output extension by Johan Engelen, 2010
%% Accompanies image file '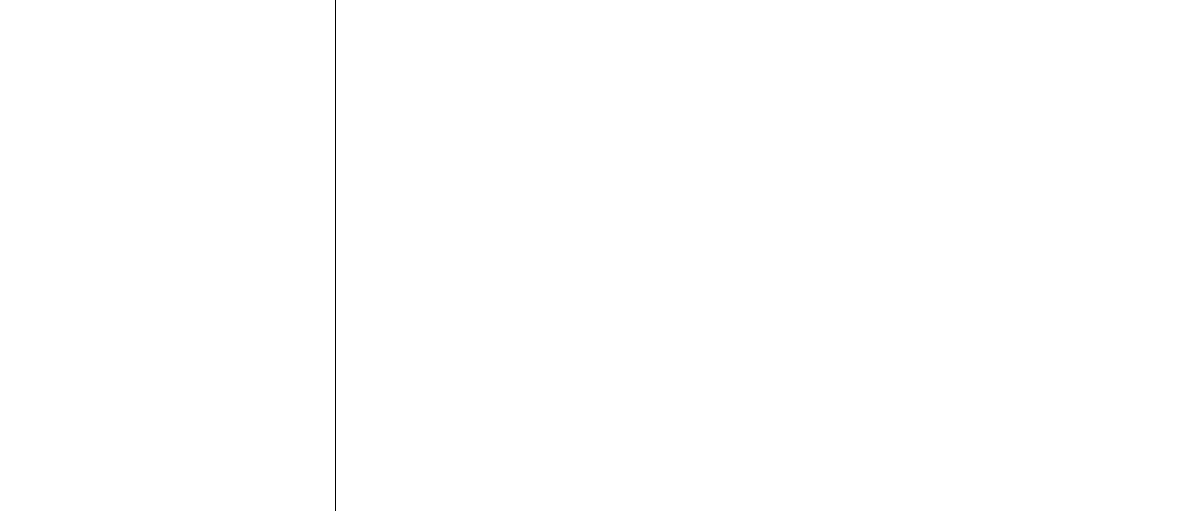' (pdf, eps, ps)
%%
%% To include the image in your LaTeX document, write
%%   \input{<filename>.pdf_tex}
%%  instead of
%%   \includegraphics{<filename>.pdf}
%% To scale the image, write
%%   \def\svgwidth{<desired width>}
%%   \input{<filename>.pdf_tex}
%%  instead of
%%   \includegraphics[width=<desired width>]{<filename>.pdf}
%%
%% Images with a different path to the parent latex file can
%% be accessed with the `import' package (which may need to be
%% installed) using
%%   \usepackage{import}
%% in the preamble, and then including the image with
%%   \import{<path to file>}{<filename>.pdf_tex}
%% Alternatively, one can specify
%%   \graphicspath{{<path to file>/}}
%% 
%% For more information, please see info/svg-inkscape on CTAN:
%%   http://tug.ctan.org/tex-archive/info/svg-inkscape
%%
\begingroup%
  \makeatletter%
  \providecommand\color[2][]{%
    \errmessage{(Inkscape) Color is used for the text in Inkscape, but the package 'color.sty' is not loaded}%
    \renewcommand\color[2][]{}%
  }%
  \providecommand\transparent[1]{%
    \errmessage{(Inkscape) Transparency is used (non-zero) for the text in Inkscape, but the package 'transparent.sty' is not loaded}%
    \renewcommand\transparent[1]{}%
  }%
  \providecommand\rotatebox[2]{#2}%
  \newcommand*\fsize{\dimexpr\f@size pt\relax}%
  \newcommand*\lineheight[1]{\fontsize{\fsize}{#1\fsize}\selectfont}%
  \ifx\svgwidth\undefined%
    \setlength{\unitlength}{344.63479831bp}%
    \ifx\svgscale\undefined%
      \relax%
    \else%
      \setlength{\unitlength}{\unitlength * \real{\svgscale}}%
    \fi%
  \else%
    \setlength{\unitlength}{\svgwidth}%
  \fi%
  \global\let\svgwidth\undefined%
  \global\let\svgscale\undefined%
  \makeatother%
  \begin{picture}(1,0.42708106)%
    \lineheight{1}%
    \setlength\tabcolsep{0pt}%
    \put(0.61330095,0.28486519){\color[rgb]{0,0,0}\makebox(0,0)[lt]{\lineheight{1.25}\smash{\begin{tabular}[t]{l}$\mu$\end{tabular}}}}%
    \put(0.76432644,0.27280344){\color[rgb]{0,0,0}\makebox(0,0)[lt]{\lineheight{1.25}\smash{\begin{tabular}[t]{l}$q$\end{tabular}}}}%
    \put(0.61109522,0.14650122){\color[rgb]{0,0,0}\makebox(0,0)[lt]{\lineheight{1.25}\smash{\begin{tabular}[t]{l}$q$\end{tabular}}}}%
    \put(0.76406567,0.13103911){\color[rgb]{0,0,0}\makebox(0,0)[lt]{\lineheight{1.25}\smash{\begin{tabular}[t]{l}$\rho$\end{tabular}}}}%
    \put(0,0){\includegraphics[width=\unitlength,page=1]{thetastump2.pdf}}%
    \put(0.29372403,0.01810026){\color[rgb]{0,0,0}\makebox(0,0)[lt]{\lineheight{1.25}\smash{\begin{tabular}[t]{l}$q$\end{tabular}}}}%
    \put(0.34235226,0.18689202){\color[rgb]{0,0,0}\makebox(0,0)[lt]{\lineheight{1.25}\smash{\begin{tabular}[t]{l}=\end{tabular}}}}%
    \put(-0.00295285,0.20100206){\color[rgb]{0,0,0}\makebox(0,0)[lt]{\lineheight{1.25}\smash{\begin{tabular}[t]{l}$\varkappa_{q}d_q\delta_{\mu\lambda}$\end{tabular}}}}%
    \put(0,0){\includegraphics[width=\unitlength,page=2]{thetastump2.pdf}}%
    \put(0.39800606,0.19878425){\color[rgb]{0,0,0}\makebox(0,0)[lt]{\lineheight{1.25}\smash{\begin{tabular}[t]{l}$\sum_\rho f_{\rho\lambda}$\end{tabular}}}}%
  \end{picture}%
\endgroup%
}\medskip
          \end{minipage}
\end{enumerate}
\end{lemma}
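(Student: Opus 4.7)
My strategy is to extract each of (i)--(iv) from the defining change-of-basis relation
\[
|\lambda_{e}\rangle \;=\; \sum_{\rho}\, f_{\rho\lambda}\,|\rho_{f}\rangle
\]
in the canonical orthonormal basis of $\Hom(q^{\otimes 3},q)$ (Section~\ref{normpatsec}), by combining orthogonality of the basis, the $\Phi$-net formula $\Phi(q,q,\lambda)=d_{q}\sqrt{d_{\lambda}}$ from (\ref{gina}), the bubble-popping identity (\ref{innerproddef})--(\ref{mangta}), and the pivotal identification (\ref{FSdef2}).

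For (i), the key observation is that the canonical basis tree $|0_{f}\rangle$ is $\tfrac{1}{\sqrt{d_{q}}}(\id_{q}\otimes\ev_{q})$, where the $1/\sqrt{d_{q}}$ originates from the normalisation of the self-dual cap $V(q,q\to\bm{1})$. Orthonormality lets me write $f_{0\lambda}=\tr(\lambda_{e}\circ 0_{f}^{\dagger})$. Expanding $\lambda_{e}$ in the $f$-basis and using that $V(q,q\to\rho)\circ\coev_{q}\in\Hom(\bm{1},\rho)$ vanishes for $\rho\neq 0$ kills all but the $\rho=0$ contribution. Evaluating this surviving term produces a closed $\Phi$-net weighted by the cap normalisations, with one pivotal reversal of the self-dual cap contributing $\varkappa_{q}$; the arithmetic collapses to $\varkappa_{q}\sqrt{d_{\lambda}}/d_{q}$.

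For (ii), I would run the mirror version of (i)'s argument on the decomposition $|0_{e}\rangle=\sum_{\rho}f_{\rho 0}\,|\rho_{f}\rangle$ to obtain $f_{\rho 0}=\varkappa_{q}\sqrt{d_{\rho}}/d_{q}$. Substituting $\sqrt{d_{\rho}}/d_{q}=\varkappa_{q}f_{\rho 0}$ into the claimed identity reformulates it as $\sum_{\rho}f_{\rho 0}\,f_{\rho\lambda}=\delta_{\lambda 0}$, which is the $(0,\lambda)$-entry of $(F^{qqq}_{q})^{\dagger}F^{qqq}_{q}=\mathbb{I}$ (unitarity from Remark~\ref{skelremk1}), combined with the realness of the $0$-row/column established above.

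For parts (iii) and (iv), which are diagrammatic encodings of (i) and (ii), the LHS of (iii) can be rewritten, via the $F$-matrix relation (\ref{6jdef}), as a sum over an internal $f$-label; bubble-popping (\ref{innerproddef})--(\ref{mangta}) then collapses this sum to a single term, one pivotal rotation contributes $\varkappa_{q}$, and the $\Phi$-net closure contributes $d_{q}$, producing $\varkappa_{q}d_{q}f_{\mu\lambda}$ times a single $q$-strand. For (iv), the outer summation $\sum_{\rho}f_{\rho\lambda}$ on the LHS combines with (iii) (applied pointwise in $\rho$) and with the orthogonality of $F$-rows from (ii) to collapse to $\varkappa_{q}d_{q}\delta_{\mu\lambda}$ times a $q$-strand.

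The main obstacle throughout is the careful bookkeeping of the Frobenius--Schur indicator $\varkappa_{q}$, which enters each time a self-dual cap or cup is reversed by the pivotal structure (\ref{FSdef2}), and the reconciliation of the $1/\sqrt{d_{q}}$ cap normalisation against the bubble-popping constant $\sqrt{d_{i}d_{j}/d_{k}}$; these factors must balance so that the normalisations cancel to produce the clean expressions in the lemma statement. A secondary subtlety is the direction convention for the partial trace used to evaluate $\Phi$ against the theta-stump, which fixes whether $\varkappa_{q}$ appears with a positive or negative exponent.
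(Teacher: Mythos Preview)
Your approach to (i) and (iii) is essentially the paper's: computing $f_{0\lambda}$ as the inner product $\langle 0_{f},\lambda_{e}\rangle$ amounts to composing the $F$-move relation (\ref{lemtreebase}) with $0_{f}^{\dagger}$, which is exactly the paper's ``cap off the rightmost pair of leaves'' (up to the cap normalisation $1/\sqrt{d_{q}}$). Likewise, your (iii) via $F$-move plus bubble-popping is the paper's ``stack the adjoint of the right-hand tree on (\ref{lemtreebase})''.

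Where you diverge is in (ii) and (iv). The paper treats all four parts symmetrically: (ii) caps the \emph{leftmost} pair of (\ref{lemtreebase}), and (iv) stacks the adjoint of the \emph{left-hand} tree. Each of these is a one-line graphical computation parallel to (i) and (iii). You instead route (ii) through unitarity of $F^{qqq}_{q}$ after first proving the mirror formula $f_{\rho 0}=\varkappa_{q}\sqrt{d_{\rho}}/d_{q}$, and you route (iv) through (iii) plus unitarity. This works, but two points need tightening. First, what you call ``orthogonality of $F$-rows from (ii)'' is not what (ii) gives you; you need the full unitarity $\sum_{\rho} f_{\rho\mu}^{*}f_{\rho\lambda}=\delta_{\mu\lambda}$ from Remark~\ref{skelremk1}. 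Second, to invoke (iii) inside (iv) you must recognise that the theta-stump appearing in (iv) is the \emph{adjoint} of the one in (iii) (with labels swapped), so that it evaluates to $\varkappa_{q}d_{q}f_{\rho\mu}^{*}$ rather than $\varkappa_{q}d_{q}f_{\mu\rho}$; this is precisely the observation the paper records \emph{after} the proof as a consistency check with unitarity. Your route buys you a shortcut at the cost of importing unitarity as an input, whereas the paper's route is self-contained and recovers unitarity as a corollary.
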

\begin{proof}
\begin{equation}\centering\def\svgwidth{7cm}%% Creator: Inkscape 1.0 (4035a4f, 2020-05-01), www.inkscape.org
%% PDF/EPS/PS + LaTeX output extension by Johan Engelen, 2010
%% Accompanies image file '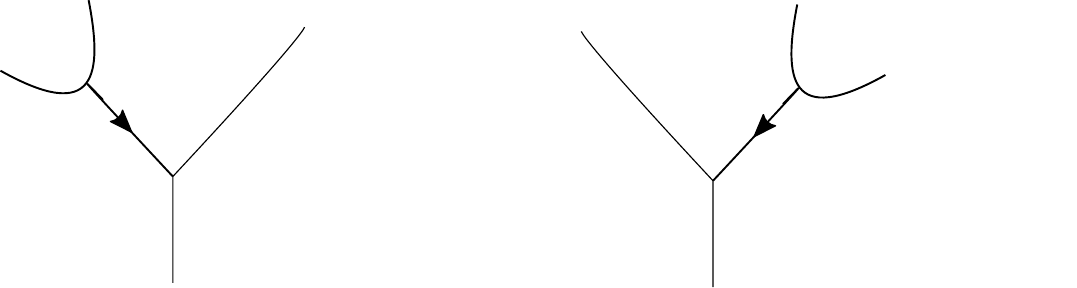' (pdf, eps, ps)
%%
%% To include the image in your LaTeX document, write
%%   \input{<filename>.pdf_tex}
%%  instead of
%%   \includegraphics{<filename>.pdf}
%% To scale the image, write
%%   \def\svgwidth{<desired width>}
%%   \input{<filename>.pdf_tex}
%%  instead of
%%   \includegraphics[width=<desired width>]{<filename>.pdf}
%%
%% Images with a different path to the parent latex file can
%% be accessed with the `import' package (which may need to be
%% installed) using
%%   \usepackage{import}
%% in the preamble, and then including the image with
%%   \import{<path to file>}{<filename>.pdf_tex}
%% Alternatively, one can specify
%%   \graphicspath{{<path to file>/}}
%% 
%% For more information, please see info/svg-inkscape on CTAN:
%%   http://tug.ctan.org/tex-archive/info/svg-inkscape
%%
\begingroup%
  \makeatletter%
  \providecommand\color[2][]{%
    \errmessage{(Inkscape) Color is used for the text in Inkscape, but the package 'color.sty' is not loaded}%
    \renewcommand\color[2][]{}%
  }%
  \providecommand\transparent[1]{%
    \errmessage{(Inkscape) Transparency is used (non-zero) for the text in Inkscape, but the package 'transparent.sty' is not loaded}%
    \renewcommand\transparent[1]{}%
  }%
  \providecommand\rotatebox[2]{#2}%
  \newcommand*\fsize{\dimexpr\f@size pt\relax}%
  \newcommand*\lineheight[1]{\fontsize{\fsize}{#1\fsize}\selectfont}%
  \ifx\svgwidth\undefined%
    \setlength{\unitlength}{310.00965509bp}%
    \ifx\svgscale\undefined%
      \relax%
    \else%
      \setlength{\unitlength}{\unitlength * \real{\svgscale}}%
    \fi%
  \else%
    \setlength{\unitlength}{\svgwidth}%
  \fi%
  \global\let\svgwidth\undefined%
  \global\let\svgscale\undefined%
  \makeatother%
  \begin{picture}(1,0.26680534)%
    \lineheight{1}%
    \setlength\tabcolsep{0pt}%
    \put(0,0){\includegraphics[width=\unitlength,page=1]{lemtreebase.pdf}}%
    \put(0.07362999,0.12656347){\color[rgb]{0,0,0}\makebox(0,0)[lt]{\lineheight{1.25}\smash{\begin{tabular}[t]{l}$\lambda$\end{tabular}}}}%
    \put(0.30828362,0.13551801){\color[rgb]{0,0,0}\makebox(0,0)[lt]{\lineheight{1.25}\smash{\begin{tabular}[t]{l}$= \sum_\rho f_{\rho\lambda}$\end{tabular}}}}%
    \put(0.72233251,0.12281143){\color[rgb]{0,0,0}\makebox(0,0)[lt]{\lineheight{1.25}\smash{\begin{tabular}[t]{l}$\rho$\end{tabular}}}}%
  \end{picture}%
\endgroup%
\label{lemtreebase}\end{equation}
\begin{enumerate}[label=(\roman*)]

\vspace{3.5mm}
\item Capping off the rightmost pair of leaves in (\ref{lemtreebase}) gives 
\vspace{7mm}
\begin{equation*}\hspace{20mm}\centering\def\svgwidth{18cm}%% Creator: Inkscape 1.0 (4035a4f, 2020-05-01), www.inkscape.org
%% PDF/EPS/PS + LaTeX output extension by Johan Engelen, 2010
%% Accompanies image file '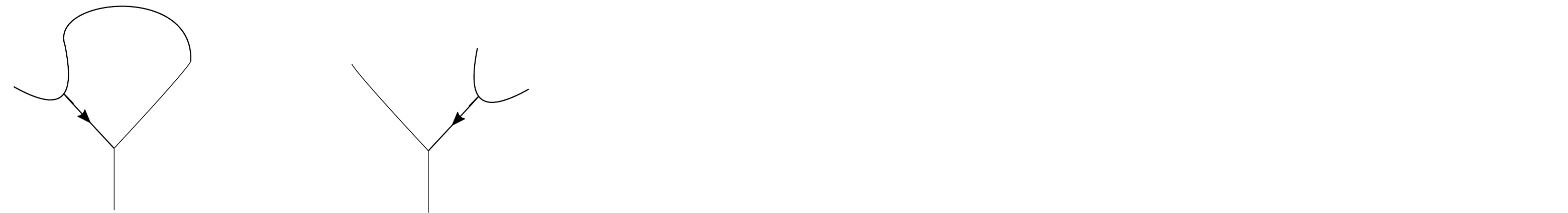' (pdf, eps, ps)
%%
%% To include the image in your LaTeX document, write
%%   \input{<filename>.pdf_tex}
%%  instead of
%%   \includegraphics{<filename>.pdf}
%% To scale the image, write
%%   \def\svgwidth{<desired width>}
%%   \input{<filename>.pdf_tex}
%%  instead of
%%   \includegraphics[width=<desired width>]{<filename>.pdf}
%%
%% Images with a different path to the parent latex file can
%% be accessed with the `import' package (which may need to be
%% installed) using
%%   \usepackage{import}
%% in the preamble, and then including the image with
%%   \import{<path to file>}{<filename>.pdf_tex}
%% Alternatively, one can specify
%%   \graphicspath{{<path to file>/}}
%% 
%% For more information, please see info/svg-inkscape on CTAN:
%%   http://tug.ctan.org/tex-archive/info/svg-inkscape
%%
\begingroup%
  \makeatletter%
  \providecommand\color[2][]{%
    \errmessage{(Inkscape) Color is used for the text in Inkscape, but the package 'color.sty' is not loaded}%
    \renewcommand\color[2][]{}%
  }%
  \providecommand\transparent[1]{%
    \errmessage{(Inkscape) Transparency is used (non-zero) for the text in Inkscape, but the package 'transparent.sty' is not loaded}%
    \renewcommand\transparent[1]{}%
  }%
  \providecommand\rotatebox[2]{#2}%
  \newcommand*\fsize{\dimexpr\f@size pt\relax}%
  \newcommand*\lineheight[1]{\fontsize{\fsize}{#1\fsize}\selectfont}%
  \ifx\svgwidth\undefined%
    \setlength{\unitlength}{899.05266709bp}%
    \ifx\svgscale\undefined%
      \relax%
    \else%
      \setlength{\unitlength}{\unitlength * \real{\svgscale}}%
    \fi%
  \else%
    \setlength{\unitlength}{\svgwidth}%
  \fi%
  \global\let\svgwidth\undefined%
  \global\let\svgscale\undefined%
  \makeatother%
  \begin{picture}(1,0.13564258)%
    \lineheight{1}%
    \setlength\tabcolsep{0pt}%
    \put(0,0){\includegraphics[width=\unitlength,page=1]{treecappf1.pdf}}%
    \put(0.03617034,0.04811769){\color[rgb]{0,0,0}\makebox(0,0)[lt]{\lineheight{1.25}\smash{\begin{tabular}[t]{l}$\lambda$\end{tabular}}}}%
    \put(0.1239063,0.05413103){\color[rgb]{0,0,0}\makebox(0,0)[lt]{\lineheight{1.25}\smash{\begin{tabular}[t]{l}$= \sum_\rho f_{\rho\lambda}$\end{tabular}}}}%
    \put(0.29163867,0.05167782){\color[rgb]{0,0,0}\makebox(0,0)[lt]{\lineheight{1.25}\smash{\begin{tabular}[t]{l}$\rho$\end{tabular}}}}%
    \put(0,0){\includegraphics[width=\unitlength,page=2]{treecappf1.pdf}}%
    \put(0.33038421,0.05568671){\color[rgb]{0,0,0}\makebox(0,0)[lt]{\lineheight{1.25}\smash{\begin{tabular}[t]{l}$\implies \varkappa_{q}$\end{tabular}}}}%
    \put(0,0){\includegraphics[width=\unitlength,page=3]{treecappf1.pdf}}%
    \put(0.42183722,0.07821981){\color[rgb]{0,0,0}\makebox(0,0)[lt]{\lineheight{1.25}\smash{\begin{tabular}[t]{l}$\lambda$\end{tabular}}}}%
    \put(0.51785595,0.05568671){\color[rgb]{0,0,0}\makebox(0,0)[lt]{\lineheight{1.25}\smash{\begin{tabular}[t]{l}$= \sum_\rho f_{\rho\lambda}\delta_{\rho0}d_q$\end{tabular}}}}%
    \put(0,0){\includegraphics[width=\unitlength,page=4]{treecappf1.pdf}}%
  \end{picture}%
\endgroup%
\end{equation*}        
\[\hspace{34mm}\implies \varkappa_q \frac{\sqrt{d_\lambda}}{d_q} = \sum_\rho f_{\rho\lambda}\delta_{\rho 0}.
\]

\vspace{20mm}
\item Capping off the leftmost pair of leaves in (\ref{lemtreebase}) gives
\vspace{7mm} \begin{equation*}\hspace{20mm}\centering\def\svgwidth{21cm}%% Creator: Inkscape 1.0 (4035a4f, 2020-05-01), www.inkscape.org
%% PDF/EPS/PS + LaTeX output extension by Johan Engelen, 2010
%% Accompanies image file '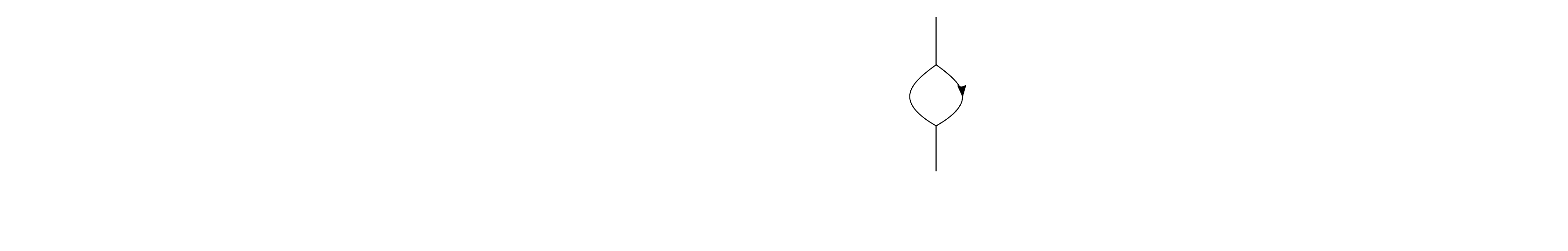' (pdf, eps, ps)
%%
%% To include the image in your LaTeX document, write
%%   \input{<filename>.pdf_tex}
%%  instead of
%%   \includegraphics{<filename>.pdf}
%% To scale the image, write
%%   \def\svgwidth{<desired width>}
%%   \input{<filename>.pdf_tex}
%%  instead of
%%   \includegraphics[width=<desired width>]{<filename>.pdf}
%%
%% Images with a different path to the parent latex file can
%% be accessed with the `import' package (which may need to be
%% installed) using
%%   \usepackage{import}
%% in the preamble, and then including the image with
%%   \import{<path to file>}{<filename>.pdf_tex}
%% Alternatively, one can specify
%%   \graphicspath{{<path to file>/}}
%% 
%% For more information, please see info/svg-inkscape on CTAN:
%%   http://tug.ctan.org/tex-archive/info/svg-inkscape
%%
\begingroup%
  \makeatletter%
  \providecommand\color[2][]{%
    \errmessage{(Inkscape) Color is used for the text in Inkscape, but the package 'color.sty' is not loaded}%
    \renewcommand\color[2][]{}%
  }%
  \providecommand\transparent[1]{%
    \errmessage{(Inkscape) Transparency is used (non-zero) for the text in Inkscape, but the package 'transparent.sty' is not loaded}%
    \renewcommand\transparent[1]{}%
  }%
  \providecommand\rotatebox[2]{#2}%
  \newcommand*\fsize{\dimexpr\f@size pt\relax}%
  \newcommand*\lineheight[1]{\fontsize{\fsize}{#1\fsize}\selectfont}%
  \ifx\svgwidth\undefined%
    \setlength{\unitlength}{968.57008566bp}%
    \ifx\svgscale\undefined%
      \relax%
    \else%
      \setlength{\unitlength}{\unitlength * \real{\svgscale}}%
    \fi%
  \else%
    \setlength{\unitlength}{\svgwidth}%
  \fi%
  \global\let\svgwidth\undefined%
  \global\let\svgscale\undefined%
  \makeatother%
  \begin{picture}(1,0.1600223)%
    \lineheight{1}%
    \setlength\tabcolsep{0pt}%
    \put(0.10031634,0.09175964){\color[rgb]{0,0,0}\makebox(0,0)[lt]{\lineheight{1.25}\smash{\begin{tabular}[t]{l}$= \sum_\rho f_{\rho\lambda}$\end{tabular}}}}%
    \put(0.27699855,0.09303915){\color[rgb]{0,0,0}\makebox(0,0)[lt]{\lineheight{1.25}\smash{\begin{tabular}[t]{l}$\implies \delta_{\lambda0}d_{q}$\end{tabular}}}}%
    \put(0,0){\includegraphics[width=\unitlength,page=1]{treecappf2.pdf}}%
    \put(0.61744223,0.11440887){\color[rgb]{0,0,0}\makebox(0,0)[lt]{\lineheight{1.25}\smash{\begin{tabular}[t]{l}$\rho$\end{tabular}}}}%
    \put(0.44192771,0.09408957){\color[rgb]{0,0,0}\makebox(0,0)[lt]{\lineheight{1.25}\smash{\begin{tabular}[t]{l}$= \varkappa_q \sum_\rho f_{\rho\lambda}$\end{tabular}}}}%
    \put(0,0){\includegraphics[width=\unitlength,page=2]{treecappf2.pdf}}%
    \put(0.02554332,0.09168533){\color[rgb]{0,0,0}\makebox(0,0)[lt]{\lineheight{1.25}\smash{\begin{tabular}[t]{l}$\lambda$\end{tabular}}}}%
    \put(0.24376351,0.09326109){\color[rgb]{0,0,0}\makebox(0,0)[lt]{\lineheight{1.25}\smash{\begin{tabular}[t]{l}$\rho$\end{tabular}}}}%
    \put(0.56627462,0.10796837){\color[rgb]{0,0,0}\makebox(0,0)[lt]{\lineheight{1.25}\smash{\begin{tabular}[t]{l}$q$\end{tabular}}}}%
    \put(0.28308688,0.00275458){\color[rgb]{0,0,0}\makebox(0,0)[lt]{\lineheight{1.25}\smash{\begin{tabular}[t]{l}$\implies \delta_{\lambda0}=\varkappa_q \sum_{\rho}\dfrac{\sqrt{d_\rho}}{d_q}f_{\rho\lambda}$\end{tabular}}}}%
  \end{picture}%
\endgroup%
\end{equation*}   

\vspace{20mm}
\item Stacking the adjoint tree of the right-hand side on (\ref{lemtreebase}) gives 
\vspace{7mm}
\begin{equation*}\hspace{20mm}\centering\def\svgwidth{13.5cm}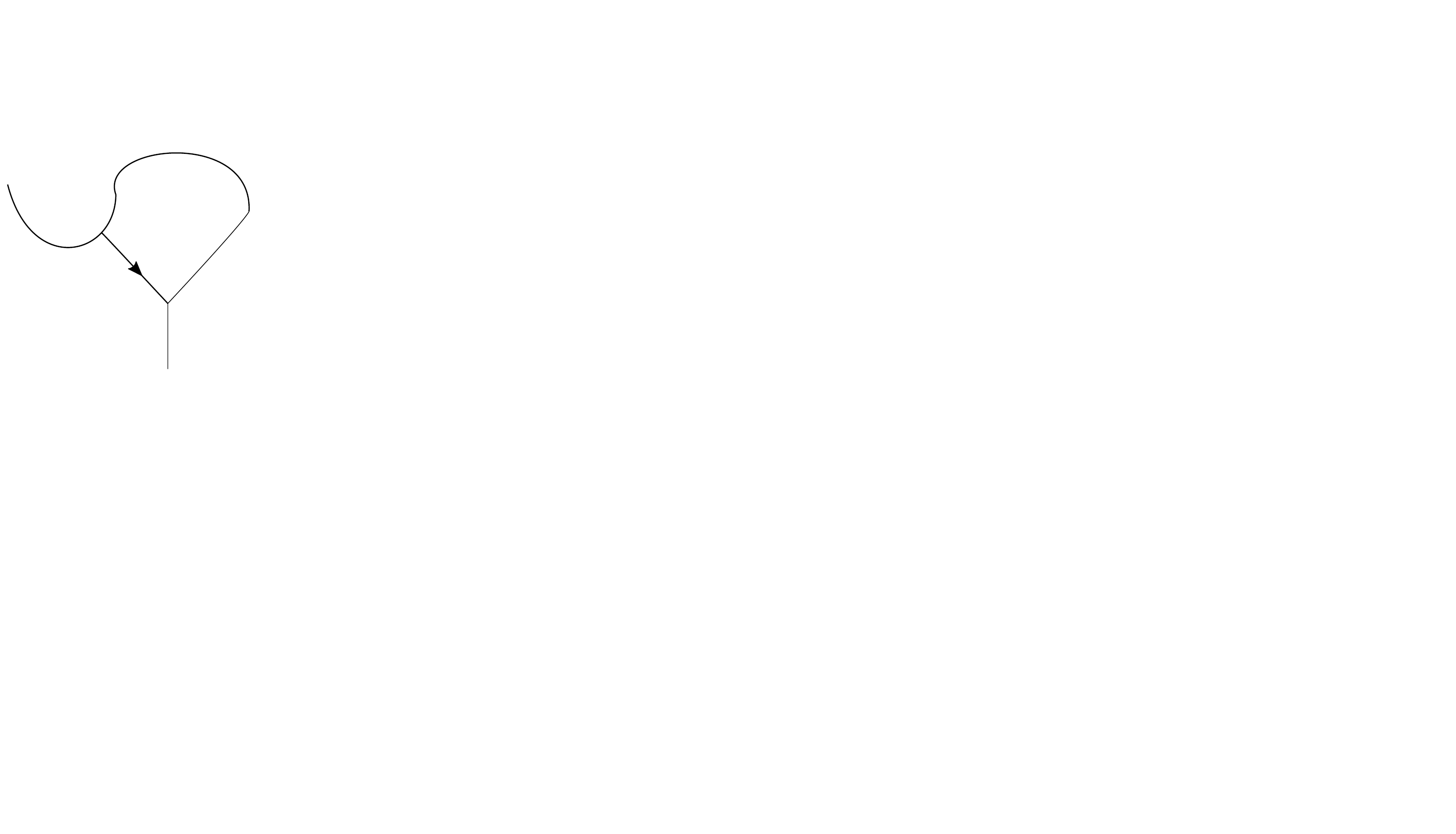\end{equation*}

\newpage
%\vspace{7mm} 
\item Stacking the adjoint tree of the left-hand side on (\ref{lemtreebase}) gives 
\vspace{4mm}
\begin{equation*}\hspace{20mm}\centering\def\svgwidth{13.5cm}%% Creator: Inkscape 1.0 (4035a4f, 2020-05-01), www.inkscape.org
%% PDF/EPS/PS + LaTeX output extension by Johan Engelen, 2010
%% Accompanies image file '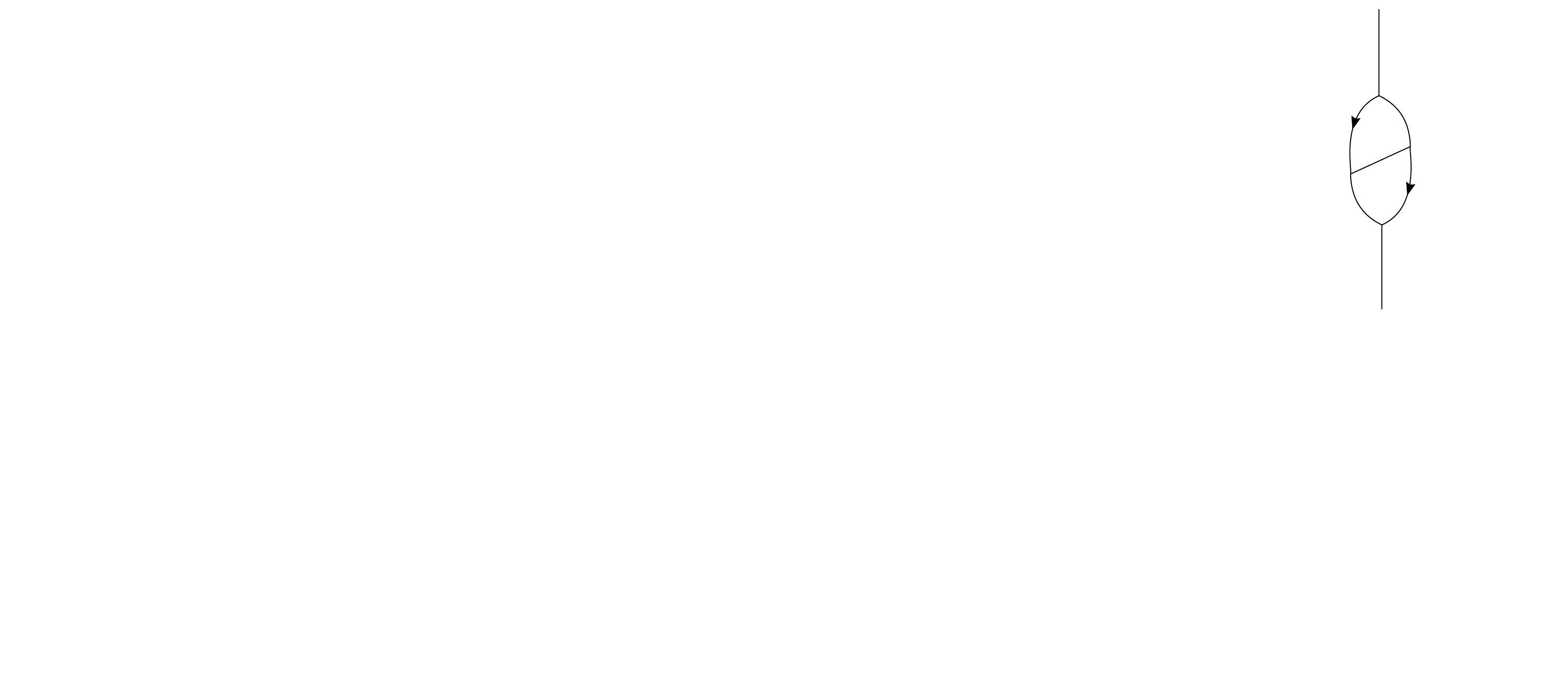' (pdf, eps, ps)
%%
%% To include the image in your LaTeX document, write
%%   \input{<filename>.pdf_tex}
%%  instead of
%%   \includegraphics{<filename>.pdf}
%% To scale the image, write
%%   \def\svgwidth{<desired width>}
%%   \input{<filename>.pdf_tex}
%%  instead of
%%   \includegraphics[width=<desired width>]{<filename>.pdf}
%%
%% Images with a different path to the parent latex file can
%% be accessed with the `import' package (which may need to be
%% installed) using
%%   \usepackage{import}
%% in the preamble, and then including the image with
%%   \import{<path to file>}{<filename>.pdf_tex}
%% Alternatively, one can specify
%%   \graphicspath{{<path to file>/}}
%% 
%% For more information, please see info/svg-inkscape on CTAN:
%%   http://tug.ctan.org/tex-archive/info/svg-inkscape
%%
\begingroup%
  \makeatletter%
  \providecommand\color[2][]{%
    \errmessage{(Inkscape) Color is used for the text in Inkscape, but the package 'color.sty' is not loaded}%
    \renewcommand\color[2][]{}%
  }%
  \providecommand\transparent[1]{%
    \errmessage{(Inkscape) Transparency is used (non-zero) for the text in Inkscape, but the package 'transparent.sty' is not loaded}%
    \renewcommand\transparent[1]{}%
  }%
  \providecommand\rotatebox[2]{#2}%
  \newcommand*\fsize{\dimexpr\f@size pt\relax}%
  \newcommand*\lineheight[1]{\fontsize{\fsize}{#1\fsize}\selectfont}%
  \ifx\svgwidth\undefined%
    \setlength{\unitlength}{926.76615557bp}%
    \ifx\svgscale\undefined%
      \relax%
    \else%
      \setlength{\unitlength}{\unitlength * \real{\svgscale}}%
    \fi%
  \else%
    \setlength{\unitlength}{\svgwidth}%
  \fi%
  \global\let\svgwidth\undefined%
  \global\let\svgscale\undefined%
  \makeatother%
  \begin{picture}(1,0.43181235)%
    \lineheight{1}%
    \setlength\tabcolsep{0pt}%
    \put(0.02625688,0.27183255){\color[rgb]{0,0,0}\makebox(0,0)[lt]{\lineheight{1.25}\smash{\begin{tabular}[t]{l}$\lambda$\end{tabular}}}}%
    \put(0.02102523,0.3644401){\color[rgb]{0,0,0}\makebox(0,0)[lt]{\lineheight{1.25}\smash{\begin{tabular}[t]{l}$\mu$\end{tabular}}}}%
    \put(0.31587482,0.27604117){\color[rgb]{0,0,0}\makebox(0,0)[lt]{\lineheight{1.25}\smash{\begin{tabular}[t]{l}$\rho$\end{tabular}}}}%
    \put(0.28828843,0.36888468){\color[rgb]{0,0,0}\makebox(0,0)[lt]{\lineheight{1.25}\smash{\begin{tabular}[t]{l}$\mu$\end{tabular}}}}%
    \put(0.12363609,0.3205677){\color[rgb]{0,0,0}\makebox(0,0)[lt]{\lineheight{1.25}\smash{\begin{tabular}[t]{l}$= \sum_\rho f_{\rho\lambda}$\end{tabular}}}}%
    \put(0.38785371,0.32064088){\color[rgb]{0,0,0}\makebox(0,0)[lt]{\lineheight{1.25}\smash{\begin{tabular}[t]{l}$\implies \delta_{\mu\lambda} \dfrac{d_q}{\sqrt{d_{\lambda}}}$\end{tabular}}}}%
    \put(0,0){\includegraphics[width=\unitlength,page=1]{treecappf4.pdf}}%
    \put(0.65931676,0.32144437){\color[rgb]{0,0,0}\makebox(0,0)[lt]{\lineheight{1.25}\smash{\begin{tabular}[t]{l}$=\varkappa_q\sum_\rho f_{\rho\lambda}$\end{tabular}}}}%
    \put(0.83560011,0.34783263){\color[rgb]{0,0,0}\makebox(0,0)[lt]{\lineheight{1.25}\smash{\begin{tabular}[t]{l}$\mu$\end{tabular}}}}%
    \put(0,0){\includegraphics[width=\unitlength,page=2]{treecappf4.pdf}}%
    \put(0.57438008,0.34153164){\color[rgb]{0,0,0}\makebox(0,0)[lt]{\lineheight{1.25}\smash{\begin{tabular}[t]{l}$\lambda$\end{tabular}}}}%
    \put(0.90322888,0.2963211){\color[rgb]{0,0,0}\makebox(0,0)[lt]{\lineheight{1.25}\smash{\begin{tabular}[t]{l}$\rho$\end{tabular}}}}%
    \put(0.39382706,0.10134463){\color[rgb]{0,0,0}\makebox(0,0)[lt]{\lineheight{1.25}\smash{\begin{tabular}[t]{l}$\implies \varkappa_q d_q\delta_{\mu\lambda}$\end{tabular}}}}%
    \put(0,0){\includegraphics[width=\unitlength,page=3]{treecappf4.pdf}}%
    \put(0.81535294,0.12452639){\color[rgb]{0,0,0}\makebox(0,0)[lt]{\lineheight{1.25}\smash{\begin{tabular}[t]{l}$\mu$\end{tabular}}}}%
    \put(0.88298162,0.07150197){\color[rgb]{0,0,0}\makebox(0,0)[lt]{\lineheight{1.25}\smash{\begin{tabular}[t]{l}$\rho$\end{tabular}}}}%
    \put(0.67348678,0.09981232){\color[rgb]{0,0,0}\makebox(0,0)[lt]{\lineheight{1.25}\smash{\begin{tabular}[t]{l}$= \sum_\rho f_{\rho\lambda}$\end{tabular}}}}%
  \end{picture}%
\endgroup%
\end{equation*} 
\end{enumerate}
\vspace{-5mm}
\end{proof}

\vspace{1.5mm}
\noindent Note that plugging the adjoint of (iii) into (iv) yields $\sum_{\rho}f_{\rho\lambda}f^{*}_{\rho\mu}=\delta_{\lambda\mu}$, which agrees with the unitarity of $F^{qqq}_{q}$. 

\begin{remark}\textbf{(Duality)}\\
\label{dualityremk}
In the following, we wish to consider the action of rotation operator $\varphi$ on our canonical basis. Expanding some arbitrary $h\in\End(q^{\otimes2})$ in this basis, it is clear that $\varphi^{2}=\mathscr{C}$. When considering the image of an $x$-jack under $\varphi$, the directed edge calls for extra caution. For $x\neq\bm{1}$ we have
\begin{equation}\centering\def\svgwidth{5.5cm}%% Creator: Inkscape 1.0 (4035a4f, 2020-05-01), www.inkscape.org
%% PDF/EPS/PS + LaTeX output extension by Johan Engelen, 2010
%% Accompanies image file '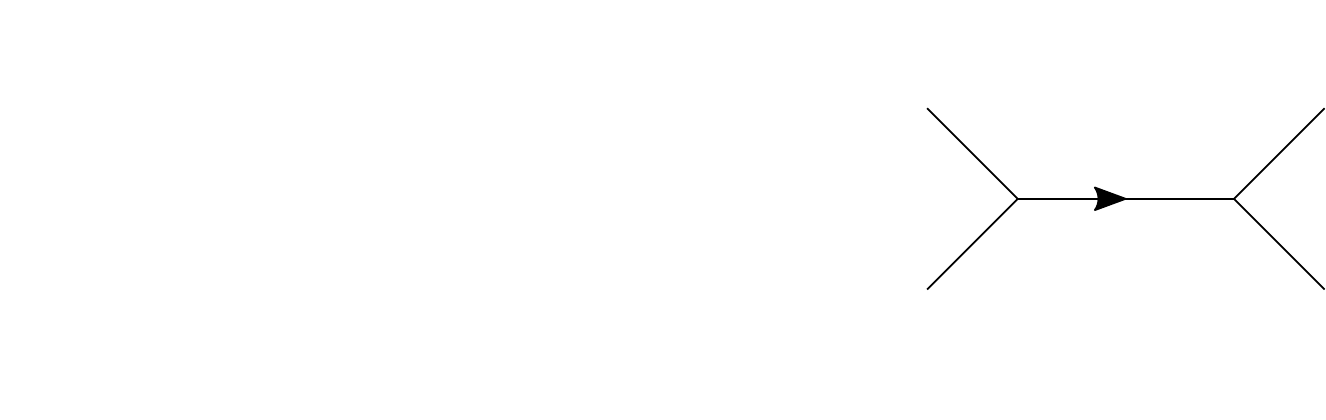' (pdf, eps, ps)
%%
%% To include the image in your LaTeX document, write
%%   \input{<filename>.pdf_tex}
%%  instead of
%%   \includegraphics{<filename>.pdf}
%% To scale the image, write
%%   \def\svgwidth{<desired width>}
%%   \input{<filename>.pdf_tex}
%%  instead of
%%   \includegraphics[width=<desired width>]{<filename>.pdf}
%%
%% Images with a different path to the parent latex file can
%% be accessed with the `import' package (which may need to be
%% installed) using
%%   \usepackage{import}
%% in the preamble, and then including the image with
%%   \import{<path to file>}{<filename>.pdf_tex}
%% Alternatively, one can specify
%%   \graphicspath{{<path to file>/}}
%% 
%% For more information, please see info/svg-inkscape on CTAN:
%%   http://tug.ctan.org/tex-archive/info/svg-inkscape
%%
\begingroup%
  \makeatletter%
  \providecommand\color[2][]{%
    \errmessage{(Inkscape) Color is used for the text in Inkscape, but the package 'color.sty' is not loaded}%
    \renewcommand\color[2][]{}%
  }%
  \providecommand\transparent[1]{%
    \errmessage{(Inkscape) Transparency is used (non-zero) for the text in Inkscape, but the package 'transparent.sty' is not loaded}%
    \renewcommand\transparent[1]{}%
  }%
  \providecommand\rotatebox[2]{#2}%
  \newcommand*\fsize{\dimexpr\f@size pt\relax}%
  \newcommand*\lineheight[1]{\fontsize{\fsize}{#1\fsize}\selectfont}%
  \ifx\svgwidth\undefined%
    \setlength{\unitlength}{381.66534184bp}%
    \ifx\svgscale\undefined%
      \relax%
    \else%
      \setlength{\unitlength}{\unitlength * \real{\svgscale}}%
    \fi%
  \else%
    \setlength{\unitlength}{\svgwidth}%
  \fi%
  \global\let\svgwidth\undefined%
  \global\let\svgscale\undefined%
  \makeatother%
  \begin{picture}(1,0.31678549)%
    \lineheight{1}%
    \setlength\tabcolsep{0pt}%
    \put(0,0){\includegraphics[width=\unitlength,page=1]{phijack.pdf}}%
    \put(0.79429845,0.19614429){\color[rgb]{0,0,0}\makebox(0,0)[lt]{\lineheight{1.25}\smash{\begin{tabular}[t]{l}$x$\end{tabular}}}}%
    \put(0,0){\includegraphics[width=\unitlength,page=2]{phijack.pdf}}%
    \put(0.31422367,0.15136987){\color[rgb]{0,0,0}\makebox(0,0)[lt]{\lineheight{1.25}\smash{\begin{tabular}[t]{l}$x$\end{tabular}}}}%
    \put(0,0){\includegraphics[width=\unitlength,page=3]{phijack.pdf}}%
    \put(0.55354982,0.14222931){\color[rgb]{0,0,0}\makebox(0,0)[lt]{\lineheight{1.25}\smash{\begin{tabular}[t]{l}=\end{tabular}}}}%
    \put(0.03100843,0.14635897){\color[rgb]{0,0,0}\makebox(0,0)[lt]{\lineheight{1.25}\smash{\begin{tabular}[t]{l}$\varphi$\end{tabular}}}}%
  \end{picture}%
\endgroup%
\label{phijack}\end{equation}
where we call the right-hand side a \textit{bone} morphism. Observing that the jack morphism may equivalently be represented with a slant,
\begin{subequations}\begin{eqnarray}
\begin{minipage}[t]{\linewidth}
          \raggedright
          \hspace{50mm}\adjustbox{valign=t}{\centering\def\svgwidth{5.5cm}%% Creator: Inkscape 1.0 (4035a4f, 2020-05-01), www.inkscape.org
%% PDF/EPS/PS + LaTeX output extension by Johan Engelen, 2010
%% Accompanies image file '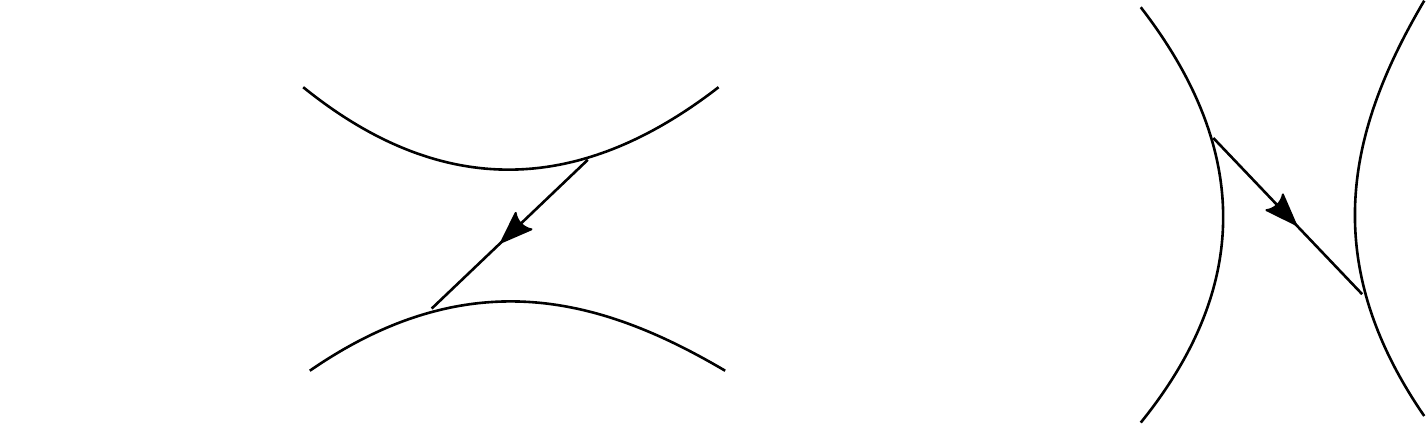' (pdf, eps, ps)
%%
%% To include the image in your LaTeX document, write
%%   \input{<filename>.pdf_tex}
%%  instead of
%%   \includegraphics{<filename>.pdf}
%% To scale the image, write
%%   \def\svgwidth{<desired width>}
%%   \input{<filename>.pdf_tex}
%%  instead of
%%   \includegraphics[width=<desired width>]{<filename>.pdf}
%%
%% Images with a different path to the parent latex file can
%% be accessed with the `import' package (which may need to be
%% installed) using
%%   \usepackage{import}
%% in the preamble, and then including the image with
%%   \import{<path to file>}{<filename>.pdf_tex}
%% Alternatively, one can specify
%%   \graphicspath{{<path to file>/}}
%% 
%% For more information, please see info/svg-inkscape on CTAN:
%%   http://tug.ctan.org/tex-archive/info/svg-inkscape
%%
\begingroup%
  \makeatletter%
  \providecommand\color[2][]{%
    \errmessage{(Inkscape) Color is used for the text in Inkscape, but the package 'color.sty' is not loaded}%
    \renewcommand\color[2][]{}%
  }%
  \providecommand\transparent[1]{%
    \errmessage{(Inkscape) Transparency is used (non-zero) for the text in Inkscape, but the package 'transparent.sty' is not loaded}%
    \renewcommand\transparent[1]{}%
  }%
  \providecommand\rotatebox[2]{#2}%
  \newcommand*\fsize{\dimexpr\f@size pt\relax}%
  \newcommand*\lineheight[1]{\fontsize{\fsize}{#1\fsize}\selectfont}%
  \ifx\svgwidth\undefined%
    \setlength{\unitlength}{410.50722137bp}%
    \ifx\svgscale\undefined%
      \relax%
    \else%
      \setlength{\unitlength}{\unitlength * \real{\svgscale}}%
    \fi%
  \else%
    \setlength{\unitlength}{\svgwidth}%
  \fi%
  \global\let\svgwidth\undefined%
  \global\let\svgscale\undefined%
  \makeatother%
  \begin{picture}(1,0.29705954)%
    \lineheight{1}%
    \setlength\tabcolsep{0pt}%
    \put(0,0){\includegraphics[width=\unitlength,page=1]{slant1.pdf}}%
    \put(0.37531375,0.11982219){\color[rgb]{0,0,0}\makebox(0,0)[lt]{\lineheight{1.25}\smash{\begin{tabular}[t]{l}$x$\end{tabular}}}}%
    \put(0.86909798,0.1913771){\color[rgb]{0,0,0}\makebox(0,0)[lt]{\lineheight{1.25}\smash{\begin{tabular}[t]{l}$x$\end{tabular}}}}%
    \put(0,0){\includegraphics[width=\unitlength,page=2]{slant1.pdf}}%
    \put(-0.00296872,0.14617281){\color[rgb]{0,0,0}\makebox(0,0)[lt]{\lineheight{1.25}\smash{\begin{tabular}[t]{l}$\varphi$\end{tabular}}}}%
    \put(0.69505898,0.14289468){\color[rgb]{0,0,0}\makebox(0,0)[lt]{\lineheight{1.25}\smash{\begin{tabular}[t]{l}=\end{tabular}}}}%
  \end{picture}%
\endgroup%
}\medskip
          \end{minipage} \label{slant1}\\
\begin{minipage}[t]{\linewidth}
          \raggedright
          \hspace{50mm}\adjustbox{valign=t}{\centering\def\svgwidth{5.5cm}%% Creator: Inkscape 1.0 (4035a4f, 2020-05-01), www.inkscape.org
%% PDF/EPS/PS + LaTeX output extension by Johan Engelen, 2010
%% Accompanies image file '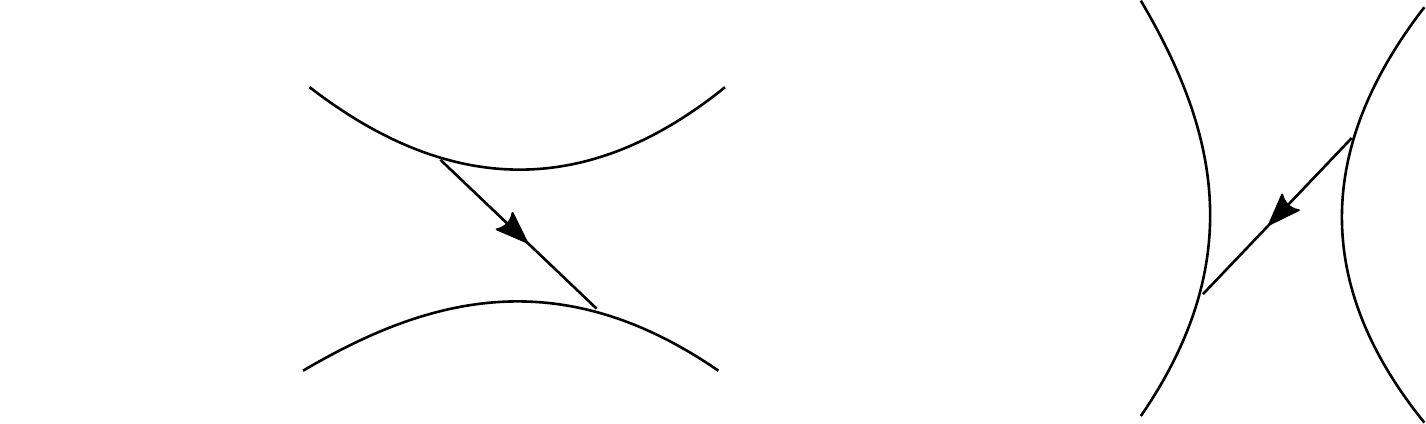' (pdf, eps, ps)
%%
%% To include the image in your LaTeX document, write
%%   \input{<filename>.pdf_tex}
%%  instead of
%%   \includegraphics{<filename>.pdf}
%% To scale the image, write
%%   \def\svgwidth{<desired width>}
%%   \input{<filename>.pdf_tex}
%%  instead of
%%   \includegraphics[width=<desired width>]{<filename>.pdf}
%%
%% Images with a different path to the parent latex file can
%% be accessed with the `import' package (which may need to be
%% installed) using
%%   \usepackage{import}
%% in the preamble, and then including the image with
%%   \import{<path to file>}{<filename>.pdf_tex}
%% Alternatively, one can specify
%%   \graphicspath{{<path to file>/}}
%% 
%% For more information, please see info/svg-inkscape on CTAN:
%%   http://tug.ctan.org/tex-archive/info/svg-inkscape
%%
\begingroup%
  \makeatletter%
  \providecommand\color[2][]{%
    \errmessage{(Inkscape) Color is used for the text in Inkscape, but the package 'color.sty' is not loaded}%
    \renewcommand\color[2][]{}%
  }%
  \providecommand\transparent[1]{%
    \errmessage{(Inkscape) Transparency is used (non-zero) for the text in Inkscape, but the package 'transparent.sty' is not loaded}%
    \renewcommand\transparent[1]{}%
  }%
  \providecommand\rotatebox[2]{#2}%
  \newcommand*\fsize{\dimexpr\f@size pt\relax}%
  \newcommand*\lineheight[1]{\fontsize{\fsize}{#1\fsize}\selectfont}%
  \ifx\svgwidth\undefined%
    \setlength{\unitlength}{410.50717812bp}%
    \ifx\svgscale\undefined%
      \relax%
    \else%
      \setlength{\unitlength}{\unitlength * \real{\svgscale}}%
    \fi%
  \else%
    \setlength{\unitlength}{\svgwidth}%
  \fi%
  \global\let\svgwidth\undefined%
  \global\let\svgscale\undefined%
  \makeatother%
  \begin{picture}(1,0.29705957)%
    \lineheight{1}%
    \setlength\tabcolsep{0pt}%
    \put(0,0){\includegraphics[width=\unitlength,page=1]{slant2.pdf}}%
    \put(0.3809643,0.12924027){\color[rgb]{0,0,0}\makebox(0,0)[lt]{\lineheight{1.25}\smash{\begin{tabular}[t]{l}$x$\end{tabular}}}}%
    \put(0.85968048,0.17630827){\color[rgb]{0,0,0}\makebox(0,0)[lt]{\lineheight{1.25}\smash{\begin{tabular}[t]{l}$x^*$\end{tabular}}}}%
    \put(0,0){\includegraphics[width=\unitlength,page=2]{slant2.pdf}}%
    \put(-0.00296874,0.14617292){\color[rgb]{0,0,0}\makebox(0,0)[lt]{\lineheight{1.25}\smash{\begin{tabular}[t]{l}$\varphi$\end{tabular}}}}%
    \put(0.69505882,0.14289479){\color[rgb]{0,0,0}\makebox(0,0)[lt]{\lineheight{1.25}\smash{\begin{tabular}[t]{l}=\end{tabular}}}}%
  \end{picture}%
\endgroup%
}\medskip
          \end{minipage} \label{slant2}
\end{eqnarray}\end{subequations}
For the $x$-bone to be well-defined, we must be able to identify (\ref{slant1}), (\ref{slant2}) and (\ref{phijack}). The adjunction of (\ref{slant1}) yields
\begin{equation*}\centering\hspace{6cm}\def\svgwidth{10cm}%% Creator: Inkscape 1.0 (4035a4f, 2020-05-01), www.inkscape.org
%% PDF/EPS/PS + LaTeX output extension by Johan Engelen, 2010
%% Accompanies image file '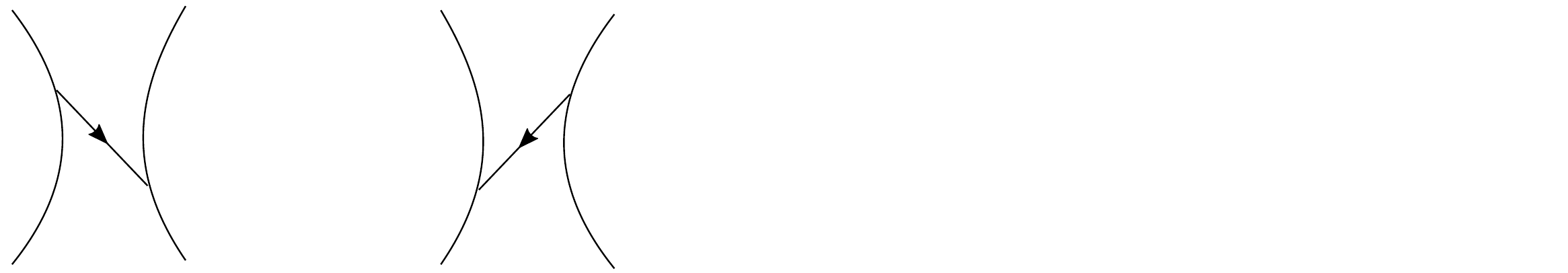' (pdf, eps, ps)
%%
%% To include the image in your LaTeX document, write
%%   \input{<filename>.pdf_tex}
%%  instead of
%%   \includegraphics{<filename>.pdf}
%% To scale the image, write
%%   \def\svgwidth{<desired width>}
%%   \input{<filename>.pdf_tex}
%%  instead of
%%   \includegraphics[width=<desired width>]{<filename>.pdf}
%%
%% Images with a different path to the parent latex file can
%% be accessed with the `import' package (which may need to be
%% installed) using
%%   \usepackage{import}
%% in the preamble, and then including the image with
%%   \import{<path to file>}{<filename>.pdf_tex}
%% Alternatively, one can specify
%%   \graphicspath{{<path to file>/}}
%% 
%% For more information, please see info/svg-inkscape on CTAN:
%%   http://tug.ctan.org/tex-archive/info/svg-inkscape
%%
\begingroup%
  \makeatletter%
  \providecommand\color[2][]{%
    \errmessage{(Inkscape) Color is used for the text in Inkscape, but the package 'color.sty' is not loaded}%
    \renewcommand\color[2][]{}%
  }%
  \providecommand\transparent[1]{%
    \errmessage{(Inkscape) Transparency is used (non-zero) for the text in Inkscape, but the package 'transparent.sty' is not loaded}%
    \renewcommand\transparent[1]{}%
  }%
  \providecommand\rotatebox[2]{#2}%
  \newcommand*\fsize{\dimexpr\f@size pt\relax}%
  \newcommand*\lineheight[1]{\fontsize{\fsize}{#1\fsize}\selectfont}%
  \ifx\svgwidth\undefined%
    \setlength{\unitlength}{671.15162767bp}%
    \ifx\svgscale\undefined%
      \relax%
    \else%
      \setlength{\unitlength}{\unitlength * \real{\svgscale}}%
    \fi%
  \else%
    \setlength{\unitlength}{\svgwidth}%
  \fi%
  \global\let\svgwidth\undefined%
  \global\let\svgscale\undefined%
  \makeatother%
  \begin{picture}(1,0.17406775)%
    \lineheight{1}%
    \setlength\tabcolsep{0pt}%
    \put(0,0){\includegraphics[width=\unitlength,page=1]{adjunction1.pdf}}%
    \put(0.17975366,0.08147346){\color[rgb]{0,0,0}\makebox(0,0)[lt]{\lineheight{1.25}\smash{\begin{tabular}[t]{l}$\overset{\dagger}{\widetilde{\longrightarrow}}$\\\end{tabular}}}}%
    \put(0.0566042,0.10304494){\color[rgb]{0,0,0}\makebox(0,0)[lt]{\lineheight{1.25}\smash{\begin{tabular}[t]{l}$x$\end{tabular}}}}%
    \put(0.32142938,0.10642502){\color[rgb]{0,0,0}\makebox(0,0)[lt]{\lineheight{1.25}\smash{\begin{tabular}[t]{l}$x$\end{tabular}}}}%
  \end{picture}%
\endgroup%
\end{equation*}
and since the bone is taken to be self-adjoint, we require that $x=x^{*}$. When considering $\varphi$ in the canonical basis we must therefore assume that $\mathscr{C}=\id$ i.e.\ $\{x_{i}\}_{i}$ are self-dual in (\ref{mainrule}). This obviates the need to direct any edges in our diagrams.
\end{remark}

%\begin{lemma}
%\begin{equation}R^{qq}_{\mu}=R^{qq}_{\mu^{*}}\end{equation}
%\label{repeatedev}
%\end{lemma}

%\begin{proof}
%\begin{equation*}\Px=\sum_{\mu}R^{qq}_{\mu}\OJack{\mu}\stackrel{\varphi^{2}}{\implies}\Px=\sum_{\mu}R^{qq}_{\mu}\OJack{\mu^{*}}\end{equation*}
%\end{proof}
%\noindent In particular, this tells us that if $R^{qq}$ has no repeated eigenvalues then $\mathscr{C}=\id$. Similarly, one can show that $R^{ab}_{c}=R^{a^*b^*}_{c*}$. 

\begin{theorem}\textbf{(Bones via jacks)}\\
\label{bojackthm}
Given fusion rule (\ref{mainrule}) with $x_{i}$ self-dual, we have
\begin{equation}
\Bone{\lambda} = \frac{\sqrt{d_\lambda}}{d_q}\Ccm + \varkappa_q \sum_i f_{i\lambda} \Jack{i} 
\label{boneviajacks}\end{equation}
\end{theorem}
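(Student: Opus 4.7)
The plan is to expand $\Bone{\lambda}$ in the canonical jack basis of $\End(q^{\otimes 2})$ and to pin down the coefficients via a trace pairing whose value is a tetrahedral $6j$-type closed diagram. By the basis decomposition (\ref{basisdecomp}), the collection $\{\Jack{\mu}\}_\mu$, with $\mu$ running over $\{0\}\cup\{i\}$ and $\Jack{0}\equiv\Ccm$, spans $\End(q^{\otimes 2})$; so I write $\Bone{\lambda}=\sum_\mu a_\mu\,\Jack{\mu}$ and aim to identify $a_0=\sqrt{d_\lambda}/d_q$ together with $a_i=\varkappa_q f_{i\lambda}$.

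To extract $a_\mu$, I exploit the orthogonality of jacks under composition. Combining (\ref{innerproddef}) with the normalization (\ref{mangta}) gives $\Jack{\mu}\circ\Jack{\mu'}=\delta_{\mu\mu'}(d_q/\sqrt{d_\mu})\Jack{\mu}$; together with $\tr(\Jack{\mu})=\Phi(q,q,\mu)=d_q\sqrt{d_\mu}$, this yields $\tr(\Jack{\mu}\circ\Jack{\mu'})=\delta_{\mu\mu'}d_q^{2}$. Composing both sides of the proposed expansion with $\Jack{\mu}$ and tracing therefore gives $a_\mu d_q^{2}=\tr(\Bone{\lambda}\circ\Jack{\mu})$. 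The closed diagram on the right is a tetrahedron with four $q$-edges, a $\lambda$-edge (from the bone) and a $\mu$-edge (from the jack)---precisely the graphical form of the $6j$-symbol $f_{\mu\lambda}$.

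The main step is to establish $\tr(\Bone{\lambda}\circ\Jack{\mu})=\varkappa_q d_q^{2} f_{\mu\lambda}$. I do this by invoking the $F$-move (\ref{lemtreebase}) inside the closed diagram to resolve the $\lambda$-bone in terms of bones of other channels, and then collapsing each resulting bubble by Lemma \ref{treecapping}(iii); the Frobenius--Schur factor $\varkappa_q$ that appears comes from the two cup-cap closures on the outer $q$-strands, exactly as in (\ref{rotx}). From this one reads off $a_\mu=\varkappa_q f_{\mu\lambda}$. For the special case $\mu=0$, Lemma \ref{treecapping}(i) gives $\varkappa_q f_{0\lambda}=\varkappa_q^{2}\sqrt{d_\lambda}/d_q=\sqrt{d_\lambda}/d_q$, reproducing the coefficient of $\Ccm$ in the statement and yielding the asserted formula. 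The main technical obstacle is the careful bookkeeping of the Frobenius--Schur sign through these cup-cap closures, which is routine but notationally heavy.
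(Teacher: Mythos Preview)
Your approach is valid and takes a genuinely different route from the paper. You extract the coefficients via the trace pairing $a_\mu=d_q^{-2}\tr\bigl(\Bone{\lambda}\circ\Jack{\mu}\bigr)$ and then identify the resulting closed tetrahedron with $\varkappa_q d_q^{2} f_{\mu\lambda}$. The paper instead applies a single partial-cap map $\Omega\colon h\mapsto(\ev_q\otimes\id_q)\circ(\id_q\otimes h)$ directly to the expansion $\Bone{\lambda}=a^\lambda\,\Ccm+\sum_i b_i^\lambda\,\Jack{i}$: under $\Omega$ the bone straightens (after removing one zig-zag, contributing a single $\varkappa_q$) into the left-branching tree on the left of (\ref{lemtreebase}), while each jack becomes a right-branching tree, so the coefficients are read off immediately from the defining relation (\ref{6jdef}) together with Lemma~\ref{treecapping}(i). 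The paper's route is shorter and never leaves $\Hom(q^{\otimes 3},q)$; your route is the standard inner-product extraction and, as a bonus, yields the tetrahedral identity of Corollary~\ref{popping}(iv) independently of the theorem---indeed the paper remarks there that this is an ``alternative'' derivation. Two minor imprecisions in your write-up: the bubble collapse you need after applying the $F$-move inside the closed diagram is the orthogonality (\ref{innerproddef})--(\ref{mangta}), not Lemma~\ref{treecapping}(iii) itself; a cleaner phrasing of your main step is that $\tr\bigl(\Bone{\lambda}\circ\Jack{\mu}\bigr)$ is, by spherical isotopy, the quantum trace of the theta-stump morphism appearing in Lemma~\ref{treecapping}(iii), which already carries the factor $\varkappa_q d_q$. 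And the sign comes from a \emph{single} zig-zag straightening, not from ``two cup-cap closures''.
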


\begin{proof}
Expanding the bone in the canonical basis, 
\begin{equation}\Bone{\lambda} = a^\lambda \Ccm +  \sum_i b_i^\lambda \Jack{i}
\label{canonbone}\end{equation}
Given a morphism $h\in\End(q^{\otimes2})$, let $h':=\id_{q}\otimes h \in\End(q^{\otimes3})$. Then we define the linear map $\Omega:h\mapsto h'\mapsto (\ev_{q}\otimes\id_{q})\circ h'$. Applying $\Omega$ to (\ref{canonbone}), we get\vspace{1mm}

\centering\def\svgwidth{8cm}%% Creator: Inkscape 1.0 (4035a4f, 2020-05-01), www.inkscape.org
%% PDF/EPS/PS + LaTeX output extension by Johan Engelen, 2010
%% Accompanies image file '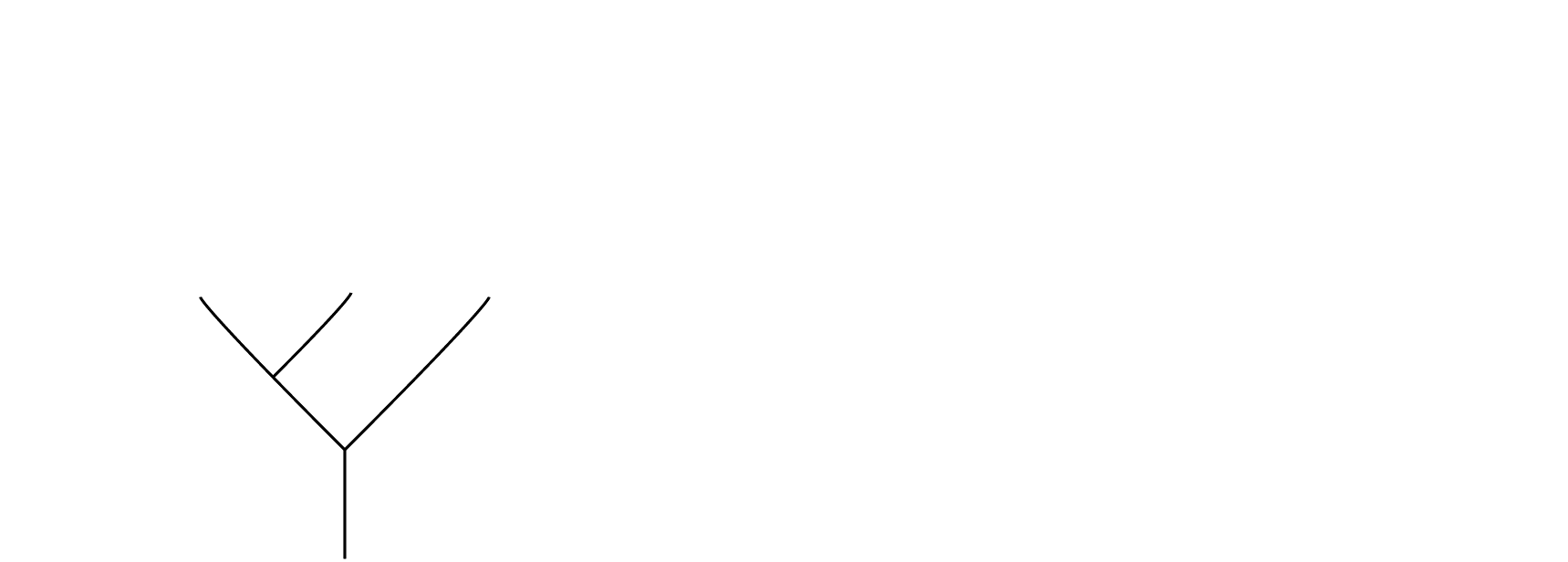' (pdf, eps, ps)
%%
%% To include the image in your LaTeX document, write
%%   \input{<filename>.pdf_tex}
%%  instead of
%%   \includegraphics{<filename>.pdf}
%% To scale the image, write
%%   \def\svgwidth{<desired width>}
%%   \input{<filename>.pdf_tex}
%%  instead of
%%   \includegraphics[width=<desired width>]{<filename>.pdf}
%%
%% Images with a different path to the parent latex file can
%% be accessed with the `import' package (which may need to be
%% installed) using
%%   \usepackage{import}
%% in the preamble, and then including the image with
%%   \import{<path to file>}{<filename>.pdf_tex}
%% Alternatively, one can specify
%%   \graphicspath{{<path to file>/}}
%% 
%% For more information, please see info/svg-inkscape on CTAN:
%%   http://tug.ctan.org/tex-archive/info/svg-inkscape
%%
\begingroup%
  \makeatletter%
  \providecommand\color[2][]{%
    \errmessage{(Inkscape) Color is used for the text in Inkscape, but the package 'color.sty' is not loaded}%
    \renewcommand\color[2][]{}%
  }%
  \providecommand\transparent[1]{%
    \errmessage{(Inkscape) Transparency is used (non-zero) for the text in Inkscape, but the package 'transparent.sty' is not loaded}%
    \renewcommand\transparent[1]{}%
  }%
  \providecommand\rotatebox[2]{#2}%
  \newcommand*\fsize{\dimexpr\f@size pt\relax}%
  \newcommand*\lineheight[1]{\fontsize{\fsize}{#1\fsize}\selectfont}%
  \ifx\svgwidth\undefined%
    \setlength{\unitlength}{494.98585618bp}%
    \ifx\svgscale\undefined%
      \relax%
    \else%
      \setlength{\unitlength}{\unitlength * \real{\svgscale}}%
    \fi%
  \else%
    \setlength{\unitlength}{\svgwidth}%
  \fi%
  \global\let\svgwidth\undefined%
  \global\let\svgscale\undefined%
  \makeatother%
  \begin{picture}(1,0.35820263)%
    \lineheight{1}%
    \setlength\tabcolsep{0pt}%
    \put(0,0){\includegraphics[width=\unitlength,page=1]{bojackpf1.pdf}}%
    \put(0.5729797,0.06998244){\color[rgb]{0,0,0}\makebox(0,0)[lt]{\lineheight{1.25}\smash{\begin{tabular}[t]{l}$+\varkappa_q\sum_ib^\lambda_i$\end{tabular}}}}%
    \put(0.29584509,0.07072904){\color[rgb]{0,0,0}\makebox(0,0)[lt]{\lineheight{1.25}\smash{\begin{tabular}[t]{l}$= \varkappa_q a^\lambda$\end{tabular}}}}%
    \put(0.14693987,0.08001818){\color[rgb]{0,0,0}\makebox(0,0)[lt]{\lineheight{1.25}\smash{\begin{tabular}[t]{l}$\lambda$\end{tabular}}}}%
    \put(0,0){\includegraphics[width=\unitlength,page=2]{bojackpf1.pdf}}%
    \put(0.88533607,0.09557329){\color[rgb]{0,0,0}\makebox(0,0)[lt]{\lineheight{1.25}\smash{\begin{tabular}[t]{l}$i$\end{tabular}}}}%
    \put(0,0){\includegraphics[width=\unitlength,page=3]{bojackpf1.pdf}}%
    \put(-0.00163539,0.08715421){\color[rgb]{0,0,0}\makebox(0,0)[lt]{\lineheight{1.25}\smash{\begin{tabular}[t]{l}$\implies$\end{tabular}}}}%
    \put(0,0){\includegraphics[width=\unitlength,page=4]{bojackpf1.pdf}}%
    \put(0.2938594,0.28071904){\color[rgb]{0,0,0}\makebox(0,0)[lt]{\lineheight{1.25}\smash{\begin{tabular}[t]{l}$= a^\lambda$\end{tabular}}}}%
    \put(0.86695509,0.27958656){\color[rgb]{0,0,0}\makebox(0,0)[lt]{\lineheight{1.25}\smash{\begin{tabular}[t]{l}$i$\end{tabular}}}}%
    \put(0,0){\includegraphics[width=\unitlength,page=5]{bojackpf1.pdf}}%
    \put(0.58686029,0.27847086){\color[rgb]{0,0,0}\makebox(0,0)[lt]{\lineheight{1.25}\smash{\begin{tabular}[t]{l}$+\sum_ib^\lambda_i$\end{tabular}}}}%
    \put(0.17962664,0.28834722){\color[rgb]{0,0,0}\makebox(0,0)[lt]{\lineheight{1.25}\smash{\begin{tabular}[t]{l}$\lambda$\end{tabular}}}}%
  \end{picture}%
\endgroup%

\noindent \mbox{From (\ref{6jdef}), we see that $(a^{\lambda},b_{i}^{\lambda})=(\varkappa_{q}f_{0\lambda},\varkappa_{q}f_{i\lambda})$. The result follows from Lemma \ref{treecapping}(i).}
\end{proof}

\begin{corollary}
\label{bojackcor1} 
Let $D$ denote the matrix representation of a rotation operator $\varphi$ in the canonical basis. Then 
\begin{enumerate}[label=(\roman*)]
\item $D=\varkappa_{q}F^{qqq}_{q}$
\item $F^{qqq}_{q}$ is self-inverse
\item $f_{\lambda0}=f_{0\lambda}$ 
\item The parity of all entries in $\sigma(\varphi)$ cannot be the same
\end{enumerate}
\end{corollary}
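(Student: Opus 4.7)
The plan is to establish the four parts in the order (i), (iii), (ii), (iv), with each part building on the previous. The principal tools are Theorem \ref{bojackthm}, Lemma \ref{treecapping}(i), the identity $\varphi^{2}=\mathscr{C}$ (which under the present self-duality hypothesis collapses to $\varphi^{2}=\id$), and a short quantum-dimension count.

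For (i), Theorem \ref{bojackthm} already supplies
\[
\Bone{\lambda}\;=\;\frac{\sqrt{d_{\lambda}}}{d_{q}}\Ccm \;+\; \varkappa_{q}\sum_{i}f_{i\lambda}\Jack{i},
\]
and Lemma \ref{treecapping}(i) lets us rewrite the $\Ccm$-coefficient as $\varkappa_{q}f_{0\lambda}$. Adopting the convention $\Jack{0}:=\Ccm$, the formula collapses to $\Bone{\lambda}=\varkappa_{q}\sum_{\mu}f_{\mu\lambda}\Jack{\mu}$. Reading off coefficients of $\varphi(\Jack{\lambda})=\Bone{\lambda}$ gives $[D]_{\mu\lambda}=\varkappa_{q}f_{\mu\lambda}$, proving $D=\varkappa_{q}F^{qqq}_{q}$. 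For (iii), specialising to $\lambda=0$ yields $\varphi(\Ccm)=\varkappa_{q}\sum_{\mu}f_{\mu 0}\Jack{\mu}$; simultaneously, the direct computation $\varphi(\Ccm)=\Idm$ together with (\ref{orientedid}) gives $\varphi(\Ccm)=\sum_{\mu}(\sqrt{d_{\mu}}/d_{q})\Jack{\mu}$. Matching coefficients forces $f_{\mu 0}=\varkappa_{q}\sqrt{d_{\mu}}/d_{q}$, which equals $f_{0\mu}$ by Lemma \ref{treecapping}(i).

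For (ii), the self-duality hypothesis (Remark \ref{dualityremk}) kills the conjugation matrix, $\mathscr{C}=\id$, so $\varphi^{2}=\id$ and hence $D^{2}=I$. Combined with (i) and $\varkappa_{q}^{2}=1$, this yields $(F^{qqq}_{q})^{2}=I$. For (iv), $D^{2}=I$ confines $\sigma(\varphi)$ to $\{\pm 1\}$. If $\sigma(\varphi)=\{\epsilon\}$ were a singleton, diagonalisability of $D$ would force $D=\epsilon I$, hence $F^{qqq}_{q}=\epsilon\varkappa_{q}I$; comparing the $(0,0)$-entry with Lemma \ref{treecapping}(i) gives $d_{q}=\epsilon$, and positivity forces $d_{q}=1$. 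This, however, is incompatible with the fusion rule: since $N\geq 2$ we have $k\geq 1$, so $d_{q}^{2}=1+\sum_{i}d_{x_{i}}\geq 2$ (using the standard fact $d_{x}\geq 1$ for any simple object in a unitary fusion category, which follows from $d_{x}^{2}=d_{x\otimes x^{*}}\geq d_{\bm{1}}=1$). The resulting contradiction forces both $+1$ and $-1$ to appear in $\sigma(\varphi)$.

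The chief subtlety lies in (iii): we must confirm that Theorem \ref{bojackthm} applies uniformly at $\lambda=0$ (with $\Jack{0}\equiv\Ccm$ and the invisible $\bm{1}$-edge through the middle of the bone) and then match the resulting expansion against $\varphi(\Ccm)=\Idm$. Once this symmetry of the $F$-matrix is in hand, (ii) is a one-line consequence of $\varphi^{2}=\id$, and (iv) reduces to the dimension count just described.
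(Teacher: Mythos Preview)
Your proof is correct. Parts (i)--(iii) follow essentially the same route as the paper: (i) is read off from Theorem~\ref{bojackthm} together with Lemma~\ref{treecapping}(i); (iii) specialises the bone-via-jacks formula to $\lambda=0$, where it collapses to the identity decomposition~(\ref{orientedid}), yielding $f_{\mu 0}=\varkappa_q\sqrt{d_\mu}/d_q=f_{0\mu}$; and (ii) is immediate from $\varphi^2=\mathscr{C}=\id$.

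Your argument for (iv), however, differs from the paper's. The paper bounds the trace: unitarity of $F^{qqq}_q$ gives $|f_{ii}|\leq 1$ for each $i$, while $|f_{00}|=1/d_q<1$ (since $N\geq 2$ forces $d_q>1$), so $|\tr(\varphi)|=|\tr(F^{qqq}_q)|<N$, ruling out a spectrum of uniform sign. You instead argue that $D^2=I$ makes $D$ diagonalisable, so a one-point spectrum would force $D=\epsilon I$; comparing the $(0,0)$-entry then yields $d_q=1$, contradicting $d_q^2=1+\sum_i d_{x_i}\geq 2$. Your route is arguably more elementary in that it avoids invoking the entrywise unitarity bound, needing only the single entry $f_{00}$ and the dimension count; the paper's trace argument, on the other hand, gives slightly more information (a strict inequality on $|\tr(\varphi)|$) at the cost of appealing to unitarity of the full $F$-matrix.
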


\begin{proof}\hspace{2mm}
\begin{enumerate}[label=(\roman*)]
\item Follows directly from Theorem \ref{bojackthm}.
\item $D^{2}=\mathscr{C}$ where $\mathscr{C}=\id$ (since $\{x_{i}\}_{i}$ are self-dual), whence the result \mbox{follows by (i).}
\item For $\lambda=0$, note that (\ref{boneviajacks}) is (\ref{orientedid}) and so $f_{i0}=\varkappa_{q}\frac{\sqrt{d_{i}}}{d_{q}}$. The result follows from Lemma \ref{treecapping}(i). 
\item Since $\varphi$ is an involution for $\mathscr{C}=\id$, its spectrum can only consist of $\pm1$ s. Observe that $|\tr(F^{qqq}_{q})|<N$ since $|f_{ii}|\leq1$ and $|f_{00}|=\frac{1}{d_{q}}<1$.  By (i), $\tr(\varphi)=\varkappa_{q}\tr(F^{qqq}_{q})$ whence (iv) follows.
\end{enumerate}
\end{proof}

\noindent Corollary \ref{bojackcor1}(ii) can also be shown by applying linear map $\Omega':h\mapsto h''\mapsto (\id_{q}\otimes\ev_{q})\circ h''$ to (\ref{canonbone}), where $h\in\End(q^{\otimes2})$ and $h'':=h\otimes\id_{q}$.

\noindent Stated differently, Corollary \ref{bojackcor1}(iv) says that there are strictly less than $N$ linearly independent formal diagrams in $\End(q^{\otimes2})$ that are (anti)symmetric under rotation.

%---------------------------------------------------------------------------------------------------------------------------------------------------------------------
%NOTE: The following corollary was made redundant by the subsequent proof that F is real-symmetric
%\begin{corollary}\label{basicapp}
%\begin{enumerate}[label=(\roman*)]
%\item $\frac{\sqrt{d_{i}}}{d_{q}}=-\sum_{j}\sqrt{d_{j}}f_{ij}$
%\item $\sum_{\nu}f_{\mu\nu}=\sum_{\nu}f_{\nu\mu}$
%\item $F^{qqq}_{q}$ is real symmetric for $N=2,3$
%\end{enumerate}
%\end{corollary}

%\begin{private}Note: For $N=4$, can see that $F^{qqq}_{q}$ is symmetric if have one off-diagonal pair (i,j) s.t. $f_{ij}=f_{ji}$. Can we say anything about general $N$?\end{private}

%\begin{proof}\hspace{2mm}
%\begin{enumerate}[label=(\roman*)]
%\item Applying $\varphi$ to \red{[REF: id in the canonical basis]},
%\begin{equation*}\red{\text{Insert `basicappdiag1'; make sure i and j were swapped rel to original drawing}}\end{equation*}
%where in the final line we used $d_{q}^{2}=1+\sum_{j}d_{j}$. The result follows by comparison with \red{[REF: id in the canonical %basis]}.
%\item
%\item
%\end{enumerate}
%\end{proof}

\begin{corollary}\label{popping}\textbf{(Bubble-popping)}
\begin{enumerate}[label=(\roman*)]
\item \begin{minipage}[t]{\linewidth}
          \raggedright
          \adjustbox{valign=t}{\centering\def\svgwidth{4cm}%% Creator: Inkscape 1.0 (4035a4f, 2020-05-01), www.inkscape.org
%% PDF/EPS/PS + LaTeX output extension by Johan Engelen, 2010
%% Accompanies image file '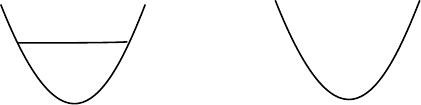' (pdf, eps, ps)
%%
%% To include the image in your LaTeX document, write
%%   \input{<filename>.pdf_tex}
%%  instead of
%%   \includegraphics{<filename>.pdf}
%% To scale the image, write
%%   \def\svgwidth{<desired width>}
%%   \input{<filename>.pdf_tex}
%%  instead of
%%   \includegraphics[width=<desired width>]{<filename>.pdf}
%%
%% Images with a different path to the parent latex file can
%% be accessed with the `import' package (which may need to be
%% installed) using
%%   \usepackage{import}
%% in the preamble, and then including the image with
%%   \import{<path to file>}{<filename>.pdf_tex}
%% Alternatively, one can specify
%%   \graphicspath{{<path to file>/}}
%% 
%% For more information, please see info/svg-inkscape on CTAN:
%%   http://tug.ctan.org/tex-archive/info/svg-inkscape
%%
\begingroup%
  \makeatletter%
  \providecommand\color[2][]{%
    \errmessage{(Inkscape) Color is used for the text in Inkscape, but the package 'color.sty' is not loaded}%
    \renewcommand\color[2][]{}%
  }%
  \providecommand\transparent[1]{%
    \errmessage{(Inkscape) Transparency is used (non-zero) for the text in Inkscape, but the package 'transparent.sty' is not loaded}%
    \renewcommand\transparent[1]{}%
  }%
  \providecommand\rotatebox[2]{#2}%
  \newcommand*\fsize{\dimexpr\f@size pt\relax}%
  \newcommand*\lineheight[1]{\fontsize{\fsize}{#1\fsize}\selectfont}%
  \ifx\svgwidth\undefined%
    \setlength{\unitlength}{121.11109696bp}%
    \ifx\svgscale\undefined%
      \relax%
    \else%
      \setlength{\unitlength}{\unitlength * \real{\svgscale}}%
    \fi%
  \else%
    \setlength{\unitlength}{\svgwidth}%
  \fi%
  \global\let\svgwidth\undefined%
  \global\let\svgscale\undefined%
  \makeatother%
  \begin{picture}(1,0.2484894)%
    \lineheight{1}%
    \setlength\tabcolsep{0pt}%
    \put(0,0){\includegraphics[width=\unitlength,page=1]{whiskytumbler.pdf}}%
    \put(0.35539956,0.0946726){\color[rgb]{0,0,0}\makebox(0,0)[lt]{\lineheight{1.25}\smash{\begin{tabular}[t]{l}$=\sqrt{d_i}$\end{tabular}}}}%
    \put(0.15184001,0.16724856){\color[rgb]{0,0,0}\makebox(0,0)[lt]{\lineheight{1.25}\smash{\begin{tabular}[t]{l}$i$\end{tabular}}}}%
  \end{picture}%
\endgroup%
}\medskip
          \end{minipage}
\item \begin{minipage}[t]{\linewidth}
          \raggedright
          \adjustbox{valign=t}{\centering\def\svgwidth{6cm}%% Creator: Inkscape 1.0 (4035a4f, 2020-05-01), www.inkscape.org
%% PDF/EPS/PS + LaTeX output extension by Johan Engelen, 2010
%% Accompanies image file '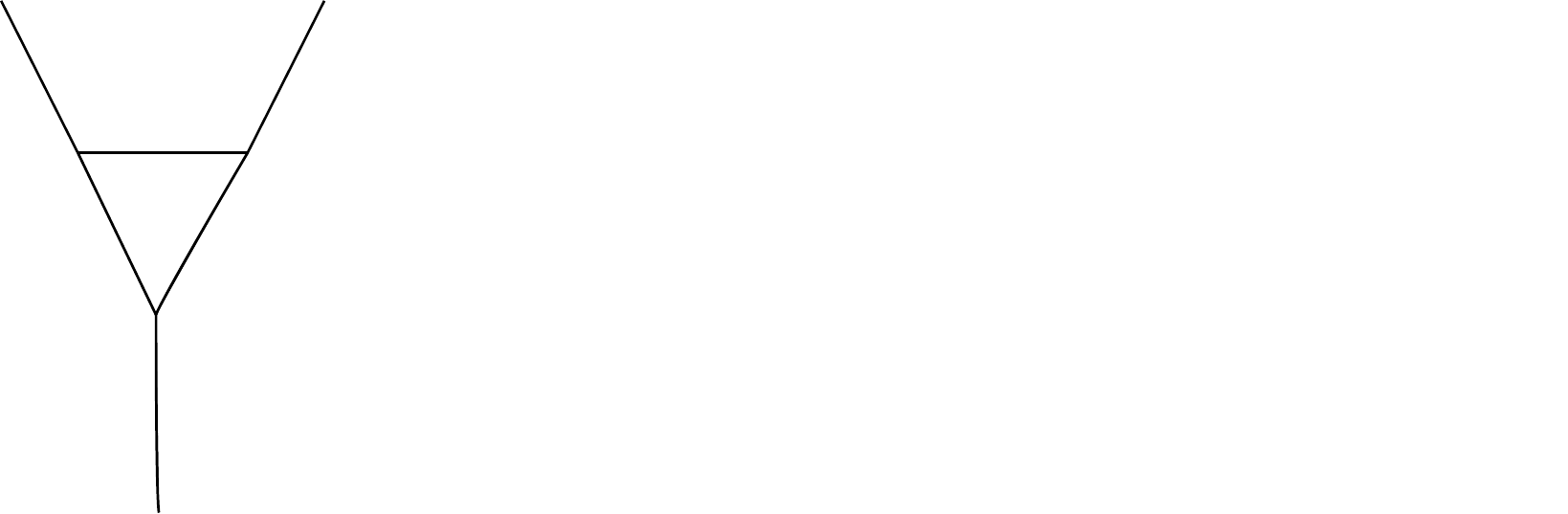' (pdf, eps, ps)
%%
%% To include the image in your LaTeX document, write
%%   \input{<filename>.pdf_tex}
%%  instead of
%%   \includegraphics{<filename>.pdf}
%% To scale the image, write
%%   \def\svgwidth{<desired width>}
%%   \input{<filename>.pdf_tex}
%%  instead of
%%   \includegraphics[width=<desired width>]{<filename>.pdf}
%%
%% Images with a different path to the parent latex file can
%% be accessed with the `import' package (which may need to be
%% installed) using
%%   \usepackage{import}
%% in the preamble, and then including the image with
%%   \import{<path to file>}{<filename>.pdf_tex}
%% Alternatively, one can specify
%%   \graphicspath{{<path to file>/}}
%% 
%% For more information, please see info/svg-inkscape on CTAN:
%%   http://tug.ctan.org/tex-archive/info/svg-inkscape
%%
\begingroup%
  \makeatletter%
  \providecommand\color[2][]{%
    \errmessage{(Inkscape) Color is used for the text in Inkscape, but the package 'color.sty' is not loaded}%
    \renewcommand\color[2][]{}%
  }%
  \providecommand\transparent[1]{%
    \errmessage{(Inkscape) Transparency is used (non-zero) for the text in Inkscape, but the package 'transparent.sty' is not loaded}%
    \renewcommand\transparent[1]{}%
  }%
  \providecommand\rotatebox[2]{#2}%
  \newcommand*\fsize{\dimexpr\f@size pt\relax}%
  \newcommand*\lineheight[1]{\fontsize{\fsize}{#1\fsize}\selectfont}%
  \ifx\svgwidth\undefined%
    \setlength{\unitlength}{471.74860436bp}%
    \ifx\svgscale\undefined%
      \relax%
    \else%
      \setlength{\unitlength}{\unitlength * \real{\svgscale}}%
    \fi%
  \else%
    \setlength{\unitlength}{\svgwidth}%
  \fi%
  \global\let\svgwidth\undefined%
  \global\let\svgscale\undefined%
  \makeatother%
  \begin{picture}(1,0.32777757)%
    \lineheight{1}%
    \setlength\tabcolsep{0pt}%
    \put(0,0){\includegraphics[width=\unitlength,page=1]{martini.pdf}}%
    \put(0.09169285,0.25868468){\color[rgb]{0,0,0}\makebox(0,0)[lt]{\lineheight{1.25}\smash{\begin{tabular}[t]{l}$j$\end{tabular}}}}%
    \put(0.10558895,0.05845473){\color[rgb]{0,0,0}\makebox(0,0)[lt]{\lineheight{1.25}\smash{\begin{tabular}[t]{l}$i$\end{tabular}}}}%
    \put(0.23015321,0.17154673){\color[rgb]{0,0,0}\makebox(0,0)[lt]{\lineheight{1.25}\smash{\begin{tabular}[t]{l}$= \varkappa_q \dfrac{d_q}{\sqrt{d_i}}f_{ij}$\end{tabular}}}}%
    \put(0,0){\includegraphics[width=\unitlength,page=2]{martini.pdf}}%
    \put(0.66908648,0.05860289){\color[rgb]{0,0,0}\makebox(0,0)[lt]{\lineheight{1.25}\smash{\begin{tabular}[t]{l}$i$\end{tabular}}}}%
  \end{picture}%
\endgroup%
}\medskip
          \end{minipage}
\item \begin{minipage}[t]{\linewidth}
          \raggedright
          \adjustbox{valign=t}{\centering\def\svgwidth{12.5cm}%% Creator: Inkscape 1.0 (4035a4f, 2020-05-01), www.inkscape.org
%% PDF/EPS/PS + LaTeX output extension by Johan Engelen, 2010
%% Accompanies image file '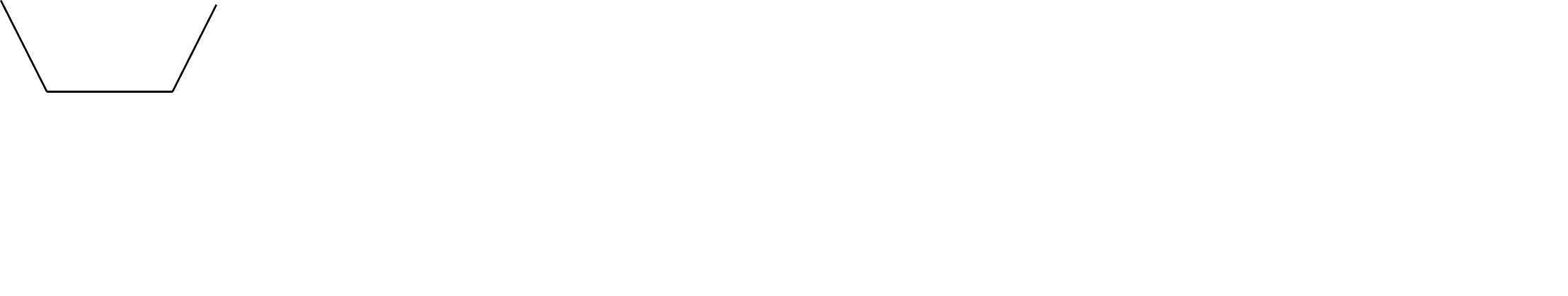' (pdf, eps, ps)
%%
%% To include the image in your LaTeX document, write
%%   \input{<filename>.pdf_tex}
%%  instead of
%%   \includegraphics{<filename>.pdf}
%% To scale the image, write
%%   \def\svgwidth{<desired width>}
%%   \input{<filename>.pdf_tex}
%%  instead of
%%   \includegraphics[width=<desired width>]{<filename>.pdf}
%%
%% Images with a different path to the parent latex file can
%% be accessed with the `import' package (which may need to be
%% installed) using
%%   \usepackage{import}
%% in the preamble, and then including the image with
%%   \import{<path to file>}{<filename>.pdf_tex}
%% Alternatively, one can specify
%%   \graphicspath{{<path to file>/}}
%% 
%% For more information, please see info/svg-inkscape on CTAN:
%%   http://tug.ctan.org/tex-archive/info/svg-inkscape
%%
\begingroup%
  \makeatletter%
  \providecommand\color[2][]{%
    \errmessage{(Inkscape) Color is used for the text in Inkscape, but the package 'color.sty' is not loaded}%
    \renewcommand\color[2][]{}%
  }%
  \providecommand\transparent[1]{%
    \errmessage{(Inkscape) Transparency is used (non-zero) for the text in Inkscape, but the package 'transparent.sty' is not loaded}%
    \renewcommand\transparent[1]{}%
  }%
  \providecommand\rotatebox[2]{#2}%
  \newcommand*\fsize{\dimexpr\f@size pt\relax}%
  \newcommand*\lineheight[1]{\fontsize{\fsize}{#1\fsize}\selectfont}%
  \ifx\svgwidth\undefined%
    \setlength{\unitlength}{637.74419349bp}%
    \ifx\svgscale\undefined%
      \relax%
    \else%
      \setlength{\unitlength}{\unitlength * \real{\svgscale}}%
    \fi%
  \else%
    \setlength{\unitlength}{\svgwidth}%
  \fi%
  \global\let\svgwidth\undefined%
  \global\let\svgscale\undefined%
  \makeatother%
  \begin{picture}(1,0.18860967)%
    \lineheight{1}%
    \setlength\tabcolsep{0pt}%
    \put(0,0){\includegraphics[width=\unitlength,page=1]{boxpopper.pdf}}%
    \put(0.0651979,0.02689461){\color[rgb]{0,0,0}\makebox(0,0)[lt]{\lineheight{1.25}\smash{\begin{tabular}[t]{l}$j$\end{tabular}}}}%
    \put(0.06280052,0.13880896){\color[rgb]{0,0,0}\makebox(0,0)[lt]{\lineheight{1.25}\smash{\begin{tabular}[t]{l}$i$\end{tabular}}}}%
    \put(0,0){\includegraphics[width=\unitlength,page=2]{boxpopper.pdf}}%
    \put(0.1317169,0.08903934){\color[rgb]{0,0,0}\makebox(0,0)[lt]{\lineheight{1.25}\smash{\begin{tabular}[t]{l}$= \dfrac{\sqrt{d_i d_j}}{d_q}$\end{tabular}}}}%
    \put(0,0){\includegraphics[width=\unitlength,page=3]{boxpopper.pdf}}%
    \put(0.41109746,0.08634165){\color[rgb]{0,0,0}\makebox(0,0)[lt]{\lineheight{1.25}\smash{\begin{tabular}[t]{l}$+ \sum_k\dfrac{d_q}{\sqrt{d_k}}f_{ki}f_{kj}$\end{tabular}}}}%
    \put(0,0){\includegraphics[width=\unitlength,page=4]{boxpopper.pdf}}%
    \put(0.67772905,0.08893517){\color[rgb]{0,0,0}\makebox(0,0)[lt]{\lineheight{1.25}\smash{\begin{tabular}[t]{l}$k$\end{tabular}}}}%
  \end{picture}%
\endgroup%
}\medskip
          \end{minipage}
\item \begin{minipage}[t]{\linewidth}
          \raggedright
          \adjustbox{valign=t}{\centering\def\svgwidth{7cm}%% Creator: Inkscape 1.0 (4035a4f, 2020-05-01), www.inkscape.org
%% PDF/EPS/PS + LaTeX output extension by Johan Engelen, 2010
%% Accompanies image file '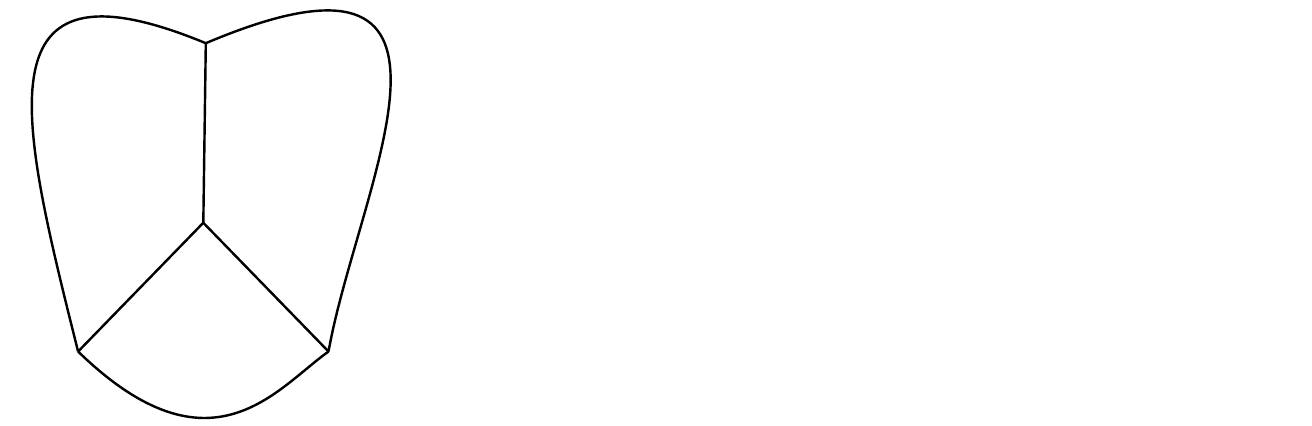' (pdf, eps, ps)
%%
%% To include the image in your LaTeX document, write
%%   \input{<filename>.pdf_tex}
%%  instead of
%%   \includegraphics{<filename>.pdf}
%% To scale the image, write
%%   \def\svgwidth{<desired width>}
%%   \input{<filename>.pdf_tex}
%%  instead of
%%   \includegraphics[width=<desired width>]{<filename>.pdf}
%%
%% Images with a different path to the parent latex file can
%% be accessed with the `import' package (which may need to be
%% installed) using
%%   \usepackage{import}
%% in the preamble, and then including the image with
%%   \import{<path to file>}{<filename>.pdf_tex}
%% Alternatively, one can specify
%%   \graphicspath{{<path to file>/}}
%% 
%% For more information, please see info/svg-inkscape on CTAN:
%%   http://tug.ctan.org/tex-archive/info/svg-inkscape
%%
\begingroup%
  \makeatletter%
  \providecommand\color[2][]{%
    \errmessage{(Inkscape) Color is used for the text in Inkscape, but the package 'color.sty' is not loaded}%
    \renewcommand\color[2][]{}%
  }%
  \providecommand\transparent[1]{%
    \errmessage{(Inkscape) Transparency is used (non-zero) for the text in Inkscape, but the package 'transparent.sty' is not loaded}%
    \renewcommand\transparent[1]{}%
  }%
  \providecommand\rotatebox[2]{#2}%
  \newcommand*\fsize{\dimexpr\f@size pt\relax}%
  \newcommand*\lineheight[1]{\fontsize{\fsize}{#1\fsize}\selectfont}%
  \ifx\svgwidth\undefined%
    \setlength{\unitlength}{377.61251134bp}%
    \ifx\svgscale\undefined%
      \relax%
    \else%
      \setlength{\unitlength}{\unitlength * \real{\svgscale}}%
    \fi%
  \else%
    \setlength{\unitlength}{\svgwidth}%
  \fi%
  \global\let\svgwidth\undefined%
  \global\let\svgscale\undefined%
  \makeatother%
  \begin{picture}(1,0.32436929)%
    \lineheight{1}%
    \setlength\tabcolsep{0pt}%
    \put(0,0){\includegraphics[width=\unitlength,page=1]{pretzel.pdf}}%
    \put(0.16685652,0.19860017){\color[rgb]{0,0,0}\makebox(0,0)[lt]{\lineheight{1.25}\smash{\begin{tabular}[t]{l}$i$\end{tabular}}}}%
    \put(0.14100806,0.03320775){\color[rgb]{0,0,0}\makebox(0,0)[lt]{\lineheight{1.25}\smash{\begin{tabular}[t]{l}$j$\end{tabular}}}}%
    \put(0.31898612,0.16244091){\color[rgb]{0,0,0}\makebox(0,0)[lt]{\lineheight{1.25}\smash{\begin{tabular}[t]{l}$=\varkappa_q d_q^2 f_{ij}$\end{tabular}}}}%
  \end{picture}%
\endgroup%
}\medskip
          \end{minipage} 
\item \begin{minipage}[t]{\linewidth}
          \raggedright
          \adjustbox{valign=t}{\centering\def\svgwidth{12cm}%% Creator: Inkscape 1.0 (4035a4f, 2020-05-01), www.inkscape.org
%% PDF/EPS/PS + LaTeX output extension by Johan Engelen, 2010
%% Accompanies image file '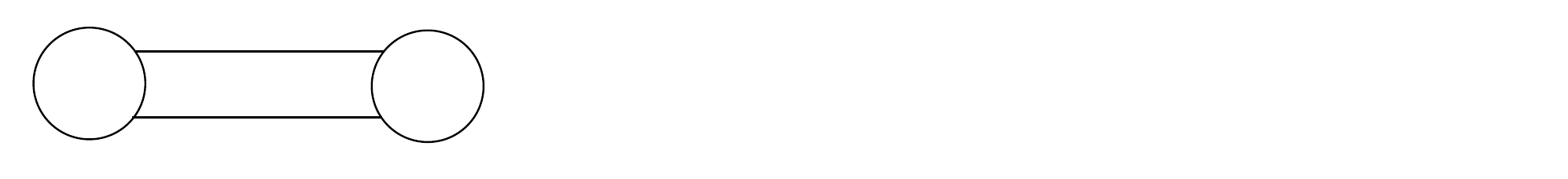' (pdf, eps, ps)
%%
%% To include the image in your LaTeX document, write
%%   \input{<filename>.pdf_tex}
%%  instead of
%%   \includegraphics{<filename>.pdf}
%% To scale the image, write
%%   \def\svgwidth{<desired width>}
%%   \input{<filename>.pdf_tex}
%%  instead of
%%   \includegraphics[width=<desired width>]{<filename>.pdf}
%%
%% Images with a different path to the parent latex file can
%% be accessed with the `import' package (which may need to be
%% installed) using
%%   \usepackage{import}
%% in the preamble, and then including the image with
%%   \import{<path to file>}{<filename>.pdf_tex}
%% Alternatively, one can specify
%%   \graphicspath{{<path to file>/}}
%% 
%% For more information, please see info/svg-inkscape on CTAN:
%%   http://tug.ctan.org/tex-archive/info/svg-inkscape
%%
\begingroup%
  \makeatletter%
  \providecommand\color[2][]{%
    \errmessage{(Inkscape) Color is used for the text in Inkscape, but the package 'color.sty' is not loaded}%
    \renewcommand\color[2][]{}%
  }%
  \providecommand\transparent[1]{%
    \errmessage{(Inkscape) Transparency is used (non-zero) for the text in Inkscape, but the package 'transparent.sty' is not loaded}%
    \renewcommand\transparent[1]{}%
  }%
  \providecommand\rotatebox[2]{#2}%
  \newcommand*\fsize{\dimexpr\f@size pt\relax}%
  \newcommand*\lineheight[1]{\fontsize{\fsize}{#1\fsize}\selectfont}%
  \ifx\svgwidth\undefined%
    \setlength{\unitlength}{574.24723948bp}%
    \ifx\svgscale\undefined%
      \relax%
    \else%
      \setlength{\unitlength}{\unitlength * \real{\svgscale}}%
    \fi%
  \else%
    \setlength{\unitlength}{\svgwidth}%
  \fi%
  \global\let\svgwidth\undefined%
  \global\let\svgscale\undefined%
  \makeatother%
  \begin{picture}(1,0.11397035)%
    \lineheight{1}%
    \setlength\tabcolsep{0pt}%
    \put(0,0){\includegraphics[width=\unitlength,page=1]{bubbleproduct.pdf}}%
    \put(0.15808611,0.09039981){\color[rgb]{0,0,0}\makebox(0,0)[lt]{\lineheight{1.25}\smash{\begin{tabular}[t]{l}$i$\end{tabular}}}}%
    \put(0.16069525,0.00678432){\color[rgb]{0,0,0}\makebox(0,0)[lt]{\lineheight{1.25}\smash{\begin{tabular}[t]{l}$j$\end{tabular}}}}%
    \put(0.32578227,0.05057694){\color[rgb]{0,0,0}\makebox(0,0)[lt]{\lineheight{1.25}\smash{\begin{tabular}[t]{l}$=\sqrt{d_i d_j}+d_q^2\sum_k f_{ki}f_{kj}$\end{tabular}}}}%
  \end{picture}%
\endgroup%
}\vspace{2mm}\newline{\footnotesize See also (\ref{bubbleprodsimplified}).}
          \end{minipage} 
\end{enumerate}
\end{corollary}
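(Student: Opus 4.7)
The plan is to derive each identity by expanding bones via~(\ref{boneviajacks}) (Theorem~\ref{bojackthm}), composing, and collapsing the resulting bubbles using Schur's lemma together with the bubble formula $\lambda_{kqqk}=d_q/\sqrt{d_k}$ from~(\ref{mangta}) and the phi-net value $\Phi(q,q,i)=d_q\sqrt{d_i}$.

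Identity~(i) is a closed phi-net built from one $x_i$-edge and two normalized trivalent vertices, which after accounting for the vertex normalizations and the ambient $q$-loop collapses to $\sqrt{d_i}$. Identity~(ii) matches the tree diagram of Lemma~\ref{treecapping}(iii) with an additional trivalent vertex on the $x_i$-leg: that lemma contributes the factor $\varkappa_q d_q f_{ij}$, and the leftover normalized vertex supplies the $1/\sqrt{d_i}$ prefactor. Identity~(iv) is obtained by closing the residual $q$-strand in Lemma~\ref{treecapping}(iii) into a loop, producing an extra factor of $d_q$ and giving $\varkappa_q d_q^2 f_{ij}$.

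The core of the argument is~(iii), the box popper $\Bone{i}\circ\Bone{j}$. Substituting~(\ref{boneviajacks}) for both bones and distributing the composition, one uses $\Ccm\circ\Ccm=d_q\,\Ccm$, the vanishing (by Schur's lemma) of cross terms $\Ccm\circ\Jack{l}$ and $\Jack{k}\circ\Ccm$ for $k,l\neq 0$ since they factor through $\bm{1}\to X_l$ or $X_k\to\bm{1}$, and the bubble formula $\Jack{k}\circ\Jack{l}=\delta_{kl}(d_q/\sqrt{d_k})\Jack{k}$. Collecting the surviving terms yields the stated decomposition.

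Identity~(v) follows from~(iii) by closing the box popper into a scalar: the $\Ccm$-piece combined with $\tr(\Ccm)=d_q$ supplies the first summand $\sqrt{d_id_j}$, while each $\Jack{k}$-piece combines with the theta-net value and bubble formula to contribute $d_q^2 f_{ki}f_{kj}$. The resulting expression then collapses via $f_{0k}=\varkappa_q\sqrt{d_k}/d_q$ (Lemma~\ref{treecapping}(i)) together with the self-inverse property of $F^{qqq}_q$ (Corollary~\ref{bojackcor1}(ii)) to the simplified form $d_q^2\delta_{ij}$ referenced in~(\ref{bubbleprodsimplified}). The principal obstacle throughout is keeping the normalization factors $\sqrt[4]{d_k/d_q^2}$ at each trivalent vertex consistent with the phi-net and bubble values; with this bookkeeping in place, every identity reduces to a routine unwinding.
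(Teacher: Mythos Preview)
Your argument for (iii), (iv), and (v) is essentially the paper's own proof: stack bones, expand via~(\ref{boneviajacks}), collapse bubbles with~(\ref{mangta}), and close up. For (ii) you invoke Lemma~\ref{treecapping}(iii) directly, whereas the paper stacks a $j$-bone on an $i$-jack and expands; both routes are short and equivalent.

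Your treatment of (i), however, contains a misidentification. The left-hand diagram in (i) is \emph{not} a closed phi-net: it is the morphism $\bm{1}\to q^{\otimes 2}$ obtained by cupping off the bottom of an $i$-bone, and the claimed identity asserts that this equals $\sqrt{d_i}$ times the plain cup. A genuine closed phi-net $\Phi(q,i,q)$ would evaluate to $d_q\sqrt{d_i}$ under the paper's normalization, not $\sqrt{d_i}$, so there is no ``ambient $q$-loop'' to divide out as you suggest. The correct argument (the paper's) is to apply~(\ref{boneviajacks}) to the $i$-bone and then precompose with the cup: the cup-cap term gives $\tfrac{\sqrt{d_i}}{d_q}\cdot d_q\cdot(\text{cup})$, while every $k$-jack term for $k\neq 0$ vanishes by Schur's lemma since it factors through $\Hom(\bm{1},X_k)=0$. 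This is exactly the mechanism you already use correctly for the cross terms in (iii), so the fix is easy; but as written your justification of (i) does not stand on its own.
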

\begin{proof}\hspace{2mm}
\begin{enumerate}[label=(\roman*)]
\item Cup off the bottom of an $i$-bone and use (\ref{boneviajacks}) to get 
\begin{equation*}\hspace{7mm}\centering\def\svgwidth{6.5cm}%% Creator: Inkscape 1.0 (4035a4f, 2020-05-01), www.inkscape.org
%% PDF/EPS/PS + LaTeX output extension by Johan Engelen, 2010
%% Accompanies image file '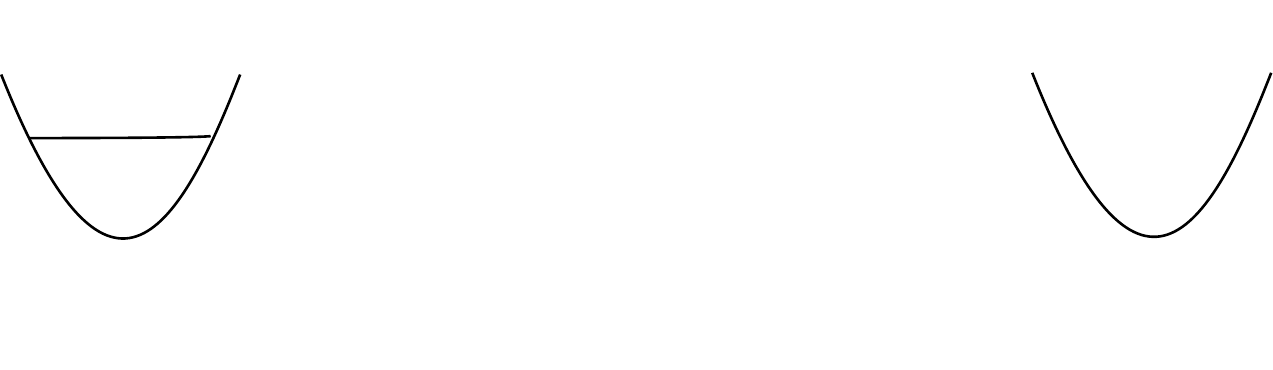' (pdf, eps, ps)
%%
%% To include the image in your LaTeX document, write
%%   \input{<filename>.pdf_tex}
%%  instead of
%%   \includegraphics{<filename>.pdf}
%% To scale the image, write
%%   \def\svgwidth{<desired width>}
%%   \input{<filename>.pdf_tex}
%%  instead of
%%   \includegraphics[width=<desired width>]{<filename>.pdf}
%%
%% Images with a different path to the parent latex file can
%% be accessed with the `import' package (which may need to be
%% installed) using
%%   \usepackage{import}
%% in the preamble, and then including the image with
%%   \import{<path to file>}{<filename>.pdf_tex}
%% Alternatively, one can specify
%%   \graphicspath{{<path to file>/}}
%% 
%% For more information, please see info/svg-inkscape on CTAN:
%%   http://tug.ctan.org/tex-archive/info/svg-inkscape
%%
\begingroup%
  \makeatletter%
  \providecommand\color[2][]{%
    \errmessage{(Inkscape) Color is used for the text in Inkscape, but the package 'color.sty' is not loaded}%
    \renewcommand\color[2][]{}%
  }%
  \providecommand\transparent[1]{%
    \errmessage{(Inkscape) Transparency is used (non-zero) for the text in Inkscape, but the package 'transparent.sty' is not loaded}%
    \renewcommand\transparent[1]{}%
  }%
  \providecommand\rotatebox[2]{#2}%
  \newcommand*\fsize{\dimexpr\f@size pt\relax}%
  \newcommand*\lineheight[1]{\fontsize{\fsize}{#1\fsize}\selectfont}%
  \ifx\svgwidth\undefined%
    \setlength{\unitlength}{366.45074799bp}%
    \ifx\svgscale\undefined%
      \relax%
    \else%
      \setlength{\unitlength}{\unitlength * \real{\svgscale}}%
    \fi%
  \else%
    \setlength{\unitlength}{\svgwidth}%
  \fi%
  \global\let\svgwidth\undefined%
  \global\let\svgscale\undefined%
  \makeatother%
  \begin{picture}(1,0.29031049)%
    \lineheight{1}%
    \setlength\tabcolsep{0pt}%
    \put(0,0){\includegraphics[width=\unitlength,page=1]{whiskypf.pdf}}%
    \put(0.63256843,0.15771575){\color[rgb]{0,0,0}\makebox(0,0)[lt]{\lineheight{1.25}\smash{\begin{tabular}[t]{l}$=\sqrt{d_i}$\end{tabular}}}}%
    \put(0.08354952,0.19322006){\color[rgb]{0,0,0}\makebox(0,0)[lt]{\lineheight{1.25}\smash{\begin{tabular}[t]{l}$i$\end{tabular}}}}%
    \put(0,0){\includegraphics[width=\unitlength,page=2]{whiskypf.pdf}}%
    \put(0.1960878,0.15741736){\color[rgb]{0,0,0}\makebox(0,0)[lt]{\lineheight{1.25}\smash{\begin{tabular}[t]{l}$=\dfrac{\sqrt{d_i}}{d_q}$\end{tabular}}}}%
  \end{picture}%
\endgroup%

\end{equation*}
Indeed, capping off both sides agrees with $\Theta(q,i,q)=\Phi(q,i,q)=d_{q}\sqrt{d_{i}}$.
\vspace{1.5mm}
\item Stacking a $j$-bone on top of an $i$-jack and using (\ref{boneviajacks}), we get 
\begin{equation}\hspace{15mm}\centering\def\svgwidth{9.5cm}%% Creator: Inkscape 1.0 (4035a4f, 2020-05-01), www.inkscape.org
%% PDF/EPS/PS + LaTeX output extension by Johan Engelen, 2010
%% Accompanies image file '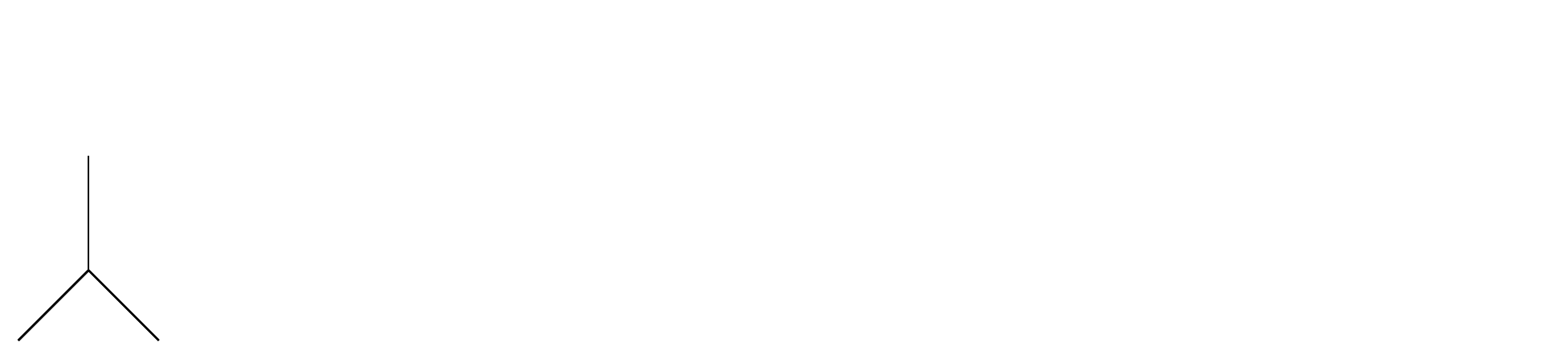' (pdf, eps, ps)
%%
%% To include the image in your LaTeX document, write
%%   \input{<filename>.pdf_tex}
%%  instead of
%%   \includegraphics{<filename>.pdf}
%% To scale the image, write
%%   \def\svgwidth{<desired width>}
%%   \input{<filename>.pdf_tex}
%%  instead of
%%   \includegraphics[width=<desired width>]{<filename>.pdf}
%%
%% Images with a different path to the parent latex file can
%% be accessed with the `import' package (which may need to be
%% installed) using
%%   \usepackage{import}
%% in the preamble, and then including the image with
%%   \import{<path to file>}{<filename>.pdf_tex}
%% Alternatively, one can specify
%%   \graphicspath{{<path to file>/}}
%% 
%% For more information, please see info/svg-inkscape on CTAN:
%%   http://tug.ctan.org/tex-archive/info/svg-inkscape
%%
\begingroup%
  \makeatletter%
  \providecommand\color[2][]{%
    \errmessage{(Inkscape) Color is used for the text in Inkscape, but the package 'color.sty' is not loaded}%
    \renewcommand\color[2][]{}%
  }%
  \providecommand\transparent[1]{%
    \errmessage{(Inkscape) Transparency is used (non-zero) for the text in Inkscape, but the package 'transparent.sty' is not loaded}%
    \renewcommand\transparent[1]{}%
  }%
  \providecommand\rotatebox[2]{#2}%
  \newcommand*\fsize{\dimexpr\f@size pt\relax}%
  \newcommand*\lineheight[1]{\fontsize{\fsize}{#1\fsize}\selectfont}%
  \ifx\svgwidth\undefined%
    \setlength{\unitlength}{561.78584542bp}%
    \ifx\svgscale\undefined%
      \relax%
    \else%
      \setlength{\unitlength}{\unitlength * \real{\svgscale}}%
    \fi%
  \else%
    \setlength{\unitlength}{\svgwidth}%
  \fi%
  \global\let\svgwidth\undefined%
  \global\let\svgscale\undefined%
  \makeatother%
  \begin{picture}(1,0.22312985)%
    \lineheight{1}%
    \setlength\tabcolsep{0pt}%
    \put(0,0){\includegraphics[width=\unitlength,page=1]{martinipf1.pdf}}%
    \put(0.0454185,0.19094053){\color[rgb]{0,0,0}\makebox(0,0)[lt]{\lineheight{1.25}\smash{\begin{tabular}[t]{l}$j$\end{tabular}}}}%
    \put(0,0){\includegraphics[width=\unitlength,page=2]{martinipf1.pdf}}%
    \put(0.11941285,0.08538575){\color[rgb]{0,0,0}\makebox(0,0)[lt]{\lineheight{1.25}\smash{\begin{tabular}[t]{l}$= \varkappa_q f_{ij}$\end{tabular}}}}%
    \put(0,0){\includegraphics[width=\unitlength,page=3]{martinipf1.pdf}}%
    \put(0.29833434,0.15833023){\color[rgb]{0,0,0}\makebox(0,0)[lt]{\lineheight{1.25}\smash{\begin{tabular}[t]{l}$i$\end{tabular}}}}%
    \put(0.29695802,0.04547287){\color[rgb]{0,0,0}\makebox(0,0)[lt]{\lineheight{1.25}\smash{\begin{tabular}[t]{l}$i$\end{tabular}}}}%
    \put(0.35353629,0.08578245){\color[rgb]{0,0,0}\makebox(0,0)[lt]{\lineheight{1.25}\smash{\begin{tabular}[t]{l}$= \varkappa_q \dfrac{d_q}{\sqrt{d_i}}f_{ij}$\end{tabular}}}}%
    \put(0,0){\includegraphics[width=\unitlength,page=4]{martinipf1.pdf}}%
    \put(0.5898086,0.10761377){\color[rgb]{0,0,0}\makebox(0,0)[lt]{\lineheight{1.25}\smash{\begin{tabular}[t]{l}$i$\end{tabular}}}}%
    \put(0.06567398,0.08582676){\color[rgb]{0,0,0}\makebox(0,0)[lt]{\lineheight{1.25}\smash{\begin{tabular}[t]{l}$i$\end{tabular}}}}%
  \end{picture}%
\endgroup%
\label{martinipf1}\end{equation}
\item Stack an $i$-bone on top of a $j$-bone and use  (\ref{boneviajacks}).
\vspace{1.5mm}
\item \begin{equation*}\hspace{10mm}\centering\def\svgwidth{14.5cm}%% Creator: Inkscape 1.0 (4035a4f, 2020-05-01), www.inkscape.org
%% PDF/EPS/PS + LaTeX output extension by Johan Engelen, 2010
%% Accompanies image file '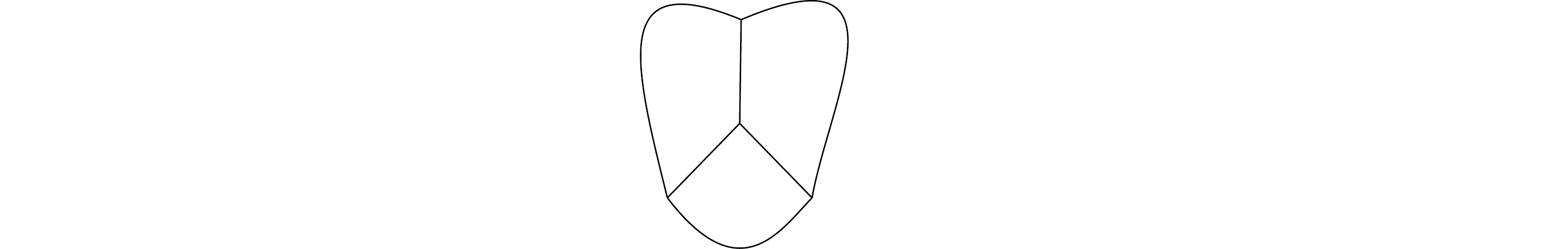' (pdf, eps, ps)
%%
%% To include the image in your LaTeX document, write
%%   \input{<filename>.pdf_tex}
%%  instead of
%%   \includegraphics{<filename>.pdf}
%% To scale the image, write
%%   \def\svgwidth{<desired width>}
%%   \input{<filename>.pdf_tex}
%%  instead of
%%   \includegraphics[width=<desired width>]{<filename>.pdf}
%%
%% Images with a different path to the parent latex file can
%% be accessed with the `import' package (which may need to be
%% installed) using
%%   \usepackage{import}
%% in the preamble, and then including the image with
%%   \import{<path to file>}{<filename>.pdf_tex}
%% Alternatively, one can specify
%%   \graphicspath{{<path to file>/}}
%% 
%% For more information, please see info/svg-inkscape on CTAN:
%%   http://tug.ctan.org/tex-archive/info/svg-inkscape
%%
\begingroup%
  \makeatletter%
  \providecommand\color[2][]{%
    \errmessage{(Inkscape) Color is used for the text in Inkscape, but the package 'color.sty' is not loaded}%
    \renewcommand\color[2][]{}%
  }%
  \providecommand\transparent[1]{%
    \errmessage{(Inkscape) Transparency is used (non-zero) for the text in Inkscape, but the package 'transparent.sty' is not loaded}%
    \renewcommand\transparent[1]{}%
  }%
  \providecommand\rotatebox[2]{#2}%
  \newcommand*\fsize{\dimexpr\f@size pt\relax}%
  \newcommand*\lineheight[1]{\fontsize{\fsize}{#1\fsize}\selectfont}%
  \ifx\svgwidth\undefined%
    \setlength{\unitlength}{810.9445647bp}%
    \ifx\svgscale\undefined%
      \relax%
    \else%
      \setlength{\unitlength}{\unitlength * \real{\svgscale}}%
    \fi%
  \else%
    \setlength{\unitlength}{\svgwidth}%
  \fi%
  \global\let\svgwidth\undefined%
  \global\let\svgscale\undefined%
  \makeatother%
  \begin{picture}(1,0.15866314)%
    \lineheight{1}%
    \setlength\tabcolsep{0pt}%
    \put(0,0){\includegraphics[width=\unitlength,page=1]{pretzelpf1.pdf}}%
    \put(0.47749188,0.10129545){\color[rgb]{0,0,0}\makebox(0,0)[lt]{\lineheight{1.25}\smash{\begin{tabular}[t]{l}$i$\end{tabular}}}}%
    \put(0.45414441,0.01505525){\color[rgb]{0,0,0}\makebox(0,0)[lt]{\lineheight{1.25}\smash{\begin{tabular}[t]{l}$j$\end{tabular}}}}%
    \put(0,0){\includegraphics[width=\unitlength,page=2]{pretzelpf1.pdf}}%
    \put(0.03076407,0.01001854){\color[rgb]{0,0,0}\makebox(0,0)[lt]{\lineheight{1.25}\smash{\begin{tabular}[t]{l}$j$\end{tabular}}}}%
    \put(0,0){\includegraphics[width=\unitlength,page=3]{pretzelpf1.pdf}}%
    \put(0.25814777,0.07597707){\color[rgb]{0,0,0}\makebox(0,0)[lt]{\lineheight{1.25}\smash{\begin{tabular}[t]{l}$i$\end{tabular}}}}%
    \put(0.07734043,0.0875182){\color[rgb]{0,0,0}\makebox(0,0)[lt]{\lineheight{1.25}\smash{\begin{tabular}[t]{l}$= \varkappa_q \dfrac{d_q}{\sqrt{d_i}}f_{ij}$\end{tabular}}}}%
    \put(0.30902906,0.08656498){\color[rgb]{0,0,0}\makebox(0,0)[lt]{\lineheight{1.25}\smash{\begin{tabular}[t]{l}$\implies$\end{tabular}}}}%
    \put(0.55215905,0.08656498){\color[rgb]{0,0,0}\makebox(0,0)[lt]{\lineheight{1.25}\smash{\begin{tabular}[t]{l}$= \varkappa_q \dfrac{d_q}{\sqrt{d_i}}f_{ij}$\end{tabular}}}}%
    \put(0,0){\includegraphics[width=\unitlength,page=4]{pretzelpf1.pdf}}%
    \put(0.80734203,0.07665125){\color[rgb]{0,0,0}\makebox(0,0)[lt]{\lineheight{1.25}\smash{\begin{tabular}[t]{l}$i$\end{tabular}}}}%
    \put(0.04464084,0.09054274){\color[rgb]{0,0,0}\makebox(0,0)[lt]{\lineheight{1.25}\smash{\begin{tabular}[t]{l}$i$\end{tabular}}}}%
  \end{picture}%
\endgroup%
\end{equation*}
whence the result follows from $\Phi(q,i,q)=d_{q}\sqrt{d_{i}}$.  Alternatively, this identity coincides with taking the quantum trace of Lemma \ref{treecapping}(iii) for $(\mu,\lambda)=(i,j)$.\\
\item Take the left and right partial traces of (iii) and plug in $\Phi(q,k,q)$.
\end{enumerate}
\end{proof}

\begin{corollary}\label{realsymm}
$F^{qqq}_{q}$ is real-symmetric. 
\end{corollary}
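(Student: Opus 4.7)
The plan is to combine Hermiticity of $F^{qqq}_{q}$ with an independent symmetry argument coming from cyclicity of the quantum trace. Write $F:=F^{qqq}_{q}$ for brevity. By Remark \ref{skelremk1}, $F$ is unitary, and by Corollary \ref{bojackcor1}(ii) it is self-inverse, i.e.\ $F^{2}=I$. Together these force $F^{\dagger}=F^{-1}=F$, so $F$ is Hermitian; equivalently,
\begin{equation*}
f_{\mu\nu}^{*}=f_{\nu\mu}.
\end{equation*}
Thus, once symmetry $f_{\mu\nu}=f_{\nu\mu}$ is established, reality (and hence real-symmetricity) is automatic.

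First I would handle the row and column indexed by $\mathbf{1}$. By Lemma \ref{treecapping}(i), $f_{0\nu}=\varkappa_{q}\sqrt{d_{\nu}}/d_{q}\in\mathbb{R}$, and Corollary \ref{bojackcor1}(iii) already gives $f_{\nu 0}=f_{0\nu}$. Hermiticity then trivially upgrades this to reality for these entries.

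Next I would handle the remaining entries $f_{ij}$ with $i,j$ non-trivial, using Corollary \ref{popping}(iv): the closed ``pretzel'' diagram evaluates to $\varkappa_{q}d_{q}^{2}f_{ij}$. The crucial observation is that this diagram is precisely $\widetilde{Tr}(B_{j}\circ B_{i})$, where $B_{i},B_{j}\in\End(q^{\otimes 2})$ denote the $i$- and $j$-bones respectively (these are self-adjoint by the discussion of Remark \ref{dualityremk}, which is why we may speak of an unlabelled ``pretzel'' at all). Sphericality and cyclicity of the quantum trace then give
\begin{equation*}
\varkappa_{q}d_{q}^{2}f_{ij}=\widetilde{Tr}(B_{j}\circ B_{i})=\widetilde{Tr}(B_{i}\circ B_{j})=\varkappa_{q}d_{q}^{2}f_{ji},
\end{equation*}
whence $f_{ij}=f_{ji}$. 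Combined with Hermiticity $f_{ij}^{*}=f_{ji}$, this yields $f_{ij}\in\mathbb{R}$, and therefore $F$ is real-symmetric.

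The only non-routine step is identifying the pretzel of Corollary \ref{popping}(iv) with the quantum trace of a composition of two bones, so that cyclicity of $\widetilde{Tr}$ can be invoked; this is immediate once one reads the diagram as the total closure of the $i$-bone stacked under the $j$-bone. An alternative route, if that identification is deemed insufficient, would be to apply the adjoint to the ``martini'' identity in Corollary \ref{popping}(ii): self-adjointness of the $j$-bone (and of the $i$-jack composed with its adjoint) makes the martini a self-adjoint endomorphism of $X_{i}$, so its scalar value $\varkappa_{q}(d_{q}/\sqrt{d_{i}})f_{ij}$ is real, giving $f_{ij}\in\mathbb{R}$ directly.
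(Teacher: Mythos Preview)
Your overall strategy (Hermiticity plus an independent symmetry/reality argument) matches the paper's, and the handling of the $\mathbf{1}$-row/column is fine. The gap is in your main step for the entries $f_{ij}$.

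The pretzel of Corollary~\ref{popping}(iv) is \emph{not} $\widetilde{Tr}(B_{j}\circ B_{i})$. From its derivation (stacking a $j$-bone on an $i$-jack and closing up), it is $\widetilde{Tr}(B_{j}\circ J_{i})$ where $J_{i}$ is the $i$-jack; the closed diagram $\widetilde{Tr}(B_{j}\circ B_{i})$ is instead the ``bubble-product'' of Corollary~\ref{popping}(v), with value $\sqrt{d_{i}d_{j}}+d_{q}^{2}\sum_{k}f_{ki}f_{kj}$. Either way, cyclicity gives nothing: $\widetilde{Tr}(B_{j}\circ J_{i})=\widetilde{Tr}(J_{i}\circ B_{j})$ both evaluate (via Theorem~\ref{bojackthm} and $J_{i}J_{k}=\delta_{ik}\tfrac{d_{q}}{\sqrt{d_{i}}}J_{i}$) to $\varkappa_{q}d_{q}^{2}f_{ij}$, while $\widetilde{Tr}(B_{j}\circ B_{i})=\widetilde{Tr}(B_{i}\circ B_{j})$ is manifestly symmetric in $i,j$ already. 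So cyclicity alone does not produce $f_{ij}=f_{ji}$; something genuinely diagrammatic (a spherical deformation swapping the roles of the jack and the bone, as the paper does) is needed.

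Your alternative route does work, with one correction: the martini of Corollary~\ref{popping}(ii) lies in $\Hom(q^{\otimes 2},X_{i})$, not $\End(X_{i})$, so it cannot literally be self-adjoint. What is self-adjoint is $\pi_{i}\circ B_{j}\circ\iota_{i}\in\End(X_{i})$, since $\iota_{i}=\pi_{i}^{\dagger}$ and $B_{j}^{\dagger}=B_{j}$ (Remark~\ref{dualityremk}); its scalar value is then real, forcing $f_{ij}\in\mathbb{R}$. Combined with Hermiticity this gives symmetry. This is essentially the paper's route (a): the paper takes the adjoint of the pretzel and compares it to the ``flipped pretzel'' obtained by tracing the martini, reaching $f_{ij}=f_{ij}^{*}$; for symmetry it separately deforms the flipped pretzel (by sphericality) into the trace of Lemma~\ref{treecapping}(iii) with indices swapped.
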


\begin{proof}
Corollary \ref{bojackcor1}(ii) tells us that $F^{qqq}_{q}$ is Hermitian. It thus suffices to show that $F^{qqq}_{q}$ is one of (a) \textit{real} or (b) \textit{symmetric}; nonetheless, we will show both explicitly. Applying the left and right partial traces to (\ref{martinipf1}), we obtain
\begin{equation}\hspace{30mm}\centering\def\svgwidth{6.5cm}%% Creator: Inkscape 1.0 (4035a4f, 2020-05-01), www.inkscape.org
%% PDF/EPS/PS + LaTeX output extension by Johan Engelen, 2010
%% Accompanies image file '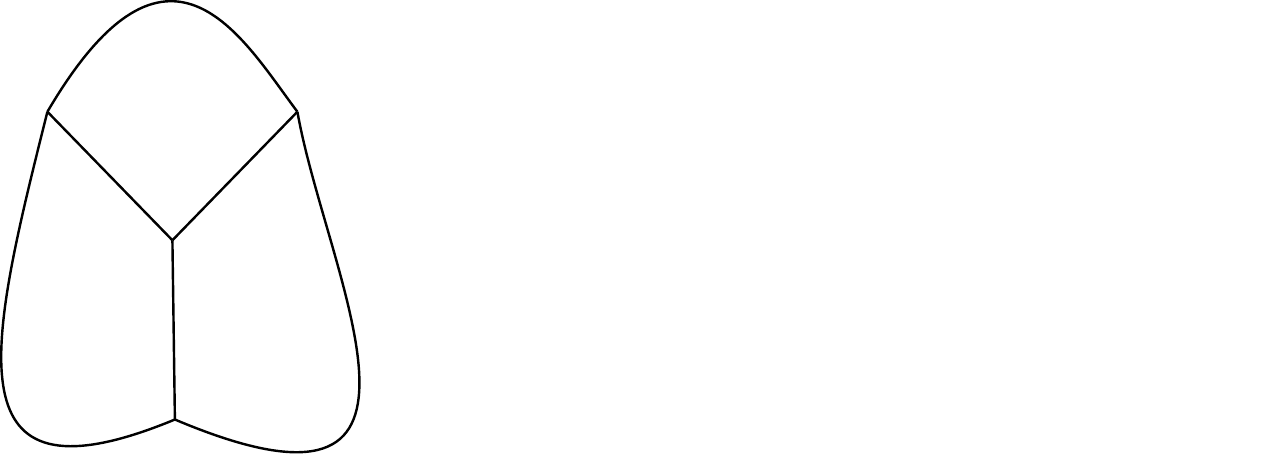' (pdf, eps, ps)
%%
%% To include the image in your LaTeX document, write
%%   \input{<filename>.pdf_tex}
%%  instead of
%%   \includegraphics{<filename>.pdf}
%% To scale the image, write
%%   \def\svgwidth{<desired width>}
%%   \input{<filename>.pdf_tex}
%%  instead of
%%   \includegraphics[width=<desired width>]{<filename>.pdf}
%%
%% Images with a different path to the parent latex file can
%% be accessed with the `import' package (which may need to be
%% installed) using
%%   \usepackage{import}
%% in the preamble, and then including the image with
%%   \import{<path to file>}{<filename>.pdf_tex}
%% Alternatively, one can specify
%%   \graphicspath{{<path to file>/}}
%% 
%% For more information, please see info/svg-inkscape on CTAN:
%%   http://tug.ctan.org/tex-archive/info/svg-inkscape
%%
\begingroup%
  \makeatletter%
  \providecommand\color[2][]{%
    \errmessage{(Inkscape) Color is used for the text in Inkscape, but the package 'color.sty' is not loaded}%
    \renewcommand\color[2][]{}%
  }%
  \providecommand\transparent[1]{%
    \errmessage{(Inkscape) Transparency is used (non-zero) for the text in Inkscape, but the package 'transparent.sty' is not loaded}%
    \renewcommand\transparent[1]{}%
  }%
  \providecommand\rotatebox[2]{#2}%
  \newcommand*\fsize{\dimexpr\f@size pt\relax}%
  \newcommand*\lineheight[1]{\fontsize{\fsize}{#1\fsize}\selectfont}%
  \ifx\svgwidth\undefined%
    \setlength{\unitlength}{363.37972326bp}%
    \ifx\svgscale\undefined%
      \relax%
    \else%
      \setlength{\unitlength}{\unitlength * \real{\svgscale}}%
    \fi%
  \else%
    \setlength{\unitlength}{\svgwidth}%
  \fi%
  \global\let\svgwidth\undefined%
  \global\let\svgscale\undefined%
  \makeatother%
  \begin{picture}(1,0.35940635)%
    \lineheight{1}%
    \setlength\tabcolsep{0pt}%
    \put(0,0){\includegraphics[width=\unitlength,page=1]{flippedpretzel.pdf}}%
    \put(0.14888293,0.09634437){\color[rgb]{0,0,0}\makebox(0,0)[lt]{\lineheight{1.25}\smash{\begin{tabular}[t]{l}$i$\end{tabular}}}}%
    \put(0.09935292,0.29322839){\color[rgb]{0,0,0}\makebox(0,0)[lt]{\lineheight{1.25}\smash{\begin{tabular}[t]{l}$j$\end{tabular}}}}%
    \put(0.30668663,0.1652335){\color[rgb]{0,0,0}\makebox(0,0)[lt]{\lineheight{1.25}\smash{\begin{tabular}[t]{l}$=\varkappa_q d_q^2 f_{ij}$\end{tabular}}}}%
  \end{picture}%
\endgroup%
\label{flippedpretzel}\end{equation}

\begin{enumerate}[label=(\alph*)]
\item Inverting the pretzel in Corollary \ref{popping}(iv) via adjunction and comparing the result to (\ref{flippedpretzel}), we see that $f_{ij}=f_{ij}^{*}$. We know that entries $f_{0\lambda}$ and $f_{\lambda0}$ are also real from Lemma \ref{treecapping}(i) and Corollary \ref{bojackcor1}(iii).
\item Note that (\ref{flippedpretzel}) can be deformed to the quantum trace of Lemma \ref{treecapping}(iii) for $(\mu,\lambda)=(j,i)$. Comparing scalars, we see that $f_{ij}=f_{ji}$. We also know that $f_{0\lambda}=f_{\lambda0}$ from Corollary \ref{bojackcor1}(iii).
\end{enumerate}
\end{proof}

\noindent In light of Corollary \ref{realsymm}, we may further simplify Corollary \ref{popping}(v) to
\begin{equation}\hspace{40mm}\centering\def\svgwidth{12cm}%% Creator: Inkscape 1.0 (4035a4f, 2020-05-01), www.inkscape.org
%% PDF/EPS/PS + LaTeX output extension by Johan Engelen, 2010
%% Accompanies image file '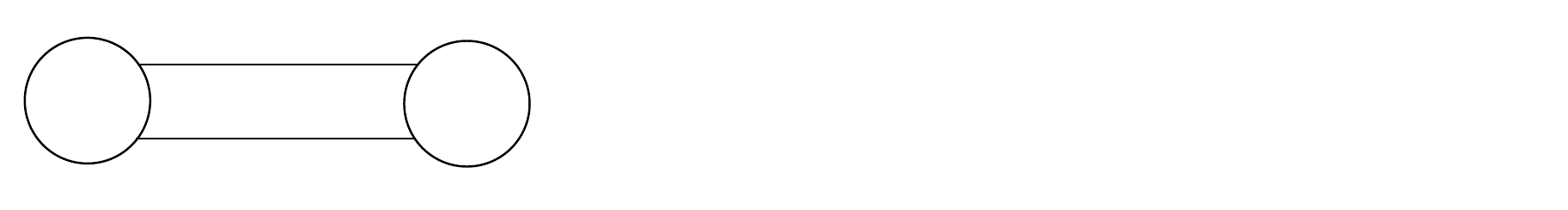' (pdf, eps, ps)
%%
%% To include the image in your LaTeX document, write
%%   \input{<filename>.pdf_tex}
%%  instead of
%%   \includegraphics{<filename>.pdf}
%% To scale the image, write
%%   \def\svgwidth{<desired width>}
%%   \input{<filename>.pdf_tex}
%%  instead of
%%   \includegraphics[width=<desired width>]{<filename>.pdf}
%%
%% Images with a different path to the parent latex file can
%% be accessed with the `import' package (which may need to be
%% installed) using
%%   \usepackage{import}
%% in the preamble, and then including the image with
%%   \import{<path to file>}{<filename>.pdf_tex}
%% Alternatively, one can specify
%%   \graphicspath{{<path to file>/}}
%% 
%% For more information, please see info/svg-inkscape on CTAN:
%%   http://tug.ctan.org/tex-archive/info/svg-inkscape
%%
\begingroup%
  \makeatletter%
  \providecommand\color[2][]{%
    \errmessage{(Inkscape) Color is used for the text in Inkscape, but the package 'color.sty' is not loaded}%
    \renewcommand\color[2][]{}%
  }%
  \providecommand\transparent[1]{%
    \errmessage{(Inkscape) Transparency is used (non-zero) for the text in Inkscape, but the package 'transparent.sty' is not loaded}%
    \renewcommand\transparent[1]{}%
  }%
  \providecommand\rotatebox[2]{#2}%
  \newcommand*\fsize{\dimexpr\f@size pt\relax}%
  \newcommand*\lineheight[1]{\fontsize{\fsize}{#1\fsize}\selectfont}%
  \ifx\svgwidth\undefined%
    \setlength{\unitlength}{525.44454259bp}%
    \ifx\svgscale\undefined%
      \relax%
    \else%
      \setlength{\unitlength}{\unitlength * \real{\svgscale}}%
    \fi%
  \else%
    \setlength{\unitlength}{\svgwidth}%
  \fi%
  \global\let\svgwidth\undefined%
  \global\let\svgscale\undefined%
  \makeatother%
  \begin{picture}(1,0.1331936)%
    \lineheight{1}%
    \setlength\tabcolsep{0pt}%
    \put(0,0){\includegraphics[width=\unitlength,page=1]{bubbleprodsimplified.pdf}}%
    \put(0.1692182,0.10248466){\color[rgb]{0,0,0}\makebox(0,0)[lt]{\lineheight{1.25}\smash{\begin{tabular}[t]{l}$i$\end{tabular}}}}%
    \put(0.17214574,0.00866119){\color[rgb]{0,0,0}\makebox(0,0)[lt]{\lineheight{1.25}\smash{\begin{tabular}[t]{l}$j$\end{tabular}}}}%
    \put(0.35738341,0.05780143){\color[rgb]{0,0,0}\makebox(0,0)[lt]{\lineheight{1.25}\smash{\begin{tabular}[t]{l}$=\sqrt{d_i d_j}+\delta_{ij}d_q^2$\end{tabular}}}}%
  \end{picture}%
\endgroup%
\label{bubbleprodsimplified}\end{equation}

\subsection{Computing some $F$-symbols}
\label{compfsymb}
We now turn our attention to calculating $F^{qqq}_{q}$ for $q$ self-dual using the rotation operator. If $q\otimes q=\bm{1}$ then  $F^{qqq}_{q}=[f_{00}]=\left[\frac{\varkappa_{q}}{d_{q}}\right]$. In the case $q\otimes q=\bm{1}\oplus x$, we have
\begin{equation}F^{qqq}_{q}=\varkappa_{q}\begin{pmatrix}
\dfrac{1}{d_{q}} & \dfrac{\sqrt{d_{q}^{2}-1}}{d_{q}} \\[4mm]
\dfrac{\sqrt{d_{q}^{2}-1}}{d_{q}} & -\dfrac{1}{d_{q}}
\end{pmatrix}\label{quadf}\end{equation}
Since $x$ is necessarily self-dual, we may apply the corollaries of Theorem \ref{bojackthm}. Indeed, (\ref{quadf}) follows almost immediately from Lemma \ref{treecapping}(i) and Corollary \ref{bojackcor1}(iii); all that remains is to find $f_{xx}$. Applying Corollaries \ref{bojackcor1}(i) and (iv), we have $\tr\left(F^{qqq}_{q}\right)=\varkappa_{q}\tr(\varphi)=0$ whence $f_{xx}=-\frac{\varkappa_{q}}{d_{q}}$. \\ 

\noindent If we promote $\mathcal{C}$ to be ribbon, we may also determine $f_{xx}$ by combining the skein theory from Section \ref{quadcase} with Theorem \ref{bojackthm}. Resolving $\px$ as in (\ref{trotsky1}) and rotating, 
\begin{equation*}
\begin{split}
\varphi\left(\ \Px\ \right) & = \frac{1}{d_q}\alpha \Idm + \frac{\sqrt{d_x}}{d_q}\beta \Bone{x}\\
& = \frac{1}{d_q}\alpha\left(\frac{1}{d_q}\Ccm + \frac{\sqrt{d_x}}{d_q} \Jack{x}\right) + \frac{\sqrt{d_x}}{d_q}\beta \left(\frac{\sqrt{d_x}}{d_q}\Ccm + \varkappa_q f_{xx} \Jack{x} \right)\\
& = \frac{\alpha+\beta d_x}{d^2_q} \Ccm + \frac{\sqrt{d_x}}{d_q}\left( \frac{\alpha}{d_q}+\varkappa_q\beta f_{xx}\right)\Jack{x}
\end{split}
\end{equation*}
\noindent Comparing coefficients with the $\nx$-crossing, the cup-cap component corresponds to (\ref{floobendooben}), while the $x$-jack component yields $f_{xx}=\beta^{-2}-\frac{\varkappa_{q}}{d_{q}}\alpha\beta^{-1}$. Plugging in the values from (\ref{quadkeys}), we get $f_{xx}=-\frac{\varkappa_{q}}{d_{q}}$.

\begin{remark} Another approach to extracting information via the rotation operator is to stack a crossing on its image under $\varphi$ and then solve for the equation levied by Reidemeister-II. This approach is equivalent to the one taken above i.e.\ solving
\begin{equation}\hspace{20mm}\centering\def\svgwidth{5cm}%% Creator: Inkscape 1.0 (4035a4f, 2020-05-01), www.inkscape.org
%% PDF/EPS/PS + LaTeX output extension by Johan Engelen, 2010
%% Accompanies image file '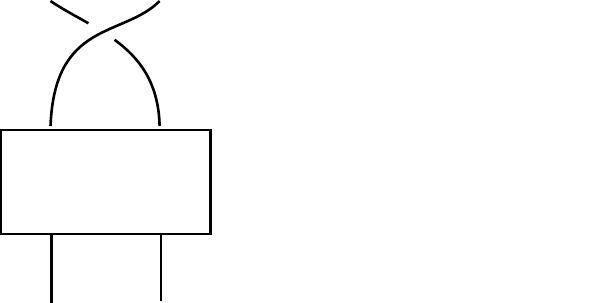' (pdf, eps, ps)
%%
%% To include the image in your LaTeX document, write
%%   \input{<filename>.pdf_tex}
%%  instead of
%%   \includegraphics{<filename>.pdf}
%% To scale the image, write
%%   \def\svgwidth{<desired width>}
%%   \input{<filename>.pdf_tex}
%%  instead of
%%   \includegraphics[width=<desired width>]{<filename>.pdf}
%%
%% Images with a different path to the parent latex file can
%% be accessed with the `import' package (which may need to be
%% installed) using
%%   \usepackage{import}
%% in the preamble, and then including the image with
%%   \import{<path to file>}{<filename>.pdf_tex}
%% Alternatively, one can specify
%%   \graphicspath{{<path to file>/}}
%% 
%% For more information, please see info/svg-inkscape on CTAN:
%%   http://tug.ctan.org/tex-archive/info/svg-inkscape
%%
\begingroup%
  \makeatletter%
  \providecommand\color[2][]{%
    \errmessage{(Inkscape) Color is used for the text in Inkscape, but the package 'color.sty' is not loaded}%
    \renewcommand\color[2][]{}%
  }%
  \providecommand\transparent[1]{%
    \errmessage{(Inkscape) Transparency is used (non-zero) for the text in Inkscape, but the package 'transparent.sty' is not loaded}%
    \renewcommand\transparent[1]{}%
  }%
  \providecommand\rotatebox[2]{#2}%
  \newcommand*\fsize{\dimexpr\f@size pt\relax}%
  \newcommand*\lineheight[1]{\fontsize{\fsize}{#1\fsize}\selectfont}%
  \ifx\svgwidth\undefined%
    \setlength{\unitlength}{173.70824018bp}%
    \ifx\svgscale\undefined%
      \relax%
    \else%
      \setlength{\unitlength}{\unitlength * \real{\svgscale}}%
    \fi%
  \else%
    \setlength{\unitlength}{\svgwidth}%
  \fi%
  \global\let\svgwidth\undefined%
  \global\let\svgscale\undefined%
  \makeatother%
  \begin{picture}(1,0.50163524)%
    \lineheight{1}%
    \setlength\tabcolsep{0pt}%
    \put(0,0){\includegraphics[width=\unitlength,page=1]{r2rotstack.pdf}}%
    \put(0.03696519,0.18227525){\color[rgb]{0,0,0}\makebox(0,0)[lt]{\lineheight{1.25}\smash{\begin{tabular}[t]{l}$\varphi\left(\px\right)$\end{tabular}}}}%
    \put(0.39887946,0.27019099){\color[rgb]{0,0,0}\makebox(0,0)[lt]{\lineheight{1.25}\smash{\begin{tabular}[t]{l}$=\varkappa_q \Idm$\end{tabular}}}}%
  \end{picture}%
\endgroup%
\label{r2rotstack}\end{equation}
is clearly equivalent to solving $\varphi\left(\px\right)=\varkappa_{q} \ \nx$. Of course, this is solved with respect to some choice of basis $\mathcal{B}$. For the case $q\otimes q=\bm{1}\oplus x$, it is interesting to observe that fixing $\mathcal{B}=\left\{\idm,\ccm\right\}$ gave us information pertaining to $R^{qq}$, while fixing $\mathcal{B}$ canonical gave us information about $F^{qqq}_{q}$. Moreover, the information extracted via the latter basis relied on that found using the former basis.
\end{remark}

\vspace{3mm}
\noindent Now suppose that $q\otimes q=\bm{1}\oplus x\oplus y$ where $\mathscr{C}=\id$ and $\mathcal{C}$ is ribbon. Following (\ref{flageolet1}), 
\vspace{3.5mm}
\begin{align*}
\varphi\left(\ \Px\ \right)\overset{}&{=}A\Idm+B\Bone{x}+C\Ccm \\
\overset{\text{Thm.\ }\ref{bojackthm}}&{=}A\Idm+\left(\frac{\sqrt{d_{x}}}{d_{q}}B+C\right)\Ccm+\varkappa_{q}f_{xx}B\Jack{x}+\varkappa_{q}f_{yx}B\Jack{y} \\
\overset{(\ref{orientedid})}&{=}\left(A+\varkappa_{q}\frac{d_{q}}{\sqrt{d_{y}}}f_{yx}B\right)\Idm+\left(\frac{\sqrt{d_{x}}}{d_{q}}B+C-\frac{\varkappa_{q}}{\sqrt{d_{y}}}f_{yx}B\right)\Ccm\\
&\hspace{5mm}+\varkappa_{q}\left(f_{xx}B-\sqrt{\frac{d_{x}}{d_{y}}}f_{yx}B\right)\Jack{x}
\end{align*}
%\vspace{3.5mm}
Solving $\varphi\left(\px\right)=\varkappa_{q}\nx$ with respect to basis $\left\{\idm,\ccm,\jack{x}\right\}$, we match coefficients with (\ref{flageolet2}) to obtain
\vspace{3.5mm}
\begin{subequations}\begin{align}
\Idm:& \quad A=\varkappa_{q}\left(C^{-1}-\frac{d_{q}}{\sqrt{d_{y}}}f_{yx}B\right) \label{gremlin1}\\
\Ccm:& \quad A'=\varkappa_{q}\left(C+\frac{\sqrt{d_{x}}}{d_{q}}B\right)-\frac{1}{\sqrt{d_{y}}}f_{yx}B \label{gremlin2}\\
\jack{x}:& \quad B'=B\left(f_{xx}-\sqrt{\frac{d_{x}}{d_{y}}}f_{yx}\right) \label{gremlin3} 
\end{align}\end{subequations}
\vspace{5mm}
\noindent Suppose $B,B'\neq0$. Recall from (\ref{thisguy}) that $B'=\pm B$. Also by Lemma \ref{treecapping}(i) \& (ii),
\begin{equation}-\varkappa_{q}\frac{\sqrt{d_{i}}}{d_{q}}=\sum_{j}\sqrt{d_{j}}f_{ji}\label{elpaso}\end{equation}
whence for $i=x$,
\begin{equation}f_{xx}=-\left(\frac{\varkappa_{q}}{d_{q}}+\sqrt{\frac{d_{y}}{d_{x}}}f_{yx}\right)\label{oggy}\end{equation}
\vspace{3mm}
\noindent Combining (\ref{gremlin3}) and (\ref{oggy}) eventually yields
\begin{equation}(f_{xx},f_{yx})=\left(\cfrac{\mp d_{x}}{(d_{q}\mp\varkappa_{q})d_{q}}\pm1 , \cfrac{\mp\sqrt{d_{x}d_{y}}}{(d_{q}\mp\varkappa_{q})d_{q} }\right)\quad , \quad B'=\pm B\end{equation}
%\vspace{3mm}
Setting $i=y$ in (\ref{elpaso}) gives $f_{yy}=-(\frac{\varkappa_{q}}{d_{q}}+\sqrt{\frac{d_{x}}{d_{y}}}f_{xy})$. By Corollary \ref{realsymm}, we have $f_{xy}=f_{yx}$ whence
\begin{equation}f_{yy}=\cfrac{\mp d_{y}}{(d_{q}\mp\varkappa_{q})d_{q}}\pm1 \quad , \quad B'=\pm B\end{equation}

\vspace{5mm}
\begin{theorem}
\label{cubicfsymbols}
Let $\mathcal{C}$ be a unitary ribbon fusion category containing a fusion rule 
\begin{equation}q\otimes q=\bm{1}\oplus x\oplus y\label{whizzle}\end{equation} 
where $x,y,q\in\Irr(\mathcal{C})$. If $R^{qq}_{x}\neq R^{qq}_{y}$ then $R^{qq}_{x}R^{qq}_{y}=\pm1$ where
\vspace{2.5mm}
\begin{itemize}
\item If $R^{qq}_{x}R^{qq}_{y}=-1$ then $d_{q}=\varkappa_{q}\left(\cfrac{\vartheta_{q}-\vartheta_{q}^{-1}}{R^{qq}_{x}+R^{qq}_{y}}+1\right)$ and the associated skein relation is given by (a) the framed Dubrovnik polynomial (\ref{dubskein}) for $\varkappa_{q}=1$ and (b) (\ref{dubtwinskein}) for $\varkappa_{q}=-1$.
\vspace{2.5mm}
\item If $R^{qq}_{x}R^{qq}_{y}=1$ then $d_{q}=\varkappa_{q}\left(\cfrac{\vartheta_{q}+\vartheta_{q}^{-1}}{R^{qq}_{x}+R^{qq}_{y}}-1\right)$ and the associated skein relation is given by (c) the framed Kauffman polynomial (\ref{kauffskein}) for $\varkappa_{q}=1$ and (d) (\ref{kaufftwinskein}) for $\varkappa_{q}=-1$.
\end{itemize}
\vspace{2.5mm}
If $x$ and $y$ are self-dual then
\vspace{2.5mm}
\begin{enumerate}[label=(\roman*)]
\item For $R^{qq}_{x}R^{qq}_{y}=-1$, we have
\vspace{2mm}
\begin{equation}F^{qqq}_{q}=\varkappa_{q}\begin{pmatrix}
\cfrac{1}{d_{q}} & \cfrac{\sqrt{d_{x}}}{d_{q}} & \cfrac{\sqrt{d_{y}}}{d_{q}} \\[3ex]
\cfrac{\sqrt{d_{x}}}{d_{q}} & \cfrac{-d_{x}}{(\varkappa_{q}d_{q}-1)d_{q}}+1 & \cfrac{-\sqrt{d_{x}d_{y}}}{(\varkappa_{q}d_{q}-1)d_{q} } \\[4ex]
\cfrac{\sqrt{d_{y}}}{d_{q}} &  \cfrac{-\sqrt{d_{x}d_{y}}}{(\varkappa_{q}d_{q}-1)d_{q} } & \cfrac{-d_{y}}{(\varkappa_{q}d_{q}-1)d_{q}}+1
\end{pmatrix}\label{bazinga1}\end{equation}
\vspace{2.5mm}
\item For $R^{qq}_{x}R^{qq}_{y}=1$, we have
\vspace{2mm}
\begin{equation}F^{qqq}_{q}=\varkappa_{q}\begin{pmatrix}
\cfrac{1}{d_{q}} & \cfrac{\sqrt{d_{x}}}{d_{q}} & \cfrac{\sqrt{d_{y}}}{d_{q}} \\[3ex]
\cfrac{\sqrt{d_{x}}}{d_{q}} & \cfrac{d_{x}}{(\varkappa_{q}d_{q}+1)d_{q}}-1 & \cfrac{\sqrt{d_{x}d_{y}}}{(\varkappa_{q}d_{q}+1)d_{q} } \\[4ex]
\cfrac{\sqrt{d_{y}}}{d_{q}} &  \cfrac{\sqrt{d_{x}d_{y}}}{(\varkappa_{q}d_{q}+1)d_{q} } & \cfrac{d_{y}}{(\varkappa_{q}d_{q}+1)d_{q}}-1
\end{pmatrix}\label{bazinga2}\end{equation}
\end{enumerate}
\end{theorem}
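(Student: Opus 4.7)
The plan is to assemble the theorem by synthesizing the case analysis of Section \ref{cubicase} with the $F$-matrix machinery developed in Sections \ref{rotcanonsec} and \ref{compfsymb}; essentially every ingredient has already been produced, and the proof is a matter of gathering and re-expressing them.

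For the first part (the constraint $R^{qq}_x R^{qq}_y = \pm 1$, the two dimension formulas, and the four associated skein relations), I would appeal directly to Section \ref{cubicase}. Writing $\beta := R^{qq}_x$ and $\gamma := R^{qq}_y$, the hypothesis $R^{qq}_x \neq R^{qq}_y$ places us in the generic regime $\beta \neq \gamma$ considered there. The rotation operator applied to $\px$ resolved in the canonical basis $\{\idm, \ccm, \jack{x}\}$, together with the twist adjustment of Remark \ref{kinkhorz}, yields (\ref{thisguy}), i.e., $B' = \pm B$; as recorded just after the four-case discussion, this translates to $\beta\gamma = \mp 1$. The four subcases, parametrised by the signs of $B'/B$ and $\varkappa_q$, produce exactly the four skein relations (\ref{dubskein})--(\ref{kaufftwinskein}) and their associated formulas for $d_q$, which regroup into the two families of the theorem according to the value of $R^{qq}_x R^{qq}_y$.

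For the explicit $F$-matrix formulas, the self-duality hypothesis on $x$ and $y$ places us in the setting of Remark \ref{dualityremk}, so that Theorem \ref{bojackthm} and its corollaries apply. The first row and first column of $F^{qqq}_q$ are supplied by Lemma \ref{treecapping}(i) (which gives $f_{0\lambda}$) together with Corollary \ref{bojackcor1}(iii) (which gives $f_{\lambda 0} = f_{0\lambda}$); Corollary \ref{realsymm} then reduces the remaining $2 \times 2$ block to the three entries $f_{xx}$, $f_{yy}$, and $f_{xy} = f_{yx}$.

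These three entries are fixed by the computation already carried out in Section \ref{compfsymb}: expanding $\varphi(\px)$ via Theorem \ref{bojackthm} applied to the $x$-bone and comparing coefficients against the resolution (\ref{flageolet2}) of $\varkappa_q \nx$ yields the system (\ref{gremlin1})--(\ref{gremlin3}); combining (\ref{gremlin3}) with the closure identity (\ref{elpaso}), which itself follows from Lemma \ref{treecapping}(i)--(ii), determines $f_{yx}$, and then $f_{xx}$ via (\ref{oggy}). A symmetric argument (either by swapping the roles of $x$ and $y$ in the derivation, or by a second application of (\ref{elpaso})) produces $f_{yy}$. Specialising to $B' = \pm B$, equivalently $R^{qq}_x R^{qq}_y = \mp 1$, then delivers the two matrices (\ref{bazinga1}) and (\ref{bazinga2}) respectively. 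The principal obstacle is purely notational: keeping the correspondence $B' = \pm B \Leftrightarrow \beta\gamma = \mp 1 \Leftrightarrow R^{qq}_x R^{qq}_y = \mp 1$ consistent across the four sign combinations of $(\varkappa_q, R^{qq}_x R^{qq}_y)$ requires careful bookkeeping, and is the only step where sign errors are likely to creep in.
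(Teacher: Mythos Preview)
Your proposal is correct and mirrors the paper's argument essentially step for step: the first part of the theorem is exactly the synthesis of the four-case analysis in Section \ref{cubicase} (with $B'=\pm B \Leftrightarrow \beta\gamma=\mp1$), and the $F$-matrix entries are obtained precisely as you describe, via Lemma \ref{treecapping}(i), Corollary \ref{bojackcor1}(iii), Corollary \ref{realsymm}, and the system (\ref{gremlin1})--(\ref{gremlin3}) combined with (\ref{elpaso})/(\ref{oggy}). The only minor imprecision is that Remark \ref{kinkhorz} is not needed to derive (\ref{thisguy}) itself---it enters later when computing the dimension formulas in the $\varkappa_q=-1$ cases---but this does not affect the validity of your outline.
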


%\vspace{1.5mm}
\newpage

\begin{corollary}For a unitary ribbon fusion category $\mathcal{C}$ containing a fusion rule of the form (\ref{whizzle}) with $x$ and $y$ self-dual, we have
\begin{enumerate}[label=(\roman*)]
\item $q$ is symmetrically self-dual
\item $\sigma(\varphi)=\begin{cases}\{+1,+1,-1\} \ \ , \ \ \Lambda_{\mathcal{C},q} \text{ is the framed Dubrovnik polynomial}
\\ \{+1,-1,-1\} \ \ , \ \ \Lambda_{\mathcal{C},q} \text{ is the framed Kauffman polynomial}\end{cases}$
\end{enumerate}\label{noasd}\end{corollary}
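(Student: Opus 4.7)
The plan is to compute $\tr(\varphi)$ explicitly in both cases of Theorem \ref{cubicfsymbols} and compare the result against the constraint imposed by Corollary \ref{bojackcor1}(iv). Since $\mathscr{C}=\id$ (by self-duality of $x,y$, and noting that $q$ is already assumed self-dual via (\ref{ourguysec3})), the rotation operator is an involution, so its spectrum lies in $\{+1,-1\}$. By Corollary \ref{bojackcor1}(i), $\tr(\varphi)=\varkappa_{q}\tr(F^{qqq}_{q})$, so the calculation reduces to tracing the explicit matrices (\ref{bazinga1}) and (\ref{bazinga2}). The only nontrivial input will be the identity $d_{x}+d_{y}=d_{q}^{2}-1$, which follows from taking quantum dimensions of (\ref{whizzle}) via (\ref{gina}), and the factorisation $d_{q}^{2}-1=(d_{q}-1)(d_{q}+1)$.

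First I compute the Dubrovnik case (\ref{bazinga1}). Summing the diagonal entries yields
\[\tr(F^{qqq}_{q})=\varkappa_{q}\left[\tfrac{1}{d_{q}}+2-\tfrac{d_{x}+d_{y}}{(\varkappa_{q}d_{q}-1)d_{q}}\right]=\varkappa_{q}\left[\tfrac{1}{d_{q}}+2-\tfrac{(d_{q}-1)(d_{q}+1)}{(\varkappa_{q}d_{q}-1)d_{q}}\right].\]
For $\varkappa_{q}=+1$ this simplifies to $+1$, so $\tr(\varphi)=+1$, forcing $\sigma(\varphi)=\{+1,+1,-1\}$. For $\varkappa_{q}=-1$ the expression simplifies to $-3$, so $\tr(\varphi)=+3$, which would force all three eigenvalues of $\varphi$ to equal $+1$ — contradicting Corollary \ref{bojackcor1}(iv). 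Hence $\varkappa_{q}=+1$ in the Dubrovnik case.

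Next I carry out the analogous calculation in the Kauffman case (\ref{bazinga2}). The diagonal sum is
\[\tr(F^{qqq}_{q})=\varkappa_{q}\left[\tfrac{1}{d_{q}}-2+\tfrac{(d_{q}-1)(d_{q}+1)}{(\varkappa_{q}d_{q}+1)d_{q}}\right],\]
which evaluates to $-1$ for $\varkappa_{q}=+1$ (giving $\sigma(\varphi)=\{+1,-1,-1\}$) and to $+3$ for $\varkappa_{q}=-1$ (giving $\tr(\varphi)=-3$, again contradicting Corollary \ref{bojackcor1}(iv)). Thus $\varkappa_{q}=+1$ here as well, establishing (i), and (ii) follows in each case from the value of $\tr(\varphi)\in\{+1,-1\}$ together with $\sigma(\varphi)\subseteq\{\pm 1\}$.

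The only step that is not purely mechanical is invoking the ``parity not all the same'' constraint of Corollary \ref{bojackcor1}(iv) to rule out $\varkappa_{q}=-1$; everything else is a direct computation using the dimension identity $d_{x}+d_{y}=d_{q}^{2}-1$. Note that Proposition \ref{dimboi} (which gives $d_{q}\geq\sqrt{2}$) ensures that the denominators $(\varkappa_{q}d_{q}\pm 1)d_{q}$ never vanish in the relevant parameter range, so the traces are well-defined.
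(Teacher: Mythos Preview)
Your proof is correct and follows essentially the same approach as the paper: compute $\tr(F^{qqq}_{q})$ from the explicit matrices in Theorem \ref{cubicfsymbols}, convert to $\tr(\varphi)$ via Corollary \ref{bojackcor1}(i), and invoke Corollary \ref{bojackcor1}(iv) to exclude $\varkappa_{q}=-1$. You supply more intermediate detail (the dimension identity $d_{x}+d_{y}=d_{q}^{2}-1$ and the nonvanishing of the denominators via Proposition \ref{dimboi}) than the paper does, but the argument is the same.
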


\begin{proof}\hspace{2mm}
\begin{enumerate}[label=(\roman*)]
\item $\tr\left(F^{qqq}_{q}\right)=\pm3$ for $R^{qq}_{x}R^{qq}_{y}=\mp1$ when $\varkappa_{q}=-1$. The result follows by Corollaries \ref{bojackcor1} (i) and (iv).
\item By (i), $\Lambda_{\mathcal{C},q}$ is either the framed Dubrovnik or Kauffman polynomial. For the former, $\tr(F^{qqq}_{q})=1$ and for the latter $\tr(F^{qqq}_{q})=-1$. The result follows by Corollary \ref{bojackcor1} (i).  
\end{enumerate}
\end{proof}

\vspace{3mm}
\subsection{Some new bases}
\label{newbies}
As an application of the results thus far, we establish some new bases for $\End(q^{\otimes2})$ where 
\begin{equation}q\otimes q=\bm{1}\oplus x\oplus y\label{cusfubasis}\end{equation}
in a unitary ribbon fusion category $\mathcal{C}$ with $x$ and $y$ self-dual. First, let us make some observations for $q$ such that
\begin{equation}q\otimes q=\bm{1}\oplus\bigoplus_{i=1}^{k}x_{i}\end{equation}
with $q,x_{i}\in\Irr(\mathcal{C})$ and distinct self-dual objects $x_{i}$. We restrict our search to bases $\mathcal{B}$ satisfying the following property\footnote{Recall that the method employed for determining $\Lambda_{\C,q}$ in Section \ref{skeinrev} relied on expressing a crossing as a linear combination of morphisms that were invariant under the action of the rotation operator (up to permutation). This motivates the study of bases satisfying \textbf{(P1)}.}: 
\vspace{2mm}
\begin{center}\textbf{(P1) }\textit{The elements of $\mathcal{B}$ are permuted under the action of $\varphi$ (up to a sign for $1$-cycles).}\end{center}
\vspace{2mm}
\mbox{We will see that bases satisfying this property are closely related to the eigenbasis of $\varphi$.}\\
Clearly, the matrix representation of $\varphi$ in such a basis is a symmetric permutation matrix with $-1$'s permitted along the diagonal. The permutation consists of $2$-cycles and signed $1$-cycles. Then $+1$'s and $-1$'s along the matrix diagonal respectively correspond to `positive' and `negative' $1$-cycles. Let
\[N:=\dim\left(\End(q^{\otimes2})\right) \ ,\ n:=\#\{\text{cycles}\}\]
\[ b:=\#\{\text{positive $1$-cycles}\} \ , \ f:=\#\{\text{negative $1$-cycles}\} \] 
Assume $\mathcal{B}$ satisfying \textbf{(P1)} exists. Then write $\mathcal{B}=\{D_{ij}\}_{(i,j)}^{(n,l(i))}$ where $i$ indexes the $n$ cycles and $l(i)$ is the length of the $i^{th}$ cycle. We have
\begin{align}\varphi(D_{ij})=\begin{cases}
&\hspace{-2mm}\phantom{-}D_{i,\overline{j+1}} \ , \ \ \  l(i)=2 \\
&\phantom{-}D_{ij} \quad , \quad i \text{ indexes a positive $1$-cycle}\\
&-D_{ij} \quad , \quad i \text{ indexes a negative $1$-cycle}
\end{cases}\end{align}
where $\overline{j}$ denotes $j$ modulo $2$. Recall that $\sigma(\varphi)$ must consist of a mixture of $\pm1$'s. Let $V_{1}$ and $V_{-1}$ respectively denote the $+1$ and $-1$ eigenspaces of $\varphi$. Then
\begin{equation}\dim(V_{1})=n-f \ \ , \ \ \dim(V_{-1})=n-b\label{pheigenspaces}\end{equation}
whence 
\begin{equation}N=2n-b-f\text{ \ \ and \ \ }\left\lceil\frac{N}{2}\right\rceil\leq n\leq N\end{equation}
where the upper bound is realised when $\mathcal{B}$ is an eigenbasis for $\varphi$. 
%Computing the eigenbasis of $\varphi$,
%\begin{equation}\sum_{i=1}^{n}\sum_{j=1}^{l(i)}a_{ij}\cdot\varphi(D_{ij})=\pm\sum_{i=1}^{n}\sum_{j=1}^{l(i)}a_{ij}D_{ij}\end{equation}
\begin{example}
We can use the above to determine the possible actions (as a signed permutation) of $\varphi$ on $\mathcal{B}$ given $\sigma(\varphi)$. We will denote such an action by the signed cycle type $(a_{1},\ldots,a_{n})$ where $|a_{i}|=l(i)$ and $a_{i}=\pm1$ encodes the sign of a $1$-cycle. In the following, we exclude the instances where $n=N$ (i.e.\ eigenbases). 
\begin{enumerate}[label=(\roman*)]
\item $N=2$\ \ : \quad \quad \begin{tabular}{c|c}
$(n,b,f)$ & $(1,0,0)$ \\
\hline
$\sigma(\varphi)$ & $\{+1,-1\}$ \\
\hline
Cycle type & $(2)$
\end{tabular}

\vspace{2mm}
\item $N=3$\ \ : \quad \quad \begin{tabular}{c|cc}
$(n,b,f)$ & $(2,1,0)$ & $(2,0,1)$ \\
\hline
$\sigma(\varphi)$ & $\{+1,+1,-1\}$ & $\{+1,-1,-1\}$\\
\hline
Cycle type & $(2,1)$ & $(2,-1) $
\end{tabular}

\vspace{2mm}
\item $N=4$\ \ : ($1^{\text{st}}$ instance where there are two distinct cycle types for the same $\sigma(\varphi)$). \\

\begin{tabular}{c|cccc}
$(n,b,f)$ & $(3,1,1)$ & $(3,2,0)$ & (3,0,2) & (2,0,0) \\
\hline
$\sigma(\varphi)$ & $\{+1+1,-1,-1\}$ & $\{+1,+1,+1,-1\}$ & $\{+1,-1,-1,-1\}$ & $\{+1,+1,-1,-1\}$\\
\hline
Cycle type & $(2,1,-1)$ & $(2,1,1)$ & $(2,-1,-1)$ & $(2,2)$
\end{tabular}
\end{enumerate}
etc.
\label{tabspecsperms}
\end{example}

\vspace{3mm}
\begin{itemize}
\item We already encountered basis $\left\{\idm,\ccm\right\}$ corresponding to Example \ref{tabspecsperms}(i).\\
\item Let $\C$ be a unitary $G_2$ ribbon category and take $q\in\Irr(\C)$ with fusion rule $q\otimes q=\bm{1}\oplus x \oplus y \oplus q$. There exists a basis $\left\{\ccm, \idm, \jack{}, \bone{}\right\}$ on $\End(q^{\otimes2}$ \cite{Kup2}. This is basis of cycle type $(2,2)$, whence we see from Example \ref{tabspecsperms}(iii) and Corollary \ref{bojackcor1}(i) that $F^{qqq}_{q}$ is traceless.  \\
\item We show by construction that there exist bases corresponding to \mbox{Example \ref{tabspecsperms}(ii).}
\end{itemize}

\vspace{3mm}
\noindent We define 
\begin{equation}\jj{X}:=\Jack{X}+\Bone{X}\ \ \quad \text{and}\ \ \quad \jjd{X}:=\Bone{X}-\Jack{X}\end{equation}
Observe that for (\ref{cusfubasis}),
\begin{subequations}\begin{align}
\Idm+\Ccm&=\cfrac{\sqrt{d_{x}}}{d_{q}-1}\ \jj{x}+\cfrac{\sqrt{d_{y}}}{d_{q}-1}\ \jj{y} \\
\Idm-\Ccm&=-\cfrac{\sqrt{d_{x}}}{d_{q}+1}\ \jjd{x}-\cfrac{\sqrt{d_{y}}}{d_{q}+1}\ \jjd{y} 
\end{align}\end{subequations}
whence
\begin{subequations}\begin{align}
\sqrt{d_{x}}\ \jj{x}+\sqrt{d_{y}}\ \jjd{y}=(d_{q}-1)\left(\ \Idm+\Ccm\ \right)-2\sqrt{d_{y}}\ \Jack{y} \\
\sqrt{d_{x}}\ \jj{x}-\sqrt{d_{y}}\ \jjd{y}=(d_{q}+1)\left(\ \Idm-\Ccm\ \right)+2\sqrt{d_{x}}\ \Bone{x} 
\end{align}\end{subequations}

\noindent Recall from Corollary \ref{noasd}(i) that $\varkappa_{q}=1$. Expanding in the canonical basis,
\begin{subequations}\begin{align}
\sqrt{d_{x}}\ \jj{x}+\sqrt{d_{y}}\ \jjd{y} &= \left(d_{q}-\cfrac{1}{d_{q}}\right)\ \Ccm + \sqrt{d_{x}}\left(1-\cfrac{1}{d_{q}}\right) \Jack{x} -\sqrt{d_{y}}\left(1+\cfrac{1}{d_{q}}\right)\Jack{y} \\
\sqrt{d_{x}}\ \jj{x}-\sqrt{d_{y}}\ \jjd{y}&= \left(\cfrac{1}{d_{q}}-d_{q}+2\cfrac{d_{x}}{d_{q}}\right) \Ccm +\sqrt{d_{x}}\left(1+\cfrac{1}{d_{q}}+2f_{xx}\right) \Jack{x} \label{leiwulong}\\
&\hspace{5mm}+\left[\sqrt{d_{y}}\left(1+\cfrac{1}{d_{q}}\right)+2\sqrt{d_{x}}f_{yx}\right] \Jack{y} \nonumber
\end{align}\end{subequations}
where in (\ref{leiwulong}) we used (\ref{boneviajacks}). Let
\begin{equation}\jj{xy}^{+}:=\sqrt{d_{x}}\ \jj{x}+\sqrt{d_{y}}\ \jjd{y} \ \quad \ \ , \ \ \quad \jj{xy}^{-}:=\sqrt{d_{x}}\ \jj{x}-\sqrt{d_{y}}\ \jjd{y}  \end{equation}

\vspace{3mm}
\begin{lemma}
$\jj{xy}^{+}$ and $\jj{xy}^{-}$ are linearly independent.
\label{rocksfoe}\end{lemma}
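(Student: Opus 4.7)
The plan is to exploit the fact that, since the charge conjugation matrix satisfies $\mathscr{C}=\id$ on $\End(q^{\otimes2})$ in this setting, the rotation operator $\varphi$ is an involution; and to use that $\varphi$ swaps each $i$-jack with the corresponding $i$-bone. In particular, $\varphi(\jj{x})=\jj{x}$ and $\varphi(\jjd{y})=-\jjd{y}$, so a brief computation gives
\begin{equation*}
\varphi(\jj{xy}^{+})=\jj{xy}^{-}\quad\text{and}\quad\varphi(\jj{xy}^{-})=\jj{xy}^{+}.
\end{equation*}

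Suppose $a\,\jj{xy}^{+}+b\,\jj{xy}^{-}=0$ for some $a,b\in\CC$. Applying $\varphi$ to this relation yields $a\,\jj{xy}^{-}+b\,\jj{xy}^{+}=0$. Adding and subtracting these two equations gives $(a+b)(\jj{xy}^{+}+\jj{xy}^{-})=0$ and $(a-b)(\jj{xy}^{+}-\jj{xy}^{-})=0$. Since
\begin{equation*}
\jj{xy}^{+}+\jj{xy}^{-}=2\sqrt{d_{x}}\,\jj{x}\quad\text{and}\quad\jj{xy}^{+}-\jj{xy}^{-}=2\sqrt{d_{y}}\,\jjd{y},
\end{equation*}
it suffices to verify that $\jj{x}\neq 0$ and $\jjd{y}\neq 0$, for then $a+b=0$ and $a-b=0$, whence $a=b=0$.

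Both nonvanishing statements follow immediately by expanding in the canonical basis via Theorem \ref{bojackthm} (with $\varkappa_{q}=1$ by Corollary \ref{noasd}(i)): one finds that the coefficients of $\ccm$ in $\jj{x}$ and $\jjd{y}$ are $\sqrt{d_{x}}/d_{q}$ and $\sqrt{d_{y}}/d_{q}$ respectively, which are strictly positive. The only potential subtlety is ensuring that the $\varphi$-action computed above is correct, but this is a direct consequence of $\varphi^{2}=\mathscr{C}=\id$ together with the defining rotation $\varphi\bigl(\jack{x}\bigr)=\bone{x}$; no further obstacle is anticipated.
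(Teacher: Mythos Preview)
Your proof is correct and follows essentially the same approach as the paper: both arguments use that $\varphi$ interchanges $\jj{xy}^{+}$ and $\jj{xy}^{-}$, and both reduce to the nonvanishing of $\jj{x}$ and $\jjd{y}$. Your version is a bit more explicit in justifying the latter (via the $\ccm$-coefficient from Theorem~\ref{bojackthm}), whereas the paper simply records the contradictions without spelling this out; the invocation of Corollary~\ref{noasd}(i) is harmless but unnecessary, since the $\ccm$-coefficient $\sqrt{d_\lambda}/d_q$ is independent of $\varkappa_q$.
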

\renewcommand\qedsymbol{\mbox{\larger\Lightning}} %switch to contradiction symbol
\begin{proof}
$\varphi(\jj{xy}^{+})=\jj{xy}^{-}$. Suppose $\jj{xy}^{-}=z\jj{xy}^{+}$ for some $z\in\mathbb{C}$. Then $\varphi(\jj{xy}^{+})=z\jj{xy}^{+}$ whence $\jj{xy}^{+}=\pm\jj{xy}^{-}$. For $z=+1$ and $z=-1$ we respectively get $\jjd{y}=0$ and $\jj{x}=0$, both of which yield a contradiction.
\end{proof} 
\renewcommand\qedsymbol{$\square$} %switch back to default proof symbol

\vspace{3mm}
\begin{theorem}
\label{sleepyhead}
Let $q$ be defined as in (\ref{cusfubasis}). Then
\begin{enumerate}[label=(\roman*)]
\item $\left\{\jj{xy}^{+}\ ,\ \jj{xy}^{-}\ ,\ \idm+\ccm\right\}$ defines a basis for $\End(q^{\otimes2})$ when $\Lambda_{\mathcal{C},q}$ is the framed Dubrovnik polynomial.
\item $\left\{\jj{xy}^{+}\ ,\ \jj{xy}^{-}\ ,\ \idm-\ccm\right\}$ defines a basis for $\End(q^{\otimes2})$ when $\Lambda_{\mathcal{C},q}$ is the framed Kauffman polynomial.
\end{enumerate}
\end{theorem}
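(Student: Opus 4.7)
The strategy is to exploit the eigenspace decomposition of the rotation operator $\varphi$. By Corollary \ref{noasd}(i) we have $\varkappa_q=1$, so $\mathscr{C}=\id$ and $\varphi^2=\id$ on $\End(q^{\otimes2})$. Thus $\End(q^{\otimes2})=V_1\oplus V_{-1}$ where $V_{\pm1}$ are the $\pm1$-eigenspaces of $\varphi$, and Corollary \ref{noasd}(ii) fixes $(\dim V_1,\dim V_{-1})=(2,1)$ in the Dubrovnik case and $(1,2)$ in the Kauffman case.

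Next I would replace the proposed triples by the equivalent spanning sets $\{s,\,d,\,\idm\pm\ccm\}$, where $s:=\jj{xy}^{+}+\jj{xy}^{-}=2\sqrt{d_x}\,\jj{x}$ and $d:=\jj{xy}^{+}-\jj{xy}^{-}=2\sqrt{d_y}\,\jjd{y}$. A direct computation using $\varphi(\jack{\lambda})=\bone{\lambda}$ together with $\varphi^2=\id$ yields $\varphi(s)=s$, $\varphi(d)=-d$, and $\varphi(\idm\pm\ccm)=\pm(\idm\pm\ccm)$; moreover $s,d$ are nonzero by Lemma \ref{rocksfoe}, and $\idm\pm\ccm\neq0$ by linear independence of $\idm$ and $\ccm$ in the canonical basis. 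Hence in case (i) we have $s,\idm+\ccm\in V_1$ and $d\in V_{-1}$, while in case (ii) we have $s\in V_1$ and $d,\idm-\ccm\in V_{-1}$. In each case the singleton eigenvector automatically spans its one-dimensional eigenspace, so the basis property reduces to showing that the two eigenvectors lying in the two-dimensional eigenspace are linearly independent.

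For case (i) I would argue by contradiction: assume $\idm+\ccm=\lambda s$ and expand both sides in the canonical basis $\{\ccm,\jack{x},\jack{y}\}$ using (\ref{orientedid}) for $\idm$ and Theorem \ref{bojackthm} for the bone morphism in $\jj{x}=\jack{x}+\bone{x}$. Matching $\ccm$-coefficients gives $\lambda=(d_q+1)/(2d_x)$; substituting this into the $\jack{y}$-coefficient match yields $f_{yx}=\sqrt{d_xd_y}/(d_q(d_q+1))>0$. But Theorem \ref{cubicfsymbols}(i) states $f_{yx}=-\sqrt{d_xd_y}/(d_q(d_q-1))<0$ (using $d_q>1$ from Proposition \ref{dimboi}, since $q$ is not invertible)---contradiction. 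Case (ii) is handled by the wholly parallel argument: writing $\idm-\ccm=\mu d$, the $\ccm$- and $\jack{x}$-comparisons force $f_{xy}<0$, contradicting the positive value in Theorem \ref{cubicfsymbols}(ii).

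The argument is essentially routine once the eigenspace decomposition is set up, so I do not anticipate a real obstacle; the only care needed is tracking signs in the final coefficient comparison. Conceptually, it is precisely the sign flip of the off-diagonal $F$-symbol $f_{xy}=f_{yx}$ between the Dubrovnik and Kauffman regimes of Theorem \ref{cubicfsymbols} that dictates which of the two proposed triples yields a basis.
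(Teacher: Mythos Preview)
Your proof is correct and takes a genuinely different route from the paper's. The paper works directly with the pair $\jj{xy}^{\pm}$: after citing Lemma \ref{rocksfoe}, it assumes $c_{1}\jj{xy}^{+}+c_{2}\jj{xy}^{-}=\idm\pm\ccm$, expands both sides in the canonical basis, uses the relation $\sqrt{d_{x}/d_{y}}\,f_{yx}=f_{xx}-r$ from (\ref{gremlin3}) to eliminate one equation, solves the resulting two-parameter system for $(c_{1},c_{2})$, and then manipulates the $f_{xx}$-expression until it clashes with (\ref{bazinga1}) or (\ref{bazinga2}). Your argument instead passes to the $\varphi$-eigenvectors $s=\jj{xy}^{+}+\jj{xy}^{-}\in V_{1}$ and $d=\jj{xy}^{+}-\jj{xy}^{-}\in V_{-1}$ and invokes the eigenspace dimensions of Corollary \ref{noasd}(ii) up front; this collapses the problem to a single-parameter check on the off-diagonal symbol $f_{yx}$, whose sign is read off directly from Theorem \ref{cubicfsymbols}. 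The upshot is that you are effectively running the paper's logic in reverse---the paper derives the diagonalisation of $\varphi$ as a corollary of this theorem, whereas you use (the already-available) spectrum of $\varphi$ to prove the theorem. Your route is shorter and isolates the conceptual reason the two cases split (the sign flip of $f_{yx}$ between (\ref{bazinga1}) and (\ref{bazinga2})); the paper's route keeps all coefficients explicit, which feeds into the displayed expansions used elsewhere in Section \ref{newbies}.
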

\noindent Note that the bases in the above theorem satisfy \textbf{(P1)} (see Example \ref{tabspecsperms}(ii)), and that we can permute labels $x$ and $y$ by the symmetry of our construction.

\renewcommand\qedsymbol{\mbox{\larger\Lightning}} %switch to contradiction symbol
\begin{proof}
By Lemma \ref{rocksfoe}, it suffices in each case to show that the final basis element is not a linear combination of the first two. For $c_{1},c_{2}\in\mathbb{C}$,
\begin{equation*}c_{1}\jj{xy}^{+}+c_{2}\jj{xy}^{-}=a_{1}\Ccm+a_{2}\Jack{x}+a_{3}\Jack{y}\end{equation*}
where 
\[a_{1}:=\cfrac{1}{d_{q}}\left[(1-d_{q}^{2})(c_{2}-c_{1})+2c_{2}d_{x}\right] \ \ , \ \ a_{2}:=\sqrt{d_{x}}\left[\left(1+\cfrac{1}{d_{q}}\right)c_{2}+\left(1-\cfrac{1}{d_{q}}\right)c_{1}+2c_{2}f_{xx}\right] \]
\[a_{3}:=\sqrt{d_{y}}\left[\left(1+\cfrac{1}{d_{q}}\right)(c_{2}-c_{1})+2c_{2}\sqrt{\cfrac{d_{x}}{d_{y}}}f_{yx}\right] \]
Recall from (\ref{gremlin3}) that 
\begin{equation}\sqrt{\cfrac{d_{x}}{d_{y}}}f_{yx}=f_{xx}-r\end{equation}
where $r=1,-1$ when $\Lambda_{\mathcal{C},q}$ is the framed Dubrovnik and framed Kauffman polynomial respectively. Thus,
\begin{equation}a_{3}=\sqrt{\cfrac{d_{y}}{d_{x}}}\ a_{2}-2\sqrt{d_{y}}(c_{1}+rc_{2})\label{sleepergreens}\end{equation}
\begin{enumerate}[label=(\roman*)]
\vspace{5mm}
\item Suppose there exist $c_{1}$ and $c_{2}$ such that $c_{1}\jj{xy}^{+}+c_{2}\jj{xy}^{-}=\idm+\ccm$. Then $(a_{1},a_{2},a_{3})=\left(1+\cfrac{1}{d_{q}},\cfrac{\sqrt{d_{x}}}{d_{q}},\cfrac{\sqrt{d_{y}}}{d_{q}}\right)$. Setting $r=1$, (\ref{sleepergreens}) gives $c_{1}=-c_{2}$. Now comparing values for $a_{1}$ yields
\begin{equation*}c_{1}=\cfrac{1}{2}\left(\cfrac{1+d_{q}}{d_{y}}\right) \quad , \quad c_{2}=-\cfrac{1}{2}\left(\cfrac{1+d_{q}}{d_{y}}\right)\end{equation*}
whence comparing values for $a_{2}$ yields
\begin{equation*}f_{xx}=-\cfrac{d_{y}}{d_{q}(1+d_{q})}-\cfrac{1}{d_{q}}\end{equation*}
where we can manipulate the right-hand side to get
\begin{align*}-\cfrac{d_{y}}{d_{q}(1+d_{q})}-\cfrac{1}{d_{q}}&=-1+\cfrac{d_{x}}{d_{q}(d_{q}-1)}-\cfrac{2d_{x}}{d_{q}(d_{q}^{2}-1)}\\
\overset{(\ref{bazinga1})}&{=}-f_{xx}-\cfrac{2d_{x}}{d_{q}(d_{q}^{2}-1)} \end{align*}
implying that $f_{xx}=-\cfrac{d_{x}}{d_{q}(d_{q}^{2}-1)}$\ . Rearranging the expression for $f_{xx}$ from (\ref{bazinga1}),
\begin{equation*}f_{xx}=1-\cfrac{d_{x}}{(d_{q}-1)d_{q}}=\cfrac{d_{q}(d_{q}^{2}-1)-d_{x}(d_{q}+1)}{d_{q}(d_{q}^{2}-1)}=\cfrac{d_{q}d_{y}-d_{x}}{d_{q}(d_{q}^{2}-1)}\end{equation*}
whence we arrive at a contradiction since $d_{q},d_{y}\neq0$. \pushQED{\qed}\qedhere\popQED

\vspace{7.5mm}

\item Suppose there exist $c_{1}$ and $c_{2}$ such that $c_{1}\jj{xy}^{+}+c_{2}\jj{xy}^{-}=\idm-\ccm$. Then $(a_{1},a_{2},a_{3})=\left(\cfrac{1}{d_{q}}-1,\cfrac{\sqrt{d_{x}}}{d_{q}},\cfrac{\sqrt{d_{y}}}{d_{q}}\right)$. Setting $r=-1$, (\ref{sleepergreens}) gives $c_{1}=c_{2}$. Now comparing values for $a_{1}$ yields
\begin{equation*}c_{1}=c_{2}=\cfrac{1}{2}\left(\cfrac{1-d_{q}}{d_{x}}\right)\end{equation*}
whence comparing values for $a_{3}$ yields
\begin{equation*}f_{yx}=\cfrac{\sqrt{d_{x}d_{y}}}{(d_{q}-1)d_{q}}\end{equation*}
From (\ref{bazinga2}), $f_{yx}=\cfrac{\sqrt{d_{x}d_{y}}}{(d_{q}+1)d_{q}}$ whence we arrive at a contradiction since $d_{q}\neq0$.
\pushQED{\qed}\qedhere\popQED
\end{enumerate}
\renewcommand\qedsymbol{\phantom{$\square$}} %invisible global symbol for whole proofl
\end{proof}
\renewcommand\qedsymbol{$\square$} %switch back to default proof symbol

\newpage
\begin{corollary}\textbf{(Diagonalising $\varphi$)} \\
Following the notation from (\ref{pheigenspaces}), we have $\End(q^{\otimes2})=V_{1}\oplus V_{-1}$.
\begin{enumerate}[label=(\roman*)]
\item If $\Lambda_{\mathcal{C},q}$ is the framed Dubrovnik polynomial then 
\begin{equation}V_{1}=\spn\left\{\jj{x}\ ,\ \idm+\ccm\right\} \quad , \quad V_{-1}=\spn\left\{\jjd{y}\right\}\label{boisy1}\end{equation}
\item If $\Lambda_{\mathcal{C},q}$ is the framed Kauffman polynomial then 
\begin{equation}V_{1}=\spn\left\{\jj{x}\right\} \quad , \quad V_{-1}=\spn\left\{\jjd{y}\ ,\ \idm-\ccm\right\}\label{boisy2}\end{equation}
\end{enumerate} 
\end{corollary}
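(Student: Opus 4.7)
The plan is to exploit the involutivity of the rotation operator. Under the standing hypothesis that all summands in $q\otimes q$ are self-dual, Remark \ref{dualityremk} gives $\varphi^{2}=\mathscr{C}=\id$, so $\varphi$ is an involution on the three-dimensional space $\End(q^{\otimes 2})$ and is therefore diagonalisable with spectrum contained in $\{+1,-1\}$. This delivers the decomposition $\End(q^{\otimes 2})=V_{1}\oplus V_{-1}$ at no extra cost, leaving us to identify each eigenspace.

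First I would verify that the proposed spanning sets do lie in the correct eigenspaces. By definition of the bone, $\varphi(\jack{x})=\bone{x}$, and involutivity then yields $\varphi(\bone{x})=\jack{x}$. Consequently
\begin{equation*}
\varphi(\jj{X})=\jj{X},\qquad \varphi(\jjd{X})=-\jjd{X},\qquad \varphi\bigl(\,\idm\pm\ccm\,\bigr)=\pm\bigl(\,\idm\pm\ccm\,\bigr),
\end{equation*}
so the listed vectors indeed land in $V_{+1}$ or $V_{-1}$ as advertised.

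Next I would read off the dimensions from Corollary \ref{noasd}(ii): in the framed Dubrovnik case $\sigma(\varphi)=\{+1,+1,-1\}$, giving $\dim V_{1}=2$ and $\dim V_{-1}=1$, while in the framed Kauffman case $\sigma(\varphi)=\{+1,-1,-1\}$, giving $\dim V_{1}=1$ and $\dim V_{-1}=2$. It therefore suffices in each case to show that the exhibited spanning sets are linearly independent.

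The essential work on independence has already been done in Theorem \ref{sleepyhead}. The key observation is that
\begin{equation*}
\jj{x}=\tfrac{1}{2\sqrt{d_{x}}}(\jj{xy}^{+}+\jj{xy}^{-})\quad\text{and}\quad \jjd{y}=\tfrac{1}{2\sqrt{d_{y}}}(\jj{xy}^{+}-\jj{xy}^{-})
\end{equation*}
both lie in $\spn\{\jj{xy}^{+},\jj{xy}^{-}\}$. Theorem \ref{sleepyhead}(i) asserts that $\idm+\ccm$ completes $\{\jj{xy}^{+},\jj{xy}^{-}\}$ to a basis in the Dubrovnik setting, hence $\idm+\ccm\notin\spn\{\jj{x}\}$, so $\{\jj{x},\,\idm+\ccm\}$ is independent and by dimension spans $V_{1}$; the $1$-dimensional space $V_{-1}$ is spanned by $\jjd{y}$ as soon as we know $\jjd{y}\neq 0$, which is immediate from Theorem \ref{bojackthm} since the $\ccm$-component of $\bone{y}-\jack{y}$ in the canonical basis is $\sqrt{d_{y}}/d_{q}\neq 0$. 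The Kauffman case is treated symmetrically using Theorem \ref{sleepyhead}(ii), with the roles of $V_{1}$ and $V_{-1}$ swapped. Since Theorem \ref{sleepyhead} has already shouldered the main obstacle — verifying that $\idm\pm\ccm$ is genuinely outside the plane spanned by $\{\jj{xy}^{+},\jj{xy}^{-}\}$ — no substantial new work is required, and this corollary amounts to a repackaging of that theorem into eigenspace form.
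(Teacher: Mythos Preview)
Your proof is correct and rests on the same ingredient as the paper's, namely Theorem \ref{sleepyhead}; the paper merely organises things differently by solving the eigenvalue equation $\varphi(v)=\pm v$ directly in the basis $\{\jj{xy}^{+},\jj{xy}^{-},\idm\pm\ccm\}$, which simultaneously yields the eigenspaces and their dimensions without a separate appeal to Corollary \ref{noasd}(ii). One tiny gap: for the independence of $\{\jj{x},\idm+\ccm\}$ you implicitly need $\jj{x}\neq 0$, but this follows from the same $\ccm$-component argument you used for $\jjd{y}$.
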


\vspace{3mm}
\begin{proof}
Given $b_{1},b_{2},b_{3}\in\mathbb{C}$, we have 
\[\varphi\left(b_{1}\jj{xy}^{+}+b_{2}\jj{xy}^{-}+b_{3}\left(\idm\pm\ccm\right)\right)=b_{1}\jj{xy}^{-}+b_{2}\jj{xy}^{+}+b_{3}\left(\ccm\pm\idm\right)\]
The result then easily follows from solving
\[b_{1}\jj{xy}^{-}+b_{2}\jj{xy}^{+}+b_{3}\left(\ccm\pm\idm\right)=\pm\left(b_{1}\jj{xy}^{+}+b_{2}\jj{xy}^{-}+b_{3}\left(\idm\pm\ccm\right)\right)\]
\end{proof}

\begin{remark}\hspace{2mm}
\begin{enumerate}[label=(\roman*)]
\item For the $N=2$ case, $\left\{\idm+\ccm ,\idm-\ccm\right\}$ trivially defines an eigenbasis for $\varphi$.
\item By symmetry of our construction, we may permute labels $x$ and $y$ in (\ref{boisy1}) and (\ref{boisy2}). For e.g.\ (\ref{boisy1}), this tells us that $\jj{y}\in V_{1}$ and $\jjd{x}\in V_{-1}$. Recovering the precise linear relations is a straightforward task. 
\end{enumerate}\end{remark}

\vspace{2mm}
%---------Concluding Remarks and Outlook---------
\section{Concluding Remarks and Outlook}
\label{outlook}
Using the rotation operator, we exploited the graphical calculus as a tool for exploring unitary spherical fusion categories (and their braided counterparts). We also used this approach to learn more about the link invariants associated to fusion rules of a particular form. Below, we summarise some of the highlights of the paper and discuss some possible directions for future work.

\subsection{Quantum invariants}\hspace{2mm}\\[1mm]
\label{conqinv}
Using the rotation operator, we extended \cite[Theorems 3.1 \& 3.2]{MSP1} to cover the anti-symmetrically self-dual cases. This produced skein relations (\ref{kbtwinskein}), (\ref{dubtwinskein}) and (\ref{kaufftwinskein}): to the knowledge of the authors, these have not previously appeared in the literature. In Appendix \ref{addendum}, we briefly investigated some properties of invariants associated to antisymmetrically self-dual objects.\\
% Lickorish found a formula \cite{lickorish} relating the Dubrovnik and Kauffman polynomials: in Theorem \ref{extendedlick}, we presented a similar formula relating the Kauffman bracket (\ref{kbskein}) to the invariant (\ref{kbtwinskein}). %\red{Add more about Section \ref{mindfuck}?}

%\vspace{1.5mm}
%\adjustbox{lap={\width}{-4mm}}{
%\begin{tcolorbox}[width=1.1\linewidth, center, breakable]
%\begin{oproblem}\label{endlessforever}
%\textit{Let $D$ and $D'$ be link diagrams that are equivalent under framed isotopy, and let $k:=\frac{1}{2}|\mathcal{W}(D)-\mathcal{W}(D')|$. Is it always true that 
%\begin{equation}\Lambda_{\C,q}(D')=(-1)^{k}\Lambda_{\C,q}(D)\label{decisiveassaultrep}\end{equation}
%given $\Lambda_{\C,q}$ for some $q$ with $\varkappa_q=-1$?}
%\end{oproblem}

%\noindent This is Problem \ref{jooksing}. Recall that $\mathcal{W}$ denotes the ``local writhe'' (which was defined beneath Theorem \ref{extendedlick}). In Proposition \ref{omglol}, we showed that the answer to this question is positive when $q^{\otimes2}=\bm{1}\oplus x$. We believe that more work is required to understand quantum invariants associated to antisymmetrically self-dual objects.
%\end{tcolorbox}
%}

\noindent We considered the framed link invariants $\Lambda_{\C,q}$ associated to (\ref{ourguysec3}) for $k=1,2$. A natural extension of this narrative would be to solve the problem below.

\setcounter{oproblem}{0} %Start open problem counter from 1 

\vspace{1.5mm}
%\adjustbox{lap={\width}{-4mm}}{
\begin{tcolorbox}[width=1.1\linewidth, center, breakable]
\begin{oproblem}\label{soulo5}
\textit{What is $\Lambda_{\C,q}$ when (i) $k\geq3$ , (ii) the fusion rule for $q^{\otimes2}$ is not multiplicity-free? }
\end{oproblem}
Partial results are known for (i) when $k=3$. If $q^{\otimes2}=\bm{1} \oplus x\oplus y \oplus q$, then $\Lambda_{\C,q}$ is said to be Kuperberg's $G_{2}$ invariant in most ``nontrivial'' cases \cite{MSP1}. See also \cite{wenzlpathalgs}.
\end{tcolorbox}
%}

\noindent We narrowed our focus to discussing skein-theoretic methods for evaluating link diagrams in $\End(\bm{1})$ when all components are labelled by the same self-dual element \mbox{$q\in\Irr(\C)$.} More generally, one could ask the same question but for 
\vspace{-1mm}
\begin{enumerate}[label=(\roman*)]
\item ``Polychromatic'' link diagrams (i.e.\ components may have distinct labels), or 
\item When the labels are not necessarily self-dual (so that orientation matters).
\end{enumerate}
\vspace{-1mm}
For instance, when $\C$ is a Temperley-Lieb-Jones (TLJ) category, then any polychromatic link diagram can be evaluated as an element of the Kauffman bracket skein algebra: each component is replaced by the corresponding closed Jones-Wenzl idempotent, and the diagram is evaluated via skein relations (\ref{kbskein}). In TLJ categories, all objects are symmetrically self-dual. An important class of TQFTs known as Jones-Kauffman theories are described by TLJ categories (e.g. Ising and Fibonacci MTCs): here, Jones-Wenzl idempotents may be reinterpreted as anyons \cite{lureps}. \\

In Appendix \ref{reptheory}, we studied unitary representations of $\CC[B_{n}]$ that factor through the Iwahori-Hecke and Temperley-Lieb algebras. This resulted in a skein relation  (\ref{homflyfrskein}) for the framed HOMFLY-PT polynomial, which specialised to (\ref{kbskein})-(\ref{kbtwinskein}) in the context of a RFC $\C$ (since $b=\pm a^{-1}$ when $\varkappa_{q}=\pm1$). In this vein, we pose the following question.

\vspace{1.5mm}
%\adjustbox{lap={\width}{-4mm}}{
\begin{tcolorbox}[width=1.1\linewidth, center, breakable]
\begin{oproblem}\label{asgrv}
\textit{Is there some $3$-variable link polynomial that specialises to (\ref{dubskein})-(\ref{kaufftwinskein})?}
\end{oproblem}
\noindent At the end of Appendix \ref{reptheory}, we see that the representation of $\CC[B_{n}]$ associated to $\Lambda_{\C,q}$ for $q^{\otimes2}=\bm{1}\oplus x\oplus y$ should factor through the cubic Hecke algebra $H_{n}(Q,3)$ and the Temperley-Lieb algebra. This motivates Problem \ref{asgrv} by analogy with the $q^{\otimes2}=\bm{1}\oplus x$ exposition.
\end{tcolorbox}
%}

\vspace{5mm}
\subsection{F-Symbols}\hspace{2mm}\\[1mm]
\label{confmats}
In Section \ref{mainresults}, we considered the action of the rotation operator $\varphi$ on a basis of jumping jacks for $\End(q^{\otimes2})$. This was for a unitary spherical fusion category $\C$ containing a fusion rule of the form $q^{\otimes2}= \bm{1}\oplus\bigoplus_{i}x_{i}$ with all the $x_{i}$ self-dual. We deduced that $\varphi=\varkappa_{q}F^{qqq}_{q}$ (Theorem \ref{bojackthm}) and that $F^{qqq}_{q}$ is real-symmetric (Corollary \ref{realsymm}). For instances where $\C$ admits a braiding and $q^{\otimes2}=\bm{1}\oplus x \oplus y$ (with $x$ and $y$ self-dual), we found formulae for $F^{qqq}_{q}$ in terms of the quantum dimensions (Theorem \ref{cubicfsymbols}), and concluded that $\varkappa_{q}\neq-1$ (Corollary \ref{noasd}). We also saw that the spectrum of the rotation operator distinguishes between the Dubrovnik and Kauffman invariants (Corollary \ref{noasd}). Obvious extensions of this work would entail relaxing various assumptions e.g. as in the problem below.

\vspace{1.5mm}
%\adjustbox{lap={\width}{-4mm}}{
\begin{tcolorbox}[width=1.1\linewidth, center, breakable]
\begin{oproblem}\textit{Can the results of Section \ref{mainresults} be extended to the case where}
\begin{enumerate}[label=(\alph*)]
\item \textit{the simple summands in $q^{\otimes2}$ are non self-dual?}
\item $\C$ \textit{is not unitary? }
\end{enumerate}
\textit{and can we extend Theorem \ref{cubicfsymbols} to determine general formulae for $F^{qqq}_{q}$ when}
\begin{enumerate}[label=(\alph*), resume]
%\item \textit{we have repeated eigenvalues $R^{qq}_{x}=R^{qq}_{y}$?}
\item  $\C$ \textit{does not admit a braiding?}
\end{enumerate}
\end{oproblem}

%For problem (a), Lemma \ref{repeatedev} tells us we will have repeated eigenvalues when $\C$ is ribbon (e.g.\ when $q^{\otimes2}=\bm{1}\oplus x\oplus y$, we will have $R^{qq}_{x}=R^{qq}_{y})$. %Furthermore, if Conjecture \ref{dualityconj} were true, then problem (a) would apply to unitary fusion categories whose braiding (if it exists) is degenerate.

\end{tcolorbox}
%}

%\subsection{Unitary modular tensor categories}\hspace{2mm}\\[1mm]
%\label{umconcsec}
%We argued that given a self-dual object $q$ in any unitary MTC corresponding to a physical model, the simple summands in $q^{\otimes2}$ must also be self-dual (Theorem \ref{dualitythm}). Consequently, all the results of Section \ref{mainresults} are applicable to anyon models. The content of Section \ref{dualthmsec} culminated in Conjecture \ref{dualityconj}, which we restate here for convenience:

%\vspace{1.5mm}
%\adjustbox{lap={\width}{-2mm}}{
%\begin{tcolorbox}[width=1.1\linewidth, center, breakable]
%\setcounter{oconj}{0} %Start conjecture counter from 1 
%\begin{oconj}
%\label{knux}
%Given any unitary MTC $\mathcal{C}$ with some self-dual $q\in\Irr(\C)$, all objects $X\in\Irr(\C)$ for which $N^{qq}_{X}\neq0$ are also self-dual.
%\end{oconj}
%\end{tcolorbox}
%}

\newpage
%\vspace{12mm}
%Bibliography-------------------------------------------------------------------------------------------------------------------

%Appendices--------------------------------------------------------------------------------------------------------------------
\newpage
\appendix

\section{Pivotal Coefficients}
\label{pivcoffappx}
A pivotal structure on a fusion category $\mathcal{C}$ affords our diagrams $2\pi$- rotational isotopy. For example, for a trivalent vertex with $a, b$ and $c \in \Irr(\mathcal{C})$, we have \vspace{1.5mm}

\begin{center}\def\svgwidth{6.5cm}%% Creator: Inkscape 1.0 (4035a4f, 2020-05-01), www.inkscape.org
%% PDF/EPS/PS + LaTeX output extension by Johan Engelen, 2010
%% Accompanies image file '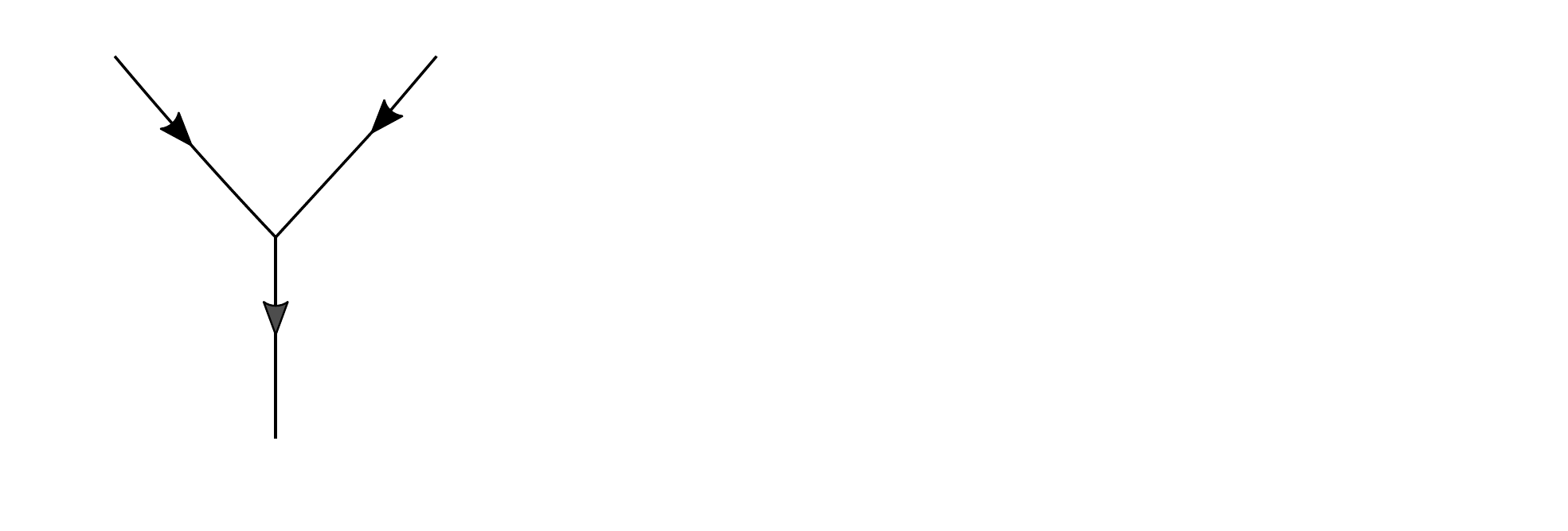' (pdf, eps, ps)
%%
%% To include the image in your LaTeX document, write
%%   \input{<filename>.pdf_tex}
%%  instead of
%%   \includegraphics{<filename>.pdf}
%% To scale the image, write
%%   \def\svgwidth{<desired width>}
%%   \input{<filename>.pdf_tex}
%%  instead of
%%   \includegraphics[width=<desired width>]{<filename>.pdf}
%%
%% Images with a different path to the parent latex file can
%% be accessed with the `import' package (which may need to be
%% installed) using
%%   \usepackage{import}
%% in the preamble, and then including the image with
%%   \import{<path to file>}{<filename>.pdf_tex}
%% Alternatively, one can specify
%%   \graphicspath{{<path to file>/}}
%% 
%% For more information, please see info/svg-inkscape on CTAN:
%%   http://tug.ctan.org/tex-archive/info/svg-inkscape
%%
\begingroup%
  \makeatletter%
  \providecommand\color[2][]{%
    \errmessage{(Inkscape) Color is used for the text in Inkscape, but the package 'color.sty' is not loaded}%
    \renewcommand\color[2][]{}%
  }%
  \providecommand\transparent[1]{%
    \errmessage{(Inkscape) Transparency is used (non-zero) for the text in Inkscape, but the package 'transparent.sty' is not loaded}%
    \renewcommand\transparent[1]{}%
  }%
  \providecommand\rotatebox[2]{#2}%
  \newcommand*\fsize{\dimexpr\f@size pt\relax}%
  \newcommand*\lineheight[1]{\fontsize{\fsize}{#1\fsize}\selectfont}%
  \ifx\svgwidth\undefined%
    \setlength{\unitlength}{566.97100301bp}%
    \ifx\svgscale\undefined%
      \relax%
    \else%
      \setlength{\unitlength}{\unitlength * \real{\svgscale}}%
    \fi%
  \else%
    \setlength{\unitlength}{\svgwidth}%
  \fi%
  \global\let\svgwidth\undefined%
  \global\let\svgscale\undefined%
  \makeatother%
  \begin{picture}(1,0.33864969)%
    \lineheight{1}%
    \setlength\tabcolsep{0pt}%
    \put(0,0){\includegraphics[width=\unitlength,page=1]{diagram1.pdf}}%
    \put(0.00726508,0.28574212){\color[rgb]{0,0,0}\makebox(0,0)[lt]{\lineheight{1.25}\smash{\begin{tabular}[t]{l}$a$\end{tabular}}}}%
    \put(0.28843923,0.28302015){\color[rgb]{0,0,0}\makebox(0,0)[lt]{\lineheight{1.25}\smash{\begin{tabular}[t]{l}$b$\end{tabular}}}}%
    \put(0.12120915,0.05955429){\color[rgb]{0,0,0}\makebox(0,0)[lt]{\lineheight{1.25}\smash{\begin{tabular}[t]{l}$c$\end{tabular}}}}%
    \put(0.85688739,0.24616985){\color[rgb]{0,0,0}\makebox(0,0)[lt]{\lineheight{1.25}\smash{\begin{tabular}[t]{l}$a$\end{tabular}}}}%
    \put(0.93032725,0.24694677){\color[rgb]{0,0,0}\makebox(0,0)[lt]{\lineheight{1.25}\smash{\begin{tabular}[t]{l}$b$\end{tabular}}}}%
    \put(0.45030869,0.03984017){\color[rgb]{0,0,0}\makebox(0,0)[lt]{\lineheight{1.25}\smash{\begin{tabular}[t]{l}$c$\end{tabular}}}}%
    \put(0,0){\includegraphics[width=\unitlength,page=2]{diagram1.pdf}}%
    \put(0.36833682,0.17444266){\color[rgb]{0,0,0}\makebox(0,0)[lt]{\lineheight{1.25}\smash{\begin{tabular}[t]{l}=\end{tabular}}}}%
    \put(0,0){\includegraphics[width=\unitlength,page=3]{diagram1.pdf}}%
  \end{picture}%
\endgroup%
\label{pivstuff101}\end{center}

\noindent Applying the identity to a cap, this gives\\
\begin{center}
\def\svgwidth{10cm}%% Creator: Inkscape 1.0 (4035a4f, 2020-05-01), www.inkscape.org
%% PDF/EPS/PS + LaTeX output extension by Johan Engelen, 2010
%% Accompanies image file '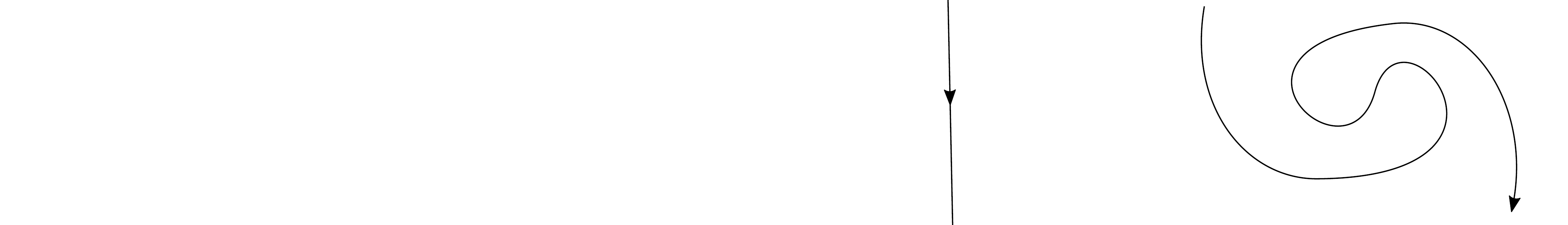' (pdf, eps, ps)
%%
%% To include the image in your LaTeX document, write
%%   \input{<filename>.pdf_tex}
%%  instead of
%%   \includegraphics{<filename>.pdf}
%% To scale the image, write
%%   \def\svgwidth{<desired width>}
%%   \input{<filename>.pdf_tex}
%%  instead of
%%   \includegraphics[width=<desired width>]{<filename>.pdf}
%%
%% Images with a different path to the parent latex file can
%% be accessed with the `import' package (which may need to be
%% installed) using
%%   \usepackage{import}
%% in the preamble, and then including the image with
%%   \import{<path to file>}{<filename>.pdf_tex}
%% Alternatively, one can specify
%%   \graphicspath{{<path to file>/}}
%% 
%% For more information, please see info/svg-inkscape on CTAN:
%%   http://tug.ctan.org/tex-archive/info/svg-inkscape
%%
\begingroup%
  \makeatletter%
  \providecommand\color[2][]{%
    \errmessage{(Inkscape) Color is used for the text in Inkscape, but the package 'color.sty' is not loaded}%
    \renewcommand\color[2][]{}%
  }%
  \providecommand\transparent[1]{%
    \errmessage{(Inkscape) Transparency is used (non-zero) for the text in Inkscape, but the package 'transparent.sty' is not loaded}%
    \renewcommand\transparent[1]{}%
  }%
  \providecommand\rotatebox[2]{#2}%
  \newcommand*\fsize{\dimexpr\f@size pt\relax}%
  \newcommand*\lineheight[1]{\fontsize{\fsize}{#1\fsize}\selectfont}%
  \ifx\svgwidth\undefined%
    \setlength{\unitlength}{1274.84079917bp}%
    \ifx\svgscale\undefined%
      \relax%
    \else%
      \setlength{\unitlength}{\unitlength * \real{\svgscale}}%
    \fi%
  \else%
    \setlength{\unitlength}{\svgwidth}%
  \fi%
  \global\let\svgwidth\undefined%
  \global\let\svgscale\undefined%
  \makeatother%
  \begin{picture}(1,0.14318709)%
    \lineheight{1}%
    \setlength\tabcolsep{0pt}%
    \put(0,0){\includegraphics[width=\unitlength,page=1]{diagram2.pdf}}%
    \put(0.68252886,0.05196946){\color[rgb]{0,0,0}\makebox(0,0)[lt]{\lineheight{1.25}\smash{\begin{tabular}[t]{l}=\end{tabular}}}}%
    \put(0.61220109,0.00273735){\color[rgb]{0,0,0}\makebox(0,0)[lt]{\lineheight{1.25}\smash{\begin{tabular}[t]{l}$a$\end{tabular}}}}%
    \put(0.97361939,0.00590439){\color[rgb]{0,0,0}\makebox(0,0)[lt]{\lineheight{1.25}\smash{\begin{tabular}[t]{l}$a$\end{tabular}}}}%
    \put(0,0){\includegraphics[width=\unitlength,page=2]{diagram2.pdf}}%
    \put(-0.0015018,0.01167744){\color[rgb]{0,0,0}\makebox(0,0)[lt]{\lineheight{1.25}\smash{\begin{tabular}[t]{l}$a$\end{tabular}}}}%
    \put(0.10818379,0.00938471){\color[rgb]{0,0,0}\makebox(0,0)[lt]{\lineheight{1.25}\smash{\begin{tabular}[t]{l}$a^*$\end{tabular}}}}%
    \put(0.37520555,0.01406206){\color[rgb]{0,0,0}\makebox(0,0)[lt]{\lineheight{1.25}\smash{\begin{tabular}[t]{l}$a$\end{tabular}}}}%
    \put(0.42804179,0.01291556){\color[rgb]{0,0,0}\makebox(0,0)[lt]{\lineheight{1.25}\smash{\begin{tabular}[t]{l}$a^*$\end{tabular}}}}%
    \put(0.1665848,0.05462921){\color[rgb]{0,0,0}\makebox(0,0)[lt]{\lineheight{1.25}\smash{\begin{tabular}[t]{l}=\end{tabular}}}}%
    \put(0.48309321,0.06811186){\color[rgb]{0,0,0}\makebox(0,0)[lt]{\lineheight{1.25}\smash{\begin{tabular}[t]{l}$\implies$\end{tabular}}}}%
  \end{picture}%
\endgroup%

\end{center}

\noindent We write
\begin{center}
\def\svgwidth{4cm}%% Creator: Inkscape 1.0 (4035a4f, 2020-05-01), www.inkscape.org
%% PDF/EPS/PS + LaTeX output extension by Johan Engelen, 2010
%% Accompanies image file '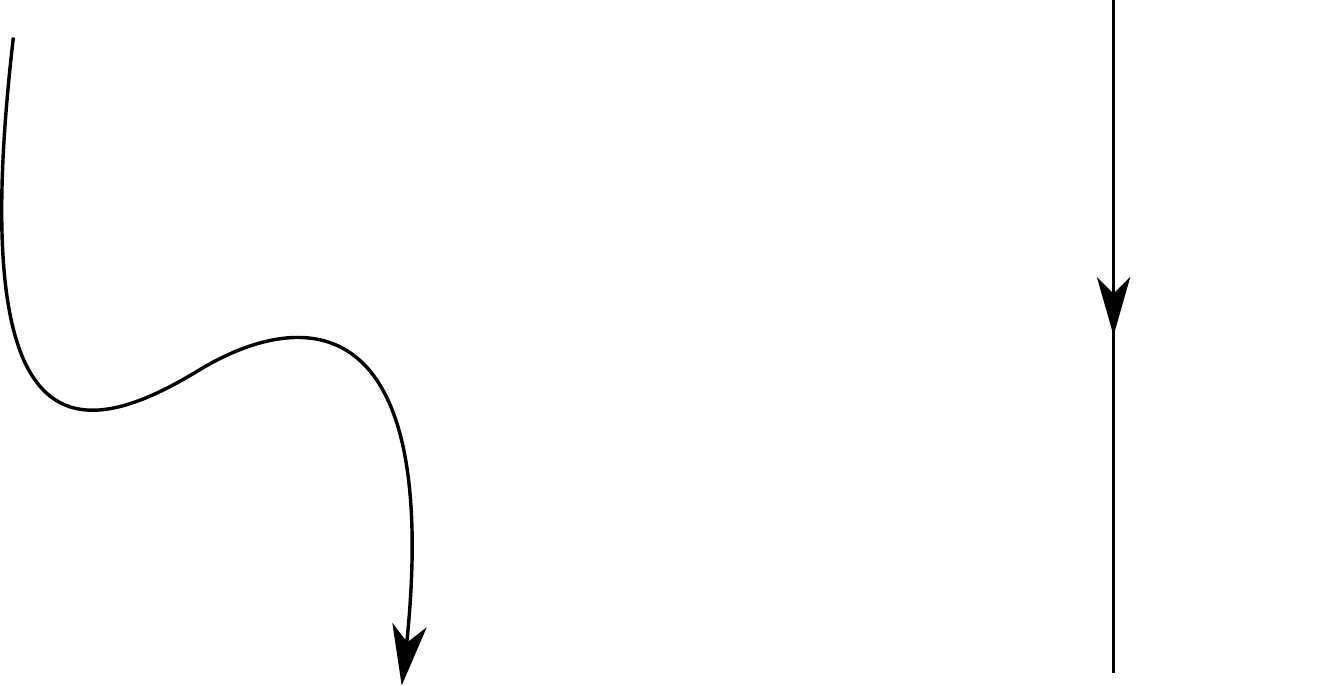' (pdf, eps, ps)
%%
%% To include the image in your LaTeX document, write
%%   \input{<filename>.pdf_tex}
%%  instead of
%%   \includegraphics{<filename>.pdf}
%% To scale the image, write
%%   \def\svgwidth{<desired width>}
%%   \input{<filename>.pdf_tex}
%%  instead of
%%   \includegraphics[width=<desired width>]{<filename>.pdf}
%%
%% Images with a different path to the parent latex file can
%% be accessed with the `import' package (which may need to be
%% installed) using
%%   \usepackage{import}
%% in the preamble, and then including the image with
%%   \import{<path to file>}{<filename>.pdf_tex}
%% Alternatively, one can specify
%%   \graphicspath{{<path to file>/}}
%% 
%% For more information, please see info/svg-inkscape on CTAN:
%%   http://tug.ctan.org/tex-archive/info/svg-inkscape
%%
\begingroup%
  \makeatletter%
  \providecommand\color[2][]{%
    \errmessage{(Inkscape) Color is used for the text in Inkscape, but the package 'color.sty' is not loaded}%
    \renewcommand\color[2][]{}%
  }%
  \providecommand\transparent[1]{%
    \errmessage{(Inkscape) Transparency is used (non-zero) for the text in Inkscape, but the package 'transparent.sty' is not loaded}%
    \renewcommand\transparent[1]{}%
  }%
  \providecommand\rotatebox[2]{#2}%
  \newcommand*\fsize{\dimexpr\f@size pt\relax}%
  \newcommand*\lineheight[1]{\fontsize{\fsize}{#1\fsize}\selectfont}%
  \ifx\svgwidth\undefined%
    \setlength{\unitlength}{385.92570147bp}%
    \ifx\svgscale\undefined%
      \relax%
    \else%
      \setlength{\unitlength}{\unitlength * \real{\svgscale}}%
    \fi%
  \else%
    \setlength{\unitlength}{\svgwidth}%
  \fi%
  \global\let\svgwidth\undefined%
  \global\let\svgscale\undefined%
  \makeatother%
  \begin{picture}(1,0.51147984)%
    \lineheight{1}%
    \setlength\tabcolsep{0pt}%
    \put(0,0){\includegraphics[width=\unitlength,page=1]{diagram3.pdf}}%
    \put(0.48126626,0.24689152){\color[rgb]{0,0,0}\makebox(0,0)[lt]{\lineheight{1.25}\smash{\begin{tabular}[t]{l}$= t_a$\end{tabular}}}}%
    \put(0.33828009,0.02027188){\color[rgb]{0,0,0}\makebox(0,0)[lt]{\lineheight{1.25}\smash{\begin{tabular}[t]{l}$a$\end{tabular}}}}%
    \put(0.86296473,0.01690237){\color[rgb]{0,0,0}\makebox(0,0)[lt]{\lineheight{1.25}\smash{\begin{tabular}[t]{l}$a$\end{tabular}}}}%
  \end{picture}%
\endgroup%

   \end{center}
where $t_a\in \CC^{\times}$ by way of the pivotal structure.\\

\begin{proposition}\label{pivpropappx}
($i$) $|t_a|=1$ \ \ , \ \ ($ii$) $t_{a^*} = t_a^*$.
\end{proposition}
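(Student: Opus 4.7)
The plan is to establish (i) using the positive dagger structure on $\C$, and then to derive (ii) from (i) together with a direct symmetry of the defining relation for $t_{a}$. Throughout, let $c_a$ denote the cap appearing on the left-hand side of the defining relation shown in the excerpt (so that $c_a = t_a c'_a$, where $c'_a$ is the same cap with opposite orientation after the left-right dual identification).

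For (i), the cap $c_a$ lies in a one-dimensional Hom-space, so its adjoint $c_a^\dagger$ is (up to the chosen normalisation) the corresponding cup. The composition $c_a\circ c_a^\dagger \in \End(\bm{1})\cong\CC$ is then an unknotted loop labelled by $a$, which under the conventions of Section \ref{pivsphetrace} and Section \ref{normpatsec} evaluates to a \emph{positive} real (a strictly positive multiple of $d_a$); crucially, the loop labelled by the opposite-orientation cap gives the same value, since $d_a = d_{a^*}$ and the positive dagger structure ensures $c'_a\circ (c'_a)^\dagger > 0$. Applying $\dagger$ to the defining relation gives $c_a^\dagger = t_a^{\,*}(c'_a)^\dagger$ by conjugate-linearity, and composing the two identities yields
\[
c_a\circ c_a^\dagger \;=\; |t_a|^2\, c'_a\circ (c'_a)^\dagger.
\]
Cancelling the common positive scalar on either side forces $|t_a|^2 = 1$, i.e.\ $|t_a|=1$.

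For (ii), I would apply the defining relation with $a$ replaced by its dual: $c_{a^*} = t_{a^*}\, c'_{a^*}$. Under the pivotal identification of $(a^*)^*$ with $a$, flipping the label from $a$ to $a^*$ is the same operation as flipping the orientation of the cap; hence $c_{a^*} = c'_a$ and $c'_{a^*} = c_a$. The relation for $a^*$ therefore reads $c'_a = t_{a^*}\, c_a$, i.e.\ $c_a = t_{a^*}^{-1}\, c'_a$. Comparing with $c_a = t_a\, c'_a$ and using that $c'_a \neq 0$ forces $t_a = t_{a^*}^{-1}$; combining with (i) gives $t_{a^*} = t_a^{-1} = t_a^{\,*}$.

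The main technical subtlety is bookkeeping: making precise, using the pivotal structure, the identification of a cap labelled by $a^*$ in one orientation with the cap labelled by $a$ in the opposite orientation, and verifying that $c_a\circ c_a^\dagger$ really evaluates to a positive real under the paper's normalisation. Both are routine once the conventions of Section \ref{pivsphetrace} are invoked, but they are what allows the otherwise purely formal manipulations above to produce numerical equalities.
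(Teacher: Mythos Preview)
Your argument for (i) is correct and matches the paper's in spirit: both use the dagger structure together with sphericality ($d_a=d_{a^*}$) to compare two closed loops and force $|t_a|^2=1$. The paper phrases this via a trace computation on the zig-zag rather than via $\|c_a\|^2=|t_a|^2\|c'_a\|^2$, but the content is the same.

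For (ii) you take a genuinely different route from the paper. You aim to prove the purely pivotal identity $t_a t_{a^*}=1$ first (from which $t_{a^*}=t_a^{-1}=t_a^*$ using (i)), whereas the paper instead manipulates a single diagram so that a $t_a^*$ appears via the adjoint of the zig-zag relation and a $t_a$ appears via the relation itself, then invokes (i). Your route is more conceptual; the paper's is more hands-on.

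The one soft spot in your (ii) is the line ``$c_{a^*}=c'_a$ and $c'_{a^*}=c_a$''. As written this is a type mismatch: with $c_a,c'_a\in\Hom(a^*\otimes a,\bm 1)$ one finds $c_{a^*},c'_{a^*}\in\Hom(a\otimes a^*,\bm 1)$, so the two sides are not literally the same morphism. What is true (and what makes your argument work) is the compatibility $(a_X)^*=a_{X^*}^{-1}$ of the pivotal structure with duals: applying the $\CC$-linear dual functor to the zig-zag for $a$ yields the \emph{opposite} zig-zag for $a^*$, giving $t_a=t_{a^*}^{-1}$. Equivalently, in the graphical calculus one must pass through the dual functor (a $\pi$-rotation) rather than merely relabelling $a\leftrightarrow a^*$ on the cap. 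You flag this bookkeeping yourself; once it is stated this way, your derivation of $t_a t_{a^*}=1$ is rigorous and the conclusion follows.
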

\begin{proof}\hspace{2mm}\vspace{2mm}
\begin{itemize}
    \item[(i)] \hspace{2mm} \begin{center}\vspace{-5mm}\def\svgwidth{10cm}%% Creator: Inkscape 1.0 (4035a4f, 2020-05-01), www.inkscape.org
%% PDF/EPS/PS + LaTeX output extension by Johan Engelen, 2010
%% Accompanies image file '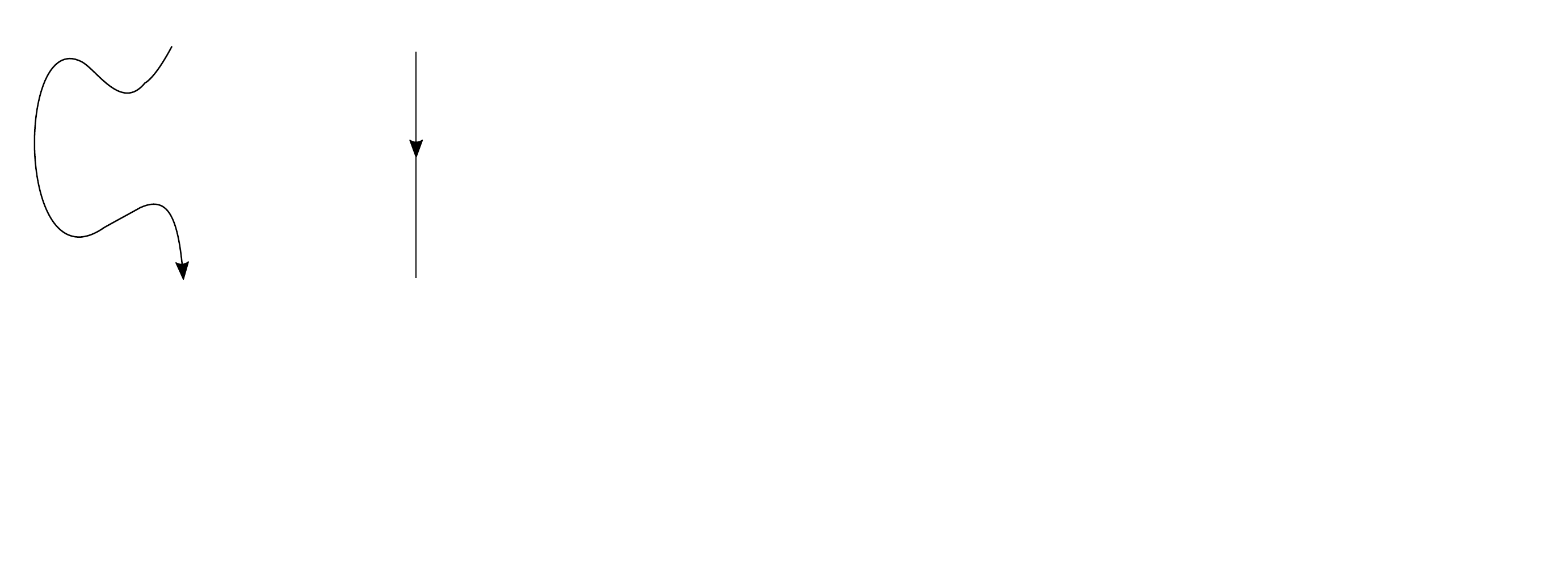' (pdf, eps, ps)
%%
%% To include the image in your LaTeX document, write
%%   \input{<filename>.pdf_tex}
%%  instead of
%%   \includegraphics{<filename>.pdf}
%% To scale the image, write
%%   \def\svgwidth{<desired width>}
%%   \input{<filename>.pdf_tex}
%%  instead of
%%   \includegraphics[width=<desired width>]{<filename>.pdf}
%%
%% Images with a different path to the parent latex file can
%% be accessed with the `import' package (which may need to be
%% installed) using
%%   \usepackage{import}
%% in the preamble, and then including the image with
%%   \import{<path to file>}{<filename>.pdf_tex}
%% Alternatively, one can specify
%%   \graphicspath{{<path to file>/}}
%% 
%% For more information, please see info/svg-inkscape on CTAN:
%%   http://tug.ctan.org/tex-archive/info/svg-inkscape
%%
\begingroup%
  \makeatletter%
  \providecommand\color[2][]{%
    \errmessage{(Inkscape) Color is used for the text in Inkscape, but the package 'color.sty' is not loaded}%
    \renewcommand\color[2][]{}%
  }%
  \providecommand\transparent[1]{%
    \errmessage{(Inkscape) Transparency is used (non-zero) for the text in Inkscape, but the package 'transparent.sty' is not loaded}%
    \renewcommand\transparent[1]{}%
  }%
  \providecommand\rotatebox[2]{#2}%
  \newcommand*\fsize{\dimexpr\f@size pt\relax}%
  \newcommand*\lineheight[1]{\fontsize{\fsize}{#1\fsize}\selectfont}%
  \ifx\svgwidth\undefined%
    \setlength{\unitlength}{795.62737412bp}%
    \ifx\svgscale\undefined%
      \relax%
    \else%
      \setlength{\unitlength}{\unitlength * \real{\svgscale}}%
    \fi%
  \else%
    \setlength{\unitlength}{\svgwidth}%
  \fi%
  \global\let\svgwidth\undefined%
  \global\let\svgscale\undefined%
  \makeatother%
  \begin{picture}(1,0.37231665)%
    \lineheight{1}%
    \setlength\tabcolsep{0pt}%
    \put(0.12354918,0.19190456){\color[rgb]{0,0,0}\makebox(0,0)[lt]{\lineheight{1.25}\smash{\begin{tabular}[t]{l}$a$\end{tabular}}}}%
    \put(0,0){\includegraphics[width=\unitlength,page=1]{diagram4.pdf}}%
    \put(0.13813457,0.26641763){\color[rgb]{0,0,0}\makebox(0,0)[lt]{\lineheight{1.25}\smash{\begin{tabular}[t]{l}$=\tilde{t}_a$\end{tabular}}}}%
    \put(0.27074585,0.19085744){\color[rgb]{0,0,0}\makebox(0,0)[lt]{\lineheight{1.25}\smash{\begin{tabular}[t]{l}$a$\end{tabular}}}}%
    \put(0,0){\includegraphics[width=\unitlength,page=2]{diagram4.pdf}}%
    \put(0.327417,0.26807781){\color[rgb]{0,0,0}\makebox(0,0)[lt]{\lineheight{1.25}\smash{\begin{tabular}[t]{l}$\overset{\widetilde{Tr}}{\implies}$\end{tabular}}}}%
    \put(0.57162629,0.17856027){\color[rgb]{0,0,0}\makebox(0,0)[lt]{\lineheight{1.25}\smash{\begin{tabular}[t]{l}$a$\end{tabular}}}}%
    \put(0.69352317,0.26650744){\color[rgb]{0,0,0}\makebox(0,0)[lt]{\lineheight{1.25}\smash{\begin{tabular}[t]{l}$= \tilde{t}_a$\end{tabular}}}}%
    \put(0.95168228,0.24609117){\color[rgb]{0,0,0}\makebox(0,0)[lt]{\lineheight{1.25}\smash{\begin{tabular}[t]{l}$a$\end{tabular}}}}%
    \put(0.62192259,0.05974644){\color[rgb]{0,0,0}\makebox(0,0)[lt]{\lineheight{1.25}\smash{\begin{tabular}[t]{l}$= \tilde{t}_a$\end{tabular}}}}%
    \put(0.86597037,0.03173224){\color[rgb]{0,0,0}\makebox(0,0)[lt]{\lineheight{1.25}\smash{\begin{tabular}[t]{l}$a$\end{tabular}}}}%
    \put(0.59541597,0.03195908){\color[rgb]{0,0,0}\makebox(0,0)[lt]{\lineheight{1.25}\smash{\begin{tabular}[t]{l}$a$\end{tabular}}}}%
    \put(0.38070992,0.05899781){\color[rgb]{0,0,0}\makebox(0,0)[lt]{\lineheight{1.25}\smash{\begin{tabular}[t]{l}$\implies$\end{tabular}}}}%
    \put(0,0){\includegraphics[width=\unitlength,page=3]{diagram4.pdf}}%
  \end{picture}%
\endgroup%
\end{center}

    \item[(ii)] Observe that 
   \begin{center}
      \def\svgwidth{8.5cm}%% Creator: Inkscape 1.0 (4035a4f, 2020-05-01), www.inkscape.org
%% PDF/EPS/PS + LaTeX output extension by Johan Engelen, 2010
%% Accompanies image file '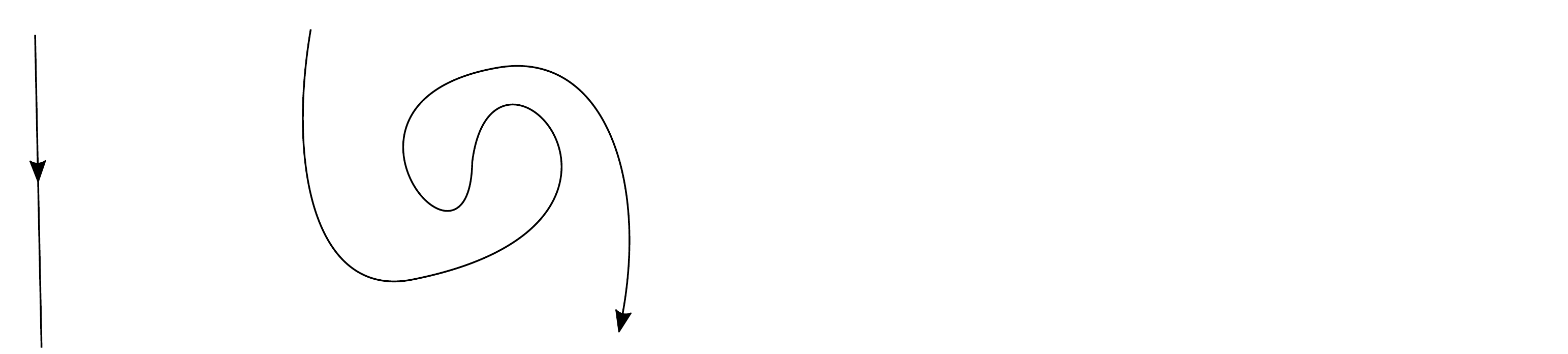' (pdf, eps, ps)
%%
%% To include the image in your LaTeX document, write
%%   \input{<filename>.pdf_tex}
%%  instead of
%%   \includegraphics{<filename>.pdf}
%% To scale the image, write
%%   \def\svgwidth{<desired width>}
%%   \input{<filename>.pdf_tex}
%%  instead of
%%   \includegraphics[width=<desired width>]{<filename>.pdf}
%%
%% Images with a different path to the parent latex file can
%% be accessed with the `import' package (which may need to be
%% installed) using
%%   \usepackage{import}
%% in the preamble, and then including the image with
%%   \import{<path to file>}{<filename>.pdf_tex}
%% Alternatively, one can specify
%%   \graphicspath{{<path to file>/}}
%% 
%% For more information, please see info/svg-inkscape on CTAN:
%%   http://tug.ctan.org/tex-archive/info/svg-inkscape
%%
\begingroup%
  \makeatletter%
  \providecommand\color[2][]{%
    \errmessage{(Inkscape) Color is used for the text in Inkscape, but the package 'color.sty' is not loaded}%
    \renewcommand\color[2][]{}%
  }%
  \providecommand\transparent[1]{%
    \errmessage{(Inkscape) Transparency is used (non-zero) for the text in Inkscape, but the package 'transparent.sty' is not loaded}%
    \renewcommand\transparent[1]{}%
  }%
  \providecommand\rotatebox[2]{#2}%
  \newcommand*\fsize{\dimexpr\f@size pt\relax}%
  \newcommand*\lineheight[1]{\fontsize{\fsize}{#1\fsize}\selectfont}%
  \ifx\svgwidth\undefined%
    \setlength{\unitlength}{888.62153782bp}%
    \ifx\svgscale\undefined%
      \relax%
    \else%
      \setlength{\unitlength}{\unitlength * \real{\svgscale}}%
    \fi%
  \else%
    \setlength{\unitlength}{\svgwidth}%
  \fi%
  \global\let\svgwidth\undefined%
  \global\let\svgscale\undefined%
  \makeatother%
  \begin{picture}(1,0.22556199)%
    \lineheight{1}%
    \setlength\tabcolsep{0pt}%
    \put(0,0){\includegraphics[width=\unitlength,page=1]{diagram5.pdf}}%
    \put(0.03286471,0.00782897){\color[rgb]{0,0,0}\makebox(0,0)[lt]{\lineheight{1.25}\smash{\begin{tabular}[t]{l}$a$\end{tabular}}}}%
    \put(0.41458037,0.01491114){\color[rgb]{0,0,0}\makebox(0,0)[lt]{\lineheight{1.25}\smash{\begin{tabular}[t]{l}$a$\end{tabular}}}}%
    \put(0,0){\includegraphics[width=\unitlength,page=2]{diagram5.pdf}}%
    \put(0.44645877,0.10720898){\color[rgb]{0,0,0}\makebox(0,0)[lt]{\lineheight{1.25}\smash{\begin{tabular}[t]{l}$= t_a^{*}$\end{tabular}}}}%
    \put(0.70471515,0.01588645){\color[rgb]{0,0,0}\makebox(0,0)[lt]{\lineheight{1.25}\smash{\begin{tabular}[t]{l}$a$\end{tabular}}}}%
    \put(0.88548214,0.01446675){\color[rgb]{0,0,0}\makebox(0,0)[lt]{\lineheight{1.25}\smash{\begin{tabular}[t]{l}$a$\end{tabular}}}}%
    \put(0.08522161,0.10073643){\color[rgb]{0,0,0}\makebox(0,0)[lt]{\lineheight{1.25}\smash{\begin{tabular}[t]{l}$=$\end{tabular}}}}%
    \put(0.73744456,0.10972946){\color[rgb]{0,0,0}\makebox(0,0)[lt]{\lineheight{1.25}\smash{\begin{tabular}[t]{l}$=t_a^{*} t_a$\end{tabular}}}}%
  \end{picture}%
\endgroup%

   \end{center} 
whence the result follows using (i).
    \end{itemize}
\end{proof}
\vspace{2mm}
\noindent Consequently, when $a=a^*$, we have that $t_a = \pm1$. In this instance, $t_a$ is called the \textit{Frobenius-Schur indicator} and is written $\varkappa_a:= t_a$. As stated in Section \ref{frobschsec}, $t_a$ is called the pivotal coefficient of $a\in \Irr(\mathcal{C})$. It is straightforward to show that $t_a$ is gauge-invariant if and only if $a=a^*$.When $a$ is non self-dual it is typical to fix the gauge such that $t_a=1$.
When $a$ is self-dual,
\begin{enumerate}[label=(\roman*)]
    \item $a$ is called \textit{symmetrically self-dual} if $\varkappa_a = 1$
    \item $a$ is called \textit{antisymmetrically self-dual} if $\varkappa_a = -1$
\end{enumerate}

\noindent If $\mathcal{C}$ is a unitary ribbon fusion category, note that
\vspace{2mm}
\begin{center}
\def\svgwidth{11cm}%% Creator: Inkscape 1.0 (4035a4f, 2020-05-01), www.inkscape.org
%% PDF/EPS/PS + LaTeX output extension by Johan Engelen, 2010
%% Accompanies image file '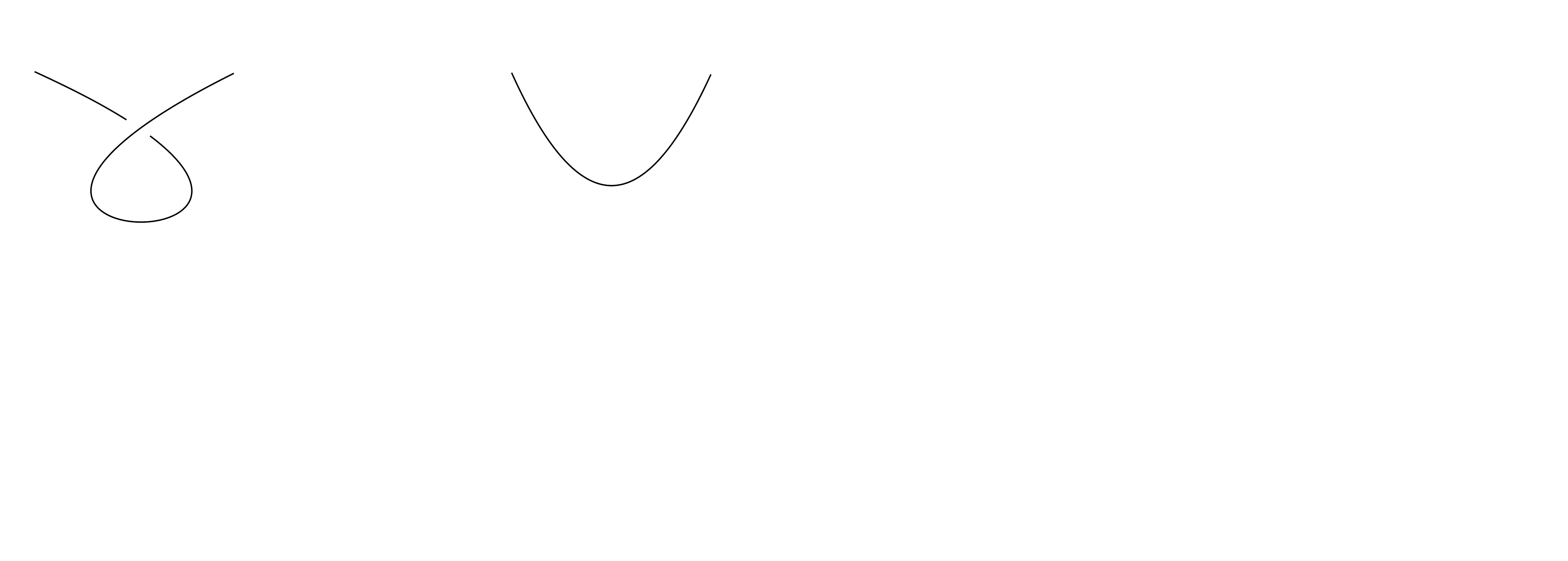' (pdf, eps, ps)
%%
%% To include the image in your LaTeX document, write
%%   \input{<filename>.pdf_tex}
%%  instead of
%%   \includegraphics{<filename>.pdf}
%% To scale the image, write
%%   \def\svgwidth{<desired width>}
%%   \input{<filename>.pdf_tex}
%%  instead of
%%   \includegraphics[width=<desired width>]{<filename>.pdf}
%%
%% Images with a different path to the parent latex file can
%% be accessed with the `import' package (which may need to be
%% installed) using
%%   \usepackage{import}
%% in the preamble, and then including the image with
%%   \import{<path to file>}{<filename>.pdf_tex}
%% Alternatively, one can specify
%%   \graphicspath{{<path to file>/}}
%% 
%% For more information, please see info/svg-inkscape on CTAN:
%%   http://tug.ctan.org/tex-archive/info/svg-inkscape
%%
\begingroup%
  \makeatletter%
  \providecommand\color[2][]{%
    \errmessage{(Inkscape) Color is used for the text in Inkscape, but the package 'color.sty' is not loaded}%
    \renewcommand\color[2][]{}%
  }%
  \providecommand\transparent[1]{%
    \errmessage{(Inkscape) Transparency is used (non-zero) for the text in Inkscape, but the package 'transparent.sty' is not loaded}%
    \renewcommand\transparent[1]{}%
  }%
  \providecommand\rotatebox[2]{#2}%
  \newcommand*\fsize{\dimexpr\f@size pt\relax}%
  \newcommand*\lineheight[1]{\fontsize{\fsize}{#1\fsize}\selectfont}%
  \ifx\svgwidth\undefined%
    \setlength{\unitlength}{1165.69785771bp}%
    \ifx\svgscale\undefined%
      \relax%
    \else%
      \setlength{\unitlength}{\unitlength * \real{\svgscale}}%
    \fi%
  \else%
    \setlength{\unitlength}{\svgwidth}%
  \fi%
  \global\let\svgwidth\undefined%
  \global\let\svgscale\undefined%
  \makeatother%
  \begin{picture}(1,0.35774271)%
    \lineheight{1}%
    \setlength\tabcolsep{0pt}%
    \put(0,0){\includegraphics[width=\unitlength,page=1]{diagram7.pdf}}%
    \put(-0.00194776,0.31734683){\color[rgb]{0,0,0}\makebox(0,0)[lt]{\lineheight{1.25}\smash{\begin{tabular}[t]{l}$a$\end{tabular}}}}%
    \put(0.12873076,0.32036555){\color[rgb]{0,0,0}\makebox(0,0)[lt]{\lineheight{1.25}\smash{\begin{tabular}[t]{l}$a^{*}$\end{tabular}}}}%
    \put(0.3057303,0.32118498){\color[rgb]{0,0,0}\makebox(0,0)[lt]{\lineheight{1.25}\smash{\begin{tabular}[t]{l}$a$\end{tabular}}}}%
    \put(0.44011888,0.31708784){\color[rgb]{0,0,0}\makebox(0,0)[lt]{\lineheight{1.25}\smash{\begin{tabular}[t]{l}$a^{*}$\end{tabular}}}}%
    \put(0.18855001,0.26710186){\color[rgb]{0,0,0}\makebox(0,0)[lt]{\lineheight{1.25}\smash{\begin{tabular}[t]{l}$= R_0^{aa^{*}}$\end{tabular}}}}%
    \put(0.4880653,0.2673282){\color[rgb]{0,0,0}\makebox(0,0)[lt]{\lineheight{1.25}\smash{\begin{tabular}[t]{l}$\implies$\end{tabular}}}}%
    \put(0,0){\includegraphics[width=\unitlength,page=2]{diagram7.pdf}}%
    \put(0.56816783,0.33568013){\color[rgb]{0,0,0}\makebox(0,0)[lt]{\lineheight{1.25}\smash{\begin{tabular}[t]{l}$a$\end{tabular}}}}%
    \put(0.69528026,0.27283801){\color[rgb]{0,0,0}\makebox(0,0)[lt]{\lineheight{1.25}\smash{\begin{tabular}[t]{l}$= R^{aa^{*}}_0$\end{tabular}}}}%
    \put(0,0){\includegraphics[width=\unitlength,page=3]{diagram7.pdf}}%
    \put(0.94498631,0.20763971){\color[rgb]{0,0,0}\makebox(0,0)[lt]{\lineheight{1.25}\smash{\begin{tabular}[t]{l}$a$\end{tabular}}}}%
    \put(0.47152651,0.08066554){\color[rgb]{0,0,0}\makebox(0,0)[lt]{\lineheight{1.25}\smash{\begin{tabular}[t]{l}$\implies$\end{tabular}}}}%
    \put(0,0){\includegraphics[width=\unitlength,page=4]{diagram7.pdf}}%
    \put(0.55162901,0.14901746){\color[rgb]{0,0,0}\makebox(0,0)[lt]{\lineheight{1.25}\smash{\begin{tabular}[t]{l}$a$\end{tabular}}}}%
    \put(0.68237676,0.0810357){\color[rgb]{0,0,0}\makebox(0,0)[lt]{\lineheight{1.25}\smash{\begin{tabular}[t]{l}$= R^{aa^{*}}_0 t_a$\end{tabular}}}}%
    \put(0,0){\includegraphics[width=\unitlength,page=5]{diagram7.pdf}}%
    \put(0.92737901,0.00453639){\color[rgb]{0,0,0}\makebox(0,0)[lt]{\lineheight{1.25}\smash{\begin{tabular}[t]{l}$a$\end{tabular}}}}%
    \put(0,0){\includegraphics[width=\unitlength,page=6]{diagram7.pdf}}%
  \end{picture}%
\endgroup%

\end{center}
\vspace{2mm}
That is, $\vartheta_a^{-1}=R^{aa^{*}}_0 t_a$. In particular, for $a$ self-dual we have $\vartheta_a=\varkappa_a(R^{aa}_0)^{-1}$.

%\begin{remark}[Frobenius-Perron Dimensions]\label{FrobPerondim}
%The Frobenius-Perron dimension for $a\in \Irr(\mathcal{C})$ is the dominant eigenvalue of $N_a$, where $[N_a]_{b,c}= N^{ab}_c$. Using the formula \begin{equation}
    %d_a d_b = \sum_cN^{ab}_c d_c
%\end{equation}
%(where $d_x$ is the quantum dimension of $x$), one easily obtains $|d_a|=FPdim_a$.
%\end{remark}

\newpage
\section{Normalisation}
\label{normaddappx}

\noindent Let $\mathcal{C}$ be a spherical fusion category with positive dagger structure. Recall from \mbox{Section \ref{daginnuni}} that the $\Hom$-spaces of $\C$ come with a Hermitian inner product $\langle \cdot,\cdot \rangle$. We can write
\vspace{1.5mm}
\begin{equation}
    \widetilde{Tr}(\id_x)=\left\langle \raisebox{-3mm}{\def\svgwidth{2cm}%% Creator: Inkscape 1.0 (4035a4f, 2020-05-01), www.inkscape.org
%% PDF/EPS/PS + LaTeX output extension by Johan Engelen, 2010
%% Accompanies image file '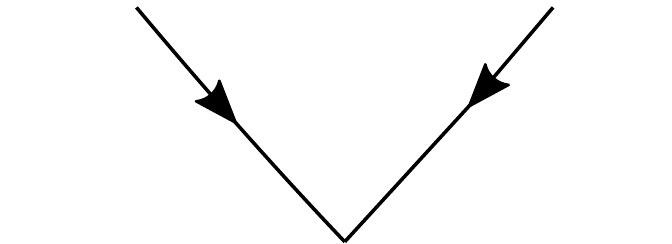' (pdf, eps, ps)
%%
%% To include the image in your LaTeX document, write
%%   \input{<filename>.pdf_tex}
%%  instead of
%%   \includegraphics{<filename>.pdf}
%% To scale the image, write
%%   \def\svgwidth{<desired width>}
%%   \input{<filename>.pdf_tex}
%%  instead of
%%   \includegraphics[width=<desired width>]{<filename>.pdf}
%%
%% Images with a different path to the parent latex file can
%% be accessed with the `import' package (which may need to be
%% installed) using
%%   \usepackage{import}
%% in the preamble, and then including the image with
%%   \import{<path to file>}{<filename>.pdf_tex}
%% Alternatively, one can specify
%%   \graphicspath{{<path to file>/}}
%% 
%% For more information, please see info/svg-inkscape on CTAN:
%%   http://tug.ctan.org/tex-archive/info/svg-inkscape
%%
\begingroup%
  \makeatletter%
  \providecommand\color[2][]{%
    \errmessage{(Inkscape) Color is used for the text in Inkscape, but the package 'color.sty' is not loaded}%
    \renewcommand\color[2][]{}%
  }%
  \providecommand\transparent[1]{%
    \errmessage{(Inkscape) Transparency is used (non-zero) for the text in Inkscape, but the package 'transparent.sty' is not loaded}%
    \renewcommand\transparent[1]{}%
  }%
  \providecommand\rotatebox[2]{#2}%
  \newcommand*\fsize{\dimexpr\f@size pt\relax}%
  \newcommand*\lineheight[1]{\fontsize{\fsize}{#1\fsize}\selectfont}%
  \ifx\svgwidth\undefined%
    \setlength{\unitlength}{193.28830989bp}%
    \ifx\svgscale\undefined%
      \relax%
    \else%
      \setlength{\unitlength}{\unitlength * \real{\svgscale}}%
    \fi%
  \else%
    \setlength{\unitlength}{\svgwidth}%
  \fi%
  \global\let\svgwidth\undefined%
  \global\let\svgscale\undefined%
  \makeatother%
  \begin{picture}(1,0.36208433)%
    \lineheight{1}%
    \setlength\tabcolsep{0pt}%
    \put(0,0){\includegraphics[width=\unitlength,page=1]{Vx.pdf}}%
    \put(-0.00615022,0.29955165){\color[rgb]{0,0,0}\makebox(0,0)[lt]{\lineheight{1.25}\smash{\begin{tabular}[t]{l}$x^{*}$\end{tabular}}}}%
    \put(0.85414203,0.29132028){\color[rgb]{0,0,0}\makebox(0,0)[lt]{\lineheight{1.25}\smash{\begin{tabular}[t]{l}$x$\end{tabular}}}}%
  \end{picture}%
\endgroup%
} \ , \  \raisebox{-3mm}{\def\svgwidth{2cm}}\right\rangle=:d_x>0
\end{equation}
%\vspace{1.5mm}
\noindent We want to assign a factor of $\nu^{x^{*}x}_0$ to the above cup in order to normalise it with respect to the inner product i.e. $\nu^{x^*x}_0= d_x^{-\frac{1}{2}}$. Sphericality gives us $d_x=d_{x^*}$ and so $\nu^{x^*x}_0=\nu^{xx^*}_0$. Using the same notation as in (\ref{innerproddef}), we have
\begin{equation}\label{innprodrevisited}
    \raisebox{-8.5mm}{\includegraphics[width=0.08\textwidth]{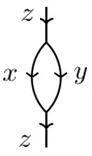}} \ =\left\langle\trivalentxyz , \trivalentxyz \right\rangle \ \Ststrandz = \lambda_{zxyz} \ \Ststrandz \ \ , \ \ \lambda_{zxyz}>0
\end{equation}
%Consider
%\begin{center}
  %  \includegraphics[angle= 90, scale=.25]{}
%\end{center}

%where $c_0=1$ by the normalisation described above. Note that $c_k$ is $c_0$ with $k$ zig-zags for $k>0$.

%\textbf{Goal:} Find some normalisation of caps and cups (under which $c_k \mapsto c'_k$) such that $|c'_k|$ is the same for all $k$.

%Following the scheme from [?], let $t_x=\delta_x \omega_x$ (be the polar form) where $\delta_x \in (0,\infty)$ and $\omega_x \in U(1)$. \\
%Observing [DIAGRAM], and $|c_k|=\delta^k_x$, this indicates we should employ the following renormalisation:
%\includegraphics[angle= 90, scale=.25]{diagram9.pdf}
%Recall $t_{x^*}=t_x^*$. Hence, we get $|t_{x^*}|=|t_x|\implies \delta_{x^*}=\delta_x$. Note that this allows us to swap labels $x$ and $x^*$ on the cup and cap above.\\
%Now,\\
%\begin{center}
%\includegraphics[angle= 90, scale=.25]{}
%\end{center}
%where $c'_k = \frac{1}{\delta_x}(\frac{1}{\sqrt{\delta_x}})^{2k}, c_k = (\frac{1}{\delta_x})^{k+1}$ and $t_x^k=\frac{1}{\delta_x}\omega_x^k$. Hence, we get
%\begin{equation}
%|c'_k|=\frac{1}{\delta_x} \forall n.
%\end{equation}
%Moreover,
%\begin{enumerate}
%\item $c'_0 = \frac{1}{\delta_x} \in (0,\infty)$. Set %$\tilde{Tr}(\id_x) = c'_0 (=d_x)$.
%\item \begin{center}
%\includegraphics[angle= 90, scale=.3]{diagram11.pdf}
   % \end{center}

%\end{enumerate}

%Hence, consistent with what we have previously seen, $c'_k = d_x t_x^k, t_x\in U(1)$.\\

\noindent We want to scale the trivalent vertices appearing in (\ref{innprodrevisited}) by a normalising factor \mbox{$\nu^{xy}_z=\lambda^{-1/2}_{zxyz}$} in a manner that is consistent with the factor $\nu^{xx^*}_0$. Using the dagger structure, note that the normalisation factor for the adjoint trivalent vertex is also $\nu^{xy}_{z}$. Expanding the identity operator for $\End(x \otimes y)$ in the canonical basis, 
\begin{center}
\def\svgwidth{21.5cm}%% Creator: Inkscape 1.0 (4035a4f, 2020-05-01), www.inkscape.org
%% PDF/EPS/PS + LaTeX output extension by Johan Engelen, 2010
%% Accompanies image file '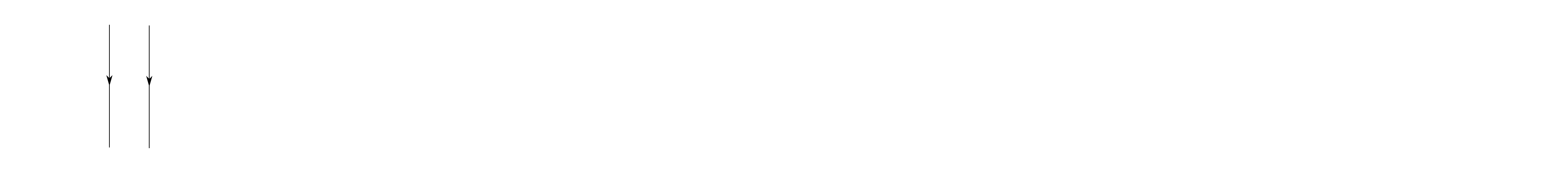' (pdf, eps, ps)
%%
%% To include the image in your LaTeX document, write
%%   \input{<filename>.pdf_tex}
%%  instead of
%%   \includegraphics{<filename>.pdf}
%% To scale the image, write
%%   \def\svgwidth{<desired width>}
%%   \input{<filename>.pdf_tex}
%%  instead of
%%   \includegraphics[width=<desired width>]{<filename>.pdf}
%%
%% Images with a different path to the parent latex file can
%% be accessed with the `import' package (which may need to be
%% installed) using
%%   \usepackage{import}
%% in the preamble, and then including the image with
%%   \import{<path to file>}{<filename>.pdf_tex}
%% Alternatively, one can specify
%%   \graphicspath{{<path to file>/}}
%% 
%% For more information, please see info/svg-inkscape on CTAN:
%%   http://tug.ctan.org/tex-archive/info/svg-inkscape
%%
\begingroup%
  \makeatletter%
  \providecommand\color[2][]{%
    \errmessage{(Inkscape) Color is used for the text in Inkscape, but the package 'color.sty' is not loaded}%
    \renewcommand\color[2][]{}%
  }%
  \providecommand\transparent[1]{%
    \errmessage{(Inkscape) Transparency is used (non-zero) for the text in Inkscape, but the package 'transparent.sty' is not loaded}%
    \renewcommand\transparent[1]{}%
  }%
  \providecommand\rotatebox[2]{#2}%
  \newcommand*\fsize{\dimexpr\f@size pt\relax}%
  \newcommand*\lineheight[1]{\fontsize{\fsize}{#1\fsize}\selectfont}%
  \ifx\svgwidth\undefined%
    \setlength{\unitlength}{1950.20404005bp}%
    \ifx\svgscale\undefined%
      \relax%
    \else%
      \setlength{\unitlength}{\unitlength * \real{\svgscale}}%
    \fi%
  \else%
    \setlength{\unitlength}{\svgwidth}%
  \fi%
  \global\let\svgwidth\undefined%
  \global\let\svgscale\undefined%
  \makeatother%
  \begin{picture}(1,0.1115704)%
    \lineheight{1}%
    \setlength\tabcolsep{0pt}%
    \put(0,0){\includegraphics[width=\unitlength,page=1]{diagram13.pdf}}%
    \put(-0.00091789,0.06175879){\color[rgb]{0,0,0}\makebox(0,0)[lt]{\lineheight{1.25}\smash{\begin{tabular}[t]{l}$\id_{x\otimes y}=$\end{tabular}}}}%
    \put(0.1124906,0.06200915){\color[rgb]{0,0,0}\makebox(0,0)[lt]{\lineheight{1.25}\smash{\begin{tabular}[t]{l}$=\sum_{\mu, z}(\nu^{xy}_z)^2$\end{tabular}}}}%
    \put(0,0){\includegraphics[width=\unitlength,page=2]{diagram13.pdf}}%
    \put(0.25063088,0.05538087){\color[rgb]{0,0,0}\makebox(0,0)[lt]{\lineheight{1.25}\smash{\begin{tabular}[t]{l}$z$\end{tabular}}}}%
    \put(0.22045045,0.10046863){\color[rgb]{0,0,0}\makebox(0,0)[lt]{\lineheight{1.25}\smash{\begin{tabular}[t]{l}$x$\end{tabular}}}}%
    \put(0.25695824,0.10093069){\color[rgb]{0,0,0}\makebox(0,0)[lt]{\lineheight{1.25}\smash{\begin{tabular}[t]{l}$y$\end{tabular}}}}%
    \put(0.22345412,0.0073506){\color[rgb]{0,0,0}\makebox(0,0)[lt]{\lineheight{1.25}\smash{\begin{tabular}[t]{l}$x$\end{tabular}}}}%
    \put(0.25996196,0.00781266){\color[rgb]{0,0,0}\makebox(0,0)[lt]{\lineheight{1.25}\smash{\begin{tabular}[t]{l}$y$\end{tabular}}}}%
    \put(0.06124851,0.00272926){\color[rgb]{0,0,0}\makebox(0,0)[lt]{\lineheight{1.25}\smash{\begin{tabular}[t]{l}$x$\end{tabular}}}}%
    \put(0.09082449,0.00226719){\color[rgb]{0,0,0}\makebox(0,0)[lt]{\lineheight{1.25}\smash{\begin{tabular}[t]{l}$y$\end{tabular}}}}%
    \put(0.28083828,0.06087872){\color[rgb]{0,0,0}\makebox(0,0)[lt]{\lineheight{1.25}\smash{\begin{tabular}[t]{l}$\overset{\widetilde{Tr}}{\implies}$\end{tabular}}}}%
    \put(0.33943891,0.0608662){\color[rgb]{0,0,0}\makebox(0,0)[lt]{\lineheight{1.25}\smash{\begin{tabular}[t]{l}$d_x d_y=\sum_{\mu, z}(\nu^{xy}_z)^2$\end{tabular}}}}%
    \put(0,0){\includegraphics[width=\unitlength,page=3]{diagram13.pdf}}%
    \put(0.53975569,0.05709378){\color[rgb]{0,0,0}\makebox(0,0)[lt]{\lineheight{1.25}\smash{\begin{tabular}[t]{l}$z$\end{tabular}}}}%
    \put(0.51930356,0.10056){\color[rgb]{0,0,0}\makebox(0,0)[lt]{\lineheight{1.25}\smash{\begin{tabular}[t]{l}$x$\end{tabular}}}}%
    \put(0.54598016,0.10289767){\color[rgb]{0,0,0}\makebox(0,0)[lt]{\lineheight{1.25}\smash{\begin{tabular}[t]{l}$y$\end{tabular}}}}%
    \put(0.51865913,0.01068491){\color[rgb]{0,0,0}\makebox(0,0)[lt]{\lineheight{1.25}\smash{\begin{tabular}[t]{l}$x$\end{tabular}}}}%
    \put(0.54386569,0.0067712){\color[rgb]{0,0,0}\makebox(0,0)[lt]{\lineheight{1.25}\smash{\begin{tabular}[t]{l}$y$\end{tabular}}}}%
    \put(0,0){\includegraphics[width=\unitlength,page=4]{diagram13.pdf}}%
    \put(0.61097948,0.05904534){\color[rgb]{0,0,0}\makebox(0,0)[lt]{\lineheight{1.25}\smash{\begin{tabular}[t]{l}$=\sum_z N^{xy}_z B^{xy}_z(\nu^{xy}_z)^2$\end{tabular}}}}%
  \end{picture}%
\endgroup%

\end{center}
where we define $B^{xy}_z:=\raisebox{-9mm}{\def\svgwidth{2cm}%% Creator: Inkscape 1.0 (4035a4f, 2020-05-01), www.inkscape.org
%% PDF/EPS/PS + LaTeX output extension by Johan Engelen, 2010
%% Accompanies image file '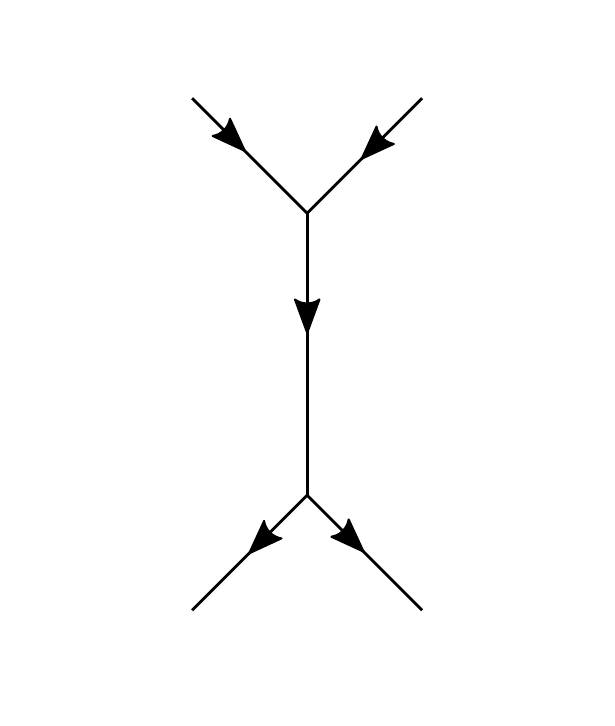' (pdf, eps, ps)
%%
%% To include the image in your LaTeX document, write
%%   \input{<filename>.pdf_tex}
%%  instead of
%%   \includegraphics{<filename>.pdf}
%% To scale the image, write
%%   \def\svgwidth{<desired width>}
%%   \input{<filename>.pdf_tex}
%%  instead of
%%   \includegraphics[width=<desired width>]{<filename>.pdf}
%%
%% Images with a different path to the parent latex file can
%% be accessed with the `import' package (which may need to be
%% installed) using
%%   \usepackage{import}
%% in the preamble, and then including the image with
%%   \import{<path to file>}{<filename>.pdf_tex}
%% Alternatively, one can specify
%%   \graphicspath{{<path to file>/}}
%% 
%% For more information, please see info/svg-inkscape on CTAN:
%%   http://tug.ctan.org/tex-archive/info/svg-inkscape
%%
\begingroup%
  \makeatletter%
  \providecommand\color[2][]{%
    \errmessage{(Inkscape) Color is used for the text in Inkscape, but the package 'color.sty' is not loaded}%
    \renewcommand\color[2][]{}%
  }%
  \providecommand\transparent[1]{%
    \errmessage{(Inkscape) Transparency is used (non-zero) for the text in Inkscape, but the package 'transparent.sty' is not loaded}%
    \renewcommand\transparent[1]{}%
  }%
  \providecommand\rotatebox[2]{#2}%
  \newcommand*\fsize{\dimexpr\f@size pt\relax}%
  \newcommand*\lineheight[1]{\fontsize{\fsize}{#1\fsize}\selectfont}%
  \ifx\svgwidth\undefined%
    \setlength{\unitlength}{176.9163345bp}%
    \ifx\svgscale\undefined%
      \relax%
    \else%
      \setlength{\unitlength}{\unitlength * \real{\svgscale}}%
    \fi%
  \else%
    \setlength{\unitlength}{\svgwidth}%
  \fi%
  \global\let\svgwidth\undefined%
  \global\let\svgscale\undefined%
  \makeatother%
  \begin{picture}(1,1.18022641)%
    \lineheight{1}%
    \setlength\tabcolsep{0pt}%
    \put(0,0){\includegraphics[width=\unitlength,page=1]{Bxyz.pdf}}%
    \put(0.52257709,0.57971456){\color[rgb]{0,0,0}\makebox(0,0)[lt]{\lineheight{1.25}\smash{\begin{tabular}[t]{l}$z$\end{tabular}}}}%
    \put(0.29712668,1.05885644){\color[rgb]{0,0,0}\makebox(0,0)[lt]{\lineheight{1.25}\smash{\begin{tabular}[t]{l}$x$\end{tabular}}}}%
    \put(0.59119117,1.08462529){\color[rgb]{0,0,0}\makebox(0,0)[lt]{\lineheight{1.25}\smash{\begin{tabular}[t]{l}$y$\end{tabular}}}}%
    \put(0.29002293,0.0681352){\color[rgb]{0,0,0}\makebox(0,0)[lt]{\lineheight{1.25}\smash{\begin{tabular}[t]{l}$x$\end{tabular}}}}%
    \put(0.56788265,0.02499318){\color[rgb]{0,0,0}\makebox(0,0)[lt]{\lineheight{1.25}\smash{\begin{tabular}[t]{l}$y$\end{tabular}}}}%
    \put(0,0){\includegraphics[width=\unitlength,page=2]{Bxyz.pdf}}%
  \end{picture}%
\endgroup%
}$ 
%Note that the proposition below clearly implies $B^{xy}_z \neq B^{xy}_z(\mu)$.

\begin{proposition}\hspace{2mm}
\begin{enumerate}[label=(\roman*)]
    \item $B^{xy}_z$ satisfies the same symmetries as $N^{xy}_z$ for a fusion category i.e. 
    \begin{align*}
        B^{xy}_z = B^{z^{*}x}_{y^*} = B^{yz^*}_{x^*} \quad , \quad  B^{xy}_z = B^{y^{*}x^{*}}_{z^*} 
    \end{align*}
    \item $B^{xy}_z \in \RR_{>0}$
    \item $B^{x0}_z=B^{0x}_z=\delta_{xz}d_x$
    \item $B^{xy}_z= d_z(\nu^{xy}_z)^{-2}$
\end{enumerate}
\end{proposition}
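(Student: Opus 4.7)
The plan is to establish \textbf{(iv)} first, from which \textbf{(ii)} will follow immediately. Recall that $B^{xy}_z$ is the theta-net obtained by stacking two (unnormalised) trivalent vertices on top of each other and closing up. This is precisely the quantum trace of the inner-product bubble in $(\ref{innprodrevisited})$: resolving the bubble yields
\[
B^{xy}_z \;=\; \widetilde{Tr}\!\left(\lambda_{zxyz}\cdot \id_z\right) \;=\; \lambda_{zxyz}\, d_z.
\]
Since the normalisation factor is defined by $\nu^{xy}_{z}=\lambda_{zxyz}^{-1/2}$, this gives $B^{xy}_z = d_z(\nu^{xy}_z)^{-2}$, establishing \textbf{(iv)}. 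Statement \textbf{(ii)} is then immediate: $d_z>0$ by unitarity, and $\lambda_{zxyz}>0$ by the positivity of the dagger structure (see Section \ref{daginnuni}).

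For \textbf{(iii)}, I would specialise the diagram defining $B^{xy}_z$ by setting $y=0$. In this case the right-hand edges of both trivalent vertices become dotted (invisible) lines, so each trivalent vertex degenerates to an identity strand on $x$; the theta-net collapses to a closed $x$-loop, which can only be nonzero when $z=x$ (forcing $N^{xx}_x=0$ to be replaced by the obvious requirement from fusion), yielding $\delta_{xz}\cdot d_x$. The $x=0$ case is entirely symmetric.

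The main obstacle is \textbf{(i)}, as it requires careful graphical manipulation respecting orientations and duality. My approach is to use the fact that sphericality allows the theta-net to be freely isotoped on the $2$-sphere, yielding an $S_3$-symmetry on the three edges labelled $x,y,z$ (with duals appearing whenever an edge's orientation is reversed by such a rotation). Concretely, to obtain $B^{xy}_z = B^{z^*x}_{y^*}$, I would pull the bottom-right $y$-edge up over the top and reinterpret it as a $y^*$-edge running downward from what was previously the middle $z$-edge; similarly, $B^{xy}_z = B^{yz^*}_{x^*}$ comes from the analogous rotation on the opposite side. Finally, $B^{xy}_z = B^{y^*x^*}_{z^*}$ is obtained by flipping the whole diagram upside down (a $180^\circ$ rotation), which simultaneously reverses the orientation on all three edges. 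Each of these moves must be justified using the pivotal structure as laid out in $(\ref{FSdef})$-$(\ref{FSdef2})$ and sphericality; modulo pivotal coefficients $t_i$ (which arise in pairs and thus cancel by Proposition \ref{pivpropappx}), the equalities follow.
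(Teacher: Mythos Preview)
Your proposal is correct. The principal difference from the paper is the order of dependencies: the paper first establishes \textbf{(i)} via graphical manipulations, and then derives both \textbf{(ii)} and \textbf{(iv)} as consequences of \textbf{(i)} (rewriting $B^{xy}_z$ as $B^{yz^*}_{x^*}$ and then recognising it as a norm-squared times $d_y$, respectively as a bubble on a $z$-strand). You instead obtain \textbf{(iv)} directly by observing that the theta-net is literally the quantum trace of the bubble in $(\ref{innprodrevisited})$, giving $B^{xy}_z=\lambda_{zxyz}\,d_z=d_z(\nu^{xy}_z)^{-2}$, and then read off \textbf{(ii)} from positivity of $\lambda_{zxyz}$ and $d_z$. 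This is arguably cleaner, since it decouples \textbf{(ii)} and \textbf{(iv)} from the symmetry argument in \textbf{(i)}. Your treatments of \textbf{(i)} and \textbf{(iii)} match the paper's in spirit; just be aware that the cancellation of pivotal coefficients in \textbf{(i)} is really the statement that the theta-net is a spherical trace (so left and right closures agree), rather than a literal pairing of $t_i$ with $t_i^*$ at each step.
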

\begin{proof}\hspace{2mm}\vspace{1.5mm}
\begin{enumerate}[label=(\roman*)]
    \item \hspace{2cm}\begin{center}\vspace{-10mm}
	\def\svgwidth{17cm}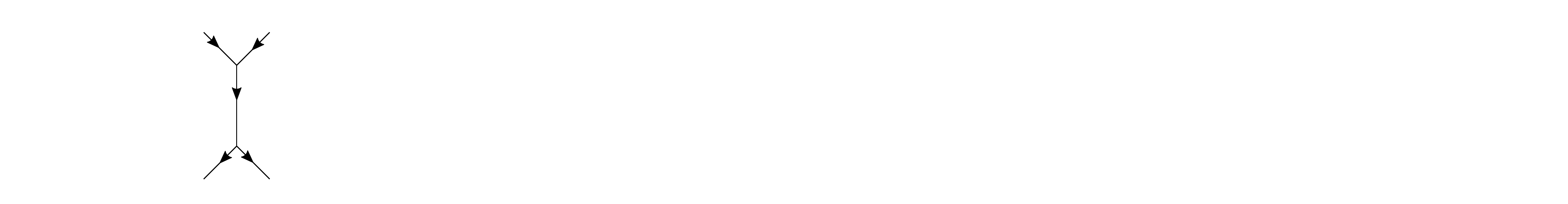 \end{center}
	Also, \\
\begin{center}\vspace{-5mm}
	\def\svgwidth{17cm}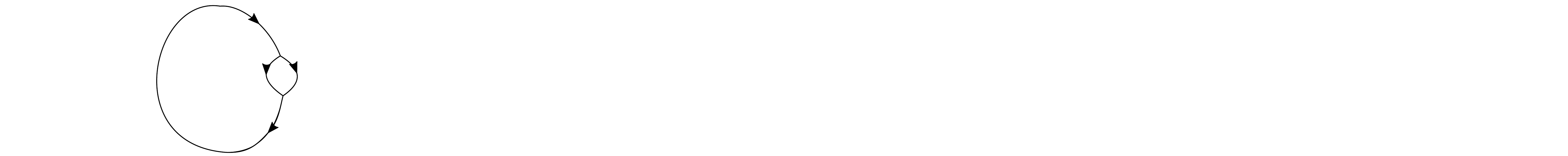
    \end{center}
    
    \item From the proof of (i), $B^{xy}_z=\scalebox{1.5}{||}\trivalentdoozy\scalebox{1.5}{||}^2 \  d_y=(\nu^{x^{*}z}_y)^{-2}d_y>0$ 
    \item Follows immediately upon inspection of $B^{xy}_z$
    \item \hspace{2mm}\begin{center}\vspace{-5mm}
      \def\svgwidth{16cm}%% Creator: Inkscape 1.0 (4035a4f, 2020-05-01), www.inkscape.org
%% PDF/EPS/PS + LaTeX output extension by Johan Engelen, 2010
%% Accompanies image file '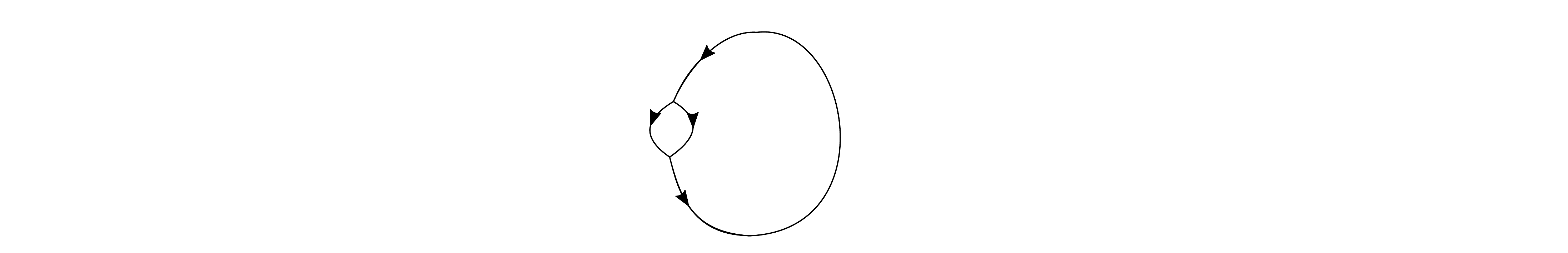' (pdf, eps, ps)
%%
%% To include the image in your LaTeX document, write
%%   \input{<filename>.pdf_tex}
%%  instead of
%%   \includegraphics{<filename>.pdf}
%% To scale the image, write
%%   \def\svgwidth{<desired width>}
%%   \input{<filename>.pdf_tex}
%%  instead of
%%   \includegraphics[width=<desired width>]{<filename>.pdf}
%%
%% Images with a different path to the parent latex file can
%% be accessed with the `import' package (which may need to be
%% installed) using
%%   \usepackage{import}
%% in the preamble, and then including the image with
%%   \import{<path to file>}{<filename>.pdf_tex}
%% Alternatively, one can specify
%%   \graphicspath{{<path to file>/}}
%% 
%% For more information, please see info/svg-inkscape on CTAN:
%%   http://tug.ctan.org/tex-archive/info/svg-inkscape
%%
\begingroup%
  \makeatletter%
  \providecommand\color[2][]{%
    \errmessage{(Inkscape) Color is used for the text in Inkscape, but the package 'color.sty' is not loaded}%
    \renewcommand\color[2][]{}%
  }%
  \providecommand\transparent[1]{%
    \errmessage{(Inkscape) Transparency is used (non-zero) for the text in Inkscape, but the package 'transparent.sty' is not loaded}%
    \renewcommand\transparent[1]{}%
  }%
  \providecommand\rotatebox[2]{#2}%
  \newcommand*\fsize{\dimexpr\f@size pt\relax}%
  \newcommand*\lineheight[1]{\fontsize{\fsize}{#1\fsize}\selectfont}%
  \ifx\svgwidth\undefined%
    \setlength{\unitlength}{1264.02393732bp}%
    \ifx\svgscale\undefined%
      \relax%
    \else%
      \setlength{\unitlength}{\unitlength * \real{\svgscale}}%
    \fi%
  \else%
    \setlength{\unitlength}{\svgwidth}%
  \fi%
  \global\let\svgwidth\undefined%
  \global\let\svgscale\undefined%
  \makeatother%
  \begin{picture}(1,0.16116833)%
    \lineheight{1}%
    \setlength\tabcolsep{0pt}%
    \put(0,0){\includegraphics[width=\unitlength,page=1]{diagram15.pdf}}%
    \put(0.38588908,0.08315794){\color[rgb]{0,0,0}\makebox(0,0)[lt]{\lineheight{1.25}\smash{\begin{tabular}[t]{l}$y^{*}$\end{tabular}}}}%
    \put(0.45029881,0.08328931){\color[rgb]{0,0,0}\makebox(0,0)[lt]{\lineheight{1.25}\smash{\begin{tabular}[t]{l}$x^{*}$\end{tabular}}}}%
    \put(-0.00165328,0.08015662){\color[rgb]{0,0,0}\makebox(0,0)[lt]{\lineheight{1.25}\smash{\begin{tabular}[t]{l}$B^{xy}_z \overset{(i)}{=}B^{yz^{*}}_{x^{*}}=$\end{tabular}}}}%
    \put(0,0){\includegraphics[width=\unitlength,page=2]{diagram15.pdf}}%
    \put(0.24290547,0.07509992){\color[rgb]{0,0,0}\makebox(0,0)[lt]{\lineheight{1.25}\smash{\begin{tabular}[t]{l}$x^{*}$\end{tabular}}}}%
    \put(0.21135101,0.14778768){\color[rgb]{0,0,0}\makebox(0,0)[lt]{\lineheight{1.25}\smash{\begin{tabular}[t]{l}$y$\end{tabular}}}}%
    \put(0.31818236,0.09296978){\color[rgb]{0,0,0}\makebox(0,0)[lt]{\lineheight{1.25}\smash{\begin{tabular}[t]{l}$z$\end{tabular}}}}%
    \put(0.21035675,0.00349806){\color[rgb]{0,0,0}\makebox(0,0)[lt]{\lineheight{1.25}\smash{\begin{tabular}[t]{l}$y$\end{tabular}}}}%
    \put(0,0){\includegraphics[width=\unitlength,page=3]{diagram15.pdf}}%
    \put(0.3347549,0.07645224){\color[rgb]{0,0,0}\makebox(0,0)[lt]{\lineheight{1.25}\smash{\begin{tabular}[t]{l}$=$\end{tabular}}}}%
    \put(0.41620499,0.12549237){\color[rgb]{0,0,0}\makebox(0,0)[lt]{\lineheight{1.25}\smash{\begin{tabular}[t]{l}$z^{*}$\end{tabular}}}}%
    \put(0.39975064,0.00962098){\color[rgb]{0,0,0}\makebox(0,0)[lt]{\lineheight{1.25}\smash{\begin{tabular}[t]{l}$z^{*}$\end{tabular}}}}%
    \put(0.54384391,0.07684753){\color[rgb]{0,0,0}\makebox(0,0)[lt]{\lineheight{1.25}\smash{\begin{tabular}[t]{l}$=$\end{tabular}}}}%
    \put(0,0){\includegraphics[width=\unitlength,page=4]{diagram15.pdf}}%
    \put(0.63932853,0.07226421){\color[rgb]{0,0,0}\makebox(0,0)[lt]{\lineheight{1.25}\smash{\begin{tabular}[t]{l}$x^{*}$\end{tabular}}}}%
    \put(0.71509403,0.09137844){\color[rgb]{0,0,0}\makebox(0,0)[lt]{\lineheight{1.25}\smash{\begin{tabular}[t]{l}$y^{*}$\end{tabular}}}}%
    \put(0.68559121,0.11914357){\color[rgb]{0,0,0}\makebox(0,0)[lt]{\lineheight{1.25}\smash{\begin{tabular}[t]{l}$z$\end{tabular}}}}%
    \put(0.68481376,0.01888706){\color[rgb]{0,0,0}\makebox(0,0)[lt]{\lineheight{1.25}\smash{\begin{tabular}[t]{l}$z$\end{tabular}}}}%
    \put(0.74318088,0.0750608){\color[rgb]{0,0,0}\makebox(0,0)[lt]{\lineheight{1.25}\smash{\begin{tabular}[t]{l}$=(\nu^{xy}_z)^{-2}d_z$\end{tabular}}}}%
  \end{picture}%
\endgroup%
 
    \end{center}
\end{enumerate}
\end{proof}
\vspace{2mm}
\noindent By (iv), we have that $(\nu^{xy}_z)^{2}=\dfrac{d_z}{B^{xy}_z}$ where $B^{xy}_z$ must satisfy (i)-(iii) above. Note that 
\[ (\nu^{x^{*}x}_0)^{2}=\dfrac{d_0}{B^{x^{*}x}_0}=\dfrac{1}{B^{x0}_x}=\dfrac{1}{d_x} \]
which is consistent with our cap and cup normalisation. The simplest such candidate (satisfying (i)-(iii)) for $B^{xy}_z$ is $\sqrt{d_x d_y d_z}$, whose corresponding normalisation is
\begin{equation}
    \nu^{xy}_z=\sqrt[4]{\frac{d_z}{d_x d_y}}
\end{equation}
(and consequently $\lambda_{zxyz}=\sqrt{\frac{d_x d_y}{d_z}}$) which is the convention used in the literature.\footnote{For example, an alternative choice for $B^{xy}_z$ could be $\frac{1}{3}(\sqrt{d_x d_y d_z}+d_x+d_y+d_z-1)$.} It is easy to see that the $F$-symbols (\ref{6jdef}) and $R$-symbols (\ref{braideigen}) are invariant under the prescribed normalisation. 

%Each side of \ref{finalpic} gains a factor of $\sqrt[4]{\frac{d_d}{d_ad_bd_c}}$. 

\newpage

\section{Rotated Morphisms}
\label{rotappx}
\noindent The rotation operator $\varphi$ as defined in Section \ref{prelims} has a simple generalisation to $\Hom(q^{\otimes m}, q^{\otimes n})$ (where $q$ is still assumed to be self-dual, and $m,n\in\mathbb{Z}_{>0}$). Let at least one of $m$ or $n$ be greater than one, and $f\in\Hom(q^{\otimes m}, q^{\otimes n})$. We have
\begin{equation}\centering \includegraphics[width=0.8\textwidth]{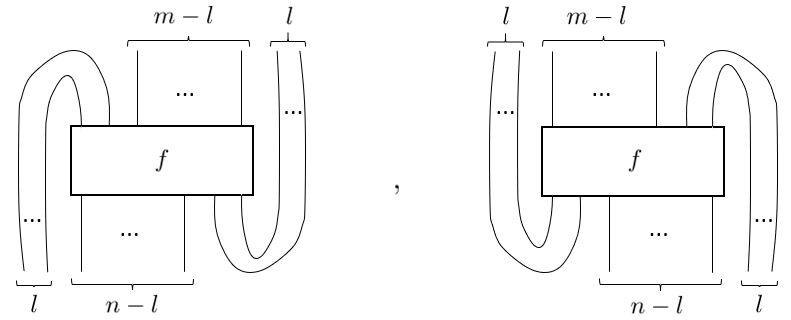}\label{quasigenrot}\end{equation}
where the left and right diagrams respectively illustrate $\varphi(f)$ for an (anti)clockwise rotation and where $1\leq l \leq\min\{m,n\}$. A further variant is studied in \cite{BJT}. 

\begin{example}
Let $m=n=3$ and $l=1$ with $\varphi$ anticlockwise. Then $\varphi^{6}=\id$. Let,
\[ f_{0}= \raisebox{-3.5ex}{\includegraphics[width=0.13\textwidth]{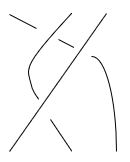}} \quad , \quad f_{1}=\raisebox{-3.5ex}{\includegraphics[width=0.13\textwidth]{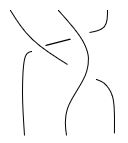}} \quad , \quad f_{2}=\raisebox{-3.5ex}{\includegraphics[width=0.11\textwidth]{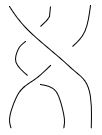}} \]
\[ f_{3}= \raisebox{-3.5ex}{\includegraphics[width=0.13\textwidth]{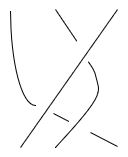}} \quad , \quad f_{4}=\raisebox{-3.5ex}{\includegraphics[width=0.13\textwidth]{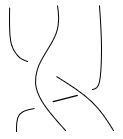}} \quad , \quad f_{5}=\raisebox{-3.5ex}{\includegraphics[width=0.11\textwidth]{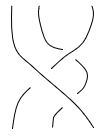}} \]

\noindent Observe that $\varphi(f_{\overline{k}})=\varkappa_{q}f_{\overline{k+1}}$ where $\overline{k}$ denotes a residue modulo $6$, and that $f_{\overline{k+3}}=f_{\overline{k}}$. Suppose there exist $\alpha,\beta\in\mathbb{C}$ such that $f_{2}=\alpha f_{0}+\beta f_{1}$. Applying $\varphi$, we get
\begin{align*}
f_{0}=\alpha f_{1} + \beta f_{2}&=\alpha f_{1} + \beta(\alpha f_{0}+\beta f_{1}) \\
\implies f_{1}&=\dfrac{1-\alpha\beta}{\alpha+\beta^{2}}f_{0} 
\end{align*}
where in the final line we assume that $\beta^{3}\neq-1$ and $\alpha\neq\beta^{-1}$. Thus, $f_{0},f_{1}$ and $f_{2}$ are either (a) linearly independent, (b) linearly dependent with $f_{2}=\alpha f_{0}+\beta f_{1}$ such that $\beta^{3}=-1$ and $\alpha=\beta^{-1}$ or (c) collinear. In the collinear case, note that $f_{\overline{k}}$ is an eigenvector of $\varphi$; coupling this with the fact that $\varphi^{3}(f_{\overline{k}})=\varkappa_{q}f_{\overline{k}}$, we have that $\varphi(f_{\overline{k}})=\omega f_{\overline{k}}$ where $\omega$ is a $3^{rd}$ root of $\varkappa_{q}$. It follows that $f_{\overline{k+1}}=\varkappa_{q}\omega f_{\overline{k}}$ (and so all of the morphisms are related to one another by a scaling of some $6^{th}$ root of unity).
\end{example}

\newpage

%-----------Representation Theory--------
\section{Unitary Representations of the Braid Group}
\label{reptheory}
We now review part of the exposition in Section \ref{skeinrev} from a slightly different perspective. While much of the discourse here is well-known, we feel that it would be amiss to exclude this material from our presentation. The $n$-strand braid group is given by
\begin{equation}B_{n}=\left\langle\begin{array}{c|c}
      \sigma_{1},\ldots,\sigma_{n-1} & \begin{array}{c}
	\sigma_{i}\sigma_{i+1}\sigma_{i}=\sigma_{i+1}\sigma_{i}\sigma_{i+1} \\
	\sigma_{i}\sigma_{j}=\sigma_{j}\sigma_{i} \ , \ |i-j|\geq2
\end{array} \end{array}\right\rangle\end{equation}
whose graphical interpretation is given in Figure \ref{braidgens}. We also have the symmetric group
\begin{equation}S_{n}=\left\langle\begin{array}{c|c}
      s_{1},\ldots,s_{n-1} & \begin{array}{c}
      s_{i}^{2}=e \\
	s_{i}s_{i+1}s_{i}=s_{i+1}s_{i}s_{i+1} \\
	s_{i}s_{j}=s_{j}s_{i} \ , \ |i-j|\geq2
\end{array} \end{array}\right\rangle\end{equation}
\begin{figure}[H]\centering 
\def\svgwidth{14cm}
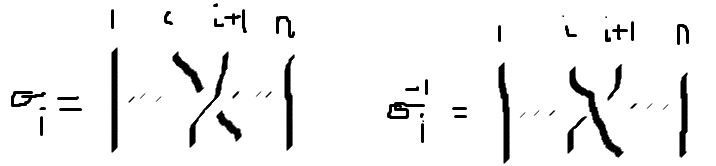 
\caption{Braid words are read from right-to-left. Braids are drawn and composed from top-to-bottom in accord with our pessimistic convention.} \label{braidgens}\end{figure}
\vspace{-2mm}
\noindent There is an epimorphism $\psi : B_{n}\to S_{n}$ where $\psi(\sigma_{i}^{\pm1})=s_{i}$. The pure braid group $PB_{n}$ is a normal subgroup of $B_{n}$ given by $\ker\psi$. That is, $\faktor{B_{n}}{PB_{n}}\cong S_{n}$. There is a closely related quotient for the algebra $\mathbb{C}[B_{n}]$; namely, we take the ideal $Q(\sigma_{i})$ generated by $(\sigma_{i}-r_{1})(\sigma_{i}-r_{2})$ where $r_{1},r_{2}\in\mathbb{C}^{\times}$. Then
\begin{equation}\faktor{\mathbb{C}[B_{n}]}{Q(\sigma_{i})}\cong H_{n}(r_{1},r_{2})\end{equation}
where $H_{n}(r_{1},r_{2})$ is called the \textit{Iwahori-Hecke algebra}. Indeed, $H_{n}(\pm1,\mp1)\cong\mathbb{C}[S_{n}]$ (and so the Iwahori-Hecke algebra can be thought of as a \textit{deformation} of $\mathbb{C}[S_{n}]$). Let $T_{1},\ldots,T_{n-1}$ be the generators of the $H_{n}(r_{1},r_{2})$. The generators satisfy relations
\begin{subequations}\begin{align}
(T_{i}-r_{1})(T_{i}-r_{2})&=0 \label{heckereln}\\
T_{i}T_{i+1}T_{i}&=T_{i+1}T_{i}T_{i+1} \\
T_{i}T_{j}&=T_{j}T_{i} \ , \ |i-j|\geq2
\end{align}\end{subequations}
where (\ref{heckereln}) is called the \textit{Hecke relation}. Viewing the Iwahori-Hecke algebra as a vector space, we have $\dim_{\mathbb{C}}(H_{n})=n!$. The \textit{generalised Hecke algebra} $H_{n}(Q,k)$ is given by the quotient of $\mathbb{C}[B_{n}]$ by the ideal $Q(\sigma_{i})$ which is now generated by $\Pi_{j=1}^{k}(\sigma_{i}-r_{j})$ where $r_{j}\in\mathbb{C}^{\times}$ and $k\geq2$. $H_{n}(Q,k)$ has the same presentation as the Iwahori-Hecke algebra except that (\ref{heckereln}) is now replaced with the generalised Hecke relation $\Pi_{j=1}^{k}(T_{i}-r_{j})=0$.\\

\noindent Let $\mathcal{C}$ be a ribbon fusion category and take some $q\in\Irr(\mathcal{C})$. Then
\begin{equation}\End\left(q^{\otimes n}\right)=\bigoplus_{X}\Hom\left(q^{\otimes n},X\right)\otimes\Hom\left(X,q^{\otimes n}\right)\end{equation}
 where $X$ indexes all the simple objects appearing in the decomposition of $q^{\otimes n}$. Fixing a fusion basis on $\Hom\left(q^{\otimes n},x\right)$ for some $X=x$ defines a linear representation 
\begin{equation}\rho:B_{n}\to U(V^{q^{n}}_{x}) \quad , \quad V^{q^{n}}_{x}:=\Hom\left(q^{\otimes n},x\right)\label{bgrep}\end{equation}
where $U(V^{q^{n}}_{x})$ denotes the group of unitary matrices on $V^{q^{n}}_{x}$. Let $n\geq2$. There exists at least one $i$ such that $\rho(\sigma_{i})=\mathcal{R}$, where $\mathcal{R}$ is a diagonal matrix whose eigenvalues are some subset of the eigenvalues of $R^{qq}$ (eigenvalues are counted without multiplicity here). Let $\{r_{1},\ldots,r_{k}\}$ denote the eigenvalues of $\mathcal{R}$ where the \mbox{$r_{i}\in U(1)$} are distinct and may appear in $\mathcal{R}$ with arbitrary nonzero multiplicity. We define
\begin{equation}p(Z)=(Z-r_{1}I_{s})\cdot\ldots\cdot(Z-r_{k}I_{s})\end{equation}
where $k\leq s:=\dim(V^{q^{n}}_{x})$, $I_{s}$ is the $s\times s$ identity matrix and $Z$ is an $s\times s$ matrix with entries in $\mathbb{C}$. It is clear that $p(\mathcal{R}')=0$ (where $\mathcal{R}'$ is any matrix similar to $\mathcal{R}$); this is an instance of the Cayley-Hamilton theorem. It follows that $p\left(\rho(\sigma_{i})\right)=0$ for all $i$, whence\footnote{In (\ref{factheck1}), $\rho$ is a $\mathbb{C}$-linear extension of  $\rho$ in (\ref{bgrep}). Through an abuse of notation, we implicitly assume that the representation in (\ref{factheck1}) is restricted to $B_{n}$ so as to coincide with (\ref{bgrep}).}
\begin{equation}\rho:\mathbb{C}[B_{n}]\to H_{n}(Q,k) \to U(V^{q^{n}}_{x})\label{factheck1}\end{equation}
i.e.\ $\rho$ factors through the generalised Hecke algebra $H_{n}(Q,k)$.\\

In Section \ref{trivcase}, we considered a fusion rule $q\otimes q=\bm{1}$. Clearly, $\px=z\ \nx$ for some $z\in\mathbb{C}^{\times}$. Applying (\ref{rotx}), we see that $z=\pm1$ for $\varkappa_{q}=\pm1$. For $\varkappa_{q}=+1$, (\ref{factheck1}) becomes
\begin{equation}\begin{split}\rho^{(u)}:\ \mathbb{C}[B_{n}] &\to \mathbb{C}[S_{n]} \to U(1) \\
\sigma_{j}&\longmapsto \ s_{j} \ \longmapsto u \end{split} \quad , \quad u=\pm1 \label{trivrep1}\end{equation}
For $\varkappa_{q}=-1$, (\ref{factheck1}) becomes
\begin{equation}\begin{split}\rho^{(u)}:\ \mathbb{C}[B_{n}] &\to \mathbb{C}[S_{n]} \to U(1) \\
\sigma_{j}&\longmapsto \pm is_{j} \longmapsto u \end{split} \quad , \quad u=\pm i \label{trivrep2}\end{equation}

In Section \ref{quadcase}, we considered a fusion rule $q\otimes q=\bm{1}\oplus y$. This means that our crossings can be written
\begin{subequations}\begin{align}
\Px=a \ \Idm + b \ \Ccm \label{ronin1}\\
\Nx=c \ \Idm + d \ \Ccm \label{ronin2}
\end{align}\end{subequations}
where $a,b,c,d\in\mathbb{C}^{\times}$. This motivates the idea that the homomorphism (\ref{factheck1}) should also factor through some algebra of cup-cap diagrams and non-intersecting strands (for $n\geq3$); this is precisely the \textit{Temperley-Lieb algebra} $TL_{n}(\delta)$: an associative $\mathscr{A}$-algebra (where $\mathscr{A}$ is a commutative ring) with generators $U_{1},\ldots,U_{n-1}$ satisfying relations
\begin{subequations}\begin{align}
U_{i}^{2}&=\delta U_{i} \ \ \ , \ \delta\in\mathscr{A}\\
U_{i}U_{j}U_{i}&=U_{i} \quad \ , \ |i-j|=1 \\
U_{i}U_{j}&=U_{j}U_{i} \ , \ |i-j|\geq2
\end{align}\end{subequations}
\begin{figure}[H]\centering 
\def\svgwidth{10cm}
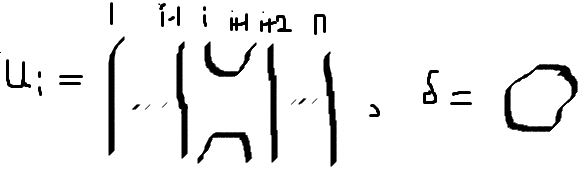 
\caption{Diagrams run from top-to-bottom. The identity element is given by $n$ vertical strands.} \label{tlgens}\end{figure}
\noindent $TL_{n}(\delta)$ is a free $\mathscr{A}$-module of rank $C_{n}$ where $C_{n}$ denotes the $n^{th}$ Catalan number. Following (\ref{ronin1})-(\ref{ronin2}), we construct a $\mathbb{C}$-linear map 
\begin{equation}\begin{split}
\hspace{25mm}\zeta:\mathbb{C}[B_{n}]&\to TL_{n}(\delta) \\
\hspace{25mm}\sigma_{i}&\mapsto a+bU_{i} \\
\hspace{25mm}\sigma_{i}^{-1}&\mapsto c+dU_{i}
\end{split}\quad , \quad \mathscr{A}=\mathbb{C}[a^{\pm1},b^{\pm1},c^{\pm1},d^{\pm1}]\end{equation}
\begin{proposition}
$\zeta$ defines an algebra homomorphism if and only if $c=a^{-1}, d=b^{-1}$ and $\delta=-(ab^{-1}+a^{-1}b)$. 
\label{yuzu}\end{proposition}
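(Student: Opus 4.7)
The plan is to verify that $\zeta$ being an algebra homomorphism is equivalent to $\zeta$ respecting the three families of defining relations of $B_n$, namely far-commutativity $\sigma_i \sigma_j = \sigma_j \sigma_i$ for $|i-j| \geq 2$, the invertibility relation $\sigma_i \sigma_i^{-1} = 1$, and the braid relation $\sigma_i \sigma_{i+1} \sigma_i = \sigma_{i+1} \sigma_i \sigma_{i+1}$. Each of these translates directly into a polynomial identity in $TL_n(\delta)$ that can be simplified using the defining relations $U_i^2 = \delta U_i$, $U_i U_{i\pm 1} U_i = U_i$, and $U_i U_j = U_j U_i$ for $|i-j| \geq 2$.

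The far-commutativity relation is immediate: since $U_i$ and $U_j$ commute for $|i-j| \geq 2$, so do $a + bU_i$ and $a + bU_j$. Next I would impose $\zeta(\sigma_i)\zeta(\sigma_i^{-1}) = 1$, which expands via $U_i^2 = \delta U_i$ to
\begin{equation*}
(a + bU_i)(c + dU_i) = ac + \bigl(ad + bc + bd\delta\bigr)U_i = 1.
\end{equation*}
Matching coefficients of $1$ and $U_i$ gives $ac = 1$ and $ad + bc + bd\delta = 0$, so in particular $c = a^{-1}$, and $d$ becomes determined once $\delta$ is.

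The main computation, and the only one that is genuinely substantial, will be the braid relation. Expanding $(a + bU_i)(a + bU_{i+1})(a + bU_i)$ and using $U_i U_{i+1} U_i = U_i$ together with $U_i^2 = \delta U_i$, I expect to collect terms into
\begin{equation*}
a^3 + \bigl(2a^2 b + a b^2 \delta + b^3\bigr) U_i + a^2 b\, U_{i+1} + a b^2 U_i U_{i+1} + a b^2 U_{i+1} U_i.
\end{equation*}
The analogous expansion of the right-hand side is obtained by swapping $i \leftrightarrow i+1$, and the two mixed terms $U_i U_{i+1}$ and $U_{i+1} U_i$ appear symmetrically on both sides. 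Subtracting, one finds that the braid relation is equivalent to $b(a^2 + a b \delta + b^2)(U_i - U_{i+1}) = 0$. Since $b \neq 0$ and $U_i, U_{i+1}$ are linearly independent in $TL_n(\delta)$ (for $n \geq 3$), this forces $a^2 + ab\delta + b^2 = 0$, i.e.\ $\delta = -(ab^{-1} + a^{-1}b)$.

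Substituting this value of $\delta$ back into the equation $ad + bc + bd\delta = 0$ (with $c = a^{-1}$ already established) should cleanly yield $d = b^{-1}$, closing the forward implication. The converse direction is then automatic: the three conditions $c = a^{-1}$, $d = b^{-1}$, and $\delta = -(ab^{-1} + a^{-1}b)$ are exactly what is needed to make every relation hold, which I would note as a verification rather than a new computation. The main obstacle is purely bookkeeping in the braid-relation expansion; the crucial simplification is that the two cubic terms $U_i U_{i+1} U_i$ and $U_{i+1} U_i U_{i+1}$ collapse to $U_i$ and $U_{i+1}$ respectively, which is precisely what makes the Temperley–Lieb relations compatible with $B_n$.
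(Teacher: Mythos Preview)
Your argument is correct. The paper states this proposition without proof (it is a standard verification, well known in the Temperley--Lieb literature), so there is no proof in the paper to compare against. Your computation is the expected one: far-commutativity is free, the inverse relation gives $ac=1$ and $ad+bc+bd\delta=0$, and the braid relation collapses to $b(a^2+ab\delta+b^2)(U_i-U_{i+1})=0$ after using $U_iU_{i\pm1}U_i=U_i$ and $U_i^2=\delta U_i$. The linear independence of $1,U_i,U_{i+1},U_iU_{i+1},U_{i+1}U_i$ in $TL_n(\delta)$ for $n\geq 3$ then forces $\delta=-(ab^{-1}+a^{-1}b)$, and back-substitution yields $d=b^{-1}$. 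One small remark: your proof implicitly uses $n\geq 3$ (otherwise there is no braid relation and $\delta$ is unconstrained); this is the intended setting in the paper, though not stated explicitly.
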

\noindent We henceforth assume that $c, d$ and $\delta$ are as in Proposition \ref{yuzu}. Since our representation 
\begin{equation}\rho:\mathbb{C}[B_{n}]\stackrel{\zeta}{\to} TL_{n}(\delta)\to U(V^{q^{n}}_{x})\label{tlfactrep}\end{equation}
is unitary, the conditions in Proposition \ref{luadap} (adapted from \cite[p.237]{lureps}) must hold.
\begin{proposition}Given $\rho$ as in (\ref{tlfactrep}), we have $U_{i}^{\dagger}=U_{i}$ and $|a|=|b|=1$.\label{luadap}\end{proposition}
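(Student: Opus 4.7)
The plan is to extract the eigenvalues of $\rho(\sigma_i)$ from the quadratic (Hecke-type) relation inherited from the Temperley-Lieb structure, and then use unitarity to pin down the moduli of $a$ and $b$; self-adjointness of $U_i$ will then fall out of the equation $\rho(\sigma_i)^\dagger=\rho(\sigma_i^{-1})$. First I would square $U_i$ using the Temperley-Lieb relation to see that $U_i$ satisfies $U_i(U_i-\delta I)=0$, so that $U_i$ has all its eigenvalues in $\{0,\delta\}$; if $\delta=0$ this forces $(\rho(\sigma_i)-aI)^2=b^2U_i^2=0$, and together with unitarity (whose minimal polynomial has simple roots) this yields $U_i=0$, in which case the proposition is trivial. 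Thus assume $\delta\neq 0$, so $U_i$ is diagonalisable with spectrum contained in $\{0,\delta\}$.

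Next I would compute the spectrum of $\rho(\sigma_i)=aI+bU_i$, which is contained in $\{a,\,a+b\delta\}$. Substituting $\delta=-(ab^{-1}+a^{-1}b)$ from Proposition \ref{yuzu}, the nontrivial eigenvalue simplifies nicely:
\begin{equation*}
a+b\delta \;=\; a-a-b^{2}a^{-1} \;=\; -b^{2}a^{-1}.
\end{equation*}
Since $\rho(\sigma_i)$ is unitary, every eigenvalue lies on $U(1)$. Provided $U_i\neq 0,\delta I$ (the degenerate cases where $\rho(\sigma_i)$ is scalar are handled separately), both $|a|=1$ and $|{-b^{2}/a}|=|b|^{2}/|a|=1$ must hold, giving $|a|=|b|=1$.

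Having established $|a|=|b|=1$, I would conclude by comparing $\rho(\sigma_i)^\dagger$ and $\rho(\sigma_i^{-1})$. By unitarity,
\begin{equation*}
\rho(\sigma_i)^\dagger \;=\; \overline{a}\,I+\overline{b}\,U_i^\dagger \;=\; \rho(\sigma_i^{-1}) \;=\; a^{-1}I+b^{-1}U_i \;=\; \overline{a}\,I+\overline{b}\,U_i,
\end{equation*}
where the last equality uses $a^{-1}=\overline{a}$ and $b^{-1}=\overline{b}$. Cancelling $\overline{a}I$ and dividing by the nonzero scalar $\overline{b}$ gives $U_i^\dagger=U_i$, as required. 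The main (mild) obstacle is the handling of the degenerate spectral cases for $U_i$; these should be dispatched quickly by observing that in the representations of interest $U_i$ is a nonzero, non-scalar element (geometrically it acts as a nontrivial cup-cap projection between adjacent strands), so the generic argument applies.
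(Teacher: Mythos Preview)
The paper does not supply its own proof of this proposition: it is stated as ``adapted from \cite[p.237]{lureps}'' and then immediately used. So there is no in-paper argument to compare against.

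Your approach is correct and is essentially the standard one. The spectral computation is clean: from $U_i^2=\delta U_i$ you get $\sigma(U_i)\subseteq\{0,\delta\}$, hence $\sigma(\rho(\sigma_i))\subseteq\{a,\,a+b\delta\}=\{a,\,-b^2 a^{-1}\}$ using the value of $\delta$ from Proposition~\ref{yuzu}; unitarity of $\rho(\sigma_i)$ then forces $|a|=|b|=1$, and the deduction $U_i^\dagger=U_i$ from $\rho(\sigma_i)^\dagger=\rho(\sigma_i^{-1})=a^{-1}I+b^{-1}U_i$ is immediate once $a^{-1}=\overline a$, $b^{-1}=\overline b$.

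One remark on the degenerate cases you flag. The point is that $a,b\in\mathbb{C}^\times$ are not attached to any particular $\rho$: they are the coefficients in the categorical identity $\Px=a\,\Idm+b\,\Ccm$ (see (\ref{ronin1})--(\ref{ronin2})) and hence are fixed by $q$ and the braiding. Thus to conclude $|a|=|b|=1$ you only need \emph{some} $\rho$ in which both eigenvalues of $\rho(\sigma_i)$ occur. Taking $n=2$ and the direct sum over $x\in\{\bm 1,y\}$ gives exactly $R^{qq}$ acting on $\End(q^{\otimes 2})$, which has the two distinct eigenvalues $R^{qq}_0$ and $R^{qq}_y$; this removes any need to argue about whether $U_i$ is scalar in a given block. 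With that observation the ``mild obstacle'' disappears entirely rather than being deferred.
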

\noindent From (\ref{factheck1}) we know that $\rho$ must also factor through $H_{n}(Q,k)$. Since we are considering a fusion rule of the form $q^{\otimes2}=y\oplus z$, we have that 
\begin{equation}\rho:\mathbb{C}[B_{n}]\to H_{n}(r_{1},r_{2}) \to U(V^{q^{n}}_{x})\label{ihfactrep}\end{equation}
It is easy to check that (\ref{tlfactrep}) is compatible with (\ref{ihfactrep}). Following Proposition \ref{luadap},
\begin{align*}U_{i}^{\dagger}=U_{i}&\iff \left[b^{-1}\left(\rho(\sigma_{i})-a\right)\right]^{\dagger}=b^{-1}\left(\rho(\sigma_{i})-a\right) \\
&\iff b\left(\rho(\sigma_{i})\right)^{\dagger}-a^{*}b=b^{*}\rho(\sigma_{i})-b^{*}a \\
&\iff \left(\rho(\sigma_{i})+a^{*}b^{2}\right)\left(\rho(\sigma_{i})-a\right)=0
\end{align*}
whence
\begin{equation}\rho:\mathbb{C}[B_{n}]\to H_{n}(-a^{*}b^{2},a) \to TL_{n}(\delta)\to U(V^{q^{n}}_{x})\end{equation}
Specifically, we have the following commutative diagram of linear homomorphisms: 
\vspace{-4mm}
\begin{equation*}
\begin{minipage}{.45\textwidth}
\begin{figure}[H]
\adjustbox{scale=1.075,center}{
\begin{tikzcd}
\mathbb{C}[B_{n}] \arrow[r,"\phi"] \arrow[d,"\zeta"] & H_{n}(-a^{-1}b^{2},a) \arrow[dl, "\eta"]\\
TL_{n}(\delta) \arrow[d, "\rho' "] \\
U(V^{q^{n}}_{x})
\end{tikzcd}}
\end{figure}
\end{minipage} \hfill
\begin{minipage}{.3\textwidth}
\vspace{5mm}
\begin{align*}
&a , b \in U(1)\\
\text{with} \quad \quad &\delta = -(ab^{-1}+a^{-1}b)\\
&\rho = \rho' \circ \zeta  
\end{align*}
\end{minipage}
\end{equation*}
\noindent where $\phi(\sigma_{i})=T_{i}$ such that $\ker\phi$ is generated by $(\sigma_{i}+a^{-1}b^{2})(\sigma_{i}-a)$, and $\eta(T_{i})=a+bU_{i}$. We may thus resolve crossings using skein relations
\begin{equation}\begin{split}
\Px&=\ a \ \ \Idm +  b \ \ \Ccm \\
\Nx&=a^{-1} \Idm + b^{-1} \Ccm 
\end{split}\quad , \quad \Loopy=-(ab^{-1}+a^{-1}b)\label{homflyfrskein}\end{equation}
Note that the resolution of the crossings in (\ref{homflyfrskein}) implies 
\begin{equation}\Px-b^{2}\ \Nx=(a-a^{-1}b^{2})\ \Idm \label{whirligig}\end{equation}
which is simply the Hecke (skein) relation for $H_{n}(-a^{-1}b^{2},a)$.
To recover the boxed result in Section \ref{quadcase}, we consider relations (\ref{homflyfrskein}) in the setting of a ribbon category. Note that
\begin{equation}\vartheta_{q}d_{q}=\raisebox{-2.1ex}{\includegraphics[width=0.11\textwidth]{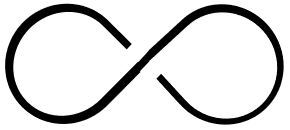}} =a \ \Loopy \ \Loopy + b \ \Loopy=-a^{2}b^{-1} \ \Loopy\label{quadrib}\end{equation}
whence $\vartheta_{q}=-a^{2}b^{-1}$. But we also have that $R^{qq}_{0}=a^{-1}b^{2}$ whence (\ref{fstwisteq}) tells us that $b=\pm a^{-1}$ for $\varkappa_{q}=\pm1$. Another way of seeing this is by applying (\ref{rotx}) to (\ref{homflyfrskein}).

\begin{remark}
The skein relations (\ref{homflyfrskein}) correspond to the \textit{framed} HOMFLY-PT polynomial. In order to see this, consider the well-known Lickorish-Millet presentation \cite{lickmill} of the HOMFLY-PT skein relations,
\begin{equation}l \ \Px + l^{-1}\  \Nx + m \ \Idm = 0 \quad , \quad \Loopy=1\end{equation}
Then setting
\begin{equation}l=\pm ib^{-1} \quad , \quad m=\mp i(ab^{-1}-a^{-1}b)\label{monrroe}\end{equation}
recovers (\ref{whirligig}). Finally, with $l$ and $m$ as in (\ref{monrroe}),\vspace{-2mm}

\begin{equation*}\begin{split}
& l \ \raisebox{-2.1ex}{\includegraphics[width=0.11\textwidth]{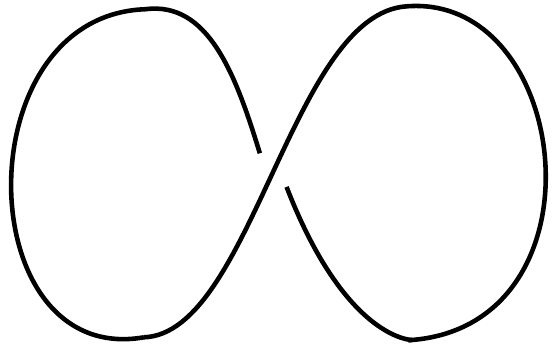}} + l^{-1}\ \raisebox{-2.1ex}{\includegraphics[width=0.11\textwidth]{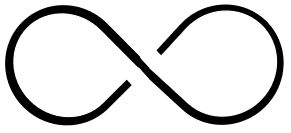}}+ m \  \Loopy \ \Loopy = 0  \\
\stackrel{(\ref{homflyfrskein})}{\implies} & -a^{2}b^{-1}l \ \Loopy -a^{-2}bl^{-1}\ \Loopy + m \ \Loopy \ \Loopy = 0 \\
\implies & -(ab^{-1}+a^{-1}b) \ \Loopy= \Loopy \ \Loopy
\end{split}\end{equation*}
whence we have the rescaled loop value $\loopy \ = -(ab^{-1}+a^{-1}b)$. Let $\tilde{H}$ denote the framed HOMFLY-PT polynomial, $L$ be a link and $D$ a corresponding link diagram. Then the (unframed) HOMFLY-PT polynomial $H$ is simply
\begin{equation}H(L)=(-a^{-2}b)^{w(D)}\tilde{H}(D)\end{equation}
where $w$ denotes the writhe. The HOMFLY-PT invariant can be derived by applying a normalised Markov trace to the Iwahori-Hecke algebra; this trace is characterised by its action on the basis elements of the HOMFLY-PT skein algebra of the annulus\footnote{Given $H_{n}(r_{1},r_{2})$, the HOMFLY-PT skein algebra $\mathcal{H}_{n}(r_{1},r_{2})$ is obtained by joining the ends of the strands (where the $i^{th}$ top is respectively connected to the $i^{th}$ bottom). We have $\mathcal{H}_{n}\cong\faktor{H_{n}}{[\cdot,\cdot]}$ (i.e. the quotient by the ideal generated by the commutator) and $\dim_{\mathbb{C}}\left(\mathcal{H}_{n}\right)$ is given by the $n^{th}$ partition number.} \cite{kasselturaev, bigelow}.
%\begin{private}{Include details in thesis?}\end{private}
\end{remark}

We omit an analogous discussion for the fusion rule $q\otimes q=\bm{1}\oplus x \oplus y$, and solely remark that since there must exist $p_{1},p_{2},p_{3},p_{4}$ nonzero such that
\begin{equation}p_{1} \ \Ccm + p_{2} \ \Idm + p_{3} \ \Px + p_{4} \ \Nx = 0 \end{equation}
this indicates that the representation should factor through the Temperley-Lieb algebra, which in turn motivates the construction of a linear map
\begin{equation}\begin{split}
\zeta' : \mathbb{C}[B_{n}] &\to TL_{n}(\delta) \\
\sigma_{i}+a\sigma_{i}^{-1} &\mapsto b + cU_{i}
\end{split}\end{equation}
where $a,b,c\in\mathbb{C}^{\times}$ and $\delta\in\mathbb{C}[a^{\pm1},b^{\pm1},c^{\pm1}]$ are such that $\zeta'$ defines a homomorphism. It is also clear that the representation should factor through $H_{n}(Q,3)$. 

%-----------ASSD Invars--------
\newpage
\section{Invariants coming from $\varkappa_{q}=-1$}
\label{addendum}

Given a skein relation associated to the fusion rule (\ref{ourguysec3}) for $\varkappa_{q}=-1$, we can define a polynomial-valued function $\Lambda_{\C,q}$ that acts on any link diagram $D$. Since invariants $\Lambda_{\C,q}$ coming from $\varkappa_{q}=-1$ are derived from a setting where an isotopy of the form $\px\stackrel{\varphi}{\longrightarrow}\raisebox{-4.5mm}{\includegraphics[width=0.065\textwidth]{fsrot}}=-\nx$ introduces a difference in sign, it is natural to ask whether the invariant $\Lambda_{\C,q}$ carries such a sensitivity. 

\begin{problem}\label{jooksing}Let $D$ and $D'$ be link diagrams that are equivalent under framed isotopy, and let $k:=\frac{1}{2}|\mathcal{W}(D)-\mathcal{W}(D')|$ where $\mathcal{W}$ denotes the \textit{local writhe}\footnote{The local writhe $\mathcal{W}$ of a link diagram $D$ is defined as the sum over the signs of all crossings in $D$, where $\px$ and $\nx$ respectively contribute $+1$ and $-1$.}. Is it always true that 
\begin{equation}\Lambda_{\C,q}(D')=(-1)^{k}\Lambda_{\C,q}(D)\label{decisiveassault}\end{equation}
given $\Lambda_{\C,q}$ for some $q$ with $\varkappa_q=-1$?\end{problem}

\subsection{Understanding the invariant (\ref{kbtwinskein})}
\label{mindfuck}
The skein relation (\ref{kbtwinskein}) must be applied \textit{locally} i.e. the link must be without orientation, and the relation should be applied to crossings precisely as they appear.\\

\noindent For instance, while a twist of the form \raisebox{-2mm}{\includegraphics[width=0.04\textwidth]{kinkhorz1}} would  be resolved as a positive crossing if it were oriented, we only take into account the local form of the crossing (which is negative here). Letting  $[\cdot]_{\beta}$ denote the invariant (\ref{kbtwinskein}),
\begin{equation} \left[ \raisebox{-6mm}{\includegraphics[width=0.05\textwidth]{kinkhorz3}}\right]_{\beta}=\beta^{3} \ \raisebox{-7mm}{\includegraphics[width=0.019\textwidth]{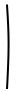}} \quad , \quad \left[\raisebox{-3mm}{\includegraphics[width=0.06\textwidth]{kinkhorz1}}\right]_{\beta}=-\beta^{3}\ \raisebox{-2mm}{\includegraphics[width=0.06\textwidth]{kinkhorz2}}\end{equation}
whence we see that the categorical distinction between horizontal and vertical twists (Remark \ref{kinkhorz}) carries over to the invariant (\ref{kbtwinskein}). Of course, this observation is subsumed by the following:

\begin{proposition}
\label{omglol}
The answer to Problem \ref{jooksing} is positive when $q^{\otimes2}=\bm{1}\oplus x$.
\begin{proof}
In this case, we know that $\Lambda:=\Lambda_{\C,q}$ is given by (\ref{kbtwinskein}). In the following, we assume that all diagrams are projected onto the plane. It suffices to consider an isotopy $D\to\check{D}$ which is one of: (i) a Reidemeister-$0$ move (i.e.\ an ambient isotopy) such that only one crossing has its sign flipped under the deformation (so $k=1$); (ii) a Reidemeister-II move; (iii) a Reidemeister-III move.
\begin{enumerate}[label=(\roman*)]
\item Given a link diagram $D$ containing some crossing \raisebox{-3mm}{\includegraphics[width=0.05\textwidth]{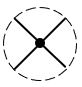}}, we let $D_0$ and $D_\infty$  respectively denote the same diagram but with the crossing smoothed to \raisebox{-3mm}{\includegraphics[width=0.05\textwidth]{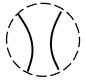}} and \raisebox{-3mm}{\includegraphics[width=0.05\textwidth]{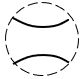}}. We consider $\Lambda(D)$ and $\Lambda(\check{D})$, first applying the skein relation to the crossing whose sign was flipped: suppose (without loss of generality) that the crossing in $D$ was $\px$\ . Then
\vspace{-2mm}
\begin{align*}
\Lambda(D)&=\beta\cdot\Lambda(D_0)-\beta^{-1}\cdot\Lambda(D_\infty) \\
\Lambda(\check{D})&=\beta^{-1}\cdot\Lambda(\check{D}_0)-\beta\cdot\Lambda(\check{D}_\infty)
\end{align*}
\vspace{-3mm}
Label the boundary points of the crossing in $D$ as \raisebox{-4mm}{\includegraphics[width=0.06\textwidth]{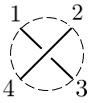}}, and so in $\check{D}$ said crossing is either \raisebox{-4mm}{\includegraphics[width=0.06\textwidth]{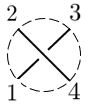}} or \raisebox{-4mm}{\includegraphics[width=0.06\textwidth]{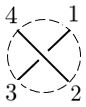}}\ . Then it is easy to see that the smoothings $D\to D_0$ and $\check{D}\to \check{D}_\infty$ are locally identical, whence the diagrams $D_0$ and $\check{D}_\infty$ are also equivalent under ambient isotopy. It follows that $\Lambda(D_0)=\Lambda(\check{D}_\infty)$. Similarly, $\Lambda(\check{D}_0)=\Lambda(D_\infty)$. Thus, $\Lambda(\check{D})=-\Lambda(D)$. 
\vspace{5mm}
\item Two crossings have their signs flipped under a Reidemeister-II move (so \mbox{$k=2$}). Thus, (\ref{decisiveassault}) respects the invariance of $\Lambda$ under Reidemeister-II i.e. \mbox{$\Lambda(\check{D})=\Lambda(D)$.}
\vspace{2mm}
\item Either zero or two crossings have their signs flipped under a Reidemeister-III move (so $k=0$ or $2$). It follows that (\ref{decisiveassault}) respects the invariance of $\Lambda$ under Reidemeister-III i.e. $\Lambda(\check{D})=\Lambda(D)$.
\end{enumerate}
\vspace{1.5mm}
Since $D\to D'$ in Problem \ref{jooksing} is a composition of moves (i)-(iii), the result follows.
\end{proof}
\end{proposition}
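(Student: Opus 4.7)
The plan is to reduce the claim to checking it on the elementary moves generating framed isotopy, and then to handle each move type via the skein relation (\ref{kbtwinskein}). Framed isotopy is generated by planar ambient isotopy (Reidemeister-$0$), Reidemeister-II, and Reidemeister-III moves, so it suffices to verify that for each such elementary move $D \to D'$ one has $\Lambda_{\C,q}(D') = (-1)^k \Lambda_{\C,q}(D)$, where $k = \tfrac{1}{2}|\mathcal{W}(D) - \mathcal{W}(D')|$. A general framed isotopy will then follow by induction on the number of elementary moves, since both the difference in local writhe and the accumulated sign are multiplicative under composition.

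The R-II and R-III cases should be routine. Under R-II, two crossings of opposite local sign appear or disappear simultaneously, so $k = 2$ and we need genuine invariance of $\Lambda_{\C,q}$; this is automatic because the skein relation (\ref{kbtwinskein}) was derived precisely to enforce R-II (see Section \ref{quadcase}). For R-III, a short case analysis over the possible local configurations of the triangular crossing pattern shows that the number of crossings whose local sign flips is always even, so $k \in \{0,2\}$ and $(-1)^k = 1$; the genuine invariance of $\Lambda_{\C,q}$ under R-III then follows from the Yang-Baxter-compatible nature of the skein resolution.

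The real work lies in the R-$0$ case, specifically when a planar ambient isotopy rotates a single crossing by $\pi/2$ and thereby flips its local sign ($k=1$). The strategy is to apply (\ref{kbtwinskein}) to the flipped crossing in both $D$ and the isotopic diagram $\check{D}$, writing
\begin{equation*}
\Lambda_{\C,q}(D) = \beta\,\Lambda_{\C,q}(D_0) - \beta^{-1}\Lambda_{\C,q}(D_\infty),
\end{equation*}
with $D_0, D_\infty$ denoting the two smoothings at that crossing, and similarly for $\check{D}$. The key geometric observation is that a $\pi/2$ rotation of a crossing swaps the two smoothings: the $0$-smoothing of $\Px$ coincides (up to planar isotopy outside the crossing site) with the $\infty$-smoothing of $\Nx$, and vice versa. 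Hence $\check{D}_0$ and $D_\infty$ represent the same link diagram, as do $\check{D}_\infty$ and $D_0$, and substitution immediately yields $\Lambda_{\C,q}(\check{D}) = \beta\,\Lambda_{\C,q}(D_\infty) - \beta^{-1}\Lambda_{\C,q}(D_0) = -\Lambda_{\C,q}(D)$.

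The main obstacle I anticipate is bookkeeping rather than substantive: justifying rigorously that any planar ambient isotopy $D \to D'$ which alters the local writhe factors as a finite composition of elementary $\pi/2$-rotations of single crossings, each contributing exactly one flipped sign and one factor of $-1$ to $\Lambda_{\C,q}$. Once that decomposition is secured, induction on the number of rotations closes the argument. A minor subtlety worth flagging is that planar isotopies which do not rotate any crossing (e.g.\ global reparametrisations of the embedding) leave both $\mathcal{W}$ and $\Lambda_{\C,q}$ untouched, so they contribute trivially to both sides of (\ref{decisiveassault}) and need not appear in the inductive count.
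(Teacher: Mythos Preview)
Your approach is essentially identical to the paper's: reduce to elementary moves (planar isotopy flipping one crossing, R-II, R-III), dispatch R-II and R-III via genuine invariance of $\Lambda_{\C,q}$ with $k$ even, and handle the single-crossing rotation by resolving that crossing on both sides and using that a $\pi/2$ rotation swaps the two smoothings. Two small slips worth correcting: (a) in R-II, since the two crossings that appear/disappear have \emph{opposite} local sign (as you correctly state), the local writhe is unchanged and $k=0$, not $2$---harmless, as $(-1)^k=1$ either way; (b) in your displayed line for $\Lambda_{\C,q}(\check{D})$ you have applied the $\Px$-relation, but the crossing in $\check{D}$ is $\Nx$, so the resolution reads $\Lambda_{\C,q}(\check{D})=\beta^{-1}\Lambda_{\C,q}(\check{D}_0)-\beta\,\Lambda_{\C,q}(\check{D}_\infty)=\beta^{-1}\Lambda_{\C,q}(D_\infty)-\beta\,\Lambda_{\C,q}(D_0)$, which then genuinely equals $-\Lambda_{\C,q}(D)$.
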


\noindent Thus, $\left[D\right]_{\beta}$ is invariant under framed isotopy \textit{up to a sign} which depends on $\mathcal{W}(D)$ e.g.
\begin{equation} \left[ \raisebox{-4mm}{\includegraphics[width=0.12\textwidth]{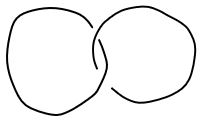}}\right]_{\beta}=-\ \left[ \raisebox{-8mm}{\includegraphics[width=0.11\textwidth]{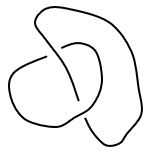}}\right]_{\beta}=\left[ \raisebox{-6.5mm}{\includegraphics[width=0.065\textwidth]{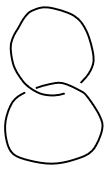}}\right]_{\beta} \label{huzzah}\end{equation}

\noindent From Figure \ref{bokchoi}, we see that (\ref{huzzah}) is equal to $\beta^{2}d^{2}-d-d+\beta^{-2}d^{2}$ (where $d:=\beta^2+\beta^{-2}$ is the value of the loop). 

\begin{figure}[H]\centering \includegraphics[width=0.39\textwidth]{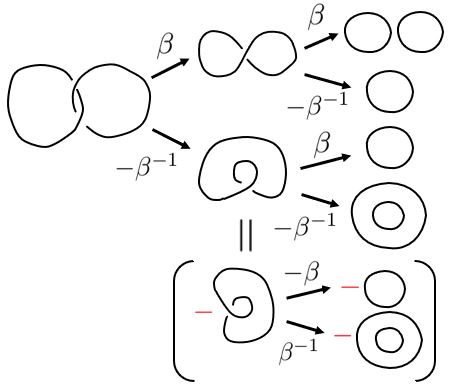} \caption{When we apply the skein relation (\ref{kbtwinskein}), we must take care to account for any minus signs accumulated if we choose to rotate crossings mid-evaluation e.g.\ if the parenthesised route is taken above (for which the resulting minus signs are highlighted in red).} \label{bokchoi}\end{figure}

%\red{We have seen that (\ref{kbtwinskein}) is invariant under Reidemeister-II and III moves, but is sensitive to the classes $\Gamma_1$ and $\Gamma_2$. Is it still accurate to call it a `framed link invariant', or is this an abuse of terminology? More generally, can we still call invariants coming from $\varkappa_q=-1$ framed link invariants? I suppose the more general question would depend on the answer to Problem \ref{jooksing}. Maybe we could propose calling (\ref{kbtwinskein}) a \textit{``pivotal framed link invariant''}? Similarly, following the comment on full isotopy invariants, should we only state the formula $\tilde{\Lambda}_{\mathcal{C},q}(L)=\vartheta_{q}^{-w(D)}\Lambda_{\mathcal{C},q}(D)$ for $\varkappa_{q}=+1$? We showed (in private) that if we apply this to e.g. (\ref{kbtwinskein}) which comes from $\varkappa_{q}=-1$, we will get a full link polynomial which will vary by an overall sign depending on the value of the local writhe modulo $4$ i.e.\ maybe it's more accurate to call it a \textit{``pivotal link polynomial''}\ ? \\
%Indeed, if the answer to Problem \ref{jooksing} is always positive, then I would propose calling all invariants $\Lambda_{\C,q}$ coming from $\varkappa_q=-1$ \textit{pivotal} framed link invariants (and similarly, their normalised counterparts $\tilde{\Lambda}_{\mathcal{C},q}(L)=\vartheta_{q}^{-w(D)}\Lambda_{\mathcal{C},q}(D)$ would be called \textit{pivotal} link invariants).}

In summary, we have seen that while (\ref{kbtwinskein}) is invariant under Reidemeister-II and III moves, it retains sensitivity to pivotality at some level: when a link diagram is isotoped in a way that locally rotates crossings, this introduces ``internal zig-zags'' that are detected by the invariant.

\begin{remark}\hspace{2mm}
\begin{enumerate}[label=(\roman*)]
\item In light of the above, referring to invariants $\Lambda_{\C,q}$ associated to $q$ antisymmetrically self-dual as ``framed link invariants" could be considered an abuse of terminology. We therefore propose the term \textit{pivotal framed invariant} for (\ref{kbtwinskein}). Similarly, if some invariant associated to $\varkappa_q=-1$ is sensitive to the local writhe in the same way as (\ref{kbtwinskein}), then it should also be identified as a pivotal framed invariant.\footnote{E.g. if the answer to Problem \ref{jooksing} is positive, then we propose that all invariants associated to $\varkappa_q=-1$ should be termed as such.} A pivotal framed invariant $\Lambda_{\C,q}$ may be normalised as in (\ref{woofwatch}), thus removing its sensitivity to framing. Nonetheless, the pivotal discrimination remains: we therefore propose that the resulting invariant should similarly be called an (oriented) \textit{pivotal invariant}. 
\vspace{2mm}

\item Although the topological motivation for studying (\ref{kbtwinskein}) and other $\Lambda_{\C,q}$ for $\varkappa_{q}=-1$ may be unclear, we suggest the following counterpoint: in the categorical setting, these invariants arise naturally; they give the evaluation of a link diagram (in $\End(\bm{1})$ whose components are all labelled by $q$) up to a possible factor of $-1$ coming from zig-zag morphisms. Whether these invariants have more interesting applications outside the categorical context remains to be seen.

\vspace{2mm}
\item One can obtain the skein relation
\begin{equation}\beta\OPx{}{}-\beta^{-1} \ \ONx=(\beta^{2}-\beta^{-2})\OIdm{}{} \label{purringmachine}\end{equation}
from (\ref{kbtwinskein}), noting that we have further endowed it with pessimistic orientation. This skein relation defines a framed oriented link invariant. Furthermore, the resulting polynomial of a framed oriented link $L$ coincides with the Kauffman bracket polynomial of the framed link $\tilde{L}$ (which is $L$ without orientation). In order to see this, take an oriented link diagram $D$ for $L$. We can isotope $D$ to obtain a diagram $D'$ whose crossings locally appear with pessimistic orientation. Now observe that applying the oriented skein relation (\ref{purringmachine}) to $D'$ is the same as applying the skein relation (\ref{whirligig}) (with $b=a^{-1}$ and $a=\beta$) to the link diagram $\tilde{D}$ (which is the diagram $D'$ without orientation).
\end{enumerate}
\end{remark}

\end{document}